\newtheorem{theorem}{Theorem}[section]
\newtheorem{lemma}[theorem]{Lemma}
\theoremstyle{definition}
\newtheorem{definition}[theorem]{Definition}
\newtheorem{example}[theorem]{Example}
\theoremstyle{proposition}
\newtheorem{proposition}[theorem]{Proposition}
\newtheorem*{remark}{Remark}
\theoremstyle{corollary}
\newtheorem{corollary}[theorem]{Corollary}
\numberwithin{subsection}{section}
\numberwithin{equation}{section}
\begin{document}

\title{Analytic study of singular curves}

\author{Yukitaka \textsc{Abe}}
\address{Department of Mathematics, University of Toyama, Toyama 930-8555, Japan}
\email{abe@sci.u-toyama.ac.jp}


\subjclass[2010]{Primary 30F10,
32G20; Secondary 14H40}

\keywords{singular curves, generalized Jacobi varieties, Albanese varieties,
abelian functions}


\begin{abstract}
We study singular curves from analytic point of view.
We give completely analytic proofs for the Serre duality and a generalized Abel's theorem.
We also reconsider Picard varieties, Albanese varieties and generalized
Jacobi varieties of
singular curves analytically. 
We call an Albanese variety considered
as a complex Lie group an analytic Albanese variety.
We investigate them in detail.
For a non-singular curve (a compact Riemann surface) $X$,
there is the relation between the meromorphic function fields on
$X$ and on its Jacobi variety $J(X)$. We try to extend this
relation to the case of singular curves.
\end{abstract}

\maketitle

\section{Introduction}
Jacobi varieties and Picard varieties of compact Riemann surfaces
and their relation are a classical theme.
In the middle of the twenty century, a generalization of these
subjects to singular curves was studied by several authors. We think that studies
in this direction had been influenced by the monograph of Severi
\lq \lq Funzioni Quasi Abeliane". The treatment of the above studies
was completely algebraic. A generalized Jacobi variety of a singular curve
was first defined algebraically by Rosenlicht.
We analytically consider generalized Jacobi varieties.
A commutative algebraic group over ${\mathbb C}$ is
analytically a commutative complex Lie group.
We obtain some information about analytic properties of them.\par
We treat all of materials completely analytic.
We use some notations in Serre's book \lq \lq Groupes
alg\'ebriques et corps de classes". 
Let $X$ be a projective, irreducible, and non-singular algebraic
curve (i.e. a compact Riemann surface), and let $S$ be a finite
subset of $X$. We denote by ${\mathfrak m}$ a modulus with support
$S$. Endowing a suitable equivalence relation on $S$, we obtain
a singular curve $X_{\mathfrak m}$ by ${\mathfrak m}$.
Rational functions (i.e. meromorphic functions) $\varphi $ with
$\varphi \equiv 1\ {\rm mod}\ {\mathfrak m}$ were considered
in the above Serre's book. A generalized Abel's theorem was formulated 
in terms of these functions. The condition \lq \lq $\varphi \equiv 1\ 
{\rm mod}\ {\mathfrak m}$" means that $\varphi $ takes the common
value $1$ at each point of $S$.
If the set of singularities consists of the only one point, this
condition is appropriate. However it becomes considering
very special functions in the general case. 
We modify this part.
We consider meromorphic
functions which take each value $c_Q$ at each singular point $Q$
in $X_{{\mathfrak m}}$\,.
We develop the theory using these functions. 
We give a new formulation of a
generalized Abel's theorem in terms of such natural concept.
And we give a completely analytic proof of it.
We also reconsider Picard varieties and Albanese varieties of
singular curves.  We consider Albanese varieties analytically.
An Albanese variety considered as a complex Lie group is called
an analytic Albanese variety. 
We show that an analytic Albanese variety has the
universality property.
\par
The abelian function
field on the Jacobi variety $J(X)$ of $X$ is generated by
the fundamental abelian functions which are constructed by
generators of the meromorphic function field on $X$.
We consider a generalization of
this relation to the case of singular curves.\par
The paper is organized as follows. In Section 2 we state the
construction of singular curves from a compact Riemann surface.
Considering an equivalence relation $R$ on $S$, we set
$\overline{S} = S/R$. If ${\mathfrak m}$ is a modulus with
support $S$, then $(X \setminus S)\cup \overline{S}$ has the
structure of $1$-dimensional compact reduced complex space.
It is a singular curve $X_{{\mathfrak m}}$ defined by
${\mathfrak m}$.
For the convenience of readers,
we also give a proof of the Riemann-Roch theorem there.\par
Section 3 is devoted to a completely analytic proof of the
Serre duality. Here we introduce two special sheaves for
the analytic proof. 
\par
In Section 4, we give a new formulation
of a generalized Abel's theorem after defining a multiconstant
on the singularities. We also need a special sheaf for its proof.
We emphasize that our proof is completely analytic.\par
Divisor classes, Picard varieties and Albanese varieties of
singular curves are discussed in Section 5.
An Albanese variety is analytically considered as a
commutative complex Lie group, which we call an analytic
Albanese variety. We show that an analytic Albanese variety
is the product of copies of ${\mathbb C}$, copies of
${\mathbb C}^{*}$ and a quasi-abelian variety of kind 0 in
Section 5.6. Let ${\rm Alb}^{an}(X_{{\mathfrak m}})$ be
the analytic Albanese variety of a singular curve 
$X_{{\mathfrak m}}$ constructed from a compact Riemann
surface $X$ of genus $g$. The period map
$\varphi : X \setminus S \longrightarrow {\rm Alb}^{an}(X_
{{\mathfrak m}})$ admits ${\mathfrak m}$ for a modulus,
and is a holomorphic embedding if $g \geq 1$.
Furthermore we prove that the map $\varphi :
(X \setminus S)^{(\pi )} \rightarrow {\rm Alb}^{an}(X_{{\mathfrak m}})$
is surjective, where $\pi $ is the genus of $X_{{\mathfrak m}}$ and
$(X \setminus S)^{(\pi )}$ is the symmetric product of
degree $\pi $ (Theorem 5.17). As a corollary we obtain that
the divisor classes of degree 0 on $X_{{\mathfrak m}}$ and
${\rm Alb}^{an}(X_{{\mathfrak m}})$ are isomorphic.
In Section 5.8 the universality of analytic Albanese varieties
is proved.\par
In Section 6 further properties of analytic Albanese varieties
are studied. We determine period matrices of analytic Albanese
varieties of general singular curves in Section 6.1.
In Section 6.2 we fully study curves with nodes in the case
$\pi = 2$. Let $X_{{\mathfrak m}}$ and $X_{{\mathfrak m}'}$
be two singular curves of genus 2 with a node constructed
from a complex torus $X$. Then we determine the necessary and
sufficient condition for the biholomorphic equivalence of
$X_{{\mathfrak m}}$ and $X_{{\mathfrak m}'}$.
In Section 6.3 we show that for any singular curve $X_{{\mathfrak m}}$
whose singularity is the only cusp we have
${\rm Alb}^{an}(X_{{\mathfrak m}}) \cong J(X)
\times {\mathbb C}$.\par
The last section is devoted to the study of meromorphic
function fields. In the non-singular case there is the
connection between the two meromorphic function fields
${\rm Mer}(X)$ and ${\rm Mer}(J(X))$ on $X$ and on its Jacobi
variety $J(X)$ respectively.
The field ${\rm Mer}(J(X))$ is generated by the fundamental
abelian functions belonging to ${\rm Mer}(X)$. 
We try to generalize
this
relation to the case of singular curves.

\section{Singular Curves}

%

\subsection{Construction of singular curves}
Let $X$ be an irreducible non-singular complex projective algebraic
curve (i.e. a compact Riemann surface).
We denote by ${\mathcal O}_X$ the structure sheaf on $X$.
Let $S$ be a finite subset of $X$. We give an equivalence
relation $R$ on $S$. We define the quotient set
$\overline{S} := S/R$ of $S$ by $R$. We set
$$\overline{X} := (X \setminus S)\cup \overline{S}.$$
We induce to $\overline{X}$ the quotient topology
by the canonical projection $\rho : X \longrightarrow \overline{X}$.
Then $\overline{X}$ is a compact Hausdorff space.
We define a modulus with support $S$, according to Serre(\cite{ref16}).
\begin{definition}
A modulus ${\mathfrak m}$ with support $S$ is the data of an
integer ${\mathfrak m}(P)>0$ for each point $P \in S$.
\end{definition}
A modulus ${\mathfrak m}$ with support $S$ is also considered as
a positive divisor on $X$. We use the same notation
$${\mathfrak m} = \sum_{P \in S} {\mathfrak m}(P)P.$$
We may assume $\deg \ {\mathfrak m} 
\geq 
2$.\par
Let ${\rm Mer}(X)$ be the field of meromorphic functions on $X$.
For any $f \in {\rm Mer}(X)$ and any $P \in X$, we denote
by ${\rm ord }_P(f)$ the order of $f$ at $P$.
\begin{definition}
Let $f, g \in {\rm Mer}(X)$. We write
$$f \equiv g\quad {\rm mod}\ {\mathfrak m}$$
if ${\rm ord}_P(f-g) 
\geq 
{\mathfrak m}(P)$ for any $P \in S$.
\end{definition}
Let $\rho _* {\mathcal O}_X$ be the direct image of
${\mathcal O}_X$ by the projection
$\rho : X \longrightarrow \overline{X}$.
For any $Q \in \overline{S}$ we denote by ${\mathcal I}_Q$
the ideal of $(\rho _* {\mathcal O}_X)_Q$ formed by the
function $f$ with ${\rm ord}_P(f) \geq {\mathfrak m}(P)$ for
any $P \in \rho ^{-1}(Q)$.
We define a sheaf ${\mathcal O}_{\mathfrak m}$ on $\overline{X}$ by
$$
{\mathcal O}_{{\mathfrak m} , Q} :=
\begin{cases}
(\rho _*{\mathcal O}_X)_Q = {\mathcal O}_{X, Q}&
\mbox{if $Q \in X \setminus S$},\\
{\mathbb C} + {\mathcal I}_Q&
\mbox{if $Q \in \overline{S}$}.
\end{cases}$$
Then we obtain a 1-dimensional compact reduced complex
space $(\overline{X}, {\mathcal O}_{\mathfrak m})$, which
we denote by $X_{{\mathfrak m}}$.
\par
Conversely, any reduced and irreducible singular curve is obtained
as above.

\subsection{Genus of $X_{{\mathfrak m} }$}
For any $Q \in X_{{\mathfrak m} }$ we set
\begin{eqnarray*}
\delta _Q &:=& \dim ((\rho _*{\mathcal O}_X)_Q / {\mathcal O}_{{\mathfrak m}, Q})
\nonumber \\
& = & 
\begin{cases}
\dim ({\mathcal O}_{X, Q}/{\mathcal O}_{X, Q}) = 0&
\mbox{if $Q \in X \setminus S$,}\\
\dim ((\rho _*{\mathcal O}_X)_Q / ({\mathbb C} + {\mathcal I}_Q))
= \dim ((\rho _*{\mathcal O}_X)_Q /{\mathcal I}_Q) - 1&
\mbox{if $ Q \in \overline{S}$.}\\
\end{cases}\nonumber \\
\end{eqnarray*}
We set
$$\delta := \sum_{Q \in X_{{\mathfrak m} }} \delta _Q = \deg {\mathfrak m}
- \# \overline{S}.$$
Let $g$ be the genus of $X$. We define the genus $\pi $ of $X_{{\mathfrak m} }$ by
$$\pi := g + \delta.$$

\subsection{Divisors prime to $S$}
A divisor $D$ on $X$ is written as
$$D = \sum_{P \in X}D(P)P,\quad D(P) \in {\mathbb Z},$$
where $D(P) = 0$ except a finite number of $P \in X$.
\begin{definition}
A divisor $D$ on $X$ is said to be prime to $S$ if
$D(P) = 0$ for $P \in S$.
\end{definition}
We denote by ${\rm Div}(X_{\mathfrak m} )$ the group of divisors
prime to $S$. We have a homomorphism $\deg : {\rm Div}(X_{\mathfrak m})
\longrightarrow {\mathbb Z}$.
Let ${\rm Mer}(X_{\mathfrak m})$ be the field of meromorphic functions
on $X_{{\mathfrak m} }$.
We may consider ${\rm Mer}(X_{\mathfrak m})\subset {\rm Mer}(X)$ by
the map ${\rm Mer}(X_{\mathfrak m})\longrightarrow {\rm Mer}(X),$
$f \longmapsto f \circ \rho$.
Let $f \in {\rm Mer}(X_{\mathfrak m})$.
It defines the divisor
$$(f)= \sum_ {Q \in X_{{\mathfrak m} }}{\rm ord}_Q(f)Q,$$
where ${\rm ord}_Q(f) = \sum _{P \in \rho ^{-1}(Q)} {\rm ord}_P(f \circ \rho )$.
\begin{definition}
Let $D_1, D_2 \in {\rm Div}(X_{\mathfrak m})$.
We write $D_1 \sim D_2$ if there exists $f \in {\rm Mer}(X_{\mathfrak m})$
such that $D_1 - D_2 = (f)$.
\end{definition}
Since the relation \lq\lq $\sim $" is an equivalence relation on ${\rm Div}(X_{\mathfrak m})$,
we have the group of divisor classes $\overline{{\rm Div}(X_{\mathfrak m})}:= {\rm Div}(X_{\mathfrak m})/\sim $.
\begin{definition}
A divisor $D \in {\rm Div}(X_{\mathfrak m})$ is called a positive
divisor if $D(Q) \geq 0$ for any $Q \in X \setminus S$, and we write it
$D \geq 0$.
Furthermore, $D$ is said to be strictly positive if
$D \geq 0$ and $D \not= 0$, in this case we write $D > 0$.
\end{definition} 
For any $D_1, D_2 \in {\rm Div}(X_{\mathfrak m})$ we write
$D_1 \geq D_2$ (resp. $D_1 > D_2$) if
$D_1 - D_2 \geq 0$ (resp. $D_1 - D_2 >0$).

\subsection{A sheaf associated with a divisor}
Let $D \in {\rm Div}(X_{\mathfrak m}) \subset {\rm Div}(X)$.
We define
$$L(D) := \{ f \in {\rm Mer}(X) ; (f) \geq - D \}.$$
We denote by ${\mathcal L}(D)$ the sheafication of $L(D)$ on $X$.
Let ${\mathcal L}_{\mathfrak m}(D)$ be the sheaf on $X_{{\mathfrak m} }$ 
defined by
$${\mathcal L}_{\mathfrak m}(D)_Q := 
\begin{cases}
{\mathcal O}_{{\mathfrak m}, Q} &
\mbox{if $Q \in \overline{S}$,}\\
{\mathcal L}(D)_Q & \mbox{if $Q \in X \setminus S$}.\\
\end{cases}$$
Then ${\mathcal L}_{\mathfrak m}(D)$ is a coherent sheaf,
for it is locally isomorphic to ${\mathcal O}_{{\mathfrak m} }$\ .
\begin{proposition}
Let $D_1, D_2 \in {\rm Div}(X_{\mathfrak m}).$
If $D_2 \geq D_1$, then
$$H^{0}(X_{\mathfrak m}, {\mathcal L}_{\mathfrak m}(D_1))
\subset H^{0}(X_{\mathfrak m}, {\mathcal L}_{\mathfrak m}(D_2)).$$
\end{proposition}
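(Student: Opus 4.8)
The plan is to exhibit a morphism of sheaves ${\mathcal L}_{\mathfrak m}(D_1) \hookrightarrow {\mathcal L}_{\mathfrak m}(D_2)$ on $X_{\mathfrak m}$ that is injective on every stalk, and then to pass to global sections using the left-exactness of the functor $H^{0}(X_{\mathfrak m}, \cdot )$. First I would compare the two sheaves stalk by stalk, using the decomposition $X_{\mathfrak m} = (X \setminus S) \cup \overline{S}$ furnished by the construction in Section 2.1.

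At a point $Q \in \overline{S}$ the stalk ${\mathcal L}_{\mathfrak m}(D)_Q = {\mathcal O}_{{\mathfrak m}, Q} = {\mathbb C} + {\mathcal I}_Q$ does not depend on $D$ at all. Hence the stalks of ${\mathcal L}_{\mathfrak m}(D_1)$ and ${\mathcal L}_{\mathfrak m}(D_2)$ coincide over the singular locus, and there is nothing to compare there. This is exactly where the requirement that divisors in ${\rm Div}(X_{\mathfrak m})$ be prime to $S$ enters: since $D_1(P) = D_2(P) = 0$ for every $P \in S$, the singular points contribute identically to the two sheaves.

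At a point $Q \in X \setminus S$ I would invoke the hypothesis $D_2 \geq D_1$. Because both divisors are prime to $S$, this inequality in ${\rm Div}(X_{\mathfrak m})$ means $(D_2 - D_1)(Q) \geq 0$ for all $Q \in X \setminus S$, equivalently $-D_1 \geq -D_2$ there. Consequently any $f \in {\rm Mer}(X)$ with $(f) \geq -D_1$ also satisfies $(f) \geq -D_2$, so $L(D_1) \subset L(D_2)$ and, after sheafification, ${\mathcal L}(D_1)_Q \subset {\mathcal L}(D_2)_Q$. Combining the identity over $\overline{S}$ with this inclusion over $X \setminus S$ patches to the desired injective sheaf morphism ${\mathcal L}_{\mathfrak m}(D_1) \hookrightarrow {\mathcal L}_{\mathfrak m}(D_2)$.

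Applying $H^{0}(X_{\mathfrak m}, \cdot )$ to an injection of sheaves yields an injection of global sections, which is the assertion. Concretely, a global section of ${\mathcal L}_{\mathfrak m}(D_1)$ is a meromorphic function on $X$ whose poles off $S$ are bounded by $D_1$ and which satisfies the prescribed constancy-modulo-${\mathcal I}_Q$ condition at each singular point; relaxing the pole bound from $D_1$ to $D_2$ preserves both requirements. I do not expect a serious obstacle here; the one point deserving care is verifying that the stalks over $\overline{S}$ are genuinely independent of the chosen divisor, so that the sheaf inclusion is well defined across the singular locus and the argument does not secretly require $D_1, D_2$ to agree there.
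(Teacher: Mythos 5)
Your proof is correct and is in substance the same as the paper's: the paper simply performs your comparison directly on global sections, writing $D_2 = D_1 + D$ with $D \geq 0$ and noting that any $f \in H^{0}(X_{\mathfrak m}, {\mathcal L}_{\mathfrak m}(D_1))$ satisfies $(f) \geq -D_1 = -D_2 + D \geq -D_2$, the condition at the points of $\overline{S}$ being divisor-independent exactly as you observe. Your stalkwise packaging (identity of stalks over $\overline{S}$, inclusion over $X \setminus S$, then left-exactness of $H^{0}$) is a harmless formalization of the same one-line argument.
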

\begin{proof}
There exists $D \in {\rm Div}(X_{\mathfrak m})$ with $D \geq 0$ such that
$D_2 = D_1 + D.$
For any $f \in H^{0}(X_{\mathfrak m}, {\mathcal L}_{\mathfrak m}(D_1))$,
we have
$$(f) \geq - D_1 = -D_2 + D \geq - D_2.$$
Then $f \in H^{0}(X_{\mathfrak m}, {\mathcal L}_{\mathfrak m}(D_2)).$
\end{proof}

Since ${\mathcal L}_{\mathfrak m}(0) = {\mathcal O}_{\mathfrak m},$
the following proposition is immediate.
\begin{proposition}
We have
$$H^{0}(X_{\mathfrak m}, {\mathcal L}_{\mathfrak m}(0)) = {\mathbb C}.$$
\end{proposition}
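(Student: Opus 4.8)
The plan is to use the identification $\mathcal{L}_{\mathfrak m}(0) = \mathcal{O}_{\mathfrak m}$ already recorded above, so that the claim reduces to the statement that a global holomorphic function on the compact complex space $X_{\mathfrak m}$ is necessarily constant. First I would take an arbitrary $f \in H^{0}(X_{\mathfrak m}, \mathcal{O}_{\mathfrak m})$ and examine its germ at each point $Q \in \overline{X}$. At a point $Q \in X \setminus S$ we have $\mathcal{O}_{{\mathfrak m}, Q} = \mathcal{O}_{X, Q}$, so $f$ is holomorphic there in the ordinary sense. At a point $Q \in \overline{S}$ we have $\mathcal{O}_{{\mathfrak m}, Q} = {\mathbb C} + \mathcal{I}_Q$, so near $Q$ we may write $f = c_Q + h$ with $c_Q \in {\mathbb C}$ and $h \in \mathcal{I}_Q$.

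The main step is to pull $f$ back to $X$ by the projection $\rho$ and to observe that $f \circ \rho$ extends to a global holomorphic function on all of $X$. On $X \setminus S$ this is clear, since $\rho$ restricts to the identity onto its image. At a point $P \in S$ with $\rho(P) = Q \in \overline{S}$, the local expression $f = c_Q + h$ gives $f \circ \rho = c_Q + h \circ \rho$; by the very definition of $\mathcal{I}_Q$ the germ $h \circ \rho$ satisfies ${\rm ord}_P(h \circ \rho) \geq {\mathfrak m}(P) > 0$, hence is holomorphic at $P$ (and in fact vanishes there). Thus $f \circ \rho \in H^{0}(X, \mathcal{O}_X)$.

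Since $X$ is a compact, connected Riemann surface, the maximum principle forces $f \circ \rho$ to be a constant $c_0 \in {\mathbb C}$. Because $\rho$ is surjective, $f \equiv c_0$ on $\overline{X} = X_{\mathfrak m}$, so $f$ is constant. Conversely every constant lies in ${\mathbb C} \subset {\mathbb C} + \mathcal{I}_Q$ at each singular point and is trivially holomorphic elsewhere, hence is a global section; therefore $H^{0}(X_{\mathfrak m}, \mathcal{O}_{\mathfrak m}) = {\mathbb C}$. I do not expect a genuine obstacle here: the only point that requires care is checking that the singular local model ${\mathbb C} + \mathcal{I}_Q$ really yields a holomorphic pullback, and this follows at once from the positivity ${\mathfrak m}(P) > 0$ built into the modulus.
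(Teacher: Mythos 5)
Your proof is correct and matches the paper's (implicit) argument: the paper dismisses this as immediate from $\mathcal{L}_{\mathfrak m}(0)=\mathcal{O}_{\mathfrak m}$, and in the proof of the later proposition on $\dim H^1(X_{\mathfrak m},\mathcal{O}_{\mathfrak m})$ it justifies $H^0(X_{\mathfrak m},\mathcal{O}_{\mathfrak m})=\mathbb{C}$ exactly by the inclusion $\mathcal{O}_{\mathfrak m}\subset\rho_*\mathcal{O}_X$ and $H^0(X_{\mathfrak m},\rho_*\mathcal{O}_X)\cong H^0(X,\mathcal{O}_X)=\mathbb{C}$, which is precisely your pullback-to-$X$ argument with the details written out. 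The only cosmetic remark is that a germ $h\in\mathcal{I}_Q\subset(\rho_*\mathcal{O}_X)_Q$ is already a holomorphic function on a neighbourhood of $\rho^{-1}(Q)$ in $X$, so writing $h\circ\rho$ is redundant, but this does not affect the argument.
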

\begin{proposition}
Let $D \in {\rm Div}(X_{\mathfrak m}).$ If $\deg\ D < 0$, then
$$H^{0}(X_{\mathfrak m}, {\mathcal L}_{\mathfrak m}(D)) = 0.$$
\end{proposition}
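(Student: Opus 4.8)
The plan is to show that a nonzero global section would force $\deg D \geq 0$, contradicting the hypothesis. So I would suppose that $f \in H^{0}(X_{\mathfrak m}, {\mathcal L}_{\mathfrak m}(D))$ with $f \neq 0$, and regard $f$ as an element of ${\rm Mer}(X)$ via the inclusion ${\rm Mer}(X_{\mathfrak m}) \subset {\rm Mer}(X)$ described above. The whole argument is the classical degree count for sections of a negative line bundle, adapted to the modulus sheaf.

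First I would translate the stalkwise membership of $f$ into a single divisor inequality on the compact Riemann surface $X$. At each $Q \in X \setminus S$ the germ lies in ${\mathcal L}(D)_Q$, so ${\rm ord}_Q(f) \geq -D(Q)$ there. At each $Q \in \overline{S}$ the germ lies in ${\mathcal O}_{{\mathfrak m}, Q} = {\mathbb C} + {\mathcal I}_Q$; hence for every $P \in \rho^{-1}(Q) \subset S$ the function $f$ is holomorphic at $P$, i.e. ${\rm ord}_P(f) \geq 0$. Since $D$ is prime to $S$ we have $D(P) = 0$ for $P \in S$, so in fact ${\rm ord}_P(f) \geq 0 = -D(P)$ at these points as well. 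Combining the two cases gives $(f) \geq -D$ as divisors on $X$, that is, $(f) + D \geq 0$ is an effective divisor on $X$.

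Then I would finish with the classical fact that a nonzero meromorphic function on a compact Riemann surface has $\deg (f) = 0$. Taking degrees in $(f) + D \geq 0$ yields $0 \leq \deg((f) + D) = \deg (f) + \deg D = \deg D$, which contradicts $\deg D < 0$. Hence no nonzero section can exist, and $H^{0}(X_{\mathfrak m}, {\mathcal L}_{\mathfrak m}(D)) = 0$.

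The only delicate point is the passage from stalks to a global divisor inequality on $X$, specifically verifying that the condition at the singular points $\overline{S}$—where the defining sheaf is ${\mathcal O}_{\mathfrak m}$ rather than ${\mathcal L}(D)$—still yields ${\rm ord}_P(f) \geq -D(P)$. This is exactly the place where one invokes that $D$ is prime to $S$, so that $-D$ imposes no pole at the points of $S$ and the holomorphy coming from ${\mathcal O}_{{\mathfrak m}, Q}$ suffices. Once $(f) + D$ is seen to be effective on $X$, the conclusion is immediate from $\deg (f) = 0$.
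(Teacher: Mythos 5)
Your proof is correct and follows essentially the same route as the paper: assume a nonzero section $f$, establish the divisor inequality $(f) \geq -D$ on $X$, and derive a contradiction from $\deg (f) = 0$ together with $\deg D < 0$. The only difference is that you spell out the stalkwise verification (including the role of $D$ being prime to $S$ at the points of $\overline{S}$), which the paper's proof leaves implicit.
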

\begin{proof}
We suppose that there exists $f \in H^{0}(X_{\mathfrak m}, {\mathcal L}_{\mathfrak m}(D))$ with $f \not= 0$.
Since $(f) \geq - D$, we have
$$0 = \deg (f) \geq - \deg \, D >0.$$
This is a contradiction.
\end{proof}
\begin{proposition}
We have
$$\dim H^1(X_{\mathfrak m}, {\mathcal O}_{\mathfrak m}) = \pi .$$
\end{proposition}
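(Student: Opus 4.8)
\emph{Proof proposal.} The plan is to build a short exact sequence of sheaves on $\overline{X}$ relating $\mathcal{O}_{\mathfrak{m}}$ to the direct image $\rho_{*}\mathcal{O}_X$, and to read off the desired dimension from the associated long exact cohomology sequence. By the very definition of $\mathcal{O}_{\mathfrak{m}}$ it is a subsheaf of $\rho_{*}\mathcal{O}_X$, and the two sheaves agree on $X\setminus S$; hence the quotient $\mathcal{F}:=\rho_{*}\mathcal{O}_X/\mathcal{O}_{\mathfrak{m}}$ is a skyscraper sheaf supported on the finite set $\overline{S}$, with stalk $\mathcal{F}_Q\cong(\rho_{*}\mathcal{O}_X)_Q/\mathcal{O}_{{\mathfrak{m}},Q}$ of dimension $\delta_Q$ at each $Q\in\overline{S}$, by the definition of $\delta_Q$ in Section 2.2. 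I would therefore start from
$$0 \longrightarrow \mathcal{O}_{\mathfrak{m}} \longrightarrow \rho_{*}\mathcal{O}_X \longrightarrow \mathcal{F} \longrightarrow 0.$$

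Next I would compute the cohomology of the outer terms. Since $\rho$ is proper with finite fibres and restricts to a homeomorphism over $X\setminus S$, a neighbourhood basis of any $Q\in\overline{S}$ pulls back to disjoint unions of Stein (disk) neighbourhoods, so the higher direct images $R^{q}\rho_{*}\mathcal{O}_X$ vanish for $q>0$. The Leray spectral sequence then yields $H^{q}(\overline{X},\rho_{*}\mathcal{O}_X)\cong H^{q}(X,\mathcal{O}_X)$ for all $q$; in particular $H^{0}(\overline{X},\rho_{*}\mathcal{O}_X)=\mathbb{C}$ and $\dim H^{1}(\overline{X},\rho_{*}\mathcal{O}_X)=g$, the genus of $X$. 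On the other hand $\mathcal{F}$ is a skyscraper sheaf, hence flasque, so $H^{0}(\overline{X},\mathcal{F})=\bigoplus_{Q\in\overline{S}}\mathcal{F}_Q$ has dimension $\sum_{Q}\delta_Q=\delta$ while $H^{q}(\overline{X},\mathcal{F})=0$ for $q>0$. Finally $H^{0}(X_{\mathfrak{m}},\mathcal{O}_{\mathfrak{m}})=\mathbb{C}$ by the preceding proposition, since $\mathcal{L}_{\mathfrak{m}}(0)=\mathcal{O}_{\mathfrak{m}}$.

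With these inputs the long exact sequence reads
$$0 \to \mathbb{C} \xrightarrow{\ \alpha\ } \mathbb{C} \to \mathbb{C}^{\delta} \to H^{1}(X_{\mathfrak{m}},\mathcal{O}_{\mathfrak{m}}) \to \mathbb{C}^{g} \to 0,$$
where $\alpha$ is the inclusion of the constants, hence an isomorphism. Consequently the map $H^{0}(\overline{X},\rho_{*}\mathcal{O}_X)\to H^{0}(\overline{X},\mathcal{F})$ is zero, and the sequence collapses to
$$0 \to \mathbb{C}^{\delta} \to H^{1}(X_{\mathfrak{m}},\mathcal{O}_{\mathfrak{m}}) \to \mathbb{C}^{g} \to 0.$$
Additivity of dimension then gives $\dim H^{1}(X_{\mathfrak{m}},\mathcal{O}_{\mathfrak{m}})=\delta+g=\pi$, as claimed. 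I expect the only genuinely delicate point to be the identification $H^{q}(\overline{X},\rho_{*}\mathcal{O}_X)\cong H^{q}(X,\mathcal{O}_X)$, i.e.\ the vanishing of the higher direct images of the collapsing map $\rho$; everything else is formal bookkeeping in the long exact sequence once $H^{0}(X_{\mathfrak{m}},\mathcal{O}_{\mathfrak{m}})=\mathbb{C}$ and $\dim H^{1}(X,\mathcal{O}_X)=g$ are in hand.
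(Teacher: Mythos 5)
Your proof is correct and follows essentially the same route as the paper: the same short exact sequence $0 \to \mathcal{O}_{\mathfrak{m}} \to \rho_{*}\mathcal{O}_X \to \rho_{*}\mathcal{O}_X/\mathcal{O}_{\mathfrak{m}} \to 0$, the same skyscraper computation giving $\mathbb{C}^{\delta}$, the same identification $H^{q}(X_{\mathfrak{m}},\rho_{*}\mathcal{O}_X)\cong H^{q}(X,\mathcal{O}_X)$, and the same collapse of the long exact sequence to $0 \to \mathbb{C}^{\delta} \to H^{1}(X_{\mathfrak{m}},\mathcal{O}_{\mathfrak{m}}) \to \mathbb{C}^{g} \to 0$. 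The only difference is cosmetic: you justify the direct-image isomorphism via vanishing of $R^{q}\rho_{*}\mathcal{O}_X$ and the Leray spectral sequence, a step the paper simply asserts.
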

\begin{proof}
By an exact sequence
$$0 \longrightarrow {\mathcal O}_{\mathfrak m}\longrightarrow
\rho_{*}{\mathcal O}_X \longrightarrow
\rho_{*}{\mathcal O}_X/{\mathcal O}_{\mathfrak m}\longrightarrow 0$$
we obtain the following long cohomology sequence
\begin{eqnarray*}
0 &\longrightarrow& H^{0}(X_{\mathfrak m},{\mathcal O}_{\mathfrak m})
\longrightarrow H^{0}(X_{\mathfrak m}, \rho_{*}{\mathcal O}_X)
\longrightarrow H^{0}(X_{\mathfrak m}, \rho_{*}{\mathcal O}_X/{\mathcal O}_{\mathfrak m})\\
& \longrightarrow & 
H^{1}(X_{\mathfrak m},{\mathcal O}_{\mathfrak m})
\longrightarrow H^{1}(X_{\mathfrak m}, \rho_{*}{\mathcal O}_X)
\longrightarrow H^{1}(X_{\mathfrak m}, \rho_{*}{\mathcal O}_X/{\mathcal O}_{\mathfrak m}) \longrightarrow \cdots.
\end{eqnarray*}
We have
$$H^q(X_{\mathfrak m}, \rho_{*}{\mathcal O}_X)\cong
H^q(X, {\mathcal O}_X)\quad
\mbox{for all $q \geq 0$},$$
especially
$$H^0(X_{\mathfrak m}, {\mathcal O}_{\mathfrak m}) = {\mathbb C}
= H^0(X_{\mathfrak m}, \rho_{*}{\mathcal O}_{X})
\cong H^0(X, {\mathcal O}_X).$$
And we have $H^1(X_{\mathfrak m}, \rho_{*}{\mathcal O}_X/{\mathcal O}_{\mathfrak m})
= 0$ for ${\rm supp}(\rho_{*}{\mathcal O}_X/{\mathcal O}_{\mathfrak m})
= \overline{S}.$
From
$$\rho_{*}{\mathcal O}_{X}/{\mathcal O}_{\mathfrak m}\cong
\bigoplus_{Q \in \overline{S}}{\mathbb C}_{Q}^{\delta _Q}$$
it follows that
$$\dim H^0(X_{\mathfrak m}, \rho_{*}{\mathcal O}_X/
{\mathcal O}_{\mathfrak m}) = \sum_{Q \in \overline{S}}\delta _Q = \delta .$$
Then we obtain
$$\dim H^1(X_{\mathfrak m}, {\mathcal O}_{\mathfrak m}) =
\dim H^1(X, {\mathcal O}_{X}) + \delta
= g + \delta = \pi$$
by the exact sequence
$$0 \longrightarrow {\mathbb C}^{\delta } \longrightarrow
H^1(X_{\mathfrak m}, {\mathcal O}_{\mathfrak m}) \longrightarrow
H^1(X, {\mathcal O}_X) \longrightarrow 0.$$
\end{proof}

\subsection{Riemann-Roch Theorem (first version)}
\begin{theorem}
Let $X, S, {\mathfrak m}, X_{\mathfrak m}$ be as above.
Let $D \in {\rm Div}(X_{\mathfrak m})$. Then,
$H^0(X_{\mathfrak m}, {\mathcal L}_{\mathfrak m}(D))$ and
$H^1(X_{\mathfrak m}, {\mathcal L}_{\mathfrak m}(D))$ are
finite dimensional, and we have
$$\dim H^0(X_{\mathfrak m}, {\mathcal L}_{\mathfrak m}(D))
- \dim H^1(X_{\mathfrak m}, {\mathcal L}_{\mathfrak m}(D))
= \deg\, D + 1 - \pi .$$
\end{theorem}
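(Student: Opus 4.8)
The plan is to prove that the Euler characteristic
$$\chi (D) := \dim H^0(X_{\mathfrak m}, {\mathcal L}_{\mathfrak m}(D)) - \dim H^1(X_{\mathfrak m}, {\mathcal L}_{\mathfrak m}(D))$$
equals $\deg D + 1 - \pi$ by a dévissage that reduces the general case to $D = 0$. First I would dispose of finiteness: since ${\mathcal L}_{\mathfrak m}(D)$ is coherent (as already noted) and $X_{\mathfrak m}$ is a compact reduced complex space of dimension $1$, the Cartan--Serre finiteness theorem gives that every $H^q(X_{\mathfrak m}, {\mathcal L}_{\mathfrak m}(D))$ is finite dimensional, and $H^q = 0$ for $q \geq 2$; hence $\chi (D)$ is well defined and the statement is purely about $H^0$ and $H^1$. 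The base case $D = 0$ is already in hand, since ${\mathcal L}_{\mathfrak m}(0) = {\mathcal O}_{\mathfrak m}$: by the earlier propositions $\dim H^0(X_{\mathfrak m}, {\mathcal O}_{\mathfrak m}) = 1$ and $\dim H^1(X_{\mathfrak m}, {\mathcal O}_{\mathfrak m}) = \pi$, so $\chi (0) = 1 - \pi = \deg 0 + 1 - \pi$.

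The inductive step compares $D$ with $D + P$ for a point $P \in X \setminus S$. Because $D$ is prime to $S$, every divisor encountered stays prime to $S$, and each point $P$ at which the two sheaves differ lies in the non-singular locus $X \setminus S$, where $X_{\mathfrak m}$ is locally isomorphic to $X$ and both ${\mathcal L}_{\mathfrak m}(D)$ and ${\mathcal L}_{\mathfrak m}(D + P)$ are locally free of rank $1$. A local computation in a coordinate $z$ centered at $P$ identifies the stalks with ${\mathcal O}_{X,P}\, z^{-D(P)}$ and ${\mathcal O}_{X,P}\, z^{-D(P)-1}$, so the cokernel is the one-dimensional skyscraper sheaf ${\mathbb C}_P$ supported at $P$. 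This gives the short exact sequence
$$0 \longrightarrow {\mathcal L}_{\mathfrak m}(D) \longrightarrow {\mathcal L}_{\mathfrak m}(D + P) \longrightarrow {\mathbb C}_P \longrightarrow 0$$
of sheaves on $X_{\mathfrak m}$.

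Passing to the long exact cohomology sequence and using that a skyscraper sheaf is flasque, so $H^0(X_{\mathfrak m}, {\mathbb C}_P) = {\mathbb C}$ and $H^1(X_{\mathfrak m}, {\mathbb C}_P) = 0$, the vanishing of the alternating sum of dimensions yields $\chi (D + P) - \chi (D) = 1 = \deg (D + P) - \deg D$. Reading the same sequence with $D$ replaced by $D - P$ shows $\chi (D) - \chi (D - P) = 1$ for any $P \in X \setminus S$. Finally, any $D \in {\rm Div}(X_{\mathfrak m})$ can be joined to $0$ by a finite chain of divisors prime to $S$ in which consecutive terms differ by a single point of $X \setminus S$; since $\chi$ changes by exactly the degree increment at each step, induction on the length of the chain gives $\chi (D) = \deg D + 1 - \pi$.

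The main obstacle I anticipate is not the induction, which is formal, but the clean identification of the quotient as ${\mathbb C}_P$: one must verify that the gluing data defining ${\mathcal L}_{\mathfrak m}(\,\cdot\,)$ at the points of $\overline{S}$ plays no role in the step (because $P \notin S$) and that the cokernel has length exactly one. This is precisely where the hypothesis that $D$ is prime to $S$ enters, confining all the nontrivial local behaviour to the smooth locus; everything else is the finiteness theorem together with the additivity of $\chi$ on short exact sequences.
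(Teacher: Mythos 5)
Your proof is correct, and its core is the same d\'evissage the paper uses: the paper's proof of Theorem 2.10 also starts from the case $D = 0$ (Propositions 2.7 and 2.9) and reduces the general case to it via skyscraper exact sequences of the form
$$0 \longrightarrow {\mathcal L}_{\mathfrak m}(D^{-}) \longrightarrow {\mathcal L}_{\mathfrak m}(D) \longrightarrow {\mathbb C}_{P} \longrightarrow 0,$$
organized as an induction first over positive divisors and then over the number of subtracted points. The one substantive difference is the treatment of finiteness. You invoke the Cartan--Serre finiteness theorem for coherent sheaves on a compact complex space at the outset, so that $\chi(D)$ is well defined from the start and the induction is purely about the numerical formula; the paper never uses that theorem, and instead propagates finite-dimensionality through the long exact sequences themselves, step by step from the base case (which rests only on the finiteness of $H^1(X, {\mathcal O}_X)$ for the non-singular curve $X$). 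Both are valid: yours is shorter and more uniform, since a single one-point step applied along a chain from $0$ to $D$ handles positive and negative parts at once, while the paper's is more elementary and self-contained. Your worry about the cokernel identification is resolved exactly as you anticipate, and exactly as in the paper: since $P \notin S$, the two sheaves coincide near $\overline{S}$, the computation takes place on the smooth locus where ${\mathcal L}_{\mathfrak m}(\,\cdot\,)$ agrees with ${\mathcal L}(\,\cdot\,)$ on $X$, and flasqueness of the skyscraper gives $H^1(X_{\mathfrak m}, {\mathbb C}_P) = 0$, which is all the additivity of $\chi$ requires.
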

\begin{proof}
(I) The case $D = 0$.\\
By Proposition 2.7 we have   $H^0(X_{\mathfrak m}, {\mathcal L}_{\mathfrak m}(0))
= {\mathbb C}.$
Also we have $\dim H^1(X_{\mathfrak m}, {\mathcal O}_{\mathfrak m}) = \pi $
by Proposition 2.9.
Since ${\mathcal L}_{\mathfrak m}(0) = {\mathcal O}_{\mathfrak m}$\;,
we obtain the assertion.\\
(II) The case $D \geq 0$.\\
We prove the assertion by induction on $d := \deg\, D.$
When $d=0$, it is the case (I).\par
Let $d \geq 1.$ We assume that it is true for $0 \leq \deg\, D \leq d-1.$
Let $D = P_1 + \cdots + P_{d-1} +P$. We set
$D^{-} := P_1 + \cdots + P_{d-1}.$ Then we have an exact sequence
$$0 \longrightarrow {\mathcal L}_{\mathfrak m}(D^{-})
\longrightarrow {\mathcal L}_{\mathfrak m}(D) \longrightarrow
{\mathbb C}_{P} \longrightarrow 0.$$
It gives the following exact sequence of cohomology groups
\begin{eqnarray*}
0 & \longrightarrow & H^0(X_{\mathfrak m}, {\mathcal L}_{\mathfrak m}(D^{-}))
\longrightarrow H^0(X_{\mathfrak m}, {\mathcal L}_{\mathfrak m}(D))
\longrightarrow H^0(X_{\mathfrak m}, {\mathbb C}_P)\\
& \longrightarrow & H^1(X_{\mathfrak m}, {\mathcal L}_{\mathfrak m}(D^{-}))
\longrightarrow H^1(X_{\mathfrak m}, {\mathcal L}_{\mathfrak m}(D))
\longrightarrow 0.
\end{eqnarray*}
Since $H^1(X_{\mathfrak m}, {\mathcal L}_{\mathfrak m}(D^{-}))$ is
of finite dimension, so is $H^1(X_{\mathfrak m}, {\mathcal L}_{\mathfrak m}(D)).$ 
Furthermore we have
$$ \dim H^0(X_{\mathfrak m}, {\mathcal L}_{\mathfrak m}(D)) \leq
\dim H^0(X_{\mathfrak m}, {\mathcal L}_{\mathfrak m}(D^{-}))
+ 1 <  + \infty .$$
Therefore, all vector spaces in the above exact sequence are finite dimensional.
Hence we have
\begin{eqnarray*}
& & - \dim H^0(X_{\mathfrak m}, {\mathcal L}_{\mathfrak m}(D^{-}))
+ \dim H^0(X_{\mathfrak m}, {\mathcal L}_{\mathfrak m}(D))
- \dim H^0(X_{\mathfrak m}, {\mathbb C}_P)\\
& & + \dim H^1(X_{\mathfrak m}, {\mathcal L}_{\mathfrak m}(D^{-}))
- \dim H^1(X_{\mathfrak m}, {\mathcal L}_{\mathfrak m}(D))= 0.
\end{eqnarray*}
Thus we obtain by the assumption of induction
\begin{eqnarray*}
\lefteqn{\dim H^0(X_{\mathfrak m}, {\mathcal L}_{\mathfrak m}(D))
- \dim H^1(X_{\mathfrak m}, {\mathcal L}_{\mathfrak m}(D))}\\
& & = \dim H^0(X_{\mathfrak m}, {\mathcal L}_{\mathfrak m}(D^{-})) 
- \dim H^1(X_{\mathfrak m}, {\mathcal L}_{\mathfrak m}(D^{-})) + 1\\
& & = \deg D^{-} + 1 - \pi + 1 = \deg D + 1 - \pi .
\end{eqnarray*}
(III) The general case.\\
Let
$$D = P_1 + \cdots + P_n - Q_1 - \cdots - Q_m\quad
(P_i \not= Q_j).$$
We show the assertion by induction on $m$.
When $m =0$, it is the case (II).\par
Let $m \geq 1.$ We assume that it is true till $m-1.$
We set
$$D^+ := P_1 + \cdots + P_n - Q_1 - \cdots - Q_{m-1}$$
and $Q := Q_m$.
Since $D = D^+ - Q$, we have a short exact sequence
$$0 \longrightarrow {\mathcal L}_{\mathfrak m}(D) \longrightarrow
{\mathcal L}_{\mathfrak m}(D^+) \longrightarrow {\mathbb C}_Q
\longrightarrow 0.$$
Then we obtain the following long exact sequence of cohomology groups
\begin{eqnarray*}
0 &\longrightarrow & H^0(X_{\mathfrak m}, {\mathcal L}_{\mathfrak m}(D))
\longrightarrow  H^0(X_{\mathfrak m}, {\mathcal L}_{\mathfrak m}(D^+))
\longrightarrow  H^0(X_{\mathfrak m}, {\mathbb C}_Q)\\
&\longrightarrow &  H^1(X_{\mathfrak m}, {\mathcal L}_{\mathfrak m}(D))
\longrightarrow  H^1(X_{\mathfrak m}, {\mathcal L}_{\mathfrak m}(D^+))
\longrightarrow  H^1(X_{\mathfrak m}, {\mathbb C}_Q) = 0.
\end{eqnarray*}
Hence we have
$$\dim H^1(X_{\mathfrak m}, {\mathcal L}_{\mathfrak m}(D)) \leq
\dim H^1(X_{\mathfrak m}, {\mathcal L}_{\mathfrak m}(D^+)) +1
< + \infty$$
and
$$\dim H^0(X_{\mathfrak m}, {\mathcal L}_{\mathfrak m}(D)) \leq
\dim H^0(X_{\mathfrak m}, {\mathcal L}_{\mathfrak m}(D^+)) 
< + \infty .$$
As in the case (II) we obtain
\begin{eqnarray*}
\lefteqn{\dim H^0(X_{\mathfrak m}, {\mathcal L}_{\mathfrak m}(D))
- \dim H^1(X_{\mathfrak m}, {\mathcal L}_{\mathfrak m}(D))}\\
& & = \dim H^0(X_{\mathfrak m}, {\mathcal L}_{\mathfrak m}(D^{+})) 
- \dim H^1(X_{\mathfrak m}, {\mathcal L}_{\mathfrak m}(D^{+})) - 1\\
& & = \deg D^{+} + 1 - \pi - 1 = \deg D + 1 - \pi .
\end{eqnarray*}
\end{proof}

\section{Analytic Proof of the Serre Duality}

\subsection{Sheaf $\Omega _{\mathfrak m}$}
Let $U \subset X_{\mathfrak m}$ be an open set.
We define
$$\Omega _{\mathfrak m}(U) := \{
\text{a meromorphic $1$-form $\omega $ on $\rho ^{-1}(U)$
satisfying the following condition $(*)$}\}.$$
The condition $(*)$:\\
For any $Q \in U$ and any $f \in {\mathcal O}_{{\mathfrak m} , Q}$, we have
$$\sum_{P \in \rho ^{-1}(Q)}{\rm Res}_P(\rho ^{*}f\omega ) = 0.$$
Then a presheaf $\{\Omega _{\mathfrak m} (U), r^{U}_{V}\}$ defines
a sheaf $\Omega _{\mathfrak m}$ on $X_{\mathfrak m}$\,,
where $r^{U}_{V}$ is the restriction map.
This sheaf $\Omega _{\mathfrak m}$ is called the duality sheaf on $X_{\mathfrak m}$
in general.
\par
Let $\Omega $ be the sheaf of germs of holomorphic $1$-forms on $X$.
The direct image $\rho _{*}\Omega $ of $\Omega $ is considered as
an ${\mathcal O}_{\mathfrak m}$\,-submodule of $\Omega _{\mathfrak m}$\,.
Let $D \in {\rm Div}(X_{\mathfrak m}) \subset {\rm Div}(X)$.
For any open subset $W$ of $X$ we define
$$\Omega (D)(W) := \{ \text{
a meromorphic $1$-form $\eta $ on $W$ with $(\eta ) \geq - D$ on $W$}\}.$$
Then a presheaf $\{ \Omega (D)(W), r^{W}_{W'}\}$ gives a sheaf $\Omega (D)$
on $X$. We define a sheaf $\Omega _{\mathfrak m}(D)$ on $X_{\mathfrak m}$ by
$$\Omega _{\mathfrak m}(D)_Q :=
\left\{ 
\begin{array}{ll}
\Omega _{{\mathfrak m} ,Q} & \text{if\  $Q \in \overline{S}$,}\\
\Omega (D)_{Q}&\text{if\  $Q \in X \setminus S$}.\\
\end{array}
\right. $$
It is obvious that $\Omega _{\mathfrak m}(0) = \Omega _{\mathfrak m}$\,.
\par
The Serre duality is represented as follows.
\begin{theorem}[Serre duality]
For any $D \in {\rm Div}(X_{\mathfrak m})$ we have
$$H^0(X_{\mathfrak m}, \Omega _{\mathfrak m}(-D))\cong
H^1(X_{\mathfrak m}, {\mathcal L}_{\mathfrak m}(D))^{*},$$
where $H^1(X_{\mathfrak m}, {\mathcal L}_{\mathfrak m}(D))^{*}$
is the dual space of $H^1(X_{\mathfrak m}, {\mathcal L}_{\mathfrak m}(D))$.
\end{theorem}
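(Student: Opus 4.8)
The plan is to realize the isomorphism through an explicit residue pairing
\[
B : H^0(X_{\mathfrak m}, \Omega_{\mathfrak m}(-D)) \times H^1(X_{\mathfrak m}, {\mathcal L}_{\mathfrak m}(D)) \longrightarrow {\mathbb C},
\]
and to prove that it is non-degenerate in each variable. First I would observe that multiplication of a form by a function gives a pairing of sheaves ${\mathcal L}_{\mathfrak m}(D) \otimes \Omega_{\mathfrak m}(-D) \to \Omega_{\mathfrak m}$: away from $S$ this is clear, while at a point $Q \in \overline{S}$ one checks that if $\omega$ satisfies the residue condition $(*)$ for every $f \in {\mathcal O}_{{\mathfrak m},Q}$ and $g \in {\mathcal L}_{\mathfrak m}(D)_Q = {\mathcal O}_{{\mathfrak m},Q}$, then $g\omega$ again satisfies $(*)$, because ${\mathcal O}_{{\mathfrak m},Q}$ is a ring and $fg \in {\mathcal O}_{{\mathfrak m},Q}$. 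Fixing a finite Leray covering of $X_{\mathfrak m}$ adapted to $\overline{S}$ and computing $H^1$ by \v{C}ech cohomology, I would then compose the induced cup product with a trace map $t : H^1(X_{\mathfrak m}, \Omega_{\mathfrak m}) \to {\mathbb C}$ defined by summing residues on $X$. The defining condition $(*)$ and the residue theorem on the compact Riemann surface $X$ are exactly what guarantee that $t$ is well defined on cohomology classes and that $B$ does not depend on the chosen cocycle representative or covering.

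The non-degeneracy I would establish by reduction to the smooth curve $X$ through the finite map $\rho$. Since $D$ is prime to $S$, one has two short exact sequences of sheaves on $X_{\mathfrak m}$,
\[
0 \longrightarrow {\mathcal L}_{\mathfrak m}(D) \longrightarrow \rho_*{\mathcal L}(D) \longrightarrow {\mathcal F} \longrightarrow 0,
\qquad
0 \longrightarrow \rho_*\Omega(-D) \longrightarrow \Omega_{\mathfrak m}(-D) \longrightarrow {\mathcal G} \longrightarrow 0,
\]
where ${\mathcal F}$ and ${\mathcal G}$ are skyscraper sheaves supported on $\overline{S}$ with $\dim {\mathcal F}_Q = \dim {\mathcal G}_Q = \delta_Q$; the first quotient is the sheaf $\rho_*{\mathcal O}_X/{\mathcal O}_{\mathfrak m}$ already used in Proposition 2.9, and the second is the local statement that the duality sheaf differs from $\rho_*\Omega$ by $\delta_Q$ at each singular point. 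Using $H^q(X_{\mathfrak m}, \rho_*{\mathcal L}(D)) \cong H^q(X, {\mathcal L}(D))$ and $H^q(X_{\mathfrak m}, \rho_*\Omega(-D)) \cong H^q(X, \Omega(-D))$, together with $H^1$ of a skyscraper being zero, I obtain two long exact sequences relating the cohomology on $X_{\mathfrak m}$ to that on $X$.

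Finally I would invoke the classical Serre duality on the compact Riemann surface $X$, namely $H^1(X, {\mathcal L}(D))^{*} \cong H^0(X, \Omega(-D))$ and $H^1(X, \Omega(-D))^{*} \cong H^0(X, {\mathcal L}(D))$, for which the standard analytic proof via $\bar\partial$ and the Dolbeault isomorphism is available. The key point is that the local residue pairing ${\mathcal F}_Q \times {\mathcal G}_Q \to {\mathbb C}$ is perfect, which is precisely the infinitesimal content of the condition $(*)$, so that under $B$ the two long exact sequences become transposes of one another. A diagram chase (five lemma) then shows that the adjoint map $H^0(X_{\mathfrak m}, \Omega_{\mathfrak m}(-D)) \to H^1(X_{\mathfrak m}, {\mathcal L}_{\mathfrak m}(D))^{*}$ is an isomorphism. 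I expect the main obstacle to be the local analysis at the singular points: verifying that $\dim {\mathcal G}_Q = \delta_Q$, that the residue pairing between the two skyscraper quotients is perfect, and, most delicately, that the connecting homomorphisms of the two sequences are genuinely adjoint with respect to $B$, which is what forces the global commutativity of the duality diagram.
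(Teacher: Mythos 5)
Your strategy is correct, but it is genuinely different from the paper's. The paper never invokes duality on the normalization: after constructing the same residue pairing, it proves injectivity of the adjoint map $\iota_D$ directly (Propositions 3.7 and 3.9, via $\Omega_{\mathfrak m}$-distributions), and obtains surjectivity from a Serre-style scaling argument: for $D_n = D - nP$, Lemma 3.8 (a consequence of the cohomological Riemann--Roch theorem, Theorem 2.10) makes $\dim {\rm Im}(\iota_{D_n})$ grow linearly in $n$, the space $\Lambda = \{\psi \lambda\}$ also grows linearly, while $\dim H^1(X_{\mathfrak m}, {\mathcal L}_{\mathfrak m}(D_n))^{*} = n + {\rm const}$; hence the two subspaces must meet for large $n$, and Lemmas 3.10--3.12 (where the requirement that $\psi$ have no zeros on $\overline{S}$ and the open-dense-subset argument do real work) descend the resulting identity $\psi\lambda = \iota_{D_n}(\omega)$ to $\lambda = \iota_D(\omega/\psi)$. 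Your reduction to $X$ is sound: since $D$ is prime to $S$, both of your quotient sheaves are skyscrapers on $\overline{S}$; the stalk $\Omega_{{\mathfrak m},Q}$ consists exactly of the germs of forms with poles bounded by ${\mathfrak m}$ and vanishing total residue over $\rho^{-1}(Q)$, so $\dim {\mathcal G}_Q = \sum_{P \in \rho^{-1}(Q)}{\mathfrak m}(P) - 1 = \delta_Q$; the jet-versus-polar-part residue pairing does induce a perfect pairing ${\mathcal F}_Q \times {\mathcal G}_Q \to {\mathbb C}$ (the quotient by constants pairs with the hyperplane of total-residue-zero polar parts); and the diagram chase needs only the three vertical maps flanking $\iota_D$ to be isomorphisms, so your five-lemma plan goes through. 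What your route buys is modularity and a transparent accounting of the $\delta$ extra differentials as local duals of the $\delta$ missing function germs, and it avoids the paper's delicate Lemmas 3.11--3.12 entirely; its costs are the use of smooth Serre duality as input (harmless, since an analytic Dolbeault proof exists) and the adjointness of the connecting homomorphisms, which you correctly single out as the hard point: one must fix the trace on $H^1(X_{\mathfrak m},\Omega_{\mathfrak m})$ compatibly with that on $H^1(X,\Omega)$ (possible precisely because condition $(*)$ forces the connecting map $H^0({\mathcal G}) \to H^1(X,\Omega)$ to vanish) and then verify commutativity of the three relevant squares by explicit \v{C}ech residue computations. The paper's route, by contrast, is self-contained --- it reproves the smooth case as the instance $S = \emptyset$ --- at the price of the longer dimension-counting apparatus.
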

The purpose of this section is to give a completely analytic
proof of it.
\subsection{Sheaves ${\mathcal E}^{(1,0)}_{\mathfrak m}$ and
${\mathcal E}^{(2)}_{\mathfrak m}$}
Let $U \subset X_{\mathfrak m}$ be an open set. We define
$${\mathcal E}^{(1,0)}_{\mathfrak m}(U) := \{
\text{a $C^{\infty}$ $(1,0)$-form $\omega $ on
$U \setminus (U \cap \overline{S})$ satisfying the following condition $(\star )$}\}.$$
The condition $(\star )$:\\
For any $Q \in U \cap \overline{S}$ we let
$\rho ^{-1}(Q) = \{ P_1, \dots , P_k\}$.
Take an open neighbourhood $V$ of $Q$ in $U$ such that
$$\rho^{-1}(V) = \bigsqcup_{i=1}^{k}V_i\:
\text{(disjoint union)}, \quad P_i \in V_i,$$
$(V_i, z_i)$ is a coordinate neighbourhood of $P_i$ with
$z_i(P_i) = 0$ and there exists a $C^{\infty }$ function
$\varphi _i$ on $V_i \setminus \{ P_i \}$ with
$\rho ^{*}\omega = \varphi _i dz_i$.
Then the limit $\lim _{P \to P_i}\varphi _i(P) z_i(P)^{{\mathfrak m}(P_i)}$
exists and there exists $\varepsilon _0 > 0$ such that for any
$C_i := \{ P \in V_i ; |z_i(P)| = r_i \}$ with
$0 < r_i < \varepsilon _0$ $(i = 1, \dots , k)$ we have
$$\sum _{i=1}^{k} \int _{C_i}\rho ^{*}\omega = 0,$$
where $C_i$ has the anticlockwise direction.\\
We denote by ${\mathcal E}^{(1,0)}_{{\mathfrak m} }$ the sheaf given by
a presheaf $\{ {\mathcal E}^{(1,0)}_{\mathfrak m}(U), r^{U}_{V}\}$
on $X_{\mathfrak m}$.
A sheaf ${\mathcal E}^{(2)}_{\mathfrak m}$ is the image of
${\mathcal E}^{(1,0)}_{\mathfrak m}$ by the operator $d$.
It is obvious from the definitions that the sequence
$$0 \longrightarrow \Omega _{\mathfrak m} \longrightarrow
{\mathcal E}^{(1,0)}_{\mathfrak m} \stackrel{d}\longrightarrow
{\mathcal E}^{(2)}_{\mathfrak m} \longrightarrow 0$$
is exact. The following proposition is immediate.
\begin{proposition}
The sheaves ${\mathcal E}^{(1,0)}_{\mathfrak m}$ and ${\mathcal E}^{(2)}_{\mathfrak m}$
are fine.
\end{proposition}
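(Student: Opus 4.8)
The plan is to deduce fineness from the fact that both sheaves are modules over a conveniently chosen fine sheaf of rings, and that multiplication by a partition of unity belonging to that ring then furnishes the sheaf homomorphisms required in the definition of a fine sheaf. Away from the finite set $\overline{S}$ the space $X_{\mathfrak m}$ is the smooth open surface $X\setminus S$, and there ${\mathcal E}^{(1,0)}_{\mathfrak m}$ and ${\mathcal E}^{(2)}_{\mathfrak m}$ are simply the sheaves of $C^{\infty}$ $(1,0)$-forms and $C^{\infty}$ $2$-forms, which are fine in the usual way; the whole difficulty is concentrated at the finitely many points of $\overline{S}$, where the condition $(\star)$ may be destroyed by multiplication by an arbitrary smooth function. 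The key observation is that $(\star)$ survives multiplication by any smooth function that is locally constant near each point of $\overline{S}$: if $\phi$ equals the constant $\phi(Q)$ on a neighbourhood of $Q\in\overline{S}$, then $\rho^{*}\phi\equiv\phi(Q)$ on each branch $V_i$, so that $\rho^{*}(\phi\omega)=\phi(Q)\,\rho^{*}\omega$ near every $P_i$ and hence $\sum_i\int_{C_i}\rho^{*}(\phi\omega)=\phi(Q)\sum_i\int_{C_i}\rho^{*}\omega=0$, while the limit condition in $(\star)$ is clearly preserved as well. I therefore introduce the sheaf $\mathcal A$ of germs of $C^{\infty}$ functions on $X_{\mathfrak m}$ that are locally constant in a neighbourhood of $\overline{S}$.

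The next step is to check that ${\mathcal E}^{(1,0)}_{\mathfrak m}$ and ${\mathcal E}^{(2)}_{\mathfrak m}$ are $\mathcal A$-modules. For ${\mathcal E}^{(1,0)}_{\mathfrak m}$ this is exactly the observation just made. For ${\mathcal E}^{(2)}_{\mathfrak m}$, which is defined only as the image sheaf $d\,{\mathcal E}^{(1,0)}_{\mathfrak m}$ rather than by an explicit condition, the verification is the main obstacle, and I would argue locally: a section is locally of the form $d\omega$ with $\omega\in{\mathcal E}^{(1,0)}_{\mathfrak m}$, and for $\phi\in\mathcal A$ one writes $\phi\,d\omega=d(\phi\omega)-d\phi\wedge\omega$. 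Here $\phi\omega\in{\mathcal E}^{(1,0)}_{\mathfrak m}$ by the first case, so $d(\phi\omega)\in{\mathcal E}^{(2)}_{\mathfrak m}$; and since $d\phi=0$ on a neighbourhood of $\overline{S}$, the form $d\phi\wedge\omega$ extends by zero to a smooth $2$-form vanishing near $\overline{S}$, which lies in ${\mathcal E}^{(2)}_{\mathfrak m}$ because it equals $d(0)$ near $\overline{S}$ and is locally a $d$-image of a smooth $(1,0)$-form at the smooth points by the $\bar\partial$-Poincaré lemma. Thus $\phi\,d\omega\in{\mathcal E}^{(2)}_{\mathfrak m}$, and ${\mathcal E}^{(2)}_{\mathfrak m}$ is an $\mathcal A$-module.

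Finally I would show that $\mathcal A$ itself admits partitions of unity, after which multiplication by such a partition gives homomorphisms $l_i\colon\mathcal F\to\mathcal F$ with $\operatorname{supp}l_i\subset U_i$ and $\sum_i l_i=\mathrm{id}$ for $\mathcal F={\mathcal E}^{(1,0)}_{\mathfrak m},{\mathcal E}^{(2)}_{\mathfrak m}$, proving both fine. Given a locally finite open cover $\{U_i\}$, the construction exploits that $\overline{S}=\{Q_1,\dots,Q_r\}$ is finite: for each $j$ I pick an index $i(j)$ with $Q_j\in U_{i(j)}$ and a cut-off $\chi_j$ with $\chi_j\equiv1$ near $Q_j$, supported in a neighbourhood of $Q_j$ contained in $U_{i(j)}$, with these neighbourhoods pairwise disjoint, so that each $\chi_j\in\mathcal A$. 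Then $\Psi:=1-\sum_j\chi_j$ lies in $\mathcal A$ and vanishes identically near $\overline{S}$; combining the $\chi_j$ with the products $\Psi\cdot\psi_i$, where $\{\psi_i\}$ is an ordinary smooth partition of unity subordinate to $\{U_i\}$ on the manifold $X\setminus S$, yields functions $\phi_i\in\mathcal A$ with $\operatorname{supp}\phi_i\subset U_i$ and $\sum_i\phi_i=1$. The only points needing care are that each $\Psi\psi_i$ is globally smooth and $\mathcal A$-valued, which holds because $\Psi\equiv0$ near $\overline{S}$ so the product extends by zero and is locally constant there, and that the resulting sum is locally finite, which is immediate since there are only finitely many $\chi_j$.
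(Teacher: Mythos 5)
Your proof is correct, and there is essentially nothing in the paper to compare it with: the author gives no argument at all, stating only that the proposition ``is immediate''. Your write-up supplies exactly the justification that is suppressed, and the point you isolate is the genuinely non-obvious one. These sheaves are \emph{not} modules over the full sheaf of smooth functions on $X_{\mathfrak m}$, so naive smooth partitions of unity do not prove fineness: for a node $Q$ with $\rho ^{-1}(Q) = \{ P_1, P_2\}$ and ${\mathfrak m}(P_1) = {\mathfrak m}(P_2) = 1$, the germ $\omega $ with $\rho ^{*}\omega = dz_1/z_1$ on $V_1$ and $\rho ^{*}\omega = -dz_2/z_2$ on $V_2$ satisfies $(\star )$, but multiplying by the function $\phi $ on $X_{\mathfrak m}$ whose pull-back is $|z_1|^2$ on $V_1$ and $0$ on $V_2$ (smooth in any reasonable sense, being even the restriction of an ambient smooth function under a local embedding of the node) gives
$$\sum _{i=1}^{2}\int _{C_i}\rho ^{*}(\phi \omega ) =
\int _{|z_1| = r_1}|z_1|^2\,\frac{dz_1}{z_1} = 2\pi \sqrt{-1}\, r_1^2 \not= 0,$$
destroying the integral condition in $(\star )$. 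Hence your restriction to cut-offs locally constant near $\overline{S}$ is not a convenience but a necessity, and it is also the device the paper itself uses implicitly later: in the proof of Proposition 3.7 the function $g$ is arranged to be constant near $\overline{S}$ precisely so that $g\sigma $ remains a section of ${\mathcal E}^{(1,0)}_{\mathfrak m}$. The two delicate steps in your argument are both sound: the ${\mathcal A}$-module structure on ${\mathcal E}^{(2)}_{\mathfrak m}$, which (because ${\mathcal E}^{(2)}_{\mathfrak m}$ is defined only as the image $d{\mathcal E}^{(1,0)}_{\mathfrak m}$) really does require the identity $\phi \, d\omega = d(\phi \omega ) - d\phi \wedge \omega $ together with the $\overline{\partial }$-Poincar\'e lemma to recognize smooth $2$-forms at smooth points, and forms vanishing near $\overline{S}$, as local $d$-images; and the partition-of-unity construction, where ${\rm supp}(\Psi \psi _i) \subset U_i$ and local finiteness at the points of $\overline{S}$ both follow from the fact that $\Psi $ vanishes identically on a fixed neighbourhood of $\overline{S}$.
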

Then the above exact sequence is a fine resolution of $\Omega _{{\mathfrak m}}$.
Hence we obtain an isomorphism
$$H^1(X_{\mathfrak m}, \Omega _{\mathfrak m}) \cong
H^0(X_{{\mathfrak m}},{\mathcal E}^{(2)}_{\mathfrak m})/
d H^0(X_{{\mathfrak m}}, {\mathcal E}^{(1,0)}_{\mathfrak m}).$$
\subsection{Residues}
As shown in the preceding section, any $\xi \in H^1(X_{\mathfrak m}, \Omega _{\mathfrak m})$ has a representative $\omega \in H^0(X_{{\mathfrak m}}, {\mathcal E}^{(2)}_{\mathfrak m})$.
\begin{definition}
For any $\xi \in H^1(X_{\mathfrak m}, \Omega _{\mathfrak m})$ we define
its residue ${\rm Res}(\xi )$ by
$${\rm Res}(\xi ) := \frac{1}{2\pi \sqrt{-1}}
\iint_{X \setminus S}\rho ^{*}\omega ,$$
where $\omega \in H^0(X_{{\mathfrak m}}, {\mathcal E}^{(2)}_{\mathfrak m})$ is a
representative of $\xi $.
\end{definition}
\begin{proposition}
The residue ${\rm Res}(\xi )$ of $\xi $ is well-defined.
\end{proposition}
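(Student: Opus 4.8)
The plan is to read the symbol $\iint_{X\setminus S}\rho^{*}\omega$ as the improper integral $\lim_{\varepsilon\to 0}\iint_{X_{\varepsilon}}\rho^{*}\omega$, where $X_{\varepsilon}:=X\setminus\bigcup_{P\in S}D_{P}(\varepsilon)$ and $D_{P}(\varepsilon)$ is the coordinate disk $\{|z|<\varepsilon\}$ about $P$, and then to prove two things: that this limit exists and is finite, and that it does not depend on the representative $\omega\in H^{0}(X_{\mathfrak m},{\mathcal E}^{(2)}_{\mathfrak m})$ of the class $\xi$. Both facts will come from Stokes' theorem combined with the vanishing condition $(\star)$ that is built into the definitions of ${\mathcal E}^{(1,0)}_{\mathfrak m}$ and ${\mathcal E}^{(2)}_{\mathfrak m}$.

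For existence of the limit, I first note that $\rho^{*}\omega$ is a genuine $C^{\infty}$ $2$-form on $X\setminus S$, so $\iint_{X_{\varepsilon}}\rho^{*}\omega$ is finite for each $\varepsilon$ because $X_{\varepsilon}$ is compact. I then show this number is independent of $\varepsilon$ for all small $\varepsilon$. Since $\omega$ is a section of ${\mathcal E}^{(2)}_{\mathfrak m}=d\,{\mathcal E}^{(1,0)}_{\mathfrak m}$, near each singular point $Q$ there is a local primitive $\sigma_{Q}\in{\mathcal E}^{(1,0)}_{\mathfrak m}$ with $\omega=d\sigma_{Q}$, where $\rho^{*}\sigma_{Q}=\varphi_{i}\,dz_{i}$ on the punctured disk at each $P_{i}\in\rho^{-1}(Q)$. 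For $\varepsilon<\varepsilon'$ small, $X_{\varepsilon}\setminus X_{\varepsilon'}$ is a disjoint union of annuli $D_{P}(\varepsilon')\setminus D_{P}(\varepsilon)$ on which $\rho^{*}\omega=d(\rho^{*}\sigma_{Q})$ is exact, so Stokes turns the difference $\iint_{X_{\varepsilon}}\rho^{*}\omega-\iint_{X_{\varepsilon'}}\rho^{*}\omega$ into the boundary sum $\sum_{Q}\bigl(\sum_{P_{i}\in\rho^{-1}(Q)}\int_{C_{i}(\varepsilon')}\rho^{*}\sigma_{Q}-\sum_{P_{i}\in\rho^{-1}(Q)}\int_{C_{i}(\varepsilon)}\rho^{*}\sigma_{Q}\bigr)$; each fibrewise inner sum vanishes by $(\star)$. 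Hence $\iint_{X_{\varepsilon}}\rho^{*}\omega$ is constant for $0<\varepsilon<\varepsilon_{0}$, and the limit exists and equals $\iint_{X_{\varepsilon_{0}}}\rho^{*}\omega$.

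For independence of the representative, suppose $\omega'=\omega+d\tau$ with $\tau\in H^{0}(X_{\mathfrak m},{\mathcal E}^{(1,0)}_{\mathfrak m})$ a global section. Then $\rho^{*}(\omega'-\omega)=d(\rho^{*}\tau)$ on $X\setminus S$, and Stokes on the compact surface-with-boundary $X_{\varepsilon}$ gives $\iint_{X_{\varepsilon}}d(\rho^{*}\tau)=\int_{\partial X_{\varepsilon}}\rho^{*}\tau=-\sum_{P\in S}\int_{C_{P}(\varepsilon)}\rho^{*}\tau$, the boundary circles carrying the clockwise orientation. Grouping the circles over the fibres $\rho^{-1}(Q)$ and applying $(\star)$ to $\tau$ makes each group vanish, so $\iint_{X_{\varepsilon}}\rho^{*}\omega'=\iint_{X_{\varepsilon}}\rho^{*}\omega$ for all small $\varepsilon$; letting $\varepsilon\to 0$ shows $\mathrm{Res}(\xi)$ is unchanged.

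The routine points—fixing the orientations of the $C_{i}$, and choosing the disks small enough that one local primitive $\sigma_{Q}$ covers every annulus $D_{P}(\varepsilon')\setminus D_{P}(\varepsilon)$—I would dispatch quickly. The one genuine subtlety, and exactly the place where $(\star)$ is indispensable, is that $\rho^{*}\omega$ need not be absolutely integrable near $S$: since $\varphi_{i}$ may have a pole of order up to ${\mathfrak m}(P_{i})$, an individual boundary integral $\int_{C_{i}(\varepsilon)}\rho^{*}\sigma_{Q}$ can blow up as $\varepsilon\to 0$, so convergence is recovered only after summing over each fibre, where $(\star)$ forces exact cancellation. Thus the whole statement hinges on $(\star)$ being imposed fibrewise rather than pointwise.
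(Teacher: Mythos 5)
Your proof is correct and takes essentially the same approach as the paper's: both read $\iint_{X\setminus S}\rho^{*}\omega$ as a limit over shrinking coordinate disks, obtain local primitives in ${\mathcal E}^{(1,0)}_{\mathfrak m}$ near each $Q\in\overline{S}$ from the definition of ${\mathcal E}^{(2)}_{\mathfrak m}$, and apply Stokes' theorem together with the fibrewise vanishing in condition $(\star)$ to cancel the boundary circle sums, so that the truncated integrals stabilize. Your explicit treatment of independence of the representative (global primitive $\tau$, Stokes on $X_{\varepsilon}$, fibrewise cancellation) is exactly the argument the paper compresses into its final sentence.
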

\begin{proof}
Let $\overline{S} = \{ Q_1, \dots , Q_N\}$. We set
$$\rho ^{-1}(Q_i) = \{ P_{i1}, \dots , P_{i,k(i)}\},\quad
i = 1, \dots , N.$$
Then we have
$$S = \{ P_{11}, \dots , P_{1,k(1)}, P_{21}, \dots , P_{N, k(N)} \}.$$
For any $i=1, \dots , N$, there exists an open neighbourhood $W_i$ of
$Q_i$ such that if we denote by $U_{ij}$ the connected component of
$\rho^{-1}(W_i)$ containing $P_{ij}$, then
$$\rho^{-1}(W_i) = \bigsqcup _{j=1}^{k(i)}U_{ij}.$$
Let $(V_{ij}, z_{ij})$ be a coordinate neighbourhood of $P_{ij}$
such that $z_{ij}(P_{ij}) = 0$ and $V_{ij} \subset U_{ij}$.
For a sufficiently small $\varepsilon > 0$ we set
$$\Delta _{ij}(\varepsilon ) := \{ P \in V_{ij} ;
|z_{ij}(P)| < \varepsilon \}.$$
We denote by $C_{ij}(\varepsilon )$ the boundary of $\Delta _{ij}(\varepsilon )$
with positive direction.
\par
Let $\omega \in H^0(X_{{\mathfrak m}}, {\mathcal E}^{(2)}_{\mathfrak m})$ be
any representaitive of $\xi $. We can take $W_i$ so small that there exists
$\eta _i \in H^0(\overline{W_i}, {\mathcal E}^{(1,0)}_{\mathfrak m})$ with
$\omega = d\eta _i$ on $\overline{W_i} \setminus \{ Q_i \}$ by the definition
of ${\mathcal E}^{(2)}_{{\mathfrak m}}$.
Since $\rho ^{*}\omega = d(\rho ^{*}\eta _i)$, we have
\begin{eqnarray*}
\lefteqn{\iint_{X \setminus \left(\cup_{ij}\Delta _{ij}(\varepsilon )\right)}
\rho ^{*}\omega }\\
&& =
\iint_{X \setminus \left(\cup _{i}\rho ^{-1}(W_i)\right)}\rho ^{*}\omega +
\iint_{\cup _{i}\left(\rho ^{-1}(W_i) \setminus \cup _{j}
\Delta _{ij}(\varepsilon )\right)}\rho ^{*}\omega \\
&& = \iint_{X \setminus \left(\cup _{i}\rho ^{-1}(W_i)\right)}\rho ^{*}\omega +
\int_{\cup _{i}\partial (\rho ^{-1}(W_i))} \rho ^{*}\eta _i -
\int _{\cup _{ij} C_{ij}(\varepsilon )}\rho ^{*}\eta _i \\
&& = \iint_{X \setminus \left(\cup _{i}\rho ^{-1}(W_i)\right)}\rho ^{*}\omega +
\int _{\cup _{i}\partial (\rho ^{-1}(W_i))}\rho ^{*}\eta _i -
\sum _{i=1}^{N}\left(\sum_{j=1}^{k(i)}\int _{C_{ij}(\varepsilon )}\rho ^{*}\eta _i
\right)\\
&& = \iint_{X \setminus \left(\cup _{i}\rho ^{-1}(W_i)\right)}\rho ^{*}\omega +
\sum_{i=1}^{N}\int _{\partial (\rho ^{-1}(W_i))}\rho ^{*}\eta _i.\\
\end{eqnarray*}
Since the last expression in the above equalities is independent
on $\varepsilon $, 
we have the limit
$$\lim _{\varepsilon \to 0}
\iint_{X \setminus \left(\cup _{ij}\Delta _{ij}(\varepsilon )\right)}
\rho ^{*}\omega = \iint _{X \setminus S}\rho ^{*}\omega .$$
\par
Furthermore, we see by the above argument that ${\rm Res}(\xi )$ is
determined independently of the choice of a representative $\omega $
of $\xi $.
\end{proof}
Let ${\mathcal M}^{(1)}$ be the sheaf of germs of meromorphic 1-forms on $X$.
Consider an open covering ${\mathfrak U} = \{ U_i\}_{i \in I}$ of
$X_{\mathfrak m}$.
\begin{definition}
A cochain $\mu = \{ \omega _i\} \in C^0({\mathfrak U}, \rho _{*}{\mathcal M}^{(1)})$
is an $\Omega _{\mathfrak m}$-distribution if
$$\omega _j - \omega _i \in H^0(U_i \cap U_j, \Omega _{{\mathfrak m}})\quad
\text{for any $i, j \in I$}.$$
\end{definition}
If $\mu = \{ \omega _i \} \in C^0({\mathfrak U}, \rho _{*}{\mathcal M}^{(1)})$
is an $\Omega _{\mathfrak m}$-distribution, then
$\delta \mu \in Z^1({\mathfrak U}, \Omega _{\mathfrak m})$, where $\delta $
is the coboundary operator. We denote by
$[\delta \mu ] \in H^1(X_{\mathfrak m}, \Omega _{\mathfrak m})$
the cohomology class given by $\delta \mu $.
\begin{definition}
Let $\mu = \{ \omega _i\} \in C^0({\mathfrak U}, \rho _{*}{\mathcal M}^{(1)})$
be an $\Omega _{\mathfrak m}$-distribution. For any $Q \in X_{\mathfrak m}$
we define the residue ${\rm Res}_Q(\mu )$ of $\mu $ at $Q$ by
$${\rm Res}_Q(\mu ) := {\rm Res}_Q(\omega _i) =
\sum_{P \in \rho^{-1}(Q)}{\rm Res}_P(\rho ^{*}\omega _i)\quad
\text{for some $i$ with $Q \in U_i$}.$$
\end{definition}
\begin{remark}
We see that the definition of ${\rm Res}_Q(\mu )$ is independent of the choice of
$i$ with $Q \in U_i$, by the definition of $\Omega _{\mathfrak m}$.
\end{remark}
We define
$${\rm Res}(\mu ) := \sum _{Q \in X_{\mathfrak m}}
{\rm Res}_Q(\mu ).$$
\begin{remark}
The residue ${\rm Res}(\mu )$ does not change if we take a refinement
of ${\mathfrak U}$.
\end{remark}
\begin{proposition}
Let $\mu = \{ \omega _i\} \in C^0({\mathfrak U}, \rho _{*}{\mathcal M}^{(1)})$
be an $\Omega _{\mathfrak m}$-distribution. Then we have the cohomology class
$[\delta \mu ]\in H^1(X_{\mathfrak m}, \Omega _{\mathfrak m})$.
In this case, we have
$${\rm Res}(\mu ) = {\rm Res}([\delta \mu ]).$$
\end{proposition}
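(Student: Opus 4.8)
The plan is to compute ${\rm Res}([\delta\mu])$ directly from Definition 3.3 by exhibiting a concrete representative $\omega\in H^0(X_{\mathfrak m},{\mathcal E}^{(2)}_{\mathfrak m})$ of the class $[\delta\mu]$, obtained from the partition-of-unity realization of the comparison isomorphism $H^1(X_{\mathfrak m},\Omega_{\mathfrak m})\cong H^0(X_{\mathfrak m},{\mathcal E}^{(2)}_{\mathfrak m})/dH^0(X_{\mathfrak m},{\mathcal E}^{(1,0)}_{\mathfrak m})$ of Section 3.2, and then to evaluate $\frac{1}{2\pi\sqrt{-1}}\iint_{X\setminus S}\rho^*\omega$ by the same Stokes argument used in the proof of Proposition 3.4, matching each boundary contribution with a local residue.

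First I would fix a partition of unity $\{\psi_i\}_{i\in I}$ on $X_{\mathfrak m}$ subordinate to ${\mathfrak U}=\{U_i\}$, chosen so that every $\psi_i$ is \emph{locally constant} near each point of $\overline{S}$; this is possible because $\overline{S}$ is finite. Writing $c_{ij}:=\omega_j-\omega_i\in H^0(U_i\cap U_j,\Omega_{\mathfrak m})$ and $\Psi:=\sum_k\psi_k\omega_k$, I set $\eta_i:=\sum_k\psi_k c_{ki}=\omega_i-\Psi$. From $\sum_k\psi_k=1$ and the cocycle identity $c_{ki}+c_{ij}=c_{kj}$ one gets $\eta_j-\eta_i=c_{ij}$, so $\{\eta_i\}$ is a $0$-cochain of ${\mathcal E}^{(1,0)}_{\mathfrak m}$ with $\delta\{\eta_i\}=\delta\mu$. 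Since each $c_{ij}\in\Omega_{\mathfrak m}=\ker(d\colon{\mathcal E}^{(1,0)}_{\mathfrak m}\to{\mathcal E}^{(2)}_{\mathfrak m})$ is $d$-closed, the $d\eta_i$ agree on overlaps and define a global section $\omega:=d\eta_i=-d\Psi\in H^0(X_{\mathfrak m},{\mathcal E}^{(2)}_{\mathfrak m})$ which, by construction of the comparison isomorphism, represents $[\delta\mu]$; note $\rho^*\omega=-d(\rho^*\Psi)$.

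Next I would let $T\subset X$ be the finite pole set of the $\omega_i$, remove the discs $\Delta_P(\varepsilon)$ around all $P\in S\cup T$, and apply Stokes to $\rho^*\omega=-d(\rho^*\Psi)$ on $X\setminus\bigcup_P\Delta_P(\varepsilon)$, getting $\iint_{X\setminus\bigcup\Delta_P(\varepsilon)}\rho^*\omega=\sum_P\int_{\partial\Delta_P(\varepsilon)}\rho^*\Psi$ with circles positively oriented, exactly as in Proposition 3.4. On each circle I use $\rho^*\Psi=\rho^*\omega_i-\rho^*\eta_i$ for an index $i$ with $Q:=\rho(P)\in U_i$. The piece $\int_{\partial\Delta_P(\varepsilon)}\rho^*\omega_i=2\pi\sqrt{-1}\,{\rm Res}_P(\rho^*\omega_i)$ since $\omega_i$ is meromorphic. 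For the $\eta_i$ piece I split cases: if $Q\in X\setminus S$ then the $c_{ki}$ are holomorphic at $P$, so $\eta_i$ is smooth there and $\int_{\partial\Delta_P(\varepsilon)}\rho^*\eta_i\to 0$; if $Q\in\overline{S}$ then every $\psi_k$ is constant near $Q$, so $\eta_i=\sum_k\psi_k(Q)c_{ki}$ is a genuine section of $\Omega_{\mathfrak m}\subset{\mathcal E}^{(1,0)}_{\mathfrak m}$ and condition $(\star)$ gives $\sum_{P\in\rho^{-1}(Q)}\int_{\partial\Delta_P(\varepsilon)}\rho^*\eta_i=0$. Summing the surviving pieces over each fibre yields $2\pi\sqrt{-1}\,{\rm Res}_Q(\mu)$ by Definition 3.6, and letting $\varepsilon\to 0$ and summing over $Q$ gives $\iint_{X\setminus S}\rho^*\omega=2\pi\sqrt{-1}\,{\rm Res}(\mu)$, that is ${\rm Res}([\delta\mu])={\rm Res}(\mu)$.

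The step I expect to be the main obstacle is ensuring that the local primitives $\eta_i$ are honest sections of ${\mathcal E}^{(1,0)}_{\mathfrak m}$ near $\overline{S}$ so that $(\star)$ may be invoked. Condition $(\star)$ demands $\sum_i\int_{C_i}\rho^*\eta_i=0$ for \emph{all} small radii, and this is destroyed by multiplication by a smooth function that is non-constant near a singular point, because derivatives of $\psi_k$ would produce spurious contributions from higher-order poles of the $c_{ki}$; this is exactly why the partition of unity must be taken locally constant near $\overline{S}$. With that choice $\eta_i$ is, near each $Q\in\overline{S}$, a constant-coefficient combination of the $c_{ki}\in\Omega_{\mathfrak m}$, condition $(*)$ with $f\equiv 1$ forces $\sum_{P\in\rho^{-1}(Q)}{\rm Res}_P(\rho^*c_{ki})=0$, and the fibre sum vanishes as needed. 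The remaining bookkeeping — convergence of $\iint_{X\setminus S}\rho^*\omega$ and independence of $\varepsilon$ — is already supplied by Proposition 3.4.
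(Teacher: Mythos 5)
Your proof is correct. It shares its skeleton with the paper's proof of this proposition: both realize $[\delta \mu ]$ as a global section of ${\mathcal E}^{(2)}_{\mathfrak m}$ by splitting the cocycle $\delta \mu $ over the fine sheaf ${\mathcal E}^{(1,0)}_{\mathfrak m}$, and then evaluate $\frac{1}{2\pi \sqrt{-1}}\iint _{X \setminus S}$ by Stokes' theorem and residue identification. But the execution differs in two substantive ways. The paper takes an abstract splitting $\{ \sigma _i \}$ supplied by fineness, forms $\sigma := \sigma _i - \omega _i$ on the complement $X'_{\mathfrak m}$ of the poles, and decomposes $\tau = d(g\sigma ) + \sum _k d(f_k \sigma )$ with cutoff functions $f_k \equiv 1$ near each pole $Q_k \in X \setminus S$ and $g = 1 - \sum _k f_k$, evaluating the three kinds of integrals separately; you instead make the splitting explicit via a partition of unity chosen \emph{locally constant near} $\overline{S}$, and run a single Stokes argument, grouping the boundary circles fibre by fibre. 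Your choice buys exactly what you identify as the main obstacle: near $Q \in \overline{S}$ your $\eta _i$ is a constant-coefficient combination of the $c_{ki}$, hence an honest section of $\Omega _{\mathfrak m}$, so condition $(*)$ with $f \equiv 1$ kills the fibre sums, with no hypothesis on the $\omega _i$ themselves along $S$, and your count correctly produces ${\rm Res}_Q(\mu )$ also at $Q \in \overline{S}$. The paper's corresponding step --- that $\sigma = \sigma _i - \omega _i$ lies in $H^0(X'_{\mathfrak m}, {\mathcal E}^{(1,0)}_{\mathfrak m})$ --- is asserted without justification, and in fact it holds only if each $\omega _i$ has ${\mathfrak m}$-bounded poles along $S$ and ${\rm Res}_Q(\mu ) = 0$ at every $Q \in \overline{S}$ (accordingly the paper's closing formula sums only over the poles in $X \setminus S$, dropping any residues at $\overline{S}$); this is harmless in the applications (Proposition 3.9, Lemma 3.10), where the local sections vanish near $\overline{S}$, but your argument proves the proposition in the generality in which it is stated. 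Two small points to spell out in a final write-up: $\omega = d\eta _i$ coincides with $-d\Psi $ only off the poles of $\Psi $, which is exactly the region where you apply Stokes; and finiteness of the pole set $T$ follows from compactness of $X_{\mathfrak m}$ together with the fact that the $c_{ij}$ are holomorphic on $X \setminus S$, so the pole set of $\mu $ is well defined and locally finite.
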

\begin{proof}
We first note that $\delta \mu = \{ \omega _j - \omega _i\} \in Z^1({\mathfrak U},
\Omega _{\mathfrak m}) \subset Z^1({\mathfrak U}, {\mathcal E}^{(1,0)}_{\mathfrak m})$.
Since ${\mathcal E}^{(1,0)}_{{\mathfrak m}}$ is a fine sheaf,
there exists a cochain $\{ \sigma _i \} \in C^0({\mathfrak U}, {\mathcal E}^{(1,0)}_
{\mathfrak m})$ such that
$$\omega _j - \omega _i = \sigma _j - \sigma _i\quad
\text{on $U_i \cap U_j$}.$$
The operator $d$ has the decomposition $d = \partial + \overline{\partial}$.
Since
$$d(\sigma _j - \sigma _i) = d(\omega _j - \omega _i) =
\overline{\partial}(\omega _j - \omega _i) = 0,$$
$d\sigma _j = d\sigma _i$ on $U_i \cap U_j$. Then there exists a 2-form 
$\tau \in H^0(X_{{\mathfrak m}}, {\mathcal E}^{(2)}_{{\mathfrak m}})$
such that
$\tau |{U_i} = d\sigma _i$ on $U_i$. The form $\tau $ is a representative
of $[\delta \mu ]$.
\par
We assume that $Q_1, \dots , Q_n \in X_{\mathfrak m} \setminus \overline{S}
= X \setminus S$ are the all poles of $\mu $ on $X_{\mathfrak m} \setminus
\overline{S}$. We set
$X'_{\mathfrak m} := X_{\mathfrak m} \setminus
\{Q_1, \dots , Q_n\}$. Since
$$\sigma _i - \omega _i = \sigma _j - \omega _j\quad
\text{on $X'_{\mathfrak m} \cap U_i \cap U_j$},$$
there exists $\sigma \in H^0(X'_{\mathfrak m}, {\mathcal E}^{(1,0)}_{{\mathfrak m}})$
such that $\sigma = \sigma _i - \omega _i$ on $X'_{\mathfrak m} \cap U_i$.
Then we have
$$d\sigma = d\sigma _i - d\omega _i = d\sigma _i\quad
\text{on $X'_{\mathfrak m} \cap U_i$}.$$
Thus we obtain that $\tau = d\sigma $ on $X'_{\mathfrak m}$.
\par
For any $k = 1, \dots , n$, there exists $i = i(k) \in I$ such that
$Q_k \in U_{i(k)}$. Let $(V_k, z_k)$ be a coordinate neighbourhood of
$Q_k$ such that $V_k \subset U_{i(k)}$ and $z_k(Q_k) = 0$.
We may assume that $V_1, \dots , V_n$ are pairwise disjoint and 
$z_1(V_1), \dots , z_n(V_n)$ are discs in ${\mathbb C}$.
There exist $C^{\infty }$ functions $f_k$ $(k = 1, \dots , n)$ on
$X_{\mathfrak m}$ with compact support such that
$0 \leq f_k \leq 1$, ${\rm supp}(f_k) \subset V_k$ and
$f_k|V'_k = 1$ for an open neighbourhood $V'_k$ of $Q_k$ with
$V'_k \subset \subset V_k$. Define $g := 1 - (f_1 + \cdots + f_n)$.
Since $g$ is a $C^{\infty}$ function on $X_{\mathfrak m}$ with
$g|V'_k = 0$, $g\sigma $ is a $C^{\infty }$ $(1,0)$-form on
$X_{\mathfrak m} \setminus \overline{S}$. We can take $V_k$ so small
that it does not contain a point in $\overline{S}$. 
Then we have $g\sigma = \sigma $ in a neighbourhood of any point
in $\overline{S}$.
Therefore
$g\sigma \in H^0(X_{{\mathfrak m}}, {\mathcal E}^{(1,0)}_{\mathfrak m})$.
\par
Take any $P \in S$. Let $(W, z)$ be a coordinate neighbourhood of $P$
with $z(P) = 0$. For a small $\varepsilon > 0$ we set
$$\Delta (P, \varepsilon ) := \{ P' \in W ;
|z(P')| < \varepsilon \}.$$
Then we obtain
\begin{eqnarray*}
\iint_{X \setminus S}\rho ^{*}(d(g\sigma )) & = &
\iint_{X \setminus S}d(\rho ^{*}(g\sigma ))\\
& = & \lim _{\varepsilon \to 0}\iint _
{X \setminus \left(\cup _{P \in S}\Delta (P, \varepsilon )\right)}
d(\rho ^{*}(g\sigma ))\\
& = & \lim _{\varepsilon \to 0}\left( - \sum _{P \in S}
\int _{\partial \Delta (P, \varepsilon )}\rho ^{*}\sigma \right)\\
& = & 0.
\end{eqnarray*}
Since $1 = g + \sum _{k=1}^{n}f_k$, we have
$$\tau = d\sigma = d(g\sigma ) + \sum _{k=1}^{n}
d(f_k\sigma )\quad
\text{on $X'_{\mathfrak m}$}.$$
The above equality holds on $X_{\mathfrak m}$ for the both sides are
$C^{\infty }$ on $X_{\mathfrak m} \setminus \overline{S}$. Then we obtain
\begin{eqnarray*}
\iint_{X \setminus S}\rho ^{*}\tau & = &
\iint_{X \setminus S}\rho ^{*}(d(g\sigma )) +
\sum_{k=1}^{n}\iint_{X \setminus S}\rho ^{*}(d(f_k\sigma ))\\
& = & \sum _{k=1}^{n}\iint_{X \setminus S}
\rho ^{*}(d(f_k\sigma ))\\
& = & \sum _{k=1}^{n}\left(
\iint_{\rho^{-1}(V_k)}\rho ^{*}(d(f_k\sigma _{i(k)})) -
\iint_{\rho ^{-1}(V_k)}\rho ^{*}(d(f_k\omega _{i(k)}))\right).\\
\end{eqnarray*}
Here we note
$$\iint_{\rho ^{-1}(V_k)}\rho ^{*}(d(f_k\sigma _{i(k)})) =
\iint_{\rho ^{-1}(V_k)}d(\rho ^{*}(f_k\sigma _{i(k)})) =
\int_{\partial(\rho ^{-1}(V_k))}\rho ^{*}(f_k\sigma _{i(k)}) = 0.$$
We set
$$\tilde{\Delta }_k(\varepsilon ) :=
\rho ^{-1}(\{Q \in V_k ; |z_k(Q)| < \varepsilon \})$$
for a sufficiently small $\varepsilon > 0$. Since $\omega _{i(k)}$ is
a meromorphic 1-form with $Q_k$ as a pole, we obtain
\begin{eqnarray*}
\iint_{\rho^{-1}(V_k)}\rho ^{*}(d(f_k\omega _{i(k)})) & = &
\lim _{\varepsilon \to 0}\iint_{\rho^{-1}(V_k) \setminus \tilde{\Delta }_k
(\varepsilon )}\rho ^{*}(d(f_k\omega _{i(k)}))\\
& = & \lim _{\varepsilon \to 0}\iint_{\rho^{-1}(V_k)\setminus
\tilde{\Delta }_k(\varepsilon )}d(\rho^{*}(f_k\omega _{i(k)}))\\
& = & \lim _{\varepsilon \to 0}\left(
\int_{\partial(\rho ^{-1}(V_k))}\rho ^{*}(f_k\omega _{i(k)}) -
\int_{\partial \tilde{\Delta }_k(\varepsilon )}
\rho ^{*}\omega _{i(k)}\right)\\
& = & - \lim _{\varepsilon \to 0}\int_{\partial \tilde{\Delta }_k(\varepsilon )}
\rho ^{*}\omega _{i(k)}\\
& = & - 2\pi \sqrt{-1} {\rm Res}_{Q_k}(\omega _{i(k)}).
\end{eqnarray*}
Thus we obtain
$$\frac{1}{2\pi \sqrt{-1}}\iint_{X \setminus S}\rho^{*}\tau =
\sum_{k=1}^{n} {\rm Res}_{Q_k}(\omega _{i(k)}) = {\rm Res}(\mu ).$$
\par
On the other hand, we have
$${\rm Res}([\delta \mu ]) = 
\frac{1}{2\pi \sqrt{-1}} \iint_{X \setminus S}\rho ^{*}\tau ,$$
because $\tau $ is a representative of $[\delta \mu ]$.
\end{proof}
Let $K$ be a canonical divisor on $X$ given by a meromorphic 1-form $\omega $.
We may assume that $K$ is prime to $S$ (i.e. $K \in {\rm Div}(X_{\mathfrak m})$).
Otherwise we take a meromorphic function $f$ on $X$ with
$(f) + K = 0$ at any $P \in S$ (for example, see Lemma 1.15 in
Chapter VI in \cite{ref13}) and set $\omega ' = f\omega $.
Then $(\omega ')$ is a divisor prime to $S$.
For any $D \in {\rm Div}(X_{\mathfrak m})$ and for any $f \in H^0(X_{\mathfrak m}, {\mathcal L}_{\mathfrak m} (D+K))$,
we have $(\rho ^{*}f)\omega \in H^0(X, \Omega (D))$.
Then we obtain an injection
$$\begin{array}{ccccc}
H^0(X_{\mathfrak m}, {\mathcal L}_{\mathfrak m}(D+K))& \longrightarrow & H^0(X, \Omega (D)) &\hookrightarrow &
\rho ^{*}H^0(X_{\mathfrak m}, \Omega _{\mathfrak m}(D))\\
f & \longmapsto & (\rho ^{*}f)\omega . & & \\
\end{array}$$
\begin{lemma}
There exists $k_0 \in {\mathbb Z}$ such that
$$\dim H^0(X_{\mathfrak m}, \Omega _{\mathfrak m}(D)) \geq \deg D + k_0$$
for any $D \in {\rm Div}(X_{\mathfrak m})$.
\end{lemma}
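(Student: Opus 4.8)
The plan is to bound $\dim H^0(X_{\mathfrak m}, \Omega_{\mathfrak m}(D))$ from below by $\dim H^0(X_{\mathfrak m}, {\mathcal L}_{\mathfrak m}(D+K))$, using the injection constructed immediately before the statement, and then to feed the latter quantity into the Riemann--Roch theorem (Theorem 2.10) applied to the divisor $D+K$. The constant $k_0$ will turn out to be $\deg K + 1 - \pi$, which is manifestly independent of $D$.

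First I would record that the map $f \longmapsto (\rho^{*}f)\omega$ displayed just above the lemma is injective. Indeed, $\omega$ is a fixed nonzero meromorphic $1$-form, so multiplication by $\omega$ has trivial kernel, and $\rho^{*}$ is injective on ${\rm Mer}(X_{\mathfrak m}) \subset {\rm Mer}(X)$; hence the composite
$$H^0(X_{\mathfrak m}, {\mathcal L}_{\mathfrak m}(D+K)) \longrightarrow \rho^{*}H^0(X_{\mathfrak m}, \Omega_{\mathfrak m}(D))$$
has trivial kernel. Since $\rho^{*}$ is itself injective on $H^0(X_{\mathfrak m}, \Omega_{\mathfrak m}(D))$, we get $\dim \rho^{*}H^0(X_{\mathfrak m}, \Omega_{\mathfrak m}(D)) = \dim H^0(X_{\mathfrak m}, \Omega_{\mathfrak m}(D))$, and therefore
$$\dim H^0(X_{\mathfrak m}, \Omega_{\mathfrak m}(D)) \geq \dim H^0(X_{\mathfrak m}, {\mathcal L}_{\mathfrak m}(D+K)).$$

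Next, because $K$ has been arranged to be prime to $S$, the divisor $D+K$ again lies in ${\rm Div}(X_{\mathfrak m})$, so Theorem 2.10 applies to it and yields
$$\dim H^0(X_{\mathfrak m}, {\mathcal L}_{\mathfrak m}(D+K)) = \deg(D+K) + 1 - \pi + \dim H^1(X_{\mathfrak m}, {\mathcal L}_{\mathfrak m}(D+K)) \geq \deg D + \deg K + 1 - \pi,$$
where the last step simply discards the nonnegative $H^1$-term. Combining the two displayed inequalities gives the claim with $k_0 = \deg K + 1 - \pi$.

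I do not expect a serious obstacle here: the statement is essentially a formal consequence of the injection and of Riemann--Roch. The only two points that genuinely need care are the verification that the map $f \longmapsto (\rho^{*}f)\omega$ is injective (so that source and image have the same dimension, letting the $\dim$ inequality pass across $\rho^{*}$) and the check that $D+K$ is admissible, i.e. prime to $S$ — which is exactly what the reduction to a canonical divisor $K$ prime to $S$, carried out just before the lemma, secures.
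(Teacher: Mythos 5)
Your proof is correct and follows essentially the same route as the paper: both use the injection $f \longmapsto (\rho^{*}f)\omega$ of $H^0(X_{\mathfrak m}, {\mathcal L}_{\mathfrak m}(D+K))$ into (the pullback of) $H^0(X_{\mathfrak m}, \Omega_{\mathfrak m}(D))$, then apply Theorem 2.10 to $D+K$ and discard the nonnegative $H^1$-term, arriving at the same constant $k_0 = \deg K + 1 - \pi$. Your additional care in checking injectivity of the map and admissibility of $D+K$ only makes explicit what the paper leaves implicit.
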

\begin{proof}
By the Riemann-Roch Theorem (Theorem 2.10) we have
$$\dim H^0(X_{\mathfrak m}, {\mathcal L}_{\mathfrak m}(D+K)) -
\dim H^1(X_{\mathfrak m}, {\mathcal L}_{\mathfrak m}(D+K)) = \deg( D+K) + 1 - \pi .$$
Then we obtain
\begin{eqnarray*}
\dim H^0(X_{\mathfrak m}, \Omega _{\mathfrak m}(D)) & \geq &
\dim H^0(X, \Omega (D))\\
& \geq & \dim H^0(X_{\mathfrak m}, {\mathcal L}_{\mathfrak m}(D + K))\\
& = & \dim H^1(X_{\mathfrak m}, {\mathcal L}_{\mathfrak m}(D + K)) + \deg (D + K) + 1 - \pi \\
& \geq & \deg D + \deg K + 1 - \pi .
\end{eqnarray*}
If we set $k_0 := \deg K + 1 - \pi $, then the assertion holds.
\end{proof}

\subsection{Dual pairing}
Take any $D \in {\rm Div}(X_{\mathfrak m})$. The product
$$\Omega _{\mathfrak m}(-D) \times {\mathcal L}_{\mathfrak m}(D) \longrightarrow \Omega _{\mathfrak m},\quad
(\omega , f) \longmapsto \omega f$$
induces a map
$$H^0(X_{\mathfrak m}, \Omega _{\mathfrak m}(-D)) \times H^1(X_{\mathfrak m}, {\mathcal L}_{\mathfrak m}(D))
\longrightarrow H^1(X_{\mathfrak m}, \Omega _{\mathfrak m}).$$
The composition of this map with ${\rm Res} : H^1(X_{\mathfrak m}, \Omega _{\mathfrak m}) \longrightarrow {\mathbb C}$
produces a bilinear map
$$\begin{array}{c}
\langle \ ,\ \rangle : H^0(X_{\mathfrak m}, \Omega _{\mathfrak m}(-D)) \times
H^1(X_{\mathfrak m}, {\mathcal L}_{\mathfrak m}(D)) \longrightarrow {\mathbb C}\\
(\omega , \xi ) \longmapsto \langle \omega , \xi \rangle := {\rm Res}(\omega \xi ).\\
\end{array}$$
This map induces a linear map
$$\iota _{D} : H^0(X_{\mathfrak m}, \Omega _{\mathfrak m}(-D)) \longrightarrow
H^1(X_{\mathfrak m}, {\mathcal L}_{\mathfrak m}(D))^{*}.$$
\begin{proposition}
The map $\iota _{D}$ is injective.
\end{proposition}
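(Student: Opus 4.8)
The plan is to prove injectivity by showing that the kernel of $\iota_D$ is trivial: given a nonzero $\omega \in H^0(X_{\mathfrak m}, \Omega_{\mathfrak m}(-D))$, I will exhibit a single class $\xi \in H^1(X_{\mathfrak m}, {\mathcal L}_{\mathfrak m}(D))$ with $\langle \omega, \xi\rangle = {\rm Res}(\omega\xi) \neq 0$. Since $\omega \neq 0$, its pullback $\rho^{*}\omega$ is a nonzero meromorphic $1$-form on $X$, and on $X \setminus S$ the defining condition of $\Omega_{\mathfrak m}(-D)$ forces $(\rho^{*}\omega) \geq D$. As the zero set and the polar set of $\rho^{*}\omega$, the set $S$, and ${\rm supp}\,D$ are all finite, I can choose a base point $P_0 \in X \setminus S$ with $P_0 \notin {\rm supp}\,D$ at which $\rho^{*}\omega$ is holomorphic and nonvanishing. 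In a coordinate $(W, z)$ centred at $P_0$ write $\rho^{*}\omega = h(z)\,dz$ with $h(0) \neq 0$, and note $\rho^{-1}(P_0) = \{P_0\}$ since $\rho$ is biholomorphic off $S$.

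Next I would set up a two-set covering $\mathfrak U = \{U_0, U_1\}$ of $X_{\mathfrak m}$, where $U_0 \subset W$ is a small coordinate disc about $P_0$ chosen to meet neither $S$ nor ${\rm supp}\,D$ (so that $D = 0$ and ${\mathcal L}_{\mathfrak m}(D) = {\mathcal O}_X$ on $U_0$) and $U_1 = X_{\mathfrak m}\setminus\{P_0\}$. On the overlap $U_0 \cap U_1 = U_0 \setminus \{P_0\}$ the function $g := 1/z$ is holomorphic, hence a section of ${\mathcal L}_{\mathfrak m}(D)$ there; as $\mathfrak U$ has only two members, $g_{01} := g$ is automatically a \v{C}ech cocycle and determines a class $\xi \in H^1(X_{\mathfrak m}, {\mathcal L}_{\mathfrak m}(D))$. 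By construction $\omega\xi \in H^1(X_{\mathfrak m}, \Omega_{\mathfrak m})$ is represented by the cocycle $\omega g_{01} = \omega/z$, which indeed lies in $\Omega_{\mathfrak m}(U_0 \cap U_1)$ because it is holomorphic there and $\Omega_{\mathfrak m}$ restricts to the sheaf of holomorphic $1$-forms on $X \setminus S$.

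To evaluate the residue I would realize $\omega\xi$ through an $\Omega_{\mathfrak m}$-distribution. Put $\eta_0 := -\omega/z$, a meromorphic $1$-form on $U_0$ with a single pole at $P_0$, and $\eta_1 := 0$ on $U_1$; then $\mu := \{\eta_0, \eta_1\}$ is a $0$-cochain in $\rho_{*}{\mathcal M}^{(1)}$ with $\eta_1 - \eta_0 = \omega/z \in \Omega_{\mathfrak m}(U_0 \cap U_1)$, so $\mu$ is an $\Omega_{\mathfrak m}$-distribution and $\delta\mu$ represents $\omega\xi$. Applying the residue identity ${\rm Res}([\delta\mu]) = {\rm Res}(\mu) = \sum_{Q}{\rm Res}_Q(\mu)$ for $\Omega_{\mathfrak m}$-distributions, only the point $P_0$ contributes: on $U_1$ one has $\eta_1 = 0$, while at $P_0$, using $i = 0$, ${\rm Res}_{P_0}(\mu) = {\rm Res}_{P_0}(-h(z)\,dz/z) = -h(0)$. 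Hence $\langle\omega,\xi\rangle = {\rm Res}(\omega\xi) = -h(0) \neq 0$, so $\iota_D(\omega) \neq 0$ and $\iota_D$ is injective.

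The computation itself is elementary; the points that require care — and where I expect the only genuine work — are the two sheaf-theoretic identifications. First, that the overlap form $\omega/z$ genuinely lies in $\Omega_{\mathfrak m}$, so that $\delta\mu$ is an honest $\Omega_{\mathfrak m}$-cocycle and $\mu$ an admissible distribution; this rests on $\Omega_{\mathfrak m}|_{X\setminus S}$ being the sheaf of holomorphic $1$-forms together with the product pairing $\Omega_{\mathfrak m}(-D)\times{\mathcal L}_{\mathfrak m}(D)\to\Omega_{\mathfrak m}$. Second, that the \v{C}ech class $\omega\xi$ attached to the two-set cover coincides, under the natural map to sheaf cohomology, with the class $[\delta\mu]$ to which the residue identity applies. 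Once these identifications are in place, the freedom to place $P_0$ away from $S$, ${\rm supp}\,D$, and the divisor of $\omega$ guarantees that the residue equals the nonzero leading coefficient $h(0)$, completing the argument.
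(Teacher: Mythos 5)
Your proof is correct and follows essentially the same route as the paper's: a two-set \v{C}ech cover $\{U_0, X_{\mathfrak m}\setminus\{P_0\}\}$ around a generic point off $S$ and $\mathrm{supp}\,D$, a cocycle with a simple pole at that point defining $\xi$, and Proposition 3.7 applied to the associated $\Omega_{\mathfrak m}$-distribution to evaluate $\langle \omega, \xi\rangle$ as a nonzero residue. The only cosmetic difference is that the paper writes $\omega = f\,dz$ and uses the cochain $1/(zf)$, normalizing the residue to exactly $1$ (so the chosen point is even allowed to be a zero of $\omega$), whereas you place $P_0$ away from the zeros of $\omega$ and obtain the residue $-h(0)\neq 0$.
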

\begin{proof}
It suffices to show that for any non-zero $\omega \in H^0(X_{\mathfrak m}, \Omega _{\mathfrak m}(-D))$
there exists $\xi \in H^1(X_{\mathfrak m}, {\mathcal L}_{\mathfrak m}(D))$ such that
$\langle \omega , \xi \rangle \not= 0$.
Let $Q \in X_{{\mathfrak m}} \setminus \overline{S}$ be a point with $D(Q) = 0$. We take a coordinate neighbourhood
$(U_0, z)$ of $Q$ such that $z(Q) = 0$ and $D|U_0 = 0$.
We may assume that there exists $f \in H^0(U_0, {\mathcal O}_{\mathfrak m})$ such that
$\omega = fdz$ on $U_0$. Furthermore, we can take $U_0$ so small that $f$ has no zeros in
$U_0 \setminus \{ Q \}$ and $U_0 \subset X_{\mathfrak m} \setminus \overline{S}$. We set
$U_1 := X_{\mathfrak m} \setminus \{ Q \}$ and ${\mathfrak U} := \{ U_0, U_1 \}$.
We define $\eta = \{ f_0, f_1\} \in C^0({\mathfrak U}, {\mathcal M}_{\mathfrak m})$ by
$$f_0 := \frac{1}{zf}\quad \text{and\quad $f_1 := 0$},$$
where ${\mathcal M}_{\mathfrak m}$ is the sheaf of germs of meromorphic functions on $X_{\mathfrak m}$.
Then $\omega \eta = \{ dz/z, 0\} \in C^0({\mathfrak U}, \rho _{*}{\mathcal M}^{(1)})$
is an $\Omega _{\mathfrak m}$-distribution with
${\rm Res}(\omega \eta ) = {\rm Res}_Q(dz/z) = 1$.
We have $\delta \eta \in Z^1({\mathfrak U}, {\mathcal L}_{\mathfrak m}(D))$.
Let $\xi := [\delta \eta ]$ be the cohomology class of $\delta \eta $ in
$H^1(X_{\mathfrak m}, {\mathcal L}_{\mathfrak m}(D))$.
Since $\omega \xi = \omega [\delta \eta ] = [\delta (\omega \eta )]$, we obtain
$$\langle \omega , \xi \rangle = {\rm Res}(\omega \xi ) = {\rm Res}([\delta (\omega \eta )])
= {\rm Res}(\omega \eta ) = 1$$
by Proposition 3.7.
\end{proof}
Let $D, D' \in {\rm Div}(X_{\mathfrak m})$ with $D'\leq D$.
The inclusion $0 \longrightarrow {\mathcal L}_{\mathfrak m}(D') \longrightarrow {\mathcal L}_{\mathfrak m}(D)$
induces an epimorphism
$$H^1(X_{\mathfrak m}, {\mathcal L}_{\mathfrak m}(D')) \longrightarrow
H^1(X_{\mathfrak m}, {\mathcal L}_{\mathfrak m}(D)) \longrightarrow 0.$$
We obtain the following monomorphism of dual spaces from this epimorphism
$$0 \longrightarrow H^1(X_{\mathfrak m}, {\mathcal L}_{\mathfrak m}(D))^{*}
\stackrel{i^{D}_{D'}}{\longrightarrow }
H^1(X_{\mathfrak m}, {\mathcal L}_{\mathfrak m}(D'))^{*}.$$
The diagram
$$
\begin{CD}
0 @>>> H^1(X_{\mathfrak m}, {\mathcal L}_{\mathfrak m}(D))^{*}
@>i^{D}_{D'}>> H^1(X_{\mathfrak m}, {\mathcal L}_{\mathfrak m}(D'))^{*}\\
@.   @AA\iota _D A   @AA\iota _{D'}A\\
0 @>>> H^0(X_{\mathfrak m}, \Omega _{\mathfrak m}(-D))
@>>> H^0(X_{\mathfrak m}, \Omega _{\mathfrak m}(-D'))
\end{CD}$$
commutes.
\begin{lemma}
Let $D, D' \in {\rm Div}(X_{\mathfrak m})$ such that $D' \leq D$.
Suppose that
$\lambda \in H^1(X_{\mathfrak m}, {\mathcal L}_{\mathfrak m}(D))^{*}$ and
$\omega \in H^0(X_{\mathfrak m}, \Omega _{\mathfrak m}(-D'))$ satisfy
$i^{D}_{D'}(\lambda ) = \iota _{D'}(\omega )$.
Then $\omega $
belongs to $H^0(X_{\mathfrak m}, \Omega _{\mathfrak m}(-D))$ and
$\lambda = \iota _D(\omega )$.
\end{lemma}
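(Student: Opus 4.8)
The plan is to establish the first assertion, that $\omega \in H^0(X_{\mathfrak m}, \Omega_{\mathfrak m}(-D))$, since the second then follows formally: once $\omega$ lies in $H^0(X_{\mathfrak m}, \Omega_{\mathfrak m}(-D))$, the commutativity of the diagram preceding the lemma gives $i^{D}_{D'}(\iota_D(\omega)) = \iota_{D'}(\omega) = i^{D}_{D'}(\lambda)$, and because $i^{D}_{D'}$ is a monomorphism we conclude $\lambda = \iota_D(\omega)$. Thus the entire content is to upgrade the order condition on $\omega$ from $-D'$ to $-D$.

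I would first localize the problem. As $D' \le D$, the divisor $D - D' \ge 0$ is prime to $S$, so its support is a finite set of points $R \in X \setminus S$. At every $Q \in \overline{S}$ the stalks $\Omega_{\mathfrak m}(-D)_Q$ and $\Omega_{\mathfrak m}(-D')_Q$ both coincide with $\Omega_{{\mathfrak m},Q}$, and at every $Q \in X \setminus S$ with $Q \notin \mathrm{supp}(D - D')$ we have $D(Q) = D'(Q)$, so the two stalks again agree. Since $\omega \in H^0(X_{\mathfrak m}, \Omega_{\mathfrak m}(-D'))$, it already satisfies the membership condition at all such $Q$, and it remains only to prove ${\rm ord}_R(\omega) \ge D(R)$ for each $R \in \mathrm{supp}(D - D')$.

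To exploit the hypothesis I would introduce the short exact sequence $0 \to {\mathcal L}_{\mathfrak m}(D') \to {\mathcal L}_{\mathfrak m}(D) \to {\mathcal Q} \to 0$, where ${\mathcal Q}$ is the skyscraper sheaf supported on $\mathrm{supp}(D - D')$. Writing $\pi$ for the epimorphism $H^1(X_{\mathfrak m}, {\mathcal L}_{\mathfrak m}(D')) \to H^1(X_{\mathfrak m}, {\mathcal L}_{\mathfrak m}(D))$ and $\delta$ for the connecting map $H^0(X_{\mathfrak m}, {\mathcal Q}) \to H^1(X_{\mathfrak m}, {\mathcal L}_{\mathfrak m}(D'))$, exactness gives $\ker \pi = \mathrm{image}(\delta)$. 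Now the hypothesis $i^{D}_{D'}(\lambda) = \iota_{D'}(\omega)$ says precisely that ${\rm Res}(\omega \xi') = (\iota_{D'}(\omega))(\xi') = \lambda(\pi(\xi'))$ for every $\xi' \in H^1(X_{\mathfrak m}, {\mathcal L}_{\mathfrak m}(D'))$; in particular ${\rm Res}(\omega \xi') = 0$ for all $\xi' \in \ker \pi$.

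Finally I would make these residues explicit, exactly as in the proof of Proposition 3.9. Fix $R \in \mathrm{supp}(D - D')$ and a coordinate $z$ centred at $R$ on a disc $U_0 \subset X \setminus S$ on which $\omega$ has no pole other than possibly at $R$; put $U_1 := X_{\mathfrak m} \setminus \{R\}$ and ${\mathfrak U} := \{U_0, U_1\}$. For each integer $k$ with $D'(R) < k \le D(R)$, the cochain $\{z^{-k}, 0\} \in C^0({\mathfrak U}, {\mathcal M}_{\mathfrak m})$ has coboundary in $Z^1({\mathfrak U}, {\mathcal L}_{\mathfrak m}(D'))$ and represents a class $\xi'_k$ lying in $\ker \pi$, since $z^{-k}$ is a section of ${\mathcal L}_{\mathfrak m}(D)$ on $U_0$. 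Then $\omega \{z^{-k}, 0\} = \{\omega z^{-k}, 0\}$ is an $\Omega_{\mathfrak m}$-distribution whose only residue is at $R$, so Proposition 3.7 yields ${\rm Res}(\omega \xi'_k) = {\rm Res}_R(\omega z^{-k})$, which is the coefficient of $z^{k-1}$ in the Laurent expansion of $\omega / dz$ at $R$. The vanishing of this residue for all such $k$ kills the coefficients of $z^{D'(R)}, \dots, z^{D(R) - 1}$, and together with ${\rm ord}_R(\omega) \ge D'(R)$ this forces ${\rm ord}_R(\omega) \ge D(R)$. I expect the main obstacle to be this last paragraph: one must verify carefully that $\{z^{-k}, 0\}$ really defines a class in $\ker \pi$ and that the associated distribution has its sole residue at $R$, so that Proposition 3.7 isolates a single Laurent coefficient of $\omega$; the remaining steps are formal.
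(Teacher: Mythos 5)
Your proof is correct and follows essentially the same route as the paper: both build explicit \v{C}ech classes on the two-set cover $\{U_0,\, X_{\mathfrak m}\setminus\{R\}\}$ whose cocycles are coboundaries with values in ${\mathcal L}_{\mathfrak m}(D)$ (hence killed by $i^{D}_{D'}(\lambda)$), and pair them against $\omega$ via Proposition 3.7 to constrain the Laurent expansion of $\omega$ at the offending point, finishing with the same formal argument from injectivity of $i^{D}_{D'}$. The only differences are cosmetic: the paper argues by contradiction using the single $\omega$-adapted test function $1/(zf)$, whose pairing with $\omega$ is exactly $1$, whereas you argue directly with the monomials $z^{-k}$, $D'(R) < k \le D(R)$, extracting the Laurent coefficients one at a time --- which incidentally spares you the paper's auxiliary requirement that $f$ have no zeros on $U_0 \setminus \{R\}$.
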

\begin{proof}
We suppose that $\omega \notin H^0(X_{\mathfrak m}, \Omega _{\mathfrak m}(-D))$.
Then there exists $Q \in X_{{\mathfrak m}} \setminus \overline{S}$ such that 
${\rm ord}_Q(\omega ) < D(Q)$.
Take a coordinate neighbourhood $(U_0, z)$ of $Q$ with $z(Q) = 0$.
We may assume that $U_0$ satisfies the following conditions;\\
(i) there exists $f \in H^0(U_0, {\mathcal M}_{\mathfrak m})$ such that
$\omega = fdz$ on $U_0$ and $f$ has no zeros and poles in
$U_0 \setminus \{ Q \}$,\\
(ii) $D|(U_0 \setminus \{ Q \}) = 0$ and
$D'|(U_0 \setminus \{ Q \}) = 0$.\\
We set $U_1 := X_{\mathfrak m} \setminus \{ Q \}$ and
${\mathfrak U} := \{ U_0, U_1 \}$.
We define $\eta = \{ f_0, f_1 \} \in C^0({\mathfrak U}, {\mathcal M}_{\mathfrak m})$
by $f_0 := 1/(zf)$ and $f_1: = 0$. Then 
$\eta \in C^0({\mathfrak U}, {\mathcal L}_{\mathfrak m}(D))$ and
$\delta \eta \in Z^1({\mathfrak U}, {\mathcal L}_{\mathfrak m}(D)) =
Z^1({\mathfrak U}, {\mathcal L}_{\mathfrak m}(D')) =
Z^1({\mathfrak U}, {\mathcal O}_{\mathfrak m}).$
We denote by $\xi $ and by $\xi '$ the cohomology classes of
$\delta \eta $ in $H^1(X_{\mathfrak m}, {\mathcal L}_{\mathfrak m}(D))$
and in $H^1(X_{\mathfrak m}, {\mathcal L}_{\mathfrak m}(D'))$ respectively.
Since $\eta \in C^0({\mathfrak U}, {\mathcal L}_{\mathfrak m}(D))$, we have
$\xi = 0$. Therefore we obtain
$$\langle \omega , \xi '\rangle = \iota _{D'}(\omega ) (\xi ') =
i^{D}_{D'}(\lambda )(\xi ') = \lambda (\xi ) = 0.$$
On the other hand, we have
$$\langle \omega , \xi '\rangle = {\rm Res}([\delta (\omega \eta )]) =
{\rm Res}(\omega \eta ) = 1$$
by Proposition 3.7. This is a contradiction. Then we conclude
$\omega \in H^0(X_{\mathfrak m}, \Omega _{\mathfrak m}(-D))$.
Furthermore we have
$$i^{D}_{D'}(\lambda ) = \iota _{D'}(\omega ) = i^{D}_{D'}(\iota _D(\omega )).$$
Since $i^{D}_{D'}$ is one-to-one, we obtain
$\lambda = \iota _D(\omega ).$
\end{proof}
Let $D, B \in {\rm Div}(X_{\mathfrak m})$. Any $\psi \in
H^0(X_{\mathfrak m}, {\mathcal L}_{\mathfrak m}(B))$ gives the following
sheaf morphism
$${\mathcal L}_{\mathfrak m}(D-B) \overset \psi \longrightarrow
{\mathcal L}_{\mathfrak m}(D),\quad
f \longmapsto \psi f.$$
This sheaf morphism induces a linear map
$$H^1(X_{\mathfrak m}, {\mathcal L}_{\mathfrak m}(D-B))
\longrightarrow H^1(X_{\mathfrak m}, {\mathcal L}_{\mathfrak m}(D)).$$
Then we obtain a linear map of dual spaces from the above linear map.
Using the same notation, we write it as follows
$$\psi : H^1(X_{\mathfrak m}, {\mathcal L}_{\mathfrak m}(D))^{*}
\longrightarrow H^1(X_{\mathfrak m}, {\mathcal L}_{\mathfrak m}(D-B))^{*}.$$
We note that the diagram
$$
\begin{CD}
 H^1(X_{\mathfrak m}, {\mathcal L}_{\mathfrak m}(D))^{*}
 @>\psi >>
 H^1(X_{\mathfrak m}, {\mathcal L}_{\mathfrak m}(D-B))^{*}\\
 @AA\iota _D A   @AA\iota _{D-B}A\\
 H^0(X_{\mathfrak m}, \Omega _{\mathfrak m}(-D))
@>\psi >> H^0(X_{\mathfrak m}, \Omega _{\mathfrak m}(-D+B))
\end{CD}$$
commutes. 
\begin{lemma}
Let $D, B \in {\rm Div}(X_{\mathfrak m})$.
If $\psi \in H^0(X_{\mathfrak m}, {\mathcal L}_{\mathfrak m}(B))$
is not identically zero and has no zeros on $\overline{S},$ then the map
$$\psi : H^1(X_{\mathfrak m}, {\mathcal L}_{\mathfrak m}(D))^{*}
\longrightarrow H^1(X_{\mathfrak m}, {\mathcal L}_{\mathfrak m}(D-B))^{*}$$
is injective.
\end{lemma}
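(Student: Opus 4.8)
The plan is to identify the map $\psi$ on dual spaces as the transpose of the linear map
$$T : H^1(X_{\mathfrak m}, {\mathcal L}_{\mathfrak m}(D-B)) \longrightarrow H^1(X_{\mathfrak m}, {\mathcal L}_{\mathfrak m}(D))$$
that multiplication by $\psi$ induces on cohomology, exactly as in the paragraph preceding the lemma. For linear maps between vector spaces the transpose is injective precisely when the original map is surjective, and here every space is finite dimensional by the Riemann-Roch Theorem (Theorem 2.10); so it suffices to prove that $T$ is surjective.

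First I would observe that the sheaf morphism $f \mapsto \psi f$ is a monomorphism. Since $X$ is irreducible, $\psi$ is a non-zero meromorphic function, hence a non-zero-divisor in each stalk of ${\mathcal L}_{\mathfrak m}(D-B)$ (a subring of $\prod_{P \in \rho^{-1}(Q)} {\mathcal O}_{X,P}$ over $\overline S$, and a module inside ${\rm Mer}(X)$ elsewhere). Thus there is a short exact sequence of sheaves
$$0 \longrightarrow {\mathcal L}_{\mathfrak m}(D-B) \overset{\psi}{\longrightarrow} {\mathcal L}_{\mathfrak m}(D) \longrightarrow {\mathcal F} \longrightarrow 0,$$
with ${\mathcal F}$ the cokernel. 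The heart of the argument is to show that ${\mathcal F}$ is supported on a finite set.

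At a regular point $Q \in X \setminus S$, in a local coordinate $z$ the two stalks are $z^{-(D-B)(Q)}{\mathcal O}_{X,Q}$ and $z^{-D(Q)}{\mathcal O}_{X,Q}$, and multiplication by $\psi$ identifies the source with $z^{\,{\rm ord}_Q(\psi)-(D-B)(Q)}{\mathcal O}_{X,Q}$; hence ${\mathcal F}_Q$ is a skyscraper of dimension ${\rm ord}_Q(\psi)+B(Q)$, which is nonzero only at the finitely many points where $\psi$ vanishes to order exceeding the pole permitted by $B$. At a point $Q \in \overline S$ the hypothesis that $\psi$ has no zeros on $\overline S$ is what I would use: there $\psi_Q \in {\mathcal O}_{{\mathfrak m},Q} = {\mathbb C} + {\mathcal I}_Q$ has non-zero constant term, and writing $\psi_Q = c(1 + h/c)$ with $c \in {\mathbb C}^{*}$ and $h \in {\mathcal I}_Q$, the factor $1 + h/c$ is a unit of $(\rho_*{\mathcal O}_X)_Q$ while ${\mathcal I}_Q$ is an ideal, so $\psi_Q$ is a unit of ${\mathcal O}_{{\mathfrak m},Q}$ and multiplication by $\psi$ is an isomorphism there, giving ${\mathcal F}_Q = 0$. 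Therefore ${\mathcal F}$ is supported on a finite subset of $X \setminus S$.

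Finally, a sheaf with finite support has no higher cohomology, so $H^1(X_{\mathfrak m}, {\mathcal F}) = 0$, and the long exact sequence of the short exact sequence above yields
$$H^1(X_{\mathfrak m}, {\mathcal L}_{\mathfrak m}(D-B)) \overset{T}{\longrightarrow} H^1(X_{\mathfrak m}, {\mathcal L}_{\mathfrak m}(D)) \longrightarrow 0,$$
i.e. $T$ is surjective, whence its transpose $\psi$ is injective. I expect the main obstacle to be the local analysis at $\overline S$: one must verify carefully that \emph{$\psi$ has no zeros on $\overline S$} really does make $\psi$ a unit in the singular local ring ${\mathcal O}_{{\mathfrak m},Q}$, so that the cokernel vanishes there; once that is in hand, the cohomological bookkeeping is routine.
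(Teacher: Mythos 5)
Your proof is correct, and it proves exactly what the paper's proof proves --- surjectivity of the induced map $T$ on $H^1$, so that its transpose is injective --- but it organizes the argument differently. The paper never forms the cokernel sheaf: setting $A := (\psi) \in {\rm Div}(X_{\mathfrak m})$ (a divisor prime to $S$, precisely because $\psi$ has no zeros on $\overline{S}$), it factors the sheaf morphism as the inclusion ${\mathcal L}_{\mathfrak m}(D-B) \hookrightarrow {\mathcal L}_{\mathfrak m}(D+A)$ followed by the isomorphism $\psi : {\mathcal L}_{\mathfrak m}(D+A) \longrightarrow {\mathcal L}_{\mathfrak m}(D)$, and then invokes the fact recorded just before Lemma 3.10 that an inclusion ${\mathcal L}_{\mathfrak m}(D') \hookrightarrow {\mathcal L}_{\mathfrak m}(D)$ with $D' \leq D$ induces an epimorphism on $H^1$; injectivity on dual spaces follows at once. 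Your route instead computes the cokernel ${\mathcal F}$ of multiplication by $\psi$, shows it is supported on a finite set, and kills $H^1(X_{\mathfrak m},{\mathcal F})$ in the long exact sequence. Both proofs rest on the same two local facts: that $\psi$ is a unit in ${\mathcal O}_{{\mathfrak m},Q}$ at every $Q \in \overline{S}$ (your ${\mathbb C} + {\mathcal I}_Q$ computation is exactly the justification the paper compresses into ``the last map is an isomorphism for $\psi$ has no zeros on $\overline{S}$''), and that away from $\overline{S}$ the discrepancy between the two sheaves is concentrated at finitely many points. What the paper's factorization buys is brevity, since the skyscraper mechanism is already packaged in the previously stated epimorphism for inclusions; what your version buys is self-containment, since that epimorphism is only asserted (not proved) in the paper, and your explicit stalk analysis at $\overline{S}$ and at the zeros of $\psi$ supplies the detail the paper leaves to the reader.
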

\begin{proof}
Set $A:= (\psi )$. Then $A \in {\rm Div}(X_{\mathfrak m})$ and
$A \geq -B$ by the assumption. The map
$\psi : {\mathcal L}_{\mathfrak m}(D-B) \longrightarrow {\mathcal L}_{\mathfrak m}(D)$
has the decomposition as follows
$${\mathcal L}_{\mathfrak m}(D-B) \hookrightarrow
{\mathcal L}_{\mathfrak m}(D+A) \overset \psi \longrightarrow
{\mathcal L}_{\mathfrak m}(D).$$
Here the last map is an isomorphism for $\psi $ has no zeros on $\overline{S}$.
Then the map $\psi : {\mathcal L}_{\mathfrak m}(D-B) \longrightarrow
{\mathcal L}_{\mathfrak m}(D)$ is an injection. Therefore we obtain the assertion.
\end{proof}
\begin{lemma}
Let
$f : V \longrightarrow W$ be a ${\mathbb C}$-linear map between 
${\mathbb C}$-linear spaces $V$ and $W$. We suppose that $V$ is finite dimensional, and
consider it as a topological space by an isomorphism
$V \cong {\mathbb C}^N$ for some $N \in {\mathbb N}$.
If
there exists
an open dense subset $U$ of $V$ such that $f|U$ is one-to-one,
then $f$ is one-to-one on $V$.
\end{lemma}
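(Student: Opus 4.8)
The plan is to work directly with the kernel and show $\ker f = \{0\}$. Suppose, for contradiction, that there is a nonzero vector $v_0 \in \ker f$. The guiding observation is that, since $f$ is linear and $f(v_0) = 0$, the points $x$ and $x + v_0$ always satisfy $f(x) = f(x + v_0)$, while they are distinct because $v_0 \neq 0$. Hence it suffices to produce a single $x$ for which \emph{both} $x$ and $x + v_0$ lie in $U$: such an $x$ exhibits two distinct points of $U$ with equal image under $f$, contradicting the injectivity of $f|U$.

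First I would realize the desired $x$ as a point of $U \cap (U - v_0)$, where $U - v_0 := \{u - v_0 : u \in U\}$ is the translate of $U$ by $-v_0$. By construction, $x \in U \cap (U - v_0)$ means precisely that $x \in U$ and $x + v_0 \in U$, so any such $x$ does the job.

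Next I would verify that $U \cap (U - v_0)$ is nonempty. Translation by $-v_0$ is a homeomorphism of $V \cong \mathbb{C}^N$, so $U - v_0$ is again open and dense. The intersection of two open dense subsets of $\mathbb{C}^N$ is open and dense, hence nonempty: it is open as an intersection of two open sets, and dense because for any nonempty open $W$ the set $W \cap U$ is nonempty and open by density of $U$, and then $(W \cap U) \cap (U - v_0)$ is nonempty by density of $U - v_0$. Choosing any $x \in U \cap (U - v_0)$ completes the contradiction, so $\ker f = \{0\}$ and $f$ is injective on all of $V$.

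The hard part is essentially cosmetic: the whole argument rests only on the elementary topological fact that two dense open sets meet, so there is no substantive obstacle. The single point worth stating with care is the role of the hypotheses: finite-dimensionality of $V$, via the fixed homeomorphism $V \cong \mathbb{C}^N$, is what makes the notions \emph{open} and \emph{dense} meaningful and furnishes a topology in which translations are homeomorphisms. Beyond supplying this topology, neither the dimension $N$ nor the particular choice of isomorphism enters the proof.
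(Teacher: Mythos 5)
Your proof is correct and uses essentially the same argument as the paper: both rest on translating $U$ (the paper intersects $U+v_1$ with $U+v_2$, you intersect $U$ with $U-v_0$) and invoking the fact that two open dense subsets of $V \cong {\mathbb C}^N$ must meet. Your reformulation via the kernel is only a cosmetic difference from the paper's formulation with two points $v_1 \neq v_2$ having equal image.
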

\begin{proof}
Assume that there exist $v_1, v_2 \in V$ with $v_1 \not= v_2$ such that
$f(v_1) = f(v_2)$. Since $U + v_1$ is also an open dense subset of $V$,
$V \setminus (U + v_1)$ has no interior point. Then
$(U + v_1) \cap (U + v_2) \not= \phi $ for $U + v_2$ is open.
Therefore we have $u_1, u_2 \in U$ such that
$u_1 + v_1 = u_2 + v_2$. By the assumption we obtain $f(u_1) = f(u_2)$.
This gives $u_1 = u_2$. Hence we have $v_1 = v_2$, a contradiction.
\end{proof}
For any $Q \in \overline{S}$ we define
$$N_Q(B) := \{ \psi \in H^0(X_{\mathfrak m}, {\mathcal L}_{\mathfrak m}(B)) ;
\psi = 0 \; \text{at $Q$}\}.$$
By Theorem 2.10 $H^0(X_{\mathfrak m}, {\mathcal L}_{\mathfrak m}(B))$ is of
finite dimension. We note that
$$\dim N_Q(B) \leq \dim H^0(X_{\mathfrak m}, {\mathcal L}_{\mathfrak m}(B)) - 1.$$
Then a set
$$H^0(X_{\mathfrak m}, {\mathcal L}_{\mathfrak m}(B)) \setminus
\left( \bigcup _{Q\in \overline{S}}N_Q(B)\right)$$
is an open dense subset of $H^0(X_{\mathfrak m}, {\mathcal L}_{\mathfrak m}(B))$.

\subsection{Proof of the Serre duality}
We prove Theorem 3.1 analytically. We have already shown that
$$\iota _D : H^0(X_{\mathfrak m}, \Omega _{\mathfrak m}(-D))
\longrightarrow H^1(X_{\mathfrak m}, {\mathcal L}_{\mathfrak m}(D))^{*}$$
is injective (Proposition 3.9). Then it suffices to show that
$\iota _D$ is also surjective.
\par
Take any non-zero $\lambda \in H^1(X_{\mathfrak m}, {\mathcal L}_{\mathfrak m}(D))^{*}$.
Fix $P \in X \setminus S$. We set
$$D_n := D - n P \in {\rm Div}(X_{\mathfrak m})\quad (n \in {\mathbb N}).$$
Let
$$\Lambda := \{ \psi \lambda ; \psi \in
H^0(X_{\mathfrak m}, {\mathcal L}_{\mathfrak m}(n P))\} \subset
H^1(X_{\mathfrak m}, {\mathcal L}_{\mathfrak m}(D_n))^{*}.$$ 
By Lemma 3.11 a linear map
$$H^0(X_{\mathfrak m}, {\mathcal L}_{\mathfrak m}(n P)) \longrightarrow \Lambda ,
\quad \psi \longmapsto \psi \lambda $$
is one-to-one on an open dense subset
$H^0(X_{\mathfrak m}, {\mathcal L}_{\mathfrak m}(n P))\setminus
\left( \cup_{Q\in \overline{S}} N_Q(n P)\right)$ of
$H^0(X_{{\mathfrak m}}, {\mathcal L}_{{\mathfrak m}}(n P))$.
Then it is one-to-one on $H^0(X_{\mathfrak m}, {\mathcal L}_{\mathfrak m}(n P))$ by Lemma 3.12.
Hence we have $\Lambda \cong H^0(X_{\mathfrak m}, {\mathcal L}_{\mathfrak m}(n P)).$
It follows from the Riemann-Roch Theorem (Theorem 2.10) that
\begin{eqnarray*}
\dim \Lambda & = & \dim H^0(X_{\mathfrak m}, {\mathcal L}_{\mathfrak m}(n P))\\
& = & \dim H^1(X_{\mathfrak m}, {\mathcal L}_{\mathfrak m}(n P)) + 1 - \pi + \deg (n P)\\
&\geq & 1 - \pi +n.
\end{eqnarray*}
Since a map
$$\iota _{D_n} : H^0(X_{\mathfrak m}, \Omega _{\mathfrak m}(-D_n))
\longrightarrow H^1(X_{\mathfrak m}, {\mathcal L}_{\mathfrak m}(D_n))^{*}$$
is injective, we have
\begin{eqnarray*}
\dim {\rm Im}(\iota _{D_n}) & = &
\dim H^0(X_{\mathfrak m}, \Omega _{\mathfrak m}(-D_n))\\
& \geq & n - \deg D + k_0
\end{eqnarray*}
by Lemma 3.8, where ${\rm Im}(\iota _{D_n})$ is the image of $\iota _{D_n}$.
By Proposition 2.8 we have $H^0(X_{\mathfrak m}, {\mathcal L}_{\mathfrak m}(D_n))
= 0$ if $n > \deg D$.
Using the Riemann-Roch Theorem again, we obtain
\begin{eqnarray*}
\dim H^1(X_{\mathfrak m}, {\mathcal L}_{\mathfrak m}(D_n))^{*}
& = & \dim H^1(X_{\mathfrak m}, {\mathcal L}_{\mathfrak m}(D_n))\\
& = & \dim H^0(X_{\mathfrak m}, {\mathcal L}_{\mathfrak m}(D_n))
- 1 + \pi - \deg D_n \\
& = & \pi - 1 - \deg D + n
\end{eqnarray*}
if $n > \deg D$.
Then it holds for a sufficiently large $n$ that
$$\dim \Lambda + \dim ({\rm Im}(\iota _{D_n})) >
\dim H^1(X_{\mathfrak m}, {\mathcal L}_{\mathfrak m}(D_n))^{*}.$$
Thus $\Lambda \cap {\rm Im}(\iota _{D_n}) \not= \phi $ for a 
sufficiently large $n$.
Since $H^0(X_{{\mathfrak m}},{\mathcal L}_{{\mathfrak m}}(nP))
\setminus \left(\cup _{Q \in \overline{S}}N_Q(nP)\right)$ is an
open dense subset of $H^0(X_{{\mathfrak m}},{\mathcal L}_{{\mathfrak m}}(nP))$,
there exist
$\psi \in H^0(X_{\mathfrak m}, {\mathcal L}_
{\mathfrak m}(n P))$ and $\omega \in H^0(X_{\mathfrak m}, \Omega _{\mathfrak m}
(-D_n))$ such that 
$\psi \lambda = \iota _{D_n}(\omega )$ and $\psi (Q) \not= 0$ at any
$Q \in \overline{S}$.

\par
Let $A := (\psi )$. Then $A \in {\rm Div}(X_{{\mathfrak m}})$ and
$1/\psi \in H^0(X_{{\mathfrak m}}, {\mathcal L}_{{\mathfrak m}}(A))$. If we set
$$D' := D_n - A \in {\rm Div}(X_{{\mathfrak m}}),$$
then we have
$$D - D' = n P + A \geq 0.$$
Therefore we obtain a monomorphism
$$i^{D}_{D'} :
H^1(X_{{\mathfrak m}}, {\mathcal L}_{{\mathfrak m}}(D))^{*} \longrightarrow
H^1(X_{{\mathfrak m}}, {\mathcal L}_{{\mathfrak m}}(D'))^{*}.$$
We have
\begin{eqnarray*}
i^{D}_{D'}(\lambda )
& = & \frac{1}{\psi } \psi \lambda \\
& = & \frac{1}{\psi }\iota _{D_n}(\omega)\\
& = & \iota _{D_n - A}\left( \frac{\omega }{\psi }\right)\\
& = & i_{D'}\left( \frac {\omega }{\psi }\right).
\end{eqnarray*}
Then it
follows from Lemma 3.10 that
$\omega / \psi \in H^0(X_{\mathfrak m}, \Omega _{\mathfrak m}(-D))$
and
$\lambda = \iota _D(\omega /\psi ).$
This finishes the proof.

\subsection{Riemann-Roch Theorem (second version)}
Combining Theorem 2.10 with Theorem 3.1, we obtain the following second version of
the Riemann-Roch Theorem.
\begin{theorem}
For any $D \in {\rm Div}(X_{\mathfrak m})$ we have
$$\dim H^0(X_{\mathfrak m},{\mathcal L}_{\mathfrak m}(D)) -
\dim H^0(X_{\mathfrak m}, \Omega _{\mathfrak m}(-D))
= \deg D + 1 - \pi .$$
\end{theorem}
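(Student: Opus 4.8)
The plan is to obtain this second version purely by combining the two main results already established: the first version of the Riemann--Roch Theorem (Theorem 2.10) and the Serre duality (Theorem 3.1). No new geometric or analytic input is needed; the statement is a formal consequence of these two.

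First I would recall from Theorem 2.10 that both $H^0(X_{\mathfrak m}, {\mathcal L}_{\mathfrak m}(D))$ and $H^1(X_{\mathfrak m}, {\mathcal L}_{\mathfrak m}(D))$ are finite dimensional, and that
$$\dim H^0(X_{\mathfrak m}, {\mathcal L}_{\mathfrak m}(D)) - \dim H^1(X_{\mathfrak m}, {\mathcal L}_{\mathfrak m}(D)) = \deg D + 1 - \pi.$$
The key step is then to rewrite the term $\dim H^1(X_{\mathfrak m}, {\mathcal L}_{\mathfrak m}(D))$ in terms of a space of $1$-forms. By Theorem 3.1 there is an isomorphism
$$H^0(X_{\mathfrak m}, \Omega _{\mathfrak m}(-D)) \cong H^1(X_{\mathfrak m}, {\mathcal L}_{\mathfrak m}(D))^{*}.$$
Since $H^1(X_{\mathfrak m}, {\mathcal L}_{\mathfrak m}(D))$ is finite dimensional, its dual has the same dimension, so
$$\dim H^0(X_{\mathfrak m}, \Omega _{\mathfrak m}(-D)) = \dim H^1(X_{\mathfrak m}, {\mathcal L}_{\mathfrak m}(D))^{*} = \dim H^1(X_{\mathfrak m}, {\mathcal L}_{\mathfrak m}(D)).$$
Substituting this equality into the displayed identity from Theorem 2.10 yields the asserted formula immediately.

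There is essentially no obstacle here; the only point requiring care is the passage from the isomorphism of Theorem 3.1 to an equality of dimensions, which is legitimate precisely because the finite-dimensionality guaranteed by Theorem 2.10 ensures that a space and its dual share the same dimension. I would therefore present the argument as a two-line computation, emphasising that the substance of the result lives in Theorems 2.10 and 3.1 and that Theorem 3.14 is simply their combination into the symmetric form relating $H^0$ of ${\mathcal L}_{\mathfrak m}(D)$ and $H^0$ of $\Omega _{\mathfrak m}(-D)$.
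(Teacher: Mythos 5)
Your proposal is correct and matches the paper exactly: the paper states Theorem 3.13 with the one-line justification "Combining Theorem 2.10 with Theorem 3.1," which is precisely your argument of substituting $\dim H^0(X_{\mathfrak m}, \Omega _{\mathfrak m}(-D)) = \dim H^1(X_{\mathfrak m}, {\mathcal L}_{\mathfrak m}(D))$ (valid by finite dimensionality and duality) into the first version of Riemann--Roch. Your write-up just makes explicit the dimension-counting step the paper leaves implicit.
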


\section{Generalized Abel's Theorem}

\subsection{Introduction}
A generalized Abel's theorem was first formulated and proved algebraically in \cite{ref15}.
After that, Jambois \cite{ref12} tried to treat it analytically. However, it seems to us that his argument is not correct. 
Furthermore, we think that the condition $f \equiv 1 \ {\rm mod}\ {\mathfrak m}$ in the
statement of the above generalized Abel's theorem is unusual.
We should consider the principal divisors which are defined by meromorphic functions on
$X_{\mathfrak m}$.
\par
We use the same notations as in the preceding sections. We assign a non-zero
constant $c_Q$ to each point $Q$ in $\overline{S}$. We call
$$c(\overline{S}) := (c_Q)_{Q \in \overline{S}}$$
a multiconstant on $\overline{S}$.
\begin{definition}
Let $f \in {\rm Mer}(X)$, and let $c(\overline{S})$ be a multiconstant on $\overline{S}$.
We write
$$f \equiv c(\overline{S})\ {\rm mod}\ {\mathfrak m}$$
if ${\rm ord}_P(f - c_Q) \geq {\mathfrak m}(P)$ for any $P \in S$ with $\rho (P) = Q$
at any $Q \in \overline{S}$.
\end{definition}
Our formulation of a generalized Abel's theorem is the following.
\begin{theorem}
Let $D \in {\rm Div}(X_{\mathfrak m})$ with $\deg D = 0$.
Then there exists a meromorphic function $f$ on $X$ with
$f \equiv c(\overline{S})\ {\rm mod}\ {\mathfrak m}$ for some multiconstant
$c(\overline{S})$ such that $D = (f)$ if and only if there exists a $1$-chain
$c \in C_1(X \setminus S)$ with $\partial c = D$ such that
$$\int _{c}\rho ^{*}\omega = 0$$
for any $\omega \in H^0(X_{\mathfrak m}, \Omega _{\mathfrak m})$.
\end{theorem}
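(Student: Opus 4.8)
The plan is to read the right-hand condition as the vanishing of the Abel--Jacobi class of $D$ in the generalized Jacobian: since changing the chain $c$ by a $1$-cycle in $X\setminus S$ alters $\int_c\rho^*\omega$ by a period of $\rho^*\omega$, the existence of a $c$ with $\partial c=D$ and $\int_c\rho^*\omega=0$ for all $\omega\in H^0(X_{\mathfrak m},\Omega_{\mathfrak m})$ is exactly the statement that the functional $\omega\mapsto\int_{c_0}\rho^*\omega$ (for any fixed $c_0$ with $\partial c_0=D$) lies in the period lattice. Both directions will be driven by the logarithmic differential $\eta:=\frac{1}{2\pi\sqrt{-1}}\,\frac{df}{f}$ together with the classical reciprocity relation between $\eta$ and the dualizing forms $\omega$, where the contribution at $S$ is controlled by the defining condition $(*)$ of $\Omega_{\mathfrak m}$.

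For necessity, assume $D=(f)$ with $f\equiv c(\overline{S})\ {\rm mod}\ {\mathfrak m}$. Because each $c_Q\neq 0$ and ${\rm ord}_P(f-c_Q)\geq{\mathfrak m}(P)$, the germ $u_Q:=\log(f/c_Q)$ is holomorphic near every $P\in\rho^{-1}(Q)$ with ${\rm ord}_P(u_Q)\geq{\mathfrak m}(P)$, so $u_Q\in{\mathcal I}_Q\subset{\mathcal O}_{{\mathfrak m},Q}$ and $\eta=du_Q$ is holomorphic along $S$. I would fix an Abel chain $c_0=\sum_i n_i\gamma_i$ in $X\setminus S$ joining a base point to the points of ${\rm supp}(D)$ (so $\partial c_0=D$), choose a local primitive $\Phi$ with $d\Phi=\rho^*\omega$, and apply the residue theorem to $\Phi\,\eta$ on a fundamental polygon of $X$ with small disks about the poles of $\eta$ and of $\omega$ removed. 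The poles of $\eta$ on ${\rm supp}(D)$ yield $\sum_i n_i\Phi(p_i)=\int_{c_0}\rho^*\omega$; the poles of $\omega$ on $S$ yield, after integrating $\Phi\,du_Q$ by parts (the boundary term dies since $u_Q(P)=0$), the sum $-\sum_{P\in\rho^{-1}(Q)}{\rm Res}_P(\rho^*u_Q\cdot\omega)$, which vanishes by $(*)$ applied to $u_Q$. As the periods of $\eta=\frac{1}{2\pi\sqrt{-1}}d\log f$ are integers, the polygon-edge contribution is a fixed integral combination of the periods of $\rho^*\omega$, independent of $\omega$; subtracting the corresponding integral $1$-cycle $\gamma$ from $c_0$ gives $c:=c_0-\gamma$ with $\partial c=D$ and $\int_c\rho^*\omega=0$ for every $\omega$ simultaneously.

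For sufficiency I would reverse the construction. The aim is to build a meromorphic $1$-form $\eta$ on $X$ with at most simple poles on ${\rm supp}(D)$, residue equal to the coefficient of $D$ at each such point, and vanishing to order $\geq{\mathfrak m}(P)-1$ at every $P\in S$, for then $\exp(\int\eta)$ will automatically satisfy $f\equiv c(\overline{S})$. The enforced vanishing along $S$ cannot be imposed through a divisor in ${\rm Div}(X_{\mathfrak m})$ (those are prime to $S$), and this is exactly why an auxiliary sheaf is needed: I would introduce the subsheaf of $\Omega_{\mathfrak m}$ consisting of forms vanishing to the prescribed orders along $S$, and obtain a third-kind section $\eta_0$ with the required residues by a dimension count modelled on Proposition 2.9 and the second Riemann--Roch theorem (Theorem 3.14), the residue sum being admissible because $\deg D=0$. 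Normalizing the periods of $\eta_0$ by subtracting sections of this auxiliary sheaf and invoking the same reciprocity relation, the remaining periods are identified with $2\pi\sqrt{-1}\int_c\rho^*\omega_k$, which vanish by hypothesis; hence the periods of the adjusted $\eta$ all lie in $2\pi\sqrt{-1}\,\mathbb{Z}$ and $f:=\exp(\int\eta)$ is a single-valued meromorphic function on $X$ with $(f)=D$.

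The main obstacle will be the sufficiency, and specifically the transition from ``single-valued $f$ with $(f)=D$'' to ``$f\equiv c(\overline{S})$''. The order condition ${\rm ord}_P(f-c_Q)\geq{\mathfrak m}(P)$ on each branch is secured by the vanishing of $\eta$ along $S$, so the delicate point is that the branch values agree over a common $Q\in\overline{S}$, i.e.\ that $f$ descends to the singular point with a single constant $c_Q$. I expect this to follow from the very residue relations $(*)$ that cancelled the $S$-contribution in the necessity argument: they force the inter-branch periods of $\eta$ over each $Q$ to coincide, so that $\log(f/c_Q)$ has one common value. Verifying that the auxiliary sheaf has enough global sections to realize the prescribed multiconstant, and that normalization by its sections does not destroy the vanishing along $S$, is the technical heart of the proof, and I would settle it by a cohomology-vanishing computation parallel to the proof of Proposition 2.9.
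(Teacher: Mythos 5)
Your proposal departs from the paper's proof in both directions, and one of the two departures is fine. For the direction ``$D=(f)$ with $f\equiv c(\overline S)\ {\rm mod}\ \mathfrak m$ implies the chain condition'', your fundamental-polygon/reciprocity argument is a legitimate alternative to the paper's, which instead pushes $\rho^{*}\omega$ forward under the map $F:X\to\mathbb P^{1}$ given by $f$, shows ${\rm Trace}(\rho^{*}\omega)=0$ (using $(*)$ only through the vanishing of the residue sums over each $\rho^{-1}(Q)$), and takes $c=F^{-1}(\gamma)$ for a path $\gamma$ from $0$ to $\infty$. Your treatment of the circles about $S$ (integration by parts against $u_{Q}=\log(f/c_{Q})\in\mathcal I_{Q}$, killing the residue sum by $(*)$ and the boundary term by $u_{Q}(P)=0$) is sound in substance, though you still owe the additional cuts required because a primitive $\Phi$ of $\rho^{*}\omega$ is multivalued on the punctured polygon whenever $\rho^{*}\omega$ has a nonzero residue at an individual point of $S$ (only the sum of residues over $\rho^{-1}(Q)$ vanishes).

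The genuine gap is in the converse direction, at exactly the point you flag. Vanishing of your third-kind form $\eta$ to order $\mathfrak m(P)-1$ at each $P\in S$ gives ${\rm ord}_{P}(f-f(P))\ge\mathfrak m(P)$ separately at each branch, but the theorem requires a single constant $c_{Q}$ with $f(P)=f(P')$ for all $P,P'\in\rho^{-1}(Q)$, i.e. $\int_{P'}^{P}\eta\in2\pi\sqrt{-1}\,\mathbb Z$. You propose to deduce this from ``the residue relations $(*)$'', but $(*)$ is a condition on sections of $\Omega_{\mathfrak m}$, not on $\eta$: your $\eta$ is holomorphic at $S$, so $(*)$ is vacuous for it and constrains the inter-branch integrals in no way. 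The hypothesis can only enforce branch matching through the pairing of $\eta$ with those third-kind sections of $\Omega_{\mathfrak m}$ whose pullbacks have simple poles at $P$ and $P'$ with opposite residues, via a reciprocity law for two differentials of the third kind; and that computation presupposes that the periods of $\eta$ already lie in $2\pi\sqrt{-1}\,\mathbb Z$, which your normalization step does not deliver. Indeed, the only forms you may add without destroying the vanishing along $S$ lie in $H^{0}\left(X,\Omega\left(-\sum_{P\in S}(\mathfrak m(P)-1)P\right)\right)$, a space of dimension roughly $g-\deg\mathfrak m+\#S$, possibly zero, so the classical scheme ``normalize the $\alpha$-periods to zero, then correct the $\beta$-periods by integral multiples of holomorphic forms'' does not carry over. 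This is precisely the difficulty the paper's proof is built to avoid: from the chain it constructs a $C^{\infty}$ weak solution $f$ with $f\equiv1$ near $S$ (Lemma 4.4), corrects it to a meromorphic $F=e^{-g}f$ by solving $\overline\partial g=\overline\partial f/f$ (solvable because the hypothesis makes $\overline\partial f/f$ orthogonal to $H^{0}(X,\Omega)$), so that $-g$ is a global branch of $\log F$ near $S$ satisfying $\sum_{P\in\rho^{-1}(Q)}{\rm Res}_{P}\bigl((-g)\rho^{*}\omega\bigr)=0$ for every $\omega$, and then Lemma 4.5 converts exactly these residue conditions into $F\equiv c(\overline S)\ {\rm mod}\ \mathfrak m$, using second- and third-kind differentials belonging to $\rho^{*}\Omega_{\mathfrak m}$. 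An analogue of Lemma 4.5, together with a mechanism (such as the $\overline\partial$-correction) that produces the global log branch it needs, is the missing idea; without it your sufficiency direction does not close.
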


\subsection{Weak solutions}
Let $D \in {\rm Div}(X_{\mathfrak m})$. We set
$$X_D := \{ P \in X ; D(P) \geq 0 \}.$$
\begin{definition}
A $C^{\infty }$ function $f$ on $X_D$ is called a weak solution of $D$
if it satisfies the following condition:\\
For any $P \in X$ there exist a coordinate neighbourhood $(U, z)$ of $P$
with $z(P) = 0$ and a $C^{\infty }$ function $\psi $ on $U$ with
$\psi (P) \not= 0$ such that
$$f = \psi z^{D(P)}\quad \text{on\quad $U \cap X_D$}.$$
\end{definition}
The following properties are immediate from the definition.\\
(i) If $f$ and $g$ are weak solutions of $D$, then there exists a
$C^{\infty }$ function $\varphi $ on $X$ such that
$$f = \varphi g$$
and $\varphi $ has no zeros.\\
(ii) If $f_1$ and $f_2$ are weak solutions of $D_1$ and $D_2$ respectively,
then $f := f_1f_2$ and $g := f_1/f_2$ are weak solutions of
$D_1 + D_2$ and $D_1 - D_2$ respectively.\\
(iii) If $f$ is a weak solution of $D$, then $df/f$ is smooth on the
complement of ${\rm Supp}(D)$ and $\overline{\partial}f/f$ is
smooth on $X$.

\subsection{Sheaf ${\mathcal E}^{(1)}_{\mathfrak m}$}
Let $U \subset X_{\mathfrak m}$ be an open set.
We define
$${\mathcal E}^{(1)}_{\mathfrak m}(U) := \{ \text{a $C^{\infty }$ $1$-form
$\omega $ on $U \setminus (U \cap \overline{S})$ satisfying the following
condition $(\star \star )$} \}.$$
The condition $(\star \star )$:\\
Let $Q \in U \cap \overline{S}$. We set
$\rho ^{-1}(Q) = \{ P_1, \dots , P_k\}.$
Let $V \subset U$ be an open neighbourhood of $Q$ such that
$$\rho ^{-1}(V) = \bigsqcup_{i=1}^{k}V_i\quad (P_i \in V_i),$$
$(V_i, z_i)$ is a coordinate neighbourhood of $P_i$ with $z_i(P_i )= 0$
and there exist $C^{\infty }$ functions $\varphi _i$ and $\psi _i$ on
$V_i \setminus \{ P_i\}$ with
$$\rho ^{*} \omega = \varphi _i dz_i + \psi _i d\overline{z}_i \quad
\text{on \quad $V_i \setminus \{ P_i \} $}.$$
Then limits
$$\lim_{P\to P_i}\varphi _i(P) z_i(P)^{{\mathfrak m}(P_i)}\quad
\text{and \quad $\lim_{P\to P_i}\psi _i(P) \overline{z_i(P)}^{{\mathfrak m}(P_i)}$}$$
exist.\\
Then a presheaf $\{ {\mathcal E}^{(1)}_{\mathfrak m}(U), r^{U}_{V}\} $
defines a sheaf ${\mathcal E}^{(1)}_{\mathfrak m}$ on $X_{\mathfrak m}$.

\subsection{Lemmas}

\begin{lemma}
Suppose that $c : [0,1] \longrightarrow X \setminus S$ is a curve and $U$
is a relatively compact open neighbourhood of $c([0,1])$ in $X \setminus S$.
Then there exists a weak solution $f$ of $\partial c$ with
$f|(X \setminus U) = 1$ such that for every $1$-form
$\omega \in H^0(X_{\mathfrak m}, {\mathcal E}^{(1)}_{\mathfrak m})$
with $d\omega = 0$ we have
$$\frac{1}{2\pi \sqrt{-1}} \iint _X \frac{d f}{f} \wedge \rho^{*} \omega
= \int _c \rho ^{*}\omega .$$ 
\end{lemma}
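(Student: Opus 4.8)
The plan is to construct the weak solution $f$ explicitly from a branch cut along $c$, and then to evaluate the surface integral by Stokes' theorem, the key point being that $\frac{1}{2\pi\sqrt{-1}}\,df/f$ acts as the current of integration over $c$. First I observe that the entire statement lives on $X \setminus S$: since $f \equiv 1$ outside $U$ we have $\mathrm{supp}(df/f) \subset U \subset X \setminus S$, and the endpoints $c(0), c(1)$ lie in $X \setminus S$, so $\rho^{*}\omega$ is here merely a $C^{\infty}$ closed $1$-form (closed because $d\omega = 0$), smooth and bounded near $c(0)$ and $c(1)$; the singular structure of $X_{\mathfrak m}$ plays no role beyond specifying the class of test forms $\omega$. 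By subdividing $[0,1]$ and using that weak solutions are multiplicative (property (ii) of Section 4.2) while both sides of the asserted identity are additive under concatenation of paths, I may reduce to the case where $c$ is an embedded arc lying in a single coordinate chart.

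For the construction of $f$, I would choose a tubular neighbourhood $T$ of $c([0,1])$ with $\overline{T} \subset U$ and coordinates in which $c$ is a straight segment from $a := c(0)$ to $b := c(1)$. I then define a single-valued branch of $\frac{1}{2\pi\sqrt{-1}}\log f$ on $X \setminus c([0,1])$ which vanishes outside $U$, jumps by $+1$ across the arc $c$, and near the endpoints has the local form of $-\log z$ at $a$ and $+\log z$ at $b$ up to a smooth summand; exponentiating and inserting a cut-off yields $f$. Using properties (i)--(iii) of Section 4.2 one checks that $f$ is a weak solution of $\partial c = b - a$, that $f \equiv 1$ on $X \setminus U$, and that $\eta := df/f$ is a $C^{\infty}$ closed $1$-form on $X \setminus \{a,b\}$ with $\frac{1}{2\pi\sqrt{-1}}\oint \eta$ equal to $+1$ at $b$ and $-1$ at $a$.

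For the main computation, set $\eta := df/f$. Since $d(\rho^{*}\omega) = \rho^{*}(d\omega) = 0$ on $X \setminus S$ and $dL = \eta$ for a local branch $L = \log f$, we have $\eta \wedge \rho^{*}\omega = d(L\,\rho^{*}\omega)$ wherever $L$ is single-valued. I remove small discs $\Delta_a(\varepsilon)$, $\Delta_b(\varepsilon)$ and cut $X$ along $c$, obtaining a surface $X_\varepsilon$ on which the branch $L$ is single-valued, so Stokes' theorem gives
$$\iint_{X_\varepsilon} \eta \wedge \rho^{*}\omega = \int_{\partial X_\varepsilon} L\,\rho^{*}\omega,$$
where $\partial X_\varepsilon$ consists of the two banks $c^{+}, c^{-}$ of the cut and the circles $\partial\Delta_a(\varepsilon)$, $\partial\Delta_b(\varepsilon)$. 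Across the cut $L$ increases by $2\pi\sqrt{-1}$ while $\rho^{*}\omega$ is continuous, so the banks contribute $2\pi\sqrt{-1}\int_c \rho^{*}\omega$ with the correct orientation; on each small circle $L = \log z + O(1)$ while $\rho^{*}\omega$ is bounded near $a,b$, giving a contribution $O(\varepsilon|\log\varepsilon|) \to 0$. Since $\eta$ has at worst simple-pole type singularities, $\eta \wedge \rho^{*}\omega$ is integrable and $\iint_{X_\varepsilon} \to \iint_X$ as $\varepsilon \to 0$; combining these facts yields $\iint_X \eta \wedge \rho^{*}\omega = 2\pi\sqrt{-1}\int_c \rho^{*}\omega$, which is the claim after dividing by $2\pi\sqrt{-1}$.

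The main obstacle is the construction step: one must simultaneously arrange the correct local form $\psi z^{D(P)}$ at each endpoint (so that $f$ is a genuine weak solution of $\partial c$), the normalization $f \equiv 1$ off $U$, and the exact $2\pi\sqrt{-1}$ monodromy of $\log f$ around the arc that makes the jump computation precise rather than merely asymptotic. One must also fix the orientations of the two banks $c^{\pm}$ consistently with the chosen sign of the jump, so that the final identity carries the correct sign.
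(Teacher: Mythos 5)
Your proposal is correct, but the heart of it runs along a different track than the paper, so a comparison is worth recording. Your subdivision-plus-multiplicativity reduction is the paper's step (b) verbatim, and your construction of $f$ is, despite the packaging, exactly the paper's: the paper sets $f=\frac{z-b}{z-a}$ on $\{|z|\le r\}$, $f=\exp\bigl(\psi \log \frac{z-b}{z-a}\bigr)$ on the annulus $\{r<|z|<1\}$ and $f=1$ elsewhere, which is precisely the function whose logarithm has the branch-cut data you prescribe. The genuine difference is the evaluation of $\iint_X \frac{df}{f}\wedge \rho^{*}\omega$. The paper never cuts $X$ along $c$: it writes $\rho^{*}\omega = dg$ on a disc containing the curve, splits $g=g_0+g_1+g_2$ with $g_1,g_2$ supported near $a,b$, kills the $g_0$-term by Stokes on $X\setminus \{a,b\}$, and evaluates the two remaining terms as residues of $df/f$ at the endpoints, obtaining $g(b)-g(a)=\int_c\rho^{*}\omega$. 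You instead keep $\rho^{*}\omega$ intact, use $L=\log f$ as a primitive of $df/f$ on the surface cut along $c$, and read the answer off the $2\pi\sqrt{-1}$ jump across the banks. Both computations are valid, but the paper's choice buys something concrete: it requires no embeddedness of the curve, so its chart-local case applies to an arbitrary curve and the reduction is pure subdivision. Your route needs $c$ to be an embedded arc, and that is the one real gap in your write-up: subdividing $[0,1]$ makes the pieces small, not embedded. The patch is short but must be said: within a disc chart, both the value $\int_{c_j}\rho^{*}\omega$ (because $\rho^{*}\omega$ is closed, hence exact, on the chart) and the assertion to be proved (because a weak solution sees only the divisor $\partial c_j$) depend only on the endpoints of the piece, so each piece may be replaced by an embedded segment, and a piece whose endpoints coincide is handled by $f_j\equiv 1$. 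Finally, fix a normalization slip: if $\frac{1}{2\pi\sqrt{-1}}\log f$ jumps by $+1$ across the cut, then its singularities at the endpoints are forced to be $-\frac{1}{2\pi\sqrt{-1}}\log z$ at $a$ and $+\frac{1}{2\pi\sqrt{-1}}\log z$ at $b$ (equivalently $f\sim z^{-1}$ at $a$ and $f\sim z$ at $b$); the local forms $-\log z$ and $+\log z$ you wrote for this function are inconsistent with that jump and would not make $f$ a weak solution of $\partial c$.
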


\begin{proof}
(a) We first consider the case that $c([0,1])$ is contained in a coordinate
neighbourhood $(U, z)$ such that $z(U) \subset {\mathbb C}$ is the unit disk.
\par
Let $a := c(0)$ and $b := c(1)$. 
We identify $U$ with $z(U)$.
There exists a positive number $r < 1$
such that $c([0,1]) \subset \{ |z| < r \}$. The function
$\log \left( (z-b)/(z-a)\right)$ has a well-defined branch in
$\{ r < |z| < 1\}.$
Take a $C^{\infty }$ function $\psi $ on $U$ such that
$\psi = 1 $ on $\{ |z| \leq r\}$ and $\psi = 0$ on
$\{ r' \leq |z| \},$ where $r < r' < 1.$
We define a $C^{\infty }$ function $f_0$ on $U \setminus \{ a \}$ by
$$f_0 := 
\begin{cases}
\exp \left(\psi \log \left( \frac{z-b}{z-a}\right)\right)&
\text{if $r < |z| < 1$,}\\
\frac{z-b}{z-a}& \text{if $|z| \leq r$}.\\
\end{cases}
$$
Since $f_0|\{ r' \leq |z| < 1 \} = 1$, we can extend $f_0$ to a
$C^{\infty }$ function $f$ on $X \setminus \{ a\}.$
Then $f$ is a weak solution of $\partial c$. Let
$\omega \in H^0(X_{\mathfrak m}, {\mathcal E}^{(1)}_{\mathfrak m})$ be
$d$-closed.
We have a $C^{\infty }$ function $g$ with compact support such that
$\rho ^{*} \omega = d g$ on $\{ |z| \leq r' \}.$
\par
There exist a coordinate neighbourhood $(V_1, t_1)$ of $a$ with
$t_1(a) = 0$ and a $C^{\infty }$ function $\psi _1 $ on $V_1$ such that
$V_1 \subset U$, $\psi _1(a) \not= 0$ and
\begin{equation*}
f = \psi _1 \frac{1}{t_1}\quad \text{on\quad$V_1 \setminus \{ a \}$}.
\end{equation*}
We can also take a coordinate neighbourhood $(V_2, t_2)$ of $b$ with
$t_2(b) = 0$ and a $C^{\infty }$ function $\psi _2 $ on $V_2$ such that
$V_2 \subset U$, $\psi _2(b) \not= 0$ and
\begin{equation*}
f = \psi _2 t_2\quad \text{on\quad$V_2 \setminus \{ b \}$}.
\end{equation*}
We may assume that $V_1 \cap V_2 = \phi $.
Let $0 < s < s' < 1$. We take $C^{\infty }$ functions
$\sigma _1$ and $\sigma _2 $ on $X$ such that
${\rm Supp}(\sigma _j) \subset \{ |t_j| < s'\}$ and
$\sigma _j|\{|t_j| \leq s\} = 1$ for $j = 1, 2$.
We set $g_j := \sigma _j g$ $(j = 1, 2)$ and
$$g_0 := g - ( g_1 + g_2).$$
Since $g_0$ has a compact support ${\rm Supp}(g_0)$ with
${\rm Supp}(g_0) \subset X \setminus \{ a, b \}$, we have
$$\iint _X \frac{df}{f} \wedge dg_0 = -
\iint _{X \setminus \{ a, b\}}d\left( g_0 \frac{df}{f}\right) = 0$$
by Stokes' theorem.
Considering the support of $df/f$ and the above equality, we obtain
\begin{align*}
\frac{1}{2\pi \sqrt{-1}}\iint _X\frac{df}{f}\wedge \rho ^{*}\omega & =
\frac{1}{2\pi \sqrt{-1}}\iint _X\frac{df}{f}\wedge dg \\
& = \frac{1}{2 \pi \sqrt{-1}}\left(
\iint _X \frac{df}{f}\wedge dg_1 + \iint _X \frac{df}{f}\wedge dg_2
\right).
\end{align*}
Now we have
$$\iint _X \frac{df}{f} \wedge dg_1 = \iint _{V_1}
\frac{df}{f}\wedge dg_1 = \lim _{\varepsilon \to 0}
\int _{|t_1| = \varepsilon }g_1 \frac{df}{f} =
- 2 \pi \sqrt{-1}g(a).$$
Similarly we have
$$\iint _X \frac{df}{f}\wedge dg_2 = 2 \pi \sqrt{-1}g(b).$$
Thus we obtain
$$\frac{1}{2\pi \sqrt{-1}}\iint _X \frac{d f}{f}\wedge \rho ^{*} \omega =
g(b) - g(a) = \int _c\rho^{*}\omega .$$
\par
(b) Next we consider the general case.
\par
There exist a partition
$$0 = t_0 < t_1 < \cdots < t_n = 1$$
of the closed interval $[0,1]$ and coordinate neighbourhoods $(U_j, z_j)$,
$j=1, \dots ,n$ such that\\
(i) $c([t_{j-1}, t_j]) \subset U_j \subset U,$\\
(ii) $z_j(U_j) \subset {\mathbb C}$ is the unit disc.\\
Let $c_j := c|[t_{j-1}, t_j]$.
By (a) we have a weak solution $f_j$ of $\partial c_j$ such that $f_j|(X\setminus U_j) = 1$ and
$$\int_{c_j}\rho ^{*} \omega = \frac{1}{2\pi \sqrt{-1}} \iint_X
\frac{d f_j}{f_j} \wedge \rho ^{*} \omega$$
for any $\omega \in H^0(X_{\mathfrak m}, {\mathcal E}^{(1)}_{\mathfrak m})$ with
$d\omega = 0$.
Then $f := f_1 f_2 \cdots f_n$ is a function with the desired
property.
\end{proof}

\begin{lemma}
For any $D \in {\rm Div}(X_{\mathfrak m})$ the following two conditions are equivalent.\\
(1) There exists a meromorphic function $g$ on $X$ such that $D = (g)$ and we have a branch
$f$ of $\log g$ defined in a neighbourhood of $S$ with the property
$$\sum_{P\in \rho ^{-1}(Q)} {\rm Res}_P(f\omega ) = 0$$
for any point $Q \in \overline{S}$ and for any
$\omega \in H^0(X, \rho ^{*}\Omega _{\mathfrak m})$.\\
(2) There exist a meromorphic function $g$ on $X$ and a multiconstant $c(\overline{S})$ such that
$$D = (g)\quad \text{and\quad $g \equiv c(\overline{S})$\ ${\rm mod}\ {\mathfrak m}$}.$$
\end{lemma}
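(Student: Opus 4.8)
The plan is to reduce both conditions to a single linear computation with residues, exploiting an explicit description of the global sections of $\rho^{*}\Omega_{\mathfrak m}$. Since $D=(g)$ is prime to $S$, the function $g$ is holomorphic and nowhere zero in a neighbourhood of each $P\in S$, so on each simply connected component of a small neighbourhood of $S$ a branch $f$ of $\log g$ exists; choosing such a branch is the same as choosing, for each $P\in S$, a value $a_P:=f(P)$ with $e^{a_P}=g(P)$. In a local coordinate $z$ centred at $P$ I would write $f=a_P+\sum_{n\geq 1}c^{(P)}_n z^n$ and record the elementary identity ${\rm Res}_P(f\omega)=a_P\,{\rm Res}_P(\omega)+{\rm Res}_P((f-a_P)\omega)$, together with the observation that, because $e^{w}-1$ and $w$ have the same order at a zero, ${\rm ord}_P(g-e^{a_P})={\rm ord}_P(f-a_P)$; thus the congruence in (2) is equivalent to $c^{(P)}_1=\cdots=c^{(P)}_{{\mathfrak m}(P)-1}=0$.

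The next step is to make $H^0(X,\rho^{*}\Omega_{\mathfrak m})$ fully explicit. Unwinding condition $(*)$ shows that such an $\omega$ is a meromorphic $1$-form on $X$, holomorphic off $S$, with ${\rm ord}_P(\omega)\geq-{\mathfrak m}(P)$ at each $P\in S$ and $\sum_{P\in\rho^{-1}(Q)}{\rm Res}_P(\omega)=0$ for every $Q\in\overline{S}$: the constant functions in ${\mathcal O}_{{\mathfrak m},Q}$ give the residue-sum relations, while the ideal ${\mathcal I}_Q$ forces the pole bound. By the Mittag--Leffler theorem for differentials on the compact Riemann surface $X$, every assignment of principal parts at the points of $S$ subject only to $\sum_{P\in\rho^{-1}(Q)}{\rm Res}_P=0$ on each fibre is realized by some such $\omega$ (the total residue then vanishes automatically), and $\omega$ is unique up to a holomorphic $1$-form. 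Since $f$ is holomorphic near $S$, adding a holomorphic $1$-form changes none of the residues ${\rm Res}_P(f\omega)$, so the conditions in (1) depend only on this free principal-part data.

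For the implication (2)$\Rightarrow$(1) I would pick $a_Q$ with $e^{a_Q}=c_Q$ and take the branch $f$ with $f(P)=a_Q$ for every $P\in\rho^{-1}(Q)$; the congruence gives ${\rm ord}_P(f-a_Q)\geq{\mathfrak m}(P)$, so $(f-a_Q)\omega$ is holomorphic at each such $P$ and $\sum_{P\in\rho^{-1}(Q)}{\rm Res}_P(f\omega)=a_Q\sum_{P\in\rho^{-1}(Q)}{\rm Res}_P(\omega)=0$ by the defining property of $\Omega_{\mathfrak m}$. The substance of the lemma is the converse. Expanding, $\sum_{P\in\rho^{-1}(Q)}{\rm Res}_P(f\omega)=\sum_P a_P\,b^{(P)}_{-1}+\sum_P\sum_{n=1}^{{\mathfrak m}(P)-1}c^{(P)}_n b^{(P)}_{-1-n}$, where the $b^{(P)}_{j}$ are the Laurent coefficients of $\omega$ at $P$. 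The coefficients $b^{(P)}_{-1-n}$ with $n\geq 1$ are unconstrained, so testing against forms that vary only these coefficients forces $c^{(P)}_n=0$ for $1\leq n\leq{\mathfrak m}(P)-1$, i.e. ${\rm ord}_P(f-a_P)\geq{\mathfrak m}(P)$. What survives is $\sum_{P\in\rho^{-1}(Q)}a_P\,b^{(P)}_{-1}=0$ for all residues with $\sum_{P\in\rho^{-1}(Q)}b^{(P)}_{-1}=0$; a linear functional vanishing on this hyperplane must be constant, so all $a_P$ with $P\in\rho^{-1}(Q)$ coincide with a common value $a_Q$. Setting $c_Q:=e^{a_Q}$ then yields $g(P)=c_Q$ and ${\rm ord}_P(g-c_Q)\geq{\mathfrak m}(P)$ for every $P\in\rho^{-1}(Q)$, which is exactly (2).

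I expect the main obstacle to be the second paragraph: one must verify carefully that the principal parts at $S$ really can be prescribed freely, up to the per-fibre residue relations, by genuine global sections of $\rho^{*}\Omega_{\mathfrak m}$, since it is precisely this freedom that converts the residue vanishing in (1) into the coefficientwise conditions used above. This rests on the classical solvability of the Mittag--Leffler problem for $1$-forms on $X$ together with the fact that the per-fibre relations make the total-residue obstruction vanish; the passage between $g$ and $\log g$, namely the preservation of vanishing order, is then routine.
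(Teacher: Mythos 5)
Your proposal is correct, and its engine is the same as the paper's: both directions come down to pairing condition (1) against meromorphic $1$-forms with prescribed principal parts at $S$ (differentials of the second and third kind), and the $(2)\Rightarrow(1)$ computation is essentially identical. The packaging, however, differs in a way worth noting. The paper never states the explicit characterization of $H^0(X,\rho^{*}\Omega_{\mathfrak m})$ or a general Mittag--Leffler solvability statement; instead it writes down two concrete families of test forms --- third-kind differentials with residues $+1,-1$ at a pair $P_1,P_j$ in one fibre, and second-kind differentials with singularity $1/z^k$ for $2\le k\le{\mathfrak m}(P)$ --- checks membership in $H^0(X,\rho^{*}\Omega_{\mathfrak m})$ directly, and then proves ${\rm ord}_P(g-c_Q)\ge{\mathfrak m}(P)$ by an induction on $\ell$ showing $g^{(\ell)}(P)=0$, using the expansion of $\log g$ in terms of $(g'/g)^{(k)}(P)$. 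You instead front-load the structural facts (the description of the space, plus solvability of arbitrary principal parts subject only to the per-fibre residue relations), and then the whole of $(1)\Rightarrow(2)$ becomes linear algebra: the pairing of the Taylor coefficients of $f$ against the free Laurent coefficients of $\omega$ kills $c^{(P)}_1,\dots,c^{(P)}_{{\mathfrak m}(P)-1}$, and the hyperplane-functional argument forces the $a_P$ to agree on each fibre (this last step is equivalent to the paper's pairwise third-kind forms, since the vectors $e_P-e_{P'}$ span the hyperplane $\sum_P x_P=0$). What your route buys is the elimination of the induction and of all derivative bookkeeping for $g$ --- you work with $f$ throughout and convert orders only once, via the simple zero of $e^{w}-1$ --- and it makes explicit the exact freedom in the test forms, which is indeed the crux; the cost is that you must actually prove the characterization of $H^0(X,\rho^{*}\Omega_{\mathfrak m})$ (your one-line justification via test functions in ${\mathcal I}_Q$ does work, but it is a genuine step), whereas the paper only needs the classical existence of the two standard kinds of differentials, for which membership in the space is immediate.
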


\begin{proof}
Let $g$ be a meromorphic function on $X$ satisfying the condition (1). Take any $Q \in \overline{S}$.
Let $\rho ^{-1}(Q) = \{ P_1, \dots ,P_N\}$. If $N = 1$, then we set
$c_Q := g(P_1) \not= 0$. 
Next we consider the case $N \geq 2$. For any $j = 2, \dots , N$, there exists a
meromorphic $1$-form $\omega $ such that it is holomorphic on $X \setminus \{ P_1, P_j\}$ and
has the simple poles $P_1$ and $P_j$.
We may assume that ${\rm Res}_{P_1}(\omega ) = 1$. Then it naturally follows from the definition that
${\rm Res}_{P_j}(\omega ) = -1$. We note that $\omega \in H^0(X, \rho ^{*}\Omega _{\mathfrak m})$.
By the assumption we have
$$0 = \sum_{P \in \rho ^{-1}(Q)} {\rm Res}_P(f\omega ) =
{\rm Res}_{P_1}(f\omega ) + {\rm Res}_{P_j}(f\omega ) = f(P_1) - f(P_j),$$
where $f$ is a branch of $\log g$ in a neighbourhood of $S$. Then we obtain
$$g(P_1) = \exp (f(P_1)) = \exp (f(P_j)) = g(P_j).$$
Since $j$ is arbitrary, we can set
$$c_Q := g(P_1) = g(P_2) = \cdots = g(P_N).$$
We show that ${\rm ord}_P(g - c_Q) \geq {\mathfrak m}(P)$ for any
$P \in \rho ^{-1}(Q)$. If ${\mathfrak m}(P) =1$, it is just $g(P) = c_Q$.
Suppose that ${\mathfrak m}(P) \geq 2$. For any $k$ with $2 \leq k \leq {\mathfrak m}(P)$,
there exists a meromorphic $1$-form $\omega _k$ such that it is holomorphic on
$X \setminus \{ P\}$ and the singularity at $P$ is $1/z^k$, where $z$ is a local coordinate with
$z(P) = 0$. We note that $\omega _k \in H^0(X, \rho ^{*}\Omega _{\mathfrak m})$.
Since $f$ is a branch of $\log g$, we have the following representation of $f$ in a
neighbourhood of $P$
$$f(z) = f(P) + \frac{g'(P)}{g(P)}z + \frac{1}{2!}\left(\frac{g'}{g}\right)'(P) z^2 +
\cdots + \frac{1}{(k-1)!}\left(\frac{g'}{g}\right)^{(k-2)}(P) z^{k-1} + \cdots .$$
We show that $g^{(\ell )}(P) = 0$ for $\ell = 1, \dots , {\mathfrak m}(P) -1$, by the
induction on $\ell $. Considering the above representation of $f$ for $k = 2$, we obtain
$$0 = {\rm Res}_P(f\omega _2) = \frac{g'(P)}{g(P)},$$
hence $g'(P) = 0$. Let $2 \leq \ell \leq {\mathfrak m}(P) - 1$ and assume that
$g'(P) = \cdots = g^{(\ell - 1)}(P) = 0$. Then we have
$$f(z) = f(P) + \frac{1}{\ell !}\left(\frac{g'}{g}\right)^{(\ell -1)}(P) z^{\ell } + \cdots .$$
Since
$$0 = {\rm Res}_P(f\omega _{\ell + 1}) = \frac{1}{\ell !}\left(\frac{g'}{g}\right)^{(\ell -1)}(P),$$
we have $g^{(\ell )}(P) = 0$. Therefore we conclude that
${\rm ord}_P(g - c_Q) \geq {\mathfrak m}(P)$. This means that
$g \equiv c(\overline{S})\ {\rm mod}\ {\mathfrak m}$.
\par
Conversely, we assume that there exists a meromorphic function $g$ on $X$ with $D = (g)$ and
$g \equiv c(\overline{S})\ {\rm mod}\ {\mathfrak m}$ for some multiconstant $c(\overline{S})$.
Take any $Q \in \overline{S}$ and any $P \in \rho ^{-1}(Q)$. Let $z$ be a local coordinate at $P$
with $z(P) = 0$. We can write
$$g(z) = c_Q + a_{{\mathfrak m}(P)}z^{{\mathfrak m}(P)} + \cdots .$$
Then we have a branch $f$ of $\log g$ in a neighbourhood of $P$ such that
$$f(z) = \log c_Q + \frac{a_{{\mathfrak m}(P)}}{c_Q}z^{{\mathfrak m}(P)} + \cdots .$$
For any $\omega \in H^0(X, \rho ^{*}\Omega _{\mathfrak m})$ and for any $Q \in \overline{S}$,
we have
$$\sum_{P \in \rho ^{-1}(Q)}{\rm Res}_P(f\omega ) = \log c_Q
\left(\sum_{P\in \rho ^{-1}(Q)}{\rm Res}_P(\omega )\right) = 0.$$
\end{proof}

\subsection{Proof of Theorem 4.2}
We first show the necessity. Assume that there exists a $1$-chain $c \in C_1(X \setminus S)$ with
$\partial c = D$ such that
$$\int _c \rho ^{*}\omega = 0$$
for any $\omega \in H^0(X_{\mathfrak m}, \Omega _{\mathfrak m})$. By Lemma 4.4 we have a weak
solution $f$ of $D = \partial c$ such that $f|(X\setminus U) = 1$ and
$$\frac{1}{2\pi \sqrt{-1} }\iint _X \frac{d f}{f}\wedge \rho ^{*} \omega =
\int _c \rho ^{*}\omega $$
for every $\omega \in H^0(X_{\mathfrak m}, {\mathcal E}^{(1)}_{\mathfrak m})$ with $d\omega = 0$,
where $U$ is an open neighbourhood of the support of $c$
with $U \subset \subset X \setminus S$.
Since $H^0(X_{\mathfrak m}, \Omega _{\mathfrak m}) \subset H^0(X_{\mathfrak m}, {\mathcal E}^{(1)}_{\mathfrak m})$,
we obtain for every $\omega \in H^0(X_{\mathfrak m}, \Omega _{\mathfrak m})$
$$0 = \int _c \rho ^{*}\omega = \frac{1}{2\pi \sqrt{-1}} \iint _X \frac{d f}{f} \wedge \rho ^{*}\omega
= \frac{1}{2\pi \sqrt{-1}} \iint _X \frac{\overline{\partial } f}{f} \wedge \rho ^{*}\omega $$
by the assumption. Note that $\sigma := \overline{\partial }f /f$ is a $C^{\infty }$ $(0,1)$-form on $X$.
It follows from the above equality that
$$\frac{1}{2\pi \sqrt{-1}} \iint _X \sigma \wedge \eta = 0$$
for every $\eta \in H^0(X, \Omega )$, for we have $H^0(X, \Omega ) \subset
\rho ^{*} H^0(X_{\mathfrak m}, \Omega _{\mathfrak m})$.
Then there exists a $C^{\infty }$ function $g$ on $X$ with 
$\overline{\partial }g = \sigma = \overline{\partial }f/f$ (Corollary (19.10) in \cite{ref10}).
If we set $F := e^{-g}f$, then it is also a weak solution of $D$. Furthermore, $F$ is meromorphic
for $\overline{\partial }F = 0$. Since $f = 1$ on a neighbourhood of $S$, we have $F = e^{-g}$ there.
Then, $-g$ is a branch of $\log F$ defined in the neighbourhood of $S$. For any 
$\omega \in H^0(X_{\mathfrak m}, \Omega _{\mathfrak m})$ we have
$$\frac{1}{2\pi \sqrt{-1}}\iint _X \overline{\partial }g \wedge \rho ^{*}\omega =
\frac{1}{2\pi \sqrt{-1}}\iint _X \frac{\overline{\partial }f}{f} \wedge \rho ^{*}\omega
= \int _c\rho ^{*}\omega = 0.$$
Let $Q \in \overline{S}$. We set $\rho ^{-1}(Q) = \{ P_1, \dots , P_N \}$.
Take a sufficiently small $\varepsilon >0$. We denote by $B_j(\varepsilon )$ the disc centered at
$P_j$ with radius $\varepsilon $.
Since
\begin{eqnarray*}
\frac{1}{2\pi \sqrt{-1}}\iint _X \overline{\partial }g \wedge \rho ^{*}\omega & = &
\lim _{\varepsilon \to 0}\frac{1}{2\pi \sqrt{-1}} \iint _{X \setminus \left( \cup _{j=1}^{N}B_j(\varepsilon )\right)}
\overline{\partial }g \wedge \rho ^{*}\omega \\
& = & \lim _{\varepsilon \to 0}\left(\sum_{j=1}^{N}\frac{1}{2\pi \sqrt{-1}}
\int _{\partial B_j(\varepsilon )}(-g)\rho ^{*}\omega \right)\\
& = & \sum _{P \in \rho ^{-1}(Q)}{\rm Res}_P\left((-g)\rho ^{*}\omega \right),
\end{eqnarray*}
we obtain
$$\sum _{P \in \rho ^{-1}(Q)}{\rm Res}_P\left((-g)\rho ^{*}\omega \right) = 0.$$
Then the condition (1) in Lemma 4.5 is fulfilled for
$$\rho ^{*}H^0(X_{\mathfrak m}, \Omega _{\mathfrak m}) = H^0(X, \rho ^{*}\Omega _{\mathfrak m}).$$
Therefore, there exists a meromorphic function $h$
on $X$ and a multiconstant $c(\overline{S})$ such that
$$D = (h)\quad \text{and\quad $h \equiv c(\overline{S})\ {\rm mod}\ {\mathfrak m}$}.$$
\par
Next we prove the sufficiency. Ler $f$ be a meromorphic function on $X$ with
$f \equiv c(\overline{S})\ {\rm mod}\ {\mathfrak m}$ for some multiconstant
$c(\overline{S})$ such that $D = (f)$. Take any $\omega \in H^0(X_{\mathfrak m}, \Omega _{\mathfrak m})$.
Let $F : X \longrightarrow {\mathbb P}^1$ be the holomorphic map defined by $f$. We note that
$d := \deg F \geq \sum _{P \in \rho ^{-1}(Q)}{\mathfrak m}(P) \geq 2$ for any
$Q \in \overline{S}$. The trace
${\rm Trace}(\rho ^{*}\omega )$ of $\rho ^{*}\omega $ by $F$ is a meromorphic $1$-form on ${\mathbb P}^1$.
\smallskip
\\ 
{\bf Assertion.}
${\rm Trace}(\rho ^{*}\omega ) = 0.$
\begin{proof}
Note that ${\rm Trace}(\rho ^{*}\omega )$ is holomorphic except
$F(S) = \{ c_Q ; Q \in \overline{S} \}$. Let $Q \in \overline{S}$.
Take any $P \in \rho ^{-1}(Q)$. Let $m (\geq {\mathfrak m}(P))$ be the multiplicity
of $F$ at $P$. We can take local coordinates $t$ and $w$ centered at $c_Q$ and
at $P$ respectively by which $F$ is represented as $t = w^m$. We may assume that
there exists a meromorphic function $h(w)$ in a neighbourhood of $P$ such that
$\rho ^{*}\omega = h(w)dw$ and $h(w)$ has the following Laurent expansion
$$\sum _{n \geq -{\mathfrak m}(P)}c_n w^n.$$
The preimages of $t = w^m (\not= 0)$ are $\zeta ^{i}w$
$(i = 0, 1, \dots , m-1)$, where $\zeta = \exp \left(\sqrt{-1}\frac{2\pi }{m}\right)$.
Since $n \geq - {\mathfrak m}(P)$ and $m \geq {\mathfrak m}(P)$,
we have
\begin{eqnarray*}
\sum _{i=0}^{m-1}
\frac{h(\zeta ^i w)}{m w^{m-1}} & = &
\frac{1}{m}\sum _{n \geq - {\mathfrak m}(P)}c_n
\left( \sum _{i=0}^{m-1}\zeta ^{i(n-m+1)}\right)w^{n-m+1}\\
& = & \sum _{k \geq 0}c_{km-1}t^{k-1}.
\end{eqnarray*}
Noting $c_{-1} = {\rm Res}_P(\rho ^{*}\omega )$, we obtain the expression
of ${\rm Trace}(\rho ^{*}\omega ) $ at $c_Q$ as follows:
$${\rm Trace}(\rho ^{*}\omega ) = \left( \left(
\sum _{P \in \rho ^{-1}(Q)}{\rm Res}_P(\rho ^{*}\omega )\right)
\frac{1}{t} + \text{holomorphic part} \right) dt.$$
Then it is holomorphic at $c_Q$ for $\omega \in H^0(X_{\mathfrak m}, \Omega _{\mathfrak m})$.
Therefore it must be zero.
\end{proof}
\par
Let $\{ a_1, \dots , a_r \}$ be the branch points of $F$. We set
$$Y := {\mathbb P}^1 \setminus \{ F(a_1), \dots , F(a_r) \}.$$
We take a curve $\gamma $ from $0$ to $\infty $ contained in
$Y \setminus F(S)$. Let $c_1, \dots , c_d$ be the curves in $X$ consisting of
$F^{-1}(\gamma )$. If we define
$$c := c_1 + \cdots + c_d,$$
then $c \in C_1(X \setminus S)$, $\partial c = D$ and
$$\int _c \rho ^{*}\omega = \int _{\gamma }{\rm Trace}(\rho ^{*}\omega )
= 0$$
for any $\omega \in H^0(X_{\mathfrak m}, \Omega _{\mathfrak m})$.
This completes the proof.

\section{Varieties Associated to Singular Curves}

\subsection{Non-singular case}
Let $X$ be a compact Riemann surface of genus $g$. We denote by ${\rm Div}(X)$ the group
of divisors on $X$. It has subgroups ${\rm Div}^0(X)$ and ${\rm Div}_P(X)$ consisting of
divisors of degree 0 and principal divisors, respectively. We define
$$\overline{{\rm Div}(X)} := {\rm Div}(X)/{\rm Div}_P(X)\quad
\text{and\quad $\overline{{\rm Div}^0(X)} := {\rm Div}^0(X)/{\rm Div}_P(X)$}.$$
Let $\{ \omega _1, \dots , \omega _g \}$ be a basis of $H^0(X, \Omega )$.
We take a canonical
homology basis $\{ \alpha _1, \beta _1, \dots , \alpha _g, \beta _g \}$ of $X$.
We denote by $\Lambda $
the lattice determined by these bases. We define a period map
$\varphi : X \longrightarrow A := {\mathbb C}^g/\Lambda $ with base point $P_0 \in X$ by
$$\varphi (P) := \left( \int _{P_0}^{P}\omega _1 , \dots , \int _{P_0}^{P}\omega _g\right)\quad
{\rm mod}\ \ \Lambda .$$
The map $\varphi $ is extended onto ${\rm Div}(X)$, and we obtain an isomorphism
$$\overline{\varphi } : \overline{{\rm Div}^0(X)} \longrightarrow A$$
by Abel's Theorem and the solution of Jacobi Inversion Problem. In this case, $A$ is called the
Albanese variety of $X$. We write ${\rm Alb}(X) = A$.
\par
Let ${\rm Pic}(X) = H^1(X, {\mathcal O}_X^{*})$ be the group of isomorphic classes of holomorphic line
bundles on $X$. We denote by ${\rm Pic}^0(X)$ the subgroup consisting of classes of topologically
trivial ones. It is called the Picard variety of $X$. Since every holomorphic line bundle on $X$ has
a non-trivial meromorphic section, we have
$${\rm Pic}(X) \cong \overline{{\rm Div}(X)},$$
hence
$${\rm Pic}^0(X) \cong \overline{{\rm Div}^0(X)}.$$
Therefore, we obtain isomorphisms
$${\rm Pic}^0(X) \cong \overline{{\rm Div}^0(X)} \cong {\rm Alb}(X) = A.$$
Furthermore, we can give a direct isomorphism ${\rm Alb}(X) \cong {\rm Pic}^0(X)$ as complex
Lie groups.
The Jacobi variety $J(X)$ of $X$ is $\overline{{\rm Div}^0(X)}$ with the complex structure induced
from the above isomorphisms.

\subsection{Divisor classes and Picard varieties of singular curves}
Results in this section are stated in \cite{ref2} without details.
Let $X$ be a compact Riemann surface. Consider a singular curve
$X_{\mathfrak m}$ constructed by a modulus ${\mathfrak m}$ with support $S$ from $X$ as
in Section 2. 
Since $H^2(X_{{\mathfrak m}},{\mathcal O}_{{\mathfrak m}}) = 0$, we obtain the following
exact sequence by a standard argument
\begin{equation}
0 \longrightarrow H^1(X_{\mathfrak m},{\mathbb Z}) \longrightarrow H^1(X_{\mathfrak m}, {\mathcal O}_{\mathfrak m})
\longrightarrow H^1(X_{\mathfrak m}, {\mathcal O}_{\mathfrak m}^{*}) \stackrel{c}{\longrightarrow}
H^2(X_{\mathfrak m}, {\mathbb Z}) \longrightarrow 0.
\end{equation}
We define
$${\rm Pic}(X_{\mathfrak m}) := H^1(X_{\mathfrak m}, {\mathcal O}_{\mathfrak m}^{*})$$
and
$${\rm Pic}^0(X_{\mathfrak m}) := \{ L \in H^1(X_{\mathfrak m}, {\mathcal O}_{\mathfrak m}^{*}) ;
c(L) = 0 \}.$$
Let ${\mathcal M}_{\mathfrak m}$ be the quotient sheaf of ${\mathcal O}_{\mathfrak m}$. We denote
the divisor sheaf on $X_{\mathfrak m}$ by
$${\mathcal D}_{\mathfrak m} := {\mathcal M}_{\mathfrak m}^{*} / {\mathcal O}_{\mathfrak m}^{*}.$$
From an exact sequence
$$\{ 1 \} \longrightarrow {\mathcal O}_{\mathfrak m}^{*} \longrightarrow {\mathcal M}_{\mathfrak m}^{*}
\longrightarrow {\mathcal D}_{\mathfrak m} \longrightarrow \{ 1 \}$$
we obtain the following exact sequence
\begin{equation}
\begin{split}
\cdots \longrightarrow & H^0(X_{\mathfrak m}, {\mathcal M}_{\mathfrak m}^{*}) \longrightarrow
H^0(X_{\mathfrak m}, {\mathcal D}_{\mathfrak m}) \longrightarrow \\
 & H^1(X_{\mathfrak m}, {\mathcal O}_{\mathfrak m}^{*})
\longrightarrow H^1(X_{\mathfrak m}, {\mathcal M}_{\mathfrak m}^{*}) \longrightarrow \cdots .
\end{split}
\end{equation}

\begin{lemma}
For every holomorphic vector bundle $E \longrightarrow X_{\mathfrak m}$ we have
$$\dim H^1(X_{\mathfrak m}, {\mathcal O}_{\mathfrak m}(E)) < \infty ,$$
where ${\mathcal O}_{\mathfrak m}(E)$ is the sheaf of germs of holomorphic sections of $E$.
\end{lemma}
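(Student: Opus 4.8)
The plan is to prove finite-dimensionality of $H^1(X_{\mathfrak m}, {\mathcal O}_{\mathfrak m}(E))$ by reducing the vector-bundle case to the line-bundle-type estimates already established in Section 2. The essential point is that on a compact reduced complex space of dimension one, the cohomology of a coherent sheaf in degrees above the dimension vanishes, and that $H^1$ of a coherent sheaf is finite-dimensional. I would first observe that ${\mathcal O}_{\mathfrak m}(E)$ is a coherent sheaf on $X_{\mathfrak m}$, being the sheaf of sections of a holomorphic vector bundle over a compact complex space; this is the conceptual heart of the matter, and from it finiteness follows by the general Cartan--Serre finiteness theorem. However, since the paper emphasizes self-contained analytic arguments, I would instead give a direct reduction that mirrors the proof of the Riemann--Roch Theorem (Theorem 2.10).

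First I would trivialize $E$ away from a finite set. Since $X_{\mathfrak m}$ has dimension one and $E$ is locally free of some rank $r$, there is a finite set $T \subset X_{\mathfrak m}$ and a coherent subsheaf comparison so that, after twisting by a suitable divisor $D \in {\rm Div}(X_{\mathfrak m})$ with $D$ large, one obtains an exact sequence relating ${\mathcal O}_{\mathfrak m}(E)$ to a direct sum of copies of sheaves of the form ${\mathcal L}_{\mathfrak m}(D_i)$. Concretely, I would choose local holomorphic frames for $E$ over an open cover and use them to construct, on the non-singular part $X \setminus S$, a sheaf morphism from a direct sum $\bigoplus_{i=1}^{r} {\mathcal L}_{\mathfrak m}(D_i)$ into ${\mathcal O}_{\mathfrak m}(E)$ that is an isomorphism outside a finite subset of $X \setminus S$. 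The cokernel and kernel of such a morphism are then skyscraper sheaves supported on finitely many points, hence have vanishing $H^1$ and finite-dimensional $H^0$.

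The key step is then to pass to the long exact cohomology sequence. If ${\mathcal F} := \bigoplus_{i=1}^r {\mathcal L}_{\mathfrak m}(D_i)$ and we have an exact sequence
$$0 \longrightarrow {\mathcal K} \longrightarrow {\mathcal F} \longrightarrow {\mathcal O}_{\mathfrak m}(E) \longrightarrow {\mathcal Q} \longrightarrow 0$$
with ${\mathcal K}, {\mathcal Q}$ supported on finitely many points, I would split it into two short exact sequences through the image sheaf and read off the $H^1$ terms. Since $H^1(X_{\mathfrak m}, {\mathcal L}_{\mathfrak m}(D_i))$ is finite-dimensional by Theorem 2.10, and the skyscraper sheaves contribute nothing to $H^1$, the finiteness of $\dim H^1(X_{\mathfrak m}, {\mathcal O}_{\mathfrak m}(E))$ follows from the exactness of the long cohomology sequence together with the fact that a quotient and an extension of finite-dimensional spaces is finite-dimensional.

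I expect the main obstacle to be constructing the comparison morphism between $\bigoplus_i {\mathcal L}_{\mathfrak m}(D_i)$ and ${\mathcal O}_{\mathfrak m}(E)$ cleanly at the singular points $Q \in \overline{S}$, where the stalk ${\mathcal O}_{{\mathfrak m},Q} = {\mathbb C} + {\mathcal I}_Q$ is not a nice local ring and the bundle $E$ must be understood in terms of the sheaf structure there rather than via naive frames. The honest way to handle this is to invoke coherence of ${\mathcal O}_{\mathfrak m}(E)$ directly and cite the general finiteness theorem for coherent sheaves on compact complex spaces; the reduction to the ${\mathcal L}_{\mathfrak m}(D_i)$ is best viewed as an explicit verification consistent with that theorem rather than a full replacement for it.
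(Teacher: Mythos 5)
Your fallback argument --- ${\mathcal O}_{\mathfrak m}(E)$ is locally free over ${\mathcal O}_{\mathfrak m}$, hence coherent on the compact reduced complex space $X_{\mathfrak m}$, so $\dim H^1 < \infty$ by the Cartan--Serre finiteness theorem --- is correct, but it is a genuinely different route from the paper's, and it imports exactly the heavy general machinery the paper is structured to avoid. The paper's proof is a short, choice-free comparison with the normalization: set $\widetilde{E} := \rho^{*}E$, a holomorphic vector bundle on the compact Riemann surface $X$, for which $\dim H^1(X,{\mathcal O}(\widetilde{E})) < \infty$ is classical; observe the natural inclusion ${\mathcal O}_{\mathfrak m}(E) \hookrightarrow \rho_{*}{\mathcal O}(\widetilde{E})$ (pull back sections), whose quotient ${\mathcal F}_E$ is a skyscraper sheaf supported on $\overline{S}$ with
$$H^0(X_{\mathfrak m},{\mathcal F}_E) \cong \bigoplus_{j=1}^{N}{\mathbb C}^{r\ell_j},\qquad \ell_j = \sum_{P \in \rho^{-1}(Q_j)}{\mathfrak m}(P) - 1,$$
computed directly from ${\mathcal O}_{{\mathfrak m},Q} = {\mathbb C} + {\mathcal I}_Q$ in a local trivialization of $E$. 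Since $H^q(X_{\mathfrak m},\rho_{*}{\mathcal O}(\widetilde{E})) \cong H^q(X,{\mathcal O}(\widetilde{E}))$ and $H^1$ of a skyscraper vanishes, the long exact sequence exhibits $H^1(X_{\mathfrak m},{\mathcal O}_{\mathfrak m}(E))$ as an extension of $H^1(X,{\mathcal O}(\widetilde{E}))$ by a quotient of $H^0(X_{\mathfrak m},{\mathcal F}_E)$, hence finite-dimensional. No frames, divisors, or sections need to be chosen.

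Your proposed explicit reduction to $\bigoplus_{i}{\mathcal L}_{\mathfrak m}(D_i)$, by contrast, has a genuine gap, and it is more serious than the ``technical obstacle at $\overline{S}$'' you flag: it is circular within the paper's logical order. A morphism $\bigoplus_{i=1}^{r}{\mathcal L}_{\mathfrak m}(D_i) \to {\mathcal O}_{\mathfrak m}(E)$ that is an isomorphism outside a finite set amounts to producing $r$ generically independent meromorphic sections of $E$ over $X_{\mathfrak m}$. But in this paper the existence of even one non-trivial meromorphic section of a bundle over $X_{\mathfrak m}$ is Proposition 5.2, whose proof \emph{uses} the present lemma: the finiteness $k = \dim H^1(X_{\mathfrak m},{\mathcal O}_{\mathfrak m}(E)) < \infty$ is precisely what forces the cocycles $\zeta_1,\dots,\zeta_{k+1}$ there to be dependent modulo coboundaries. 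Nothing available before Lemma 5.1 supplies the sections your comparison morphism requires. The gap can be repaired by constructing the sections upstairs --- take meromorphic sections of $\widetilde{E}$ on $X$ vanishing to order at least ${\mathfrak m}(P)$ at every $P \in S$, so that in a local trivialization near $Q$ their germs lie in ${\mathcal I}_Q^{\,r} \subset {\mathcal O}_{{\mathfrak m},Q}^{\,r}$ and hence descend to ${\mathcal O}_{\mathfrak m}(E)$ --- but at that point you are working through the normalization anyway, and the paper's direct comparison of ${\mathcal O}_{\mathfrak m}(E)$ with $\rho_{*}{\mathcal O}(\widetilde{E})$ does the same job in one step.
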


\begin{proof}
Since $\widetilde{E} := \rho ^{*}E$ is a holomorphic vector bundle over $X$, we have
$$\dim H^1(X, {\mathcal O}(\widetilde{E})) < \infty .$$
Let $\overline{S} = \{ Q_1, \dots , Q_N \}$. If we set
$${\mathcal F}_E := \rho _{*}{\mathcal O}(\widetilde{E})/{\mathcal O}_{\mathfrak m}(E),$$
then
$$H^0(X_{\mathfrak m}, {\mathcal F}_E) \cong \bigoplus _{j=1}^{N}
{\mathbb C}^{r\ell _j},$$
where $r = {\rm rank}\ E$ and 
$\ell _j = \sum _{P \in \rho ^{-1}(Q_j)} {\mathfrak m}(P) -1$. 
From an exact sequence
$$0 \longrightarrow {\mathcal O}_{{\mathfrak m}}(E)
\longrightarrow \rho _{*}{\mathcal O}(\widetilde{E})
\longrightarrow {\mathcal F}_E \longrightarrow 0$$
we obtain the following exact sequence
\begin{equation*}
\begin{split}
\cdots \longrightarrow & H^0(X_{\mathfrak m}, \rho _{*}{\mathcal O}(\widetilde{E}))
\longrightarrow H^0(X_{\mathfrak m}, {\mathcal F}_{\mathfrak m}) \longrightarrow \\
& H^1(X_{\mathfrak m}, {\mathcal O}_{\mathfrak m}(E)) \longrightarrow H^1(X_{\mathfrak m}, \rho _{*}
{\mathcal O}(\widetilde{E})) \longrightarrow 0.
\end{split}
\end{equation*}
Since
$$H^q(X_{\mathfrak m}, \rho _{*}{\mathcal O}(\widetilde{E})) \cong
H^q(X, {\mathcal O}(\widetilde{E})) \ \ 
\text{for\ \ $q \geq 0$},$$
we have
\begin{eqnarray*}
\lefteqn{\dim H^1(X_{\mathfrak m}, {\mathcal O}_{\mathfrak m}(E))}\\
&=& \dim H^1(X, {\mathcal O}(\widetilde{E})) + \dim H^0(X_{\mathfrak m}, {\mathcal F}_E)
- \dim H^0(X, {\mathcal O}(\widetilde{E}))\\
& < & \infty .
\end{eqnarray*}
\end{proof}

\begin{proposition}
Let $\pi : E \longrightarrow X_{\mathfrak m}$ be a holomorphic vector bundle over $X_{\mathfrak m}$.
For any $Q \in X_{\mathfrak m} \setminus \overline{S}$ there exists a meromorphic section
$s$ of $E$ over $X_{\mathfrak m}$ such that $Q$ is the only pole of $s$.
\end{proposition}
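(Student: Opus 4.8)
The plan is to produce the desired section by a dimension count resting on the finiteness statement of Lemma 5.1. Write $r := {\rm rank}\, E$, and for $n \in {\mathbb N}$ let ${\mathcal O}_{\mathfrak m}(E)(nQ)$ denote the sheaf of germs of meromorphic sections of $E$ that are holomorphic on $X_{\mathfrak m} \setminus \{ Q \}$ and have a pole of order at most $n$ at $Q$. Since $Q \in X_{\mathfrak m} \setminus \overline{S}$ is a smooth point, where ${\mathcal O}_{\mathfrak m}$ agrees with $\rho_{*}{\mathcal O}_X = {\mathcal O}_X$, the curve is near $Q$ an ordinary disc with a coordinate $z$, $z(Q) = 0$, and $E$ is holomorphically trivial of rank $r$ there. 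Hence the quotient ${\mathcal O}_{\mathfrak m}(E)(nQ)/{\mathcal O}_{\mathfrak m}(E)$ is a skyscraper sheaf supported at $Q$ whose stalk is spanned by the principal parts of the $r$ component functions, so it has dimension $rn$.

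First I would record the short exact sequence
$$0 \longrightarrow {\mathcal O}_{\mathfrak m}(E) \longrightarrow {\mathcal O}_{\mathfrak m}(E)(nQ) \longrightarrow {\mathbb C}_Q^{rn} \longrightarrow 0$$
and pass to its long exact cohomology sequence. As the quotient is a skyscraper sheaf its first cohomology vanishes, and the sequence begins
$$0 \longrightarrow H^0(X_{\mathfrak m}, {\mathcal O}_{\mathfrak m}(E)) \longrightarrow H^0(X_{\mathfrak m}, {\mathcal O}_{\mathfrak m}(E)(nQ)) \stackrel{\alpha}{\longrightarrow} {\mathbb C}^{rn} \longrightarrow H^1(X_{\mathfrak m}, {\mathcal O}_{\mathfrak m}(E)) \longrightarrow \cdots .$$
By Lemma 5.1 we have $h := \dim H^1(X_{\mathfrak m}, {\mathcal O}_{\mathfrak m}(E)) < \infty$. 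Exactness at the ${\mathbb C}^{rn}$ term identifies the image of $\alpha$ with the kernel of the connecting map into $H^1(X_{\mathfrak m}, {\mathcal O}_{\mathfrak m}(E))$, so
$$\dim ({\rm Im}\, \alpha ) \geq rn - h .$$

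Now choose any $n$ with $rn > h$. Then $\dim({\rm Im}\, \alpha) > 0$, so there exists a section $s \in H^0(X_{\mathfrak m}, {\mathcal O}_{\mathfrak m}(E)(nQ))$ with $\alpha (s) \not= 0$; equivalently, $s$ has a nonzero principal part at $Q$ and hence does not lie in $H^0(X_{\mathfrak m}, {\mathcal O}_{\mathfrak m}(E))$. Such an $s$ is a meromorphic section of $E$ that is holomorphic on $X_{\mathfrak m} \setminus \{ Q \}$ yet genuinely singular at $Q$; thus $Q$ is its only pole, which is exactly the assertion.

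I expect the only delicate point to be the local bookkeeping at $Q$: one must check that at the smooth point $Q$ lying off $\overline{S}$ the local triviality of $E$ really yields a skyscraper quotient of dimension exactly $rn$, so that the count is valid. It is worth noting that finiteness of $H^1(X_{\mathfrak m}, {\mathcal O}_{\mathfrak m}(E)(nQ))$ is never needed — only that the dimension of a cohomology group is nonnegative — so Lemma 5.1 applied to $E$ alone suffices and no further coherence estimate is required.
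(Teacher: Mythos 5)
Your proof is correct, but it is packaged differently from the paper's. Where you twist by $nQ$ and run the long exact cohomology sequence of the skyscraper quotient $0 \to {\mathcal O}_{\mathfrak m}(E) \to {\mathcal O}_{\mathfrak m}(E)(nQ) \to {\mathbb C}_Q^{rn} \to 0$, so that finiteness of $h = \dim H^1(X_{\mathfrak m},{\mathcal O}_{\mathfrak m}(E))$ forces the connecting map to have nontrivial kernel once $rn > h$, the paper argues with Čech cohomology: it takes the two-set covering ${\mathfrak U} = \{ U_1, U_2 \}$ with $U_1$ a trivializing coordinate disc around $Q$ and $U_2 = X_{\mathfrak m} \setminus \{ Q \}$, notes this is a Stein (Leray) covering so that $H^1({\mathfrak U},{\mathcal O}_{\mathfrak m}(E)) \cong H^1(X_{\mathfrak m},{\mathcal O}_{\mathfrak m}(E))$, forms the cocycles $\zeta _j$ given by the sections $s_j(z) = (z^{-j}, \dots , z^{-j})$ on $U_1 \cap U_2$, observes that $\zeta _1, \dots , \zeta _{k+1}$ must be linearly dependent modulo coboundaries since $\dim H^1 = k$, and then glues $f_1 + \sum _j c_j s_j$ with $f_2$ out of a relation $\sum _j c_j \zeta _j = \delta \eta$, $\eta = (f_1, f_2)$. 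The two arguments rest on the same pillars, Lemma 5.1 plus a pigeonhole count on principal parts at $Q$; indeed your connecting homomorphism ${\mathbb C}^{rn} \to H^1(X_{\mathfrak m},{\mathcal O}_{\mathfrak m}(E))$ is exactly the map the paper realizes at the level of explicit cocycles. What your version buys: you never need $U_2$ to be Stein nor the covering to be Leray, only that skyscraper sheaves are flasque, and the estimate $\dim \ker \geq rn - h$ is immediate; your closing observations (the stalk really is ${\mathbb C}^{rn}$ by local triviality at the smooth point $Q$, and no finiteness is needed for the twisted sheaf) are accurate. What the paper's version buys: an explicit gluing construction of the section, staying within the Čech formalism it uses throughout Section 5. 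It is worth noting that your device is the one the paper itself uses to prove Theorem 2.10, where the sequence $0 \to {\mathcal L}_{\mathfrak m}(D^{-}) \to {\mathcal L}_{\mathfrak m}(D) \to {\mathbb C}_P \to 0$ plays the identical role, so your proof is very much in the spirit of the text even though it departs from the proof actually given.
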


\begin{proof}
By Lemma 5.1 we have
$$k := \dim H^1(X_{\mathfrak m}, {\mathcal O}_{\mathfrak m}(E)) < \infty .$$
Let $(U_1, z)$ be a coordinate neighbourhood of $Q$ with $z(Q) = 0$ such that
$z(U_1)$ is a disc in ${\mathbb C}$ and $E$ has a trivialization on $U_1$.
We set $U_2 := X_{\mathfrak m} \setminus \{ Q \}$. Then $U_2$ is a 1-dimensional 
Stein space. Consider a Stein covering ${\mathfrak U} = \{ U_1, U_2\}$ of $X_{\mathfrak m}$.
Let $\pi ^{-1}(U_1) \cong U_1 \times {\mathbb C}^r$ be the trivialization of $E$ on $U_1$,
where $r = {\rm rank}\ E$. We define a holomorphic section $s_j(z) := (z^{-j}, \dots ,
z^{-j})$ of $E$ on $U_1 \cap U_2 = U_1 \setminus \{ Q \}$ for $j \in {\mathbb N}$.
Since ${\mathfrak U}$ is a Stein covering, we have
$$H^1(X_{\mathfrak m}, {\mathcal O}_{\mathfrak m}(E)) \cong H^1({\mathfrak U}, {\mathcal O}_{\mathfrak m}(E)).$$
We denote by $\zeta _j \in Z^1({\mathfrak U}, {\mathcal O}_{\mathfrak m}(E))$ the cocycle
given by $s_j$. We see that $\zeta _1, \dots , \zeta _{k+1}$ are linearly dependent modulo
the coboundaries for $k = \dim H^1(X_{\mathfrak m}, {\mathcal O}_{\mathfrak m}(E))$.
Then there exist $(c_1, \dots c_{k+1}) \in {\mathbb C}^{k+1} \setminus \{ 0 \}$ and
$\eta = (f_1, f_2) \in C^0({\mathfrak U}, {\mathcal O}_{\mathfrak m}(E))$ such that
$$\sum _{j=1}^{k+1}c_j\zeta _j = \delta \eta .$$
If we set 
\begin{equation*}
f := \begin{cases}
    f_1 + \sum _{j=1}^{k+1}c_j s_j & \text{on $U_1$,}\\
    f_2  & \text{on $U_2$},
    \end{cases}
\end{equation*}
then $f$ is a meromorphic section of $E$ and has the only pole at $Q$.
\end{proof}

We note that a holomorphic line bundle over $X_{{\mathfrak m}}$ is given
by an element of $H^0(X_{{\mathfrak m}},{\mathcal D}_{{\mathfrak m}})$
if and only if it has a non-trivial meromorphic section.
Applying the above proposition to holomorphic line bundles over $X_{\mathfrak m}$,
we see that the map $H^0(X_{\mathfrak m}, {\mathcal D}_{\mathfrak m})
\longrightarrow H^1(X_{{\mathfrak m}}, {\mathcal O}_{{\mathfrak m}}^{*})$
in (5.2) is surjective and
then
$$H^1(X_{\mathfrak m}, {\mathcal O}_{\mathfrak m}^{*}) \cong
H^0(X_{\mathfrak m}, {\mathcal D}_{\mathfrak m})/
H^0(X_{\mathfrak m}, {\mathcal M}_{\mathfrak m}^{*}).$$
\par
We investigate the divisor sheaf ${\mathcal D}_{{\mathfrak m}}$ more closely.
Let $L \in H^1(X_{\mathfrak m}, {\mathcal O}_{\mathfrak m}^{*})$.
We denote by ${\mathcal O}_{\mathfrak m}(L)$ the sheaf of germs of holomorphic
sections of $L$ over $X_{\mathfrak m}$. Similarly, 
${\mathcal E}_{\mathfrak m}^{p,q}(L)$ is the sheaf of germs of $L$-valued
$C^{\infty}$ $(p,q)$-forms over $X_{\mathfrak m}$. The following exact
sequence
$$0 \longrightarrow {\mathcal O}_{\mathfrak m}(L) \longrightarrow
{\mathcal E}_{\mathfrak m}^{0,0}(L) \stackrel{\overline{\partial}}{\longrightarrow}
{\mathcal E}_{\mathfrak m}^{0,1}(L) \longrightarrow 0$$
is a fine resolution of ${\mathcal O}_{\mathfrak m}(L)$.
Then we have
$$H^1(X_{\mathfrak m}, {\mathcal O}_{\mathfrak m}(L)) \cong
H^0(X_{\mathfrak m},{\mathcal E}_{\mathfrak m}^{0,1}(L))/
\overline{\partial}H^0(X_{\mathfrak m},{\mathcal E}_{\mathfrak m}^{0,0}(L)),$$
$$H^q(X_{\mathfrak m},{\mathcal O}_{\mathfrak m}(L)) = 0\quad
\text{for \ \ $q \geq 2$}.$$
Let ${\mathcal E}_{\mathfrak m}^{r}$ be the sheaf of germs of $C^{\infty }$
$r$-forms on $X_{\mathfrak m}$.
Then we obtain
$$H^2(X_{\mathfrak m},{\mathbb C}) \cong H^0(X_{\mathfrak m},{\mathcal E}_{\mathfrak m}^2)
/d H^0(X_{\mathfrak m},{\mathcal E}_{\mathfrak m}^1)$$
by the following exact sequence
$$0 \longrightarrow {\mathbb C} \longrightarrow {\mathcal E}_{\mathfrak m}^0
\stackrel{d}{\longrightarrow} {\mathcal E}_{\mathfrak m}^1 \stackrel{d}{\longrightarrow}
{\mathcal E}_{\mathfrak m}^2 \longrightarrow 0,$$
because sheaves ${\mathcal E}_{{\mathfrak m}}^{r}$
$(r = 0, 1, 2)$ are also fine.
It follows from (5.1) that a sequence
$$0 \longrightarrow H^1(X_{\mathfrak m},{\mathcal O}_{\mathfrak m})/
H^1(X_{\mathfrak m},{\mathbb Z}) \longrightarrow H^1(X_{\mathfrak m},{\mathcal O}_
{\mathfrak m}^{*}) \stackrel{c}{\longrightarrow} H^2(X_{\mathfrak m},{\mathbb Z})
\cong {\mathbb Z} \longrightarrow 0$$
is exact. The Chern class $c(L)$ of $L$ is considered as a class in
$H^2(X_{\mathfrak m},{\mathbb C}) \cong {\mathbb C}$. The pull-back
$\rho ^{*}\varphi $ of any $\varphi \in H^0(X_{\mathfrak m},{\mathcal E}_{\mathfrak m}^2)$
is a $C^{\infty}$ 2-form on $X$. We define an epimorphism
$\Phi : H^0(X_{\mathfrak m},{\mathcal E}_{\mathfrak m}^2) \longrightarrow {\mathbb C}$
by
$$\Phi (\varphi ) := \iint _X \rho ^{*}\varphi \quad
\text{for any $\varphi \in H^0(X_{\mathfrak m}, {\mathcal E}_{\mathfrak m}^2)$}.$$
Since $d$ and $\rho ^{*}$ are commutative, we have
$$d H^0(X_{\mathfrak m},{\mathcal E}_{\mathfrak m}^1) \subset
{\rm Ker}\ \Phi .$$
Then we obtain an epimorphism
$$\sigma : H^0(X_{\mathfrak m}, {\mathcal E}_{\mathfrak m}^2)/
d H^0(X_{\mathfrak m},{\mathcal E}_{\mathfrak m}^1) \longrightarrow
H^0(X_{\mathfrak m},{\mathcal E}_{\mathfrak m}^2)/{\rm Ker}\ \Phi .$$
However, we have $H^0(X_{\mathfrak m},{\mathcal E}_{\mathfrak m}^2)/{\rm Ker}\ \Phi
\cong {\mathbb C}$ and
$$H^0(X_{\mathfrak m},{\mathcal E}_{\mathfrak m}^2)/
d H^0(X_{\mathfrak m},{\mathcal E}_{\mathfrak m}^1) \cong
H^2(X_{\mathfrak m},{\mathbb C}) \cong {\mathbb C}.$$
Then, $\sigma $ is an isomorphism and
${\rm Ker}\ \Phi  = d  H^0(X_{\mathfrak m},{\mathcal E}_{\mathfrak m}^1)$.
\par
We suppose that $L \in H^1(X_{\mathfrak m},{\mathcal O}_{\mathfrak m}^{*})$
has a representative $(\xi _{\alpha \beta }) \in Z^1({\mathfrak U},{\mathcal O}_
{\mathfrak m}^{*})$ for some sufficiently fine covering
${\mathfrak U} = \{ U_{\alpha }\}$ of $X_{\mathfrak m}$. We set
$$\sigma _{\alpha \beta } := \frac{1}{2\pi \sqrt{-1}} \log \xi _{\alpha \beta }.$$
Then $(\sigma _{\alpha \beta } )\in C^1({\mathfrak U},{\mathcal O}_{\mathfrak m})$.
The Chern class $c(L)$ of $L$ is the class given by the coboundary $(c_{\alpha \beta \gamma})$ of $(\sigma _{\alpha \beta })$, where
$$c_{\alpha \beta \gamma } = \sigma _{\beta \gamma} - \sigma _{\alpha \gamma} + \sigma _{\alpha \beta }
= \sigma _{\alpha \beta } + \sigma _{\beta \gamma}
+ \sigma _{\gamma \alpha}.$$
We consider $(c_{\alpha \beta \gamma})\in Z^2({\mathfrak U},{\mathbb Z})$
as a 2-cocycle in $Z^2({\mathfrak U},{\mathcal E}_{\mathfrak m}^0)$. Then there
exists $(\sigma '_{\alpha \beta})\in C^1({\mathfrak U},{\mathcal E}_{\mathfrak m}^0)$
such that
$$c_{\alpha \beta \gamma} = \sigma '_{\alpha \beta} + \sigma '_{\beta \gamma}
+ \sigma '_{\gamma \alpha},$$
for $H^2(X_{{\mathfrak m}}, {\mathcal E}_{{\mathfrak m}}^0) = 0$.
Since $(c_{\alpha \beta \gamma })$ is $d$-closed, we have
$(d\sigma '_{\alpha \beta}) \in Z^1({\mathfrak U},{\mathcal E}_{\mathfrak m}^1)$.
Therefore, we can take $(\tau _{\alpha}) \in C^0({\mathfrak U},{\mathcal E}_{\mathfrak m}^1)$ with $d\sigma '_{\alpha \beta} = \tau _{\beta} - \tau _{\alpha}$.
Hence we obtain a global 2-form $\varphi$ defined by $(d\tau _{\alpha})$.
This is a 2-form which is a representative of $c(L)$ in
$H^0(X_{{\mathfrak m}}, {\mathcal E}_{{\mathfrak m}}^2)$. Then we write
$\varphi(L) = \varphi$. In this case $\rho ^{*}\varphi(L)$ is a representative of
the Chern class $c_X(\rho ^{*}L)$ of $\rho ^{*}L$ on $X$. We know
$$\iint _X \rho^{*}\varphi(L) = c_X(\rho^{*}L) \in {\mathbb Z}.$$
We sum up the above facts as follows. The Chern class $c(L)$ of any
$L \in H^1(X_{\mathfrak m},{\mathcal O}_{\mathfrak m}^{*})$ is given by
$$c(L) = \iint _X\rho ^{*}\varphi(L),$$
where $\varphi(L) \in H^0(X_{\mathfrak m},{\mathcal E}_{\mathfrak m}^2)$ is
a representative of $c(L)$.
\par
We obtain the following lemma by a slight modification of the proof in
the non-singular case.

\begin{lemma}
Let $L \in H^1(X_{\mathfrak m},{\mathcal O}_{\mathfrak m}^{*})$.
Assume that $L$ is represented by $(\zeta _{\alpha \beta}) \in
Z^1({\mathfrak U},{\mathcal O}_{\mathfrak m}^{*})$ for some open covering
${\mathfrak U} = \{ U_{\alpha }\}$ of $X_{\mathfrak m}$. For any $\alpha$
there exists a nowhere-vanishing $C^{\infty}$ function $r_{\alpha }$ on
$U_{\alpha }$ such that $\log r_{\alpha}$ has a well-defined branch on
$U_{\alpha}$ and
$$r_{\alpha} = r_{\beta}|\zeta _{\beta \alpha}|^2\quad
\text{on\quad$U_{\alpha}\cap U_{\beta}$}.$$
Then, $\varphi := \frac{1}{2\pi \sqrt{-1}}\partial \overline{\partial}
\log r_{\alpha}$ is a well-defined 2-form on $X_{\mathfrak m}$ and we have
$$c(L) = \iint _X \rho ^{*}\varphi = \frac{1}{2\pi \sqrt{-1}}
\iint _X \partial \overline{\partial}\log (r_{\alpha}\circ \rho ).$$
\end{lemma}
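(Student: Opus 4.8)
The plan is to verify first that $\varphi$ is a well-defined global section of $\mathcal{E}_{\mathfrak m}^2$, then to identify it with a representative of $c(L)$ via the \v{C}ech--de Rham comparison already set up in this section, and finally to invoke the formula $c(L) = \iint_X \rho^{*}\varphi(L)$ proved just above for \emph{any} representative $\varphi(L)$ of $c(L)$. For the global well-definedness, on $U_{\alpha}\cap U_{\beta}$ the relation $r_{\alpha} = r_{\beta}|\zeta_{\beta\alpha}|^2$ yields, after choosing branches,
$$\log r_{\alpha} = \log r_{\beta} + \log\zeta_{\beta\alpha} + \overline{\log\zeta_{\beta\alpha}}.$$
Since $\zeta_{\beta\alpha}$ is holomorphic, $\log\zeta_{\beta\alpha}$ is holomorphic and $\overline{\log\zeta_{\beta\alpha}}$ is antiholomorphic, so both are annihilated by $\partial\overline{\partial}$; hence $\partial\overline{\partial}\log r_{\alpha} = \partial\overline{\partial}\log r_{\beta}$ on the overlap and the local forms patch to a global $2$-form $\varphi$. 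Because each $r_{\alpha}$ is a nowhere-vanishing $C^{\infty}$ function on $U_{\alpha}$ admitting a branch of $\log r_{\alpha}$, the function $\log r_{\alpha}$ lies in $H^0(U_{\alpha},\mathcal{E}_{\mathfrak m}^0)$, so $\varphi \in H^0(X_{\mathfrak m},\mathcal{E}_{\mathfrak m}^2)$.

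Next I would run the zig-zag. Put $\sigma_{\alpha\beta} := \frac{1}{2\pi\sqrt{-1}}\log\zeta_{\alpha\beta}$, exactly as in the construction of $c(L)$ above, and set $\theta_{\alpha} := \frac{1}{2\pi\sqrt{-1}}\partial\log r_{\alpha} \in H^0(U_{\alpha},\mathcal{E}_{\mathfrak m}^1)$. Two computations are needed: on $U_{\alpha}$, using $d = \partial + \overline{\partial}$ and $\overline{\partial}\partial = -\partial\overline{\partial}$, one gets $d\theta_{\alpha} = \frac{1}{2\pi\sqrt{-1}}\overline{\partial}\partial\log r_{\alpha} = -\varphi$; and on $U_{\alpha}\cap U_{\beta}$, since $\overline{\log\zeta_{\beta\alpha}}$ is antiholomorphic, $\theta_{\alpha} - \theta_{\beta} = \frac{1}{2\pi\sqrt{-1}}\partial\log\zeta_{\beta\alpha} = d\sigma_{\beta\alpha} = -d\sigma_{\alpha\beta}$. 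Writing $\lambda_{\alpha} := -\theta_{\alpha}$ we therefore have $d\lambda_{\alpha} = \varphi$ with $\lambda_{\alpha} - \lambda_{\beta} = d\sigma_{\alpha\beta}$, while $\delta(\sigma_{\alpha\beta}) = \sigma_{\alpha\beta}+\sigma_{\beta\gamma}+\sigma_{\gamma\alpha} = c_{\alpha\beta\gamma}$. This is precisely the ladder identifying the de Rham class of $\varphi$ in $H^0(X_{\mathfrak m},\mathcal{E}_{\mathfrak m}^2)/dH^0(X_{\mathfrak m},\mathcal{E}_{\mathfrak m}^1) \cong H^2(X_{\mathfrak m},{\mathbb C})$ with the integral class $[c_{\alpha\beta\gamma}] = c(L)$, so that $\varphi$ is a representative of $c(L)$.

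Finally, applying the already-proved formula $c(L) = \iint_X\rho^{*}\varphi(L)$ to the representative $\varphi$ gives $c(L) = \iint_X\rho^{*}\varphi$, and since $\rho^{*}$ commutes with $\partial$ and $\overline{\partial}$ we have $\rho^{*}\varphi = \frac{1}{2\pi\sqrt{-1}}\partial\overline{\partial}\log(r_{\alpha}\circ\rho)$, which yields the last displayed equality.

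I expect the main obstacle to be the bookkeeping at the singular points: one must confirm that $\log r_{\alpha}$, $\theta_{\alpha}$, and $\varphi$ really are sections of $\mathcal{E}_{\mathfrak m}^0$, $\mathcal{E}_{\mathfrak m}^1$, and $\mathcal{E}_{\mathfrak m}^2$ near each $Q\in\overline{S}$ (this is the only place the argument departs from the classical smooth-manifold computation), and that the branches of $\log\zeta_{\alpha\beta}$ defining $\sigma_{\alpha\beta}$ are consistent with those appearing in $\log r_{\alpha} - \log r_{\beta}$, so that the resulting $2$-cocycle is exactly the integral Chern cocycle of the preceding construction. Both points reduce to the standing hypothesis that $r_{\alpha}$ is $C^{\infty}$ on $X_{\mathfrak m}$ with a well-defined logarithm, after which the computation is the expected \emph{slight modification of the non-singular case}.
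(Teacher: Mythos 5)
The paper gives no proof of this lemma at all: it is stated with the remark that it follows ``by a slight modification of the proof in the non-singular case.'' Your proposal writes out exactly that classical argument (well-definedness of $\varphi$ from $\partial\overline{\partial}$ applied to the cocycle relation, a \v{C}ech--de Rham ladder, then the integral formula $c(L)=\iint_X\rho^{*}\varphi(L)$ established just before the lemma), and every individual computation you perform is correct; in substance this is the intended, omitted proof.

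There is, however, one point you must repair: the claim that your ladder ``is precisely the ladder'' of the paper's construction of $\varphi(L)$ is false as stated, because your zig-zag uses the opposite sign convention from the paper's. The paper's recipe is $d\sigma'_{\alpha\beta}=\tau_{\beta}-\tau_{\alpha}$ with representative $(d\tau_{\alpha})$. Your own computation shows that, with the natural choices $\sigma'_{\alpha\beta}=\sigma_{\alpha\beta}$ and $\tau_{\alpha}=\theta_{\alpha}=\frac{1}{2\pi\sqrt{-1}}\partial\log r_{\alpha}$, one has exactly $d\sigma_{\alpha\beta}=\theta_{\beta}-\theta_{\alpha}$; so the paper's recipe, followed verbatim, produces the representative $(d\theta_{\alpha})=-\varphi$, not $\varphi$. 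Your convention ($\lambda_{\alpha}-\lambda_{\beta}=d\sigma_{\alpha\beta}$, representative $(d\lambda_{\alpha})$) is the other normalization of the isomorphism $H^{2}(X_{\mathfrak m},{\mathbb C})\cong H^{0}(X_{\mathfrak m},{\mathcal E}^{2}_{\mathfrak m})/dH^{0}(X_{\mathfrak m},{\mathcal E}^{1}_{\mathfrak m})$, and the two differ by a sign, so you cannot both invoke the paper's paragraph and conclude with $+\varphi$: one of the two conventions must be singled out. The resolution is that yours is the one compatible with the normalization the section actually needs: with the lemma's sign, Proposition 5.4 (whose proof applies this lemma to $r_{\alpha}=g_{\alpha}^{-1}$) yields $c(L)=\sum_{Q}{\rm ord}_{Q}(f)$, i.e.\ $c$ equals the degree of the divisor of a meromorphic section, and the standard test case (Fubini--Study metric on ${\mathcal O}(1)$ over ${\mathbb P}^{1}$, where $\frac{1}{2\pi\sqrt{-1}}\partial\overline{\partial}\log r_{\alpha}$ integrates to $+1$) confirms it; the paper's displayed convention would instead give $c({\mathcal O}(1))=-1$. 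So either insert this normalization check, or state explicitly that the representative of $c(L)$ is $(-d\tau_{\alpha})=(d\lambda_{\alpha})=\varphi$, i.e.\ that the zig-zag paragraph preceding the lemma carries a sign slip. The other caveat you flag---that $\log r_{\alpha}$, $\theta_{\alpha}$, $\varphi$ really are sections of ${\mathcal E}^{0}_{\mathfrak m}$, ${\mathcal E}^{1}_{\mathfrak m}$, ${\mathcal E}^{2}_{\mathfrak m}$ near $\overline{S}$---is left at the same level of precision as in the paper itself, which never pins down ${\mathcal E}^{r}_{\mathfrak m}$ at the singular points, so it is acceptable here.
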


\begin{remark}
Functions $\{ r_{\alpha}\}$ satisfying the condition in the above lemma
always exist. We can take such functions taking strictly positive values.
\end{remark}
\par
Let $L \in H^1(X_{\mathfrak m},{\mathcal O}_{\mathfrak m}^{*})$.
We denote by ${\mathcal M}_{\mathfrak m}^{*}(L)$ the sheaf of germs of non-zero
meromorphic sections of $L$. For any $f \in H^0(X_{\mathfrak m},{\mathcal M}_
{\mathfrak m}^{*}(L))$ we set
$${\rm ord}_Q(f) = \sum _{P \in \rho ^{-1}(Q)} {\rm ord}_P(f\circ \rho )\quad
\text{for $Q \in X_{\mathfrak m}$}.$$

\begin{proposition}
Let $L \in H^1(X_{\mathfrak m},{\mathcal O}_{\mathfrak m}^{*})$.
Take any $f \in H^0(X_{\mathfrak m},{\mathcal M}_{\mathfrak m}^{*}(L))$.
Then we have
$$c(L) = \sum _{Q \in X_{\mathfrak m}}{\rm ord}_Q(f).$$
\end{proposition}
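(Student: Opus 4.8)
The plan is to reduce the statement to the classical Poincar\'e--Lelong computation carried out on the normalization $X$, using the analytic description of the Chern class furnished by Lemma 5.3. Concretely, represent $L$ by a cocycle $(\zeta _{\alpha \beta }) \in Z^1({\mathfrak U}, {\mathcal O}_{\mathfrak m}^{*})$ and choose, as in Lemma 5.3 and the Remark following it, strictly positive $C^{\infty}$ functions $r_{\alpha }$ on $U_{\alpha }$ with $r_{\alpha } = r_{\beta }|\zeta _{\beta \alpha }|^2$, so that
$$c(L) = \frac{1}{2\pi \sqrt{-1}}\iint _X \partial \overline{\partial }\log (r_{\alpha }\circ \rho ).$$
Write the section $f$ by local meromorphic functions $f_{\alpha }$ on $U_{\alpha }$ satisfying $f_{\alpha } = \zeta _{\alpha \beta }f_{\beta }$, and set $R_{\alpha } := r_{\alpha }\circ \rho $ and $F_{\alpha } := f_{\alpha }\circ \rho $ on $X$. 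First I would observe that the product $H := R_{\alpha }|F_{\alpha }|^2$ is independent of $\alpha $ off the zeros and poles of $\rho ^{*}f$: the two cocycle relations contribute $|\zeta _{\beta \alpha }|^2$ and $|\zeta _{\alpha \beta }|^2$, whose product is $1$, so $H$ is a single globally defined positive $C^{\infty }$ function on $X$ minus that finite set.

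Next, on $X$ minus the support of $\mathrm{div}(\rho ^{*}f)$ each $\log |F_{\alpha }|^2$ is harmonic (locally it is $2\,\mathrm{Re}\log F_{\alpha }$), hence $\partial \overline{\partial }\log |F_{\alpha }|^2 = 0$ there; since $\log R_{\alpha } = \log H - \log |F_{\alpha }|^2$, we get $\partial \overline{\partial }\log R_{\alpha } = \partial \overline{\partial }\log H$ away from the divisor. Let $p_1, \dots , p_m$ be the zeros and poles of $\rho ^{*}f$ on $X$, with $n_i := {\rm ord}_{p_i}(\rho ^{*}f)$, and excise coordinate discs $B_i(\varepsilon )$. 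Because $\partial \overline{\partial }\log R_{\alpha }$ is a genuine smooth global $2$-form on $X$, the integral is the $\varepsilon \to 0$ limit of $\iint _{X \setminus \cup _i B_i(\varepsilon )}\partial \overline{\partial }\log H$; writing $\partial \overline{\partial } = d\overline{\partial }$ and applying Stokes turns this into $-\sum _i \int _{\partial B_i(\varepsilon )}\overline{\partial }\log H$.

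I would finish by a local analysis at each $p_i$: choosing $F_{\alpha } = z^{n_i}u$ with $u$ holomorphic and nonvanishing, one has $\overline{\partial }\log H = \overline{\partial }\log R_{\alpha } + n_i\,d\bar z/\bar z + \overline{\partial }\log |u|^2$, where every term but the middle one is smooth and integrates to $0$ over the shrinking circles, while the middle term gives $\int _{\partial B_i(\varepsilon )}n_i\,d\bar z/\bar z = -2\pi \sqrt{-1}\,n_i$. Collecting signs yields $\iint _X \partial \overline{\partial }\log R_{\alpha } = 2\pi \sqrt{-1}\sum _i n_i$, whence $c(L) = \sum _i n_i$. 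Since every $p_i$ lies over a unique point of $X_{\mathfrak m}$ and ${\rm ord}_Q(f) = \sum _{P \in \rho ^{-1}(Q)}{\rm ord}_P(\rho ^{*}f)$ by definition, the right-hand side equals $\sum _{Q \in X_{\mathfrak m}}{\rm ord}_Q(f)$.

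The computation is essentially classical, so the only real care needed is the bookkeeping of orientations in Stokes' theorem and the verification that $H$ is globally well defined; I expect the latter cancellation of $|\zeta _{\alpha \beta }|^2$ against $|\zeta _{\beta \alpha }|^2$ to be the crux. The points lying over $\overline{S}$ require no special treatment, since they are ordinary points of the normalization $X$ and the summation defining ${\rm ord}_Q$ absorbs them automatically.
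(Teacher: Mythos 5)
Your proof is correct, and it rests on the same key input as the paper's proof, namely Lemma 5.3 together with the Remark following it; but the localization is organized in a genuinely different way, so a comparison is worthwhile. The paper does not work with an arbitrary family $(r_{\alpha })$: it takes the functions $|f_{\alpha }|^{2}$ themselves, modifies them inside fixed neighbourhoods $V_{i}$ of the finitely many points $Q_{i}$ with ${\rm ord}_{Q_{i}}(f)\not= 0$ so as to obtain smooth positive functions $(g_{\alpha })$ still satisfying $g_{\alpha }=|\zeta _{\alpha \beta }|^{2}g_{\beta }$, and applies Lemma 5.3 to these (via $r_{\alpha }=g_{\alpha }^{-1}$). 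The resulting Chern form then vanishes identically outside $\cup _{i}\,\rho ^{-1}(V_{i})$ by harmonicity of $\log |f_{\alpha }\circ \rho |^{2}$, so the integral localizes over fixed domains, and one application of Stokes plus the argument principle, $\frac{1}{2\pi \sqrt{-1}}\int _{\partial (\rho ^{-1}(V_{i}))}d\log (f_{\alpha }\circ \rho )=\sum _{P\in \rho ^{-1}(Q_{i})}{\rm ord}_{P}(f_{\alpha }\circ \rho )$, finishes the proof with no limiting process. You instead keep an arbitrary positive family $(r_{\alpha })$, introduce the globally defined norm function $H=R_{\alpha }|F_{\alpha }|^{2}$, excise shrinking discs around the zeros and poles, and read off the orders from the singular term $n_{i}\,d\bar{z}/\bar{z}$ of $\bar{\partial }\log H$. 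Each route buys something: yours bypasses the paper's smoothing step, which is asserted in one sentence (``modifying $f_{\alpha }$ on $V_{i}$ without changing the functional equation'') and never carried out, whereas the paper's keeps all integrals over fixed regions and so needs no $\varepsilon \to 0$ estimates of boundary terms. Your sign bookkeeping is consistent ($\int _{\partial B_{i}(\varepsilon )}d\bar{z}/\bar{z}=-2\pi \sqrt{-1}$, against the minus sign coming from the orientation of the excised boundary), and your closing remark is exactly right: since the whole computation takes place on the normalization $X$, the points lying over $\overline{S}$ enter only through the definition ${\rm ord}_{Q}(f)=\sum _{P\in \rho ^{-1}(Q)}{\rm ord}_{P}(f\circ \rho )$.
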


\begin{proof}
Let $\{ Q_1, \dots , Q_N\}$ be the set of all $Q \in X_{\mathfrak m}$ such that
${\rm ord}_Q(f) \not= 0$. We show that
$$c(L) = \sum _{i=1}^{N} {\rm ord}_{Q_i}(f).$$
\par
Suppose that $L$ is given by $(\zeta _{\alpha \beta})\in Z^1({\mathfrak U},
{\mathcal O}_{\mathfrak m}^{*})$.
We may assume that the open covering ${\mathfrak U} = \{ U_{\alpha}\}$ of
$X_{\mathfrak m}$ satisfies the following condition:\\
For any $i=1, \dots , N$ there exists an open neighbourhood $V_i$ of $Q_i$ such that
$V_i \subset U_{\alpha _i}$ for some $\alpha _i$ and
$V_i \cap U_{\alpha} = \phi $ if $\alpha \not= \alpha _i$.\\
Let $(f_{\alpha})$ be meromorphic functions which represent
$f \in H^0(X_{\mathfrak m},{\mathcal M}_{\mathfrak m}^{*}(L))$.
Then $|f_{\alpha}|^2$ is a nowhere-vanishing $C^{\infty}$ function on
$U_{\alpha} \setminus \left(\{ Q_1, \dots , Q_N\} \cap U_{\alpha}\right)$,
and satisfies
$$|f_{\alpha}|^2 = |\zeta _{\alpha \beta}|^2 |f_{\beta}|^2.$$
Modifying $f_{\alpha}$ on $V_i$ without changing the above functional equation,
we obtain positive-valued $C^{\infty}$ functions $(g_{\alpha})$ such that
$$g_{\alpha} = |\zeta _{\alpha \beta}|^2 g_{\beta}\ \ 
\text{on\ \ $U_{\alpha} \cap U_{\beta}$},$$
$$g_{\alpha} = |f_{\alpha}|^2 \ \ 
\text{on\ \ $U_{\alpha} \setminus \left( \left( \cup _{i=1}^{N} V_i \right)
\cap U_{\alpha} \right) $ }.$$
By Lemma 5.3 we have
\begin{equation*}
\begin{split}
c(L) & = \frac{1}{2\pi \sqrt{-1}} \iint _X \partial \overline{\partial}
\log (g_{\alpha}\circ \rho )^{-1}\\
 & = \frac{1}{2\pi \sqrt{-1}} \iint _X  \overline{\partial} \partial
\log (g_{\alpha} \circ \rho ).
\end{split}
\end{equation*}
Since
$$\overline{\partial }\partial \log (g_{\alpha }\circ \rho )
= \overline{\partial }\partial \log |f_{\alpha} \circ \rho |^2 = 0$$
on $\rho ^{-1}\left(U_{\alpha } \setminus \left(\left(
\cup _{i = 1}^{N}V_i\right) \cap U_{\alpha }\right) \right)$,
we have
\begin{equation*}
\begin{split}
c(L) & = \sum _{i=1}^{N}\frac{1}{2\pi \sqrt{-1}}\iint _{\rho ^{-1}(V_i)}
\overline{\partial }\partial \log (g_{\alpha } \circ \rho )\\
& = \sum_{i=1}^{N}\frac{1}{2\pi \sqrt{-1}}
\int _{\partial (\rho ^{-1}(V_i))}
\partial \log (g_{\alpha } \circ \rho ).
\end{split}
\end{equation*}
Noting 
$g_{\alpha } \circ  \rho = |f_{\alpha } \circ \rho |^2$ on
$\partial \left(\rho ^{-1}(V_i)\right)$, we obtain
\begin{equation*}
\begin{split}
c(L) & = 
\sum _{i=1}^{N} \frac{1}{2\pi \sqrt{-1}}
\int _{\partial (\rho ^{-1}(V_i))}d \log (f_{\alpha } \circ \rho )\\
& =  \sum _{i=1}^{N}  \sum _{P \in \rho ^{-1}(Q_i)}{\rm ord}_P(f_{\alpha } \circ \rho )\\
& = \sum _{i=1}^{N} {\rm ord}_{Q_i}(f).
\end{split}
\end{equation*}
\end{proof}

\par
An element in $H^0(X_{\mathfrak m},{\mathcal D}_{\mathfrak m})$ is
identified with a divisor
$$D = \sum_{Q \in X_{\mathfrak m}}D(Q)Q,$$
where
$D(Q) = \sum _{P \in \rho ^{-1}(Q)}n_P,\ n_P \in {\mathbb Z}$ with
$|n_P| \geq {\mathfrak m}(P)$ and $n_P n_{P'} >0$ for any
$P, P' \in \rho ^{-1}(Q)$ if $Q \in \overline{S}$ and $D(Q) \not= 0$,
$D(Q) \in {\mathbb Z}$ if $Q \in X_{{\mathfrak m}} \setminus \overline{S}$,
and the number of points $Q$ with $D(Q)\not= 0$ is
finite. We denote by $\widetilde{{\rm Div}}_{\mathfrak m}(X_{\mathfrak m})$
the set of all such divisors. Note that the divisor group ${\rm Div}(X_{\mathfrak m})$
is considered as a subgroup of $\widetilde{{\rm Div}}_{\mathfrak m}(X_{\mathfrak m})$.
Any $f \in {\rm Mer}^{*}(X_{\mathfrak m}) := {\rm Mer}(X_{\mathfrak m}) \setminus
\{ 0 \}$ gives a divisor
$$(f) := \sum _{Q \in X_{\mathfrak m}}{\rm ord}_Q(f)Q$$
in $\widetilde{{\rm Div}}_{\mathfrak m}(X_{\mathfrak m})$.
\begin{remark}
The set
$$\{ f \in {\rm Mer}(X) ; f \equiv c(\overline{S})\ {\rm mod}\ {\mathfrak m}\; \,  
\text{for some multiconstant $c(\overline{S})$}\}$$
is identified with a subset of ${\rm Mer}(X_{\mathfrak m})$.
\end{remark}
\begin{definition}
The divisors $D_1, D_2 \in \widetilde{{\rm Div}}_{\mathfrak m}(X_{\mathfrak m})$
are said to be equivalent if there exists $f \in {\rm Mer}(X_{\mathfrak m})$
such that $D_1 - D_2 = (f)$. In this case we write 
$D_1 \sim _{\mathfrak m} D_2$.
\end{definition}
The relation \lq\lq $\sim _{\mathfrak m}$" is an equivalence relation on
$\widetilde{{\rm Div}}_{\mathfrak m}(X_{\mathfrak m})$.
We denote $[\widetilde{{\rm Div}}_{\mathfrak m}(X_{\mathfrak m})] :=
\widetilde{{\rm Div}}_{\mathfrak m}(X_{\mathfrak m})/\sim _{\mathfrak m}$.
\par
Let $P \in X$. Take a local coordinate $z$ with $z(P) = 0$.
For any $n, m \in {\mathbb Z}$ with $n\leq m$, we define a Laurent polynomial
$$r(z) = \sum _{j=n}^{m}c_j z^j.$$
A function $f \in {\rm Mer}(X)$ is said to have $r(z)$ as a Laurent tail at $P$
if ${\rm ord}_P(f-r) > m$. The following lemma is well-known 
(cf. Chapter VI, $\S $1, Lemma 1.15 in \cite{ref13}).
 
\begin{lemma}
Let $X$ be a compact Riemann surface, and let $P_1, \dots , P_N$ be a finite
number of points in $X$. Take a local coordinate $z_i$ at $P_i$ with $z_i(P_i) = 0$
for $i = 1, \dots , N$. Suppose that a Laurent polynomial $r_i(z_i)$ is
given at each point $P_i$. Then there exists $f \in {\rm Mer}(X)$
such that $f$ has $r_i(z_i)$ as a Laurent tail at $P_i$ for all $i = 1, \dots , N$.
\end{lemma}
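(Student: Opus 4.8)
The plan is to prove this via a cohomological argument, reducing the existence of the desired meromorphic function to the vanishing of an obstruction in a suitable cohomology group. First I would encode the prescribed Laurent tails as a global datum. For each $i$, the Laurent polynomial $r_i(z_i) = \sum_{j=n_i}^{m_i} c_j z_i^j$ defines a germ of a meromorphic function at $P_i$; by subtracting off its local principal part we may as well reduce to the case where each $r_i$ has only nonnegative exponents, so that what must be matched is a finite jet at each $P_i$. Concretely, choose a divisor $E \geq 0$ supported on $\{P_1,\dots,P_N\}$ with $E(P_i) > m_i$ for every $i$; then the condition that $f$ has $r_i$ as a Laurent tail at $P_i$ is exactly the condition that $f - r_i \in \mathcal{O}_{X}(E)_{P_i}$ at each point, i.e. that the image of $f$ in the skyscraper quotient $\bigoplus_i \mathcal{O}_X(E)_{P_i}/\mathcal{O}_{X,P_i}$ equals the prescribed class coming from the $r_i$.

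The key step is the short exact sequence of sheaves on $X$
$$
0 \longrightarrow \mathcal{O}_X \longrightarrow \mathcal{L}(E)
\longrightarrow \mathcal{Q} \longrightarrow 0,
$$
where $\mathcal{L}(E)$ is the sheaf associated to $E$ (as in Section 2.4, but now on the Riemann surface $X$) and $\mathcal{Q}$ is the skyscraper quotient supported on the $P_i$, whose stalk at $P_i$ records the allowable principal parts up to order $E(P_i)$. Passing to the long exact cohomology sequence gives
$$
H^0(X, \mathcal{L}(E)) \longrightarrow H^0(X, \mathcal{Q})
\stackrel{\partial}{\longrightarrow} H^1(X, \mathcal{O}_X).
$$
The prescribed data $(r_i)$ determines an element $s \in H^0(X, \mathcal{Q})$, and a global meromorphic function $f$ with the required Laurent tails exists precisely when $s$ lies in the image of the first map, i.e. when $\partial(s) = 0$ in $H^1(X,\mathcal{O}_X)$.

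The main obstacle is therefore to kill the connecting class $\partial(s)$, and here is where one uses that we are free to enlarge the problem. The trick is to increase the pole order allowed at a single auxiliary point. I would fix a point $P_0 \in X$ distinct from all $P_i$ and replace $E$ by $E + kP_0$ for large $k$; since $\deg(E+kP_0) \to \infty$, the Riemann–Roch theorem on $X$ forces $H^1(X,\mathcal{L}(E+kP_0)) = 0$ once $\deg(E+kP_0) > 2g-2$, so the map $H^0(X,\mathcal{L}(E+kP_0)) \to H^0(X,\mathcal{Q})$ becomes surjective. Any $f$ in the preimage of $s$ now has the prescribed Laurent tails at $P_1,\dots,P_N$ (its behavior at the auxiliary point $P_0$ is irrelevant to the statement), which is exactly what is claimed. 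In summary, the heart of the argument is the surjectivity onto the skyscraper sections, which is a direct consequence of Riemann–Roch and the vanishing of $H^1$ for a divisor of sufficiently large degree; all the remaining bookkeeping with local coordinates and principal parts is routine.
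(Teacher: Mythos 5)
The paper does not actually prove this lemma (it is quoted as well known, with a citation to Chapter VI, \S 1, Lemma 1.15 of \cite{ref13}), so your proposal stands on its own; its overall cohomological strategy is the standard one, but two steps are wrong as written. First, the encoding of the problem fails. The condition that $f$ has $r_i$ as a Laurent tail at $P_i$ is $\mathrm{ord}_{P_i}(f-r_i)>m_i$, i.e. $f-r_i\in\mathcal{L}(-A)_{P_i}$ where $A:=\sum_i(m_i+1)P_i$; it is \emph{not} the condition $f-r_i\in\mathcal{O}_{X,P_i}$. Your skyscraper $\bigoplus_i\mathcal{O}_X(E)_{P_i}/\mathcal{O}_{X,P_i}$ records only principal parts, so it cannot see the coefficients $c_j$ with $0\le j\le m_i$; in fact, after your preliminary reduction to tails with nonnegative exponents, the prescribed class in that quotient is zero and the formulation becomes vacuous (any constant would ``solve'' it). The reduction itself is also unjustified: subtracting off the principal parts presupposes a global meromorphic function with those principal parts, which is a Mittag--Leffler problem carrying the same kind of obstruction you are trying to handle. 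The correct sequence is
$$0\longrightarrow \mathcal{L}(-A)\longrightarrow \mathcal{L}(B)\longrightarrow \mathcal{Q}\longrightarrow 0,\qquad B:=\sum_i\max(-n_i,0)P_i,$$
with $\mathcal{Q}$ the skyscraper of Laurent tails truncated after order $m_i$.

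Second, and fatally, the surjectivity step invokes the vanishing of $H^1$ of the \emph{middle} term, whereas in the long exact sequence the obstruction to lifting sections of $\mathcal{Q}$ lies in $H^1$ of the \emph{kernel}. In your sequence the kernel stays $\mathcal{O}_X$ no matter how large you make $E+kP_0$, and $H^1(X,\mathcal{O}_X)\cong\mathbb{C}^g\neq 0$ for $g\ge 1$. Worse, the vanishing $H^1(X,\mathcal{L}(E+kP_0))=0$ that you invoke forces the connecting map $H^0(X,\mathcal{Q})\to H^1(X,\mathcal{O}_X)$ to be surjective (since $H^1$ of a skyscraper vanishes), so the image of $H^0(X,\mathcal{L}(E+kP_0))$ has codimension exactly $g$: for $g\ge 1$ your map is provably \emph{not} surjective, so this is not a mere gap in justification. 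The repair is to put the auxiliary divisor into the kernel as well: use
$$0\longrightarrow \mathcal{L}(kP_0-A)\longrightarrow \mathcal{L}(kP_0+B)\longrightarrow \mathcal{Q}\longrightarrow 0,$$
where now the stalks at $P_0$ cancel, so $\mathcal{Q}$ is supported only on $P_1,\dots,P_N$ and the behavior of $f$ at $P_0$ is automatically unconstrained. For $k>\deg A+2g-2$, Serre duality on the compact Riemann surface $X$ gives $H^1(X,\mathcal{L}(kP_0-A))\cong H^0(X,\Omega(A-kP_0))^{*}=0$, hence $H^0(X,\mathcal{L}(kP_0+B))\to H^0(X,\mathcal{Q})$ is onto, and any preimage of the section determined by $(r_1,\dots,r_N)$ is the desired $f$. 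With these corrections your argument becomes the standard proof, essentially the one in \cite{ref13} phrased there in the language of Laurent tail divisors.
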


\begin{lemma}
For any $\widetilde{D} \in \widetilde{{\rm Div}}_{\mathfrak m}(X_{\mathfrak m})$
there exists $f \in {\rm Mer}(X_{\mathfrak m})$ such that
$\widetilde{D} - (f) \in {\rm Div}(X_{\mathfrak m})$.
\end{lemma}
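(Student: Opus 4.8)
The plan is to reduce the statement to the interpolation Lemma 5.7. Regarding an element of $\widetilde{{\rm Div}}_{\mathfrak m}(X_{\mathfrak m})$ as a divisor $\widetilde{D} = \sum_{P \in X} n_P P$ on $X$ (so that at each $Q \in \overline{S}$ the integers $n_P$, $P \in \rho^{-1}(Q)$, either all vanish or share a common sign with $|n_P| \geq {\mathfrak m}(P)$), the requirement $\widetilde{D} - (f) \in {\rm Div}(X_{\mathfrak m})$ says precisely that $\widetilde{D} - (f)$ is prime to $S$, i.e. that ${\rm ord}_P(f \circ \rho) = n_P$ for every $P \in S$; the behaviour of $f$ off $S$ is immaterial. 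Thus it suffices to construct $f \in {\rm Mer}(X_{\mathfrak m})$ whose pullback has exactly order $n_P$ at each $P \in S$. The constraints defining $\widetilde{{\rm Div}}_{\mathfrak m}(X_{\mathfrak m})$ are exactly what makes such prescribed orders realizable by a function descending to $X_{\mathfrak m}$.

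First I would fix, for each $Q \in \overline{S}$ with $\rho^{-1}(Q) = \{ P_1, \dots, P_k \}$, a local coordinate $z_i$ at $P_i$ with $z_i(P_i) = 0$, and prescribe a Laurent polynomial $r_i(z_i)$ there as follows. If $\widetilde{D}(Q) = 0$, choose one nonzero constant $c_Q$ and put $r_i := c_Q + 0 \cdot z_i + \cdots + 0 \cdot z_i^{{\mathfrak m}(P_i) - 1}$, so that any $f$ realizing it satisfies ${\rm ord}_{P_i}(f - c_Q) \geq {\mathfrak m}(P_i)$, whence $f \equiv c_Q\ {\rm mod}\ {\mathfrak m}$ and ${\rm ord}_{P_i}(f \circ \rho) = 0 = n_{P_i}$. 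If the $n_P$ over $Q$ are positive (so $n_{P_i} \geq {\mathfrak m}(P_i)$), put $r_i := z_i^{n_{P_i}}$; then ${\rm ord}_{P_i}(f \circ \rho) = n_{P_i}$ and, since $n_{P_i} \geq {\mathfrak m}(P_i)$, $f$ lies in ${\mathcal I}_Q$. If the $n_P$ over $Q$ are negative (so $-n_{P_i} \geq {\mathfrak m}(P_i)$), again put $r_i := z_i^{n_{P_i}}$; then ${\rm ord}_{P_i}(f \circ \rho) = n_{P_i} < 0$ and $1/f$ vanishes at each $P_i$ to order $-n_{P_i} \geq {\mathfrak m}(P_i)$, so $1/f \in {\mathcal I}_Q$. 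In each case the data prescribed over a given $Q$ force a common value there, namely $c_Q$, $0$, or $\infty$ respectively.

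Then I would apply Lemma 5.7 to the finite set $S$ with these Laurent polynomials, obtaining $f \in {\rm Mer}(X)$ having $r_i$ as its Laurent tail at each point of $S$. By construction ${\rm ord}_P(f \circ \rho) = n_P$ for all $P \in S$, so $\widetilde{D} - (f)$ is prime to $S$. The step needing care is to verify $f \in {\rm Mer}(X_{\mathfrak m})$, i.e. that $f$ descends to a meromorphic function on $\overline{X}$ that is locally a ratio of sections of ${\mathcal O}_{\mathfrak m}$ at each $Q \in \overline{S}$. This is exactly what the case analysis secures: in the first case $f \in {\mathbb C} + {\mathcal I}_Q = {\mathcal O}_{{\mathfrak m}, Q}$, in the second $f \in {\mathcal I}_Q \subset {\mathcal O}_{{\mathfrak m}, Q}$, and in the third $f = 1/(1/f)$ with $1/f \in {\mathcal I}_Q \subset {\mathcal O}_{{\mathfrak m}, Q}$; moreover $f$ takes a common value on each fibre $\rho^{-1}(Q)$, so it descends to $\overline{X}$, while at points outside $S$ the condition is automatic because ${\mathcal O}_{\mathfrak m} = \rho_* {\mathcal O}_X$ there. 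Hence $f \in {\rm Mer}(X_{\mathfrak m})$ and $\widetilde{D} - (f) \in {\rm Div}(X_{\mathfrak m})$. The main obstacle is this last verification, and it is precisely where the same-sign and $|n_P| \geq {\mathfrak m}(P)$ conditions built into $\widetilde{{\rm Div}}_{\mathfrak m}(X_{\mathfrak m})$ enter: without them the prescribed local behaviour at the singular points could not arise from a single meromorphic function on the singular curve.
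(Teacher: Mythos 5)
Your proof is correct and takes essentially the same route as the paper: both reduce the statement to the Laurent-tail interpolation Lemma 5.6, prescribing monomial tails $z_i^{n_{P_i}}$ with $|n_{P_i}| \geq {\mathfrak m}(P_i)$ at the points over each $Q \in \overline{S}$ carrying $\widetilde{D}$, and constant-type tails at the remaining points of $S$, so that the resulting function has the required orders and descends to $X_{\mathfrak m}$. The only differences are organizational: the paper constructs one function per point $Q$ with $\widetilde{D}(Q) \neq 0$ (using the tail $1 + z_P^{{\mathfrak m}(P)}$ at the other points of $S$, handling $\widetilde{D}(Q) < 0$ by passing to $-\widetilde{D}$) and then multiplies finitely many such functions, whereas you handle all points in a single application of Lemma 5.6 with negative exponents allowed, and you make explicit the descent verification (germ in ${\mathbb C} + {\mathcal I}_Q$, in ${\mathcal I}_Q$, or the reciprocal of a non-zero-divisor in ${\mathcal I}_Q$) that the paper only asserts.
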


\begin{proof}
Let $Q \in \overline{S}$ be a point with $\widetilde{D}(Q) \not= 0$.
Put $M := \widetilde{D}(Q)$. Consider the case $M > 0$.
We set $\rho ^{-1}(Q) = \{ P_1, \dots , P_N\}$.
For any $P_i$ there exists
$n_i \in {\mathbb N}$ with $n_i \geq {\mathfrak m}(P_i)$
such that $M = \sum _{i=1}^{N}n_i$.
Let $z_i$ be a local coordinate at $P_i$ with
$z_i(P_i) = 0$. Define $r_i(z_i) := z_i^{n_i}$. For any
$P \in S \setminus \{ P_1, \dots , P_N \}$ we set a Laurent polynomial
$$r_P(z_P) := 1 + z_P^{{\mathfrak m}(P)},$$
where $z_P$ is a local coordinate at $P$ with $z_P(P) = 0$.
By Lemma 5.6 we obtain a meromorphic function $f \in {\rm Mer}(X)$ such that
$f$ has $r_i(z_i)$ as a Laurent tail at each $P_i$ and $r_P(z_P)$ as a
Laurent tail at each $P \in S \setminus \{ P_1, \dots , P_N\}$. 
Then $f$ is considered as the pull-back of some
$g \in {\rm Mer}(X_{\mathfrak m})$. It is obvious that
$\widetilde{D} - (g)$ is zero at $Q$.\par
When $M < 0$, we apply the above argument to $- \widetilde{D}$.
Since the number of $Q \in \overline{S}$ with 
$\widetilde{D}(Q) \not= 0$ is finite, 
we obtain the lemma.
\end{proof}

We set
$${\rm Div}^{0}(X_{\mathfrak m}) := \{ D \in {\rm Div}(X_{\mathfrak m}) ;
\deg  D = 0 \}$$
and
$$\widetilde{{\rm Div}}_{\mathfrak m}^{0}(X_{\mathfrak m}) := \{
\widetilde{D} \in \widetilde{{\rm Div}}_{\mathfrak m}(X_{\mathfrak m}) ; 
\deg  \widetilde{D} = 0 \}.$$
We define $\overline{{\rm Div}^{0}(X_{\mathfrak m})} := 
{\rm Div}^{0}(X_{\mathfrak m})/\sim $ and $[\widetilde{{\rm Div}}_{\mathfrak m}^{0}
(X_{\mathfrak m})] := \widetilde{{\rm Div}}_{\mathfrak m}^{0}(X_{\mathfrak m})/
\sim _{\mathfrak m}$.

\begin{proposition}
We have
$$[\widetilde{{\rm Div}}_{\mathfrak m}(X_{\mathfrak m})] \cong
\overline{{\rm Div}(X_{\mathfrak m})}$$
and
$$[\widetilde{{\rm Div}}_{\mathfrak m}^{0}(X_{\mathfrak m})] \cong
\overline{{\rm Div}^{0}(X_{\mathfrak m})}.$$
\end{proposition}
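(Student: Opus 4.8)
The plan is to realize both isomorphisms through the single inclusion homomorphism $\iota : {\rm Div}(X_{\mathfrak m}) \hookrightarrow \widetilde{{\rm Div}}_{\mathfrak m}(X_{\mathfrak m})$ and to show that it descends to a bijection on equivalence classes. The first observation I would make is that the relations of Definition 2.4 and Definition 5.5 are imposed by the \emph{same} condition, namely the existence of $f \in {\rm Mer}(X_{\mathfrak m})$ with $D_1 - D_2 = (f)$; they differ only in the ambient set of divisors on which they are considered. Consequently $D_1 \sim D_2$ immediately implies $D_1 \sim _{\mathfrak m} D_2$, so, since ${\rm Div}(X_{\mathfrak m})$ and $\widetilde{{\rm Div}}_{\mathfrak m}(X_{\mathfrak m})$ are abelian groups and the two relations are congruences, $\iota$ induces a well-defined homomorphism $\overline{\iota} : \overline{{\rm Div}(X_{\mathfrak m})} \longrightarrow [\widetilde{{\rm Div}}_{\mathfrak m}(X_{\mathfrak m})]$. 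It then suffices to prove that $\overline{\iota}$ is bijective.

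For surjectivity I would invoke Lemma 5.7 directly: given any $\widetilde{D} \in \widetilde{{\rm Div}}_{\mathfrak m}(X_{\mathfrak m})$ there is $f \in {\rm Mer}(X_{\mathfrak m})$ with $D := \widetilde{D} - (f) \in {\rm Div}(X_{\mathfrak m})$, whence $\widetilde{D} \sim _{\mathfrak m} D$ and the class of $\widetilde{D}$ is the image under $\overline{\iota}$ of the class of $D$. For injectivity, suppose $D_1, D_2 \in {\rm Div}(X_{\mathfrak m})$ satisfy $D_1 \sim _{\mathfrak m} D_2$, so that $D_1 - D_2 = (f)$ for some $f \in {\rm Mer}(X_{\mathfrak m})$. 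The point to check is that $D_1 - D_2$ is prime to $S$, being a difference of divisors in ${\rm Div}(X_{\mathfrak m})$; hence ${\rm ord}_Q(f) = 0$ for every $Q \in \overline{S}$, so $(f)$ is prime to $S$ and the same $f$ witnesses $D_1 \sim D_2$ in the sense of Definition 2.4. This yields the first isomorphism.

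For the second isomorphism I would rerun the identical argument while tracking degrees. The supplementary fact needed is that every principal divisor has degree zero: for $f \in {\rm Mer}(X_{\mathfrak m})$ one has $\deg (f) = \sum_{Q} {\rm ord}_Q(f) = \sum_{P \in X} {\rm ord}_P(f \circ \rho) = 0$, since $f \circ \rho$ is a meromorphic function on the compact Riemann surface $X$. Thus in the surjectivity step, if $\deg \widetilde{D} = 0$ then $\deg D = \deg \widetilde{D} - \deg (f) = 0$, so $D \in {\rm Div}^0(X_{\mathfrak m})$; and the injectivity step restricts verbatim to degree-zero divisors since the degree is preserved under $\iota$. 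Hence $\overline{\iota}$ carries $\overline{{\rm Div}^0(X_{\mathfrak m})}$ isomorphically onto $[\widetilde{{\rm Div}}_{\mathfrak m}^0(X_{\mathfrak m})]$.

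Overall the statement is essentially a formal consequence of Lemma 5.7, so I anticipate no genuine analytic obstacle. The only points demanding care are the two bookkeeping observations isolated above: that the two equivalence relations literally coincide on divisors prime to $S$ (which simultaneously gives the well-definedness of $\overline{\iota}$ and its injectivity), and that principal divisors have degree zero (which is exactly what keeps the degree-zero subgroups matched). I would be careful to phrase these so that $\overline{\iota}$ is exhibited as a group isomorphism and not merely a bijection of underlying sets.
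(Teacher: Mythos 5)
Your proof is correct and takes essentially the same route as the paper: both arguments rest on Lemma 5.7 together with the observation that the relations $\sim$ and $\sim_{\mathfrak m}$ are given by the identical condition (existence of $f \in {\rm Mer}(X_{\mathfrak m})$ with $D_1 - D_2 = (f)$), the only difference being that the paper uses Lemma 5.7 to define an epimorphism $\Phi : \widetilde{{\rm Div}}_{\mathfrak m}(X_{\mathfrak m}) \longrightarrow \overline{{\rm Div}(X_{\mathfrak m})}$ by choosing a representative prime to $S$ and then identifies ${\rm Ker}\,\Phi$ with the principal divisors, whereas you run the same argument in the opposite direction through the inclusion-induced map $\overline{\iota}$ and verify bijectivity directly. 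Your explicit degree bookkeeping for the second isomorphism (principal divisors have degree zero, so the correspondence restricts to the degree-zero subgroups) is a point the paper dismisses as obvious, and your version is sound.
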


\begin{proof}
We define an epimorphism
$$\Phi : \widetilde{{\rm Div}}_{\mathfrak m}(X_{\mathfrak m}) \longrightarrow
\overline{{\rm Div}(X_{\mathfrak m})}$$
as follows. For any $\widetilde{D} \in \widetilde{{\rm Div}}_{\mathfrak m}(X_
{\mathfrak m})$ there exists $D \in {\rm Div}(X_{\mathfrak m})$ such that
$\widetilde{D} \sim _{\mathfrak m} D$ (Lemma 5.7).
We set $\Phi (\widetilde{D}) := [D]$,
where $[D]$ is the equivalence class of $D$ in
$\overline{{\rm Div}(X_{\mathfrak m})}$.
It is easy to verify that $\Phi $ is well-defined and
$${\rm Ker}\; \Phi = \{ (f) ; f \in {\rm Mer}(X_{\mathfrak m})\}.$$
Then we obtain that 
$[\widetilde{{\rm Div}}_{\mathfrak m}(X_{\mathfrak m})] \cong
\overline{{\rm Div}(X_{\mathfrak m})}$.
\par
The second isomorphism is obvious.
\end{proof}

By Proposition 5.8 we have
\begin{equation*}
\begin{split}
{\rm Pic}(X_{\mathfrak m}) & \cong
H^0(X_{\mathfrak m}, {\mathcal D}_{{\mathfrak m}})/H^0(X_{\mathfrak m}, {\mathcal M}^{*}_{\mathfrak m})\\
& \cong [\widetilde{{\rm Div}}_{\mathfrak m}(X_{\mathfrak m})]\\
& \cong \overline{{\rm Div}(X_{\mathfrak m})}.
\end{split}
\end{equation*}
It follows from Propositions 5.4 and 5.8 that
$${\rm Pic}^{0}(X_{\mathfrak m}) \cong
[\widetilde{{\rm Div}}_{\mathfrak m}^{0}(X_{\mathfrak m})] \cong
\overline{{\rm Div}^{0}(X_{\mathfrak m})}$$
(see Section 3 in \cite{ref2}). The group ${\rm Pic}^{0}(X_{\mathfrak m})$ is
called the Picard variety of $X_{\mathfrak m}$.

\subsection{The structure of Picard varieties}
We study the relation between the Picard variety of $X_{\mathfrak m}$ and that of $X$.
Results stated in this section have been already gotten in \cite{ref2}.
We restate them in detail because of the convenience of readers.
\par
By the projection $\rho : X \longrightarrow X_{\mathfrak m}$ we have the inclusion
${\mathcal O}_{\mathfrak m}^{*} \hookrightarrow \rho _{*}{\mathcal O}_X^{*}$.
Let ${\mathcal S}:= \rho _{*}{\mathcal O}_X^{*} / {\mathcal O}_{\mathfrak m}^{*}$.
It is obvious by the definition of ${\mathcal O}_{\mathfrak m}^{*}$ that
$H^0(X_{\mathfrak m}, {\mathcal S}) \cong {\mathbb C}^m \times
({\mathbb C}^{*})^n$ for some $m,n \in {\mathbb N} \cup \{ 0 \}$.
From an exact sequence
$$\{ 1 \} \longrightarrow {\mathcal O}_{\mathfrak m}^{*} \longrightarrow
\rho _{*} {\mathcal O}_X^{*} \longrightarrow {\mathcal S} \longrightarrow \{ 1 \}$$
the following exact sequence follows:
\begin{equation*}
\begin{split}
0 &\longrightarrow H^0(X_{\mathfrak m},{\mathcal O}_{\mathfrak m}^{*})
\longrightarrow H^0(X_{\mathfrak m}, \rho _{*}{\mathcal O}_X^{*})
\longrightarrow H^0(X_{\mathfrak m},{\mathcal S})\\
&\longrightarrow H^1(X_{\mathfrak m},{\mathcal O}_{\mathfrak m}^{*})
\longrightarrow H^1(X_{\mathfrak m}, \rho _{*} {\mathcal O}_X^{*})
\longrightarrow H^1(X_{\mathfrak m},{\mathcal S}) \longrightarrow \cdots .
\end{split}
\end{equation*}
Since $H^0(X_{\mathfrak m},{\mathcal O}_{\mathfrak m}^{*}) \cong
H^0(X_{\mathfrak m}, \rho _{*}{\mathcal O}_X^{*}) \cong {\mathbb C}^{*}$
and $H^1(X_{\mathfrak m},{\mathcal S}) = 0$, we have an exact sequence
$$0 \longrightarrow H^0(X_{\mathfrak m},{\mathcal S})
\longrightarrow H^1(X_{\mathfrak m},{\mathcal O}_{\mathfrak m}^{*})
\longrightarrow H^1(X_{\mathfrak m}, \rho _{*}{\mathcal O}_X^{*})
\longrightarrow 0.$$
We set ${\mathcal T}:= \rho _{*}{\mathcal O}_X/{\mathcal O}_{{\mathfrak m}}$.
Applying the above argument to ${\mathcal T}$, we obtain the following
exact sequence
$$0 \longrightarrow H^0(X_{\mathfrak m},{\mathcal T})
\longrightarrow H^1(X_{\mathfrak m},{\mathcal O}_{\mathfrak m})
\longrightarrow H^1(X_{\mathfrak m}, \rho _{*}{\mathcal O}_X)
\longrightarrow 0.$$
Let ${\mathcal Z} := \rho _{*}{\mathbb Z}_X/{\mathbb Z}$, where
${\mathbb Z}_X$ is the constant sheaf ${\mathbb Z}$ on $X$.
Then we also obtain an exact sequence
$$0 \longrightarrow H^0(X_{\mathfrak m},{\mathcal Z})
\longrightarrow H^1(X_{\mathfrak m},{\mathbb Z})
\longrightarrow H^1(X_{\mathfrak m}, \rho _{*}{\mathbb Z}_X)
\longrightarrow 0$$
in the same manner. Combining the above diagram with (5.1),
we obtain the following commutative diagram with exact rows and columns
$$\begin{CD}
@. @. 0 @. 0 @. \\
@. @. @VVV @VVV  \\
0 @>>> H^0(X_{{\mathfrak m}}, {\mathcal Z}) @>>>
H^1(X_{{\mathfrak m}},{\mathbb Z}) @>>>
H^1(X_{{\mathfrak m}}, \rho _{*}{\mathbb Z}_X) @>>>0 \\
@. @. @VVV @VVV  \\
0 @>>> H^0(X_{{\mathfrak m}}, {\mathcal T}) @>>>
H^1(X_{{\mathfrak m}},{\mathcal O}_{{\mathfrak m}}) @>>>
H^1(X_{{\mathfrak m}}, \rho _{*}{\mathcal O}_X) @>>>0 \\
@. @. @VVV @VVV  \\
0 @>>> H^0(X_{{\mathfrak m}}, {\mathcal S}) @>>>
H^1(X_{{\mathfrak m}},{\mathcal O}_{{\mathfrak m}}^{*}) @>>>
H^1(X_{{\mathfrak m}}, \rho _{*}{\mathcal O}_X^{*}) @>>>0 \\
@. @. @VVV @VVV   \\
@.@. H^2(X_{{\mathfrak m}}, {\mathbb Z}) @=
H^2(X_{{\mathfrak m}}, \rho _{*}{\mathbb Z}_X)
@= {\mathbb Z}\\
@. @. @VVV @VVV  \\
@. @. 0 @. 0 @. \\
\end{CD}
$$
Therefore we have the following exact sequence
\begin{equation}
0 \longrightarrow H^0(X_{\mathfrak m},{\mathcal S}) \longrightarrow
{\rm Pic}^0(X_{\mathfrak m}) \xrightarrow{\ \sigma \ }
{\rm Pic}^0(X) \longrightarrow 0.
\end{equation}

\begin{theorem}[Theorem 1 in \cite{ref2}]
The above exact sequence (5.3) splits. Therefore we have
$${\rm Pic}^{0}(X_{\mathfrak m}) \cong {\rm Pic}^0(X) \oplus
H^0(X_{\mathfrak m},{\mathcal S}).$$  
\end{theorem}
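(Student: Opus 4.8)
The plan is to produce a splitting of (5.3) as complex Lie groups, i.e.\ a holomorphic homomorphism $s\colon {\rm Pic}^0(X)\to {\rm Pic}^0(X_{\mathfrak m})$ with $\sigma\circ s=\mathrm{id}$; note that as \emph{abstract} groups the splitting is automatic, since $H^0(X_{\mathfrak m},{\mathcal S})\cong{\mathbb C}^m\times({\mathbb C}^{*})^n$ is divisible and hence an injective ${\mathbb Z}$-module, so all the content is analytic. First I would translate the two middle rows of the commutative diagram preceding (5.3) into linear data. Put $V:=H^1(X_{\mathfrak m},{\mathcal O}_{\mathfrak m})$, $W:=H^1(X,{\mathcal O}_X)$, $K:=H^0(X_{\mathfrak m},{\mathcal T})$, let $L$, $M$ denote the images of $H^1(X_{\mathfrak m},{\mathbb Z})$, $H^1(X,{\mathbb Z})$ in $V$, $W$, and $\Lambda:=H^0(X_{\mathfrak m},{\mathcal Z})$. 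Then there is a surjection of ${\mathbb C}$-vector spaces $p\colon V\to W$ with kernel $K$, and a compatible surjection of lattices $L\to M$ with kernel $\Lambda$, so that ${\rm Pic}^0(X_{\mathfrak m})=V/L$, ${\rm Pic}^0(X)=W/M$ and $H^0(X_{\mathfrak m},{\mathcal S})=K/\Lambda$. Splitting (5.3) holomorphically is equivalent to producing a complex subspace $V_1\subset V$ with $p|_{V_1}\colon V_1\xrightarrow{\sim}W$ and $p$ carrying $L\cap V_1$ isomorphically onto $M$, for then $V/L\cong(K/\Lambda)\times\bigl(V_1/(L\cap V_1)\bigr)=H^0(X_{\mathfrak m},{\mathcal S})\oplus{\rm Pic}^0(X)$ as complex Lie groups.

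Next I would isolate a single obstruction. Since $M$ is free, the lattice sequence $0\to\Lambda\to L\to M\to 0$ admits a ${\mathbb Z}$-section $\lambda\colon M\to L$, and since $p$ is surjective I may fix a ${\mathbb C}$-linear section $t_0\colon W\to V$. The difference $h:=\lambda-t_0|_M\colon M\to K$ reduces modulo $\Lambda$ to a homomorphism $\bar h\colon M\to K/\Lambda=H^0(X_{\mathfrak m},{\mathcal S})$ which is independent of the choices and represents the extension class of (5.3). A holomorphic splitting exists exactly when $t_0$ can be corrected by a ${\mathbb C}$-linear map $c\colon W\to K$ with $c|_M\equiv h\pmod{\Lambda}$; one then takes $V_1:=(t_0+c)(W)$. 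Because $M\otimes{\mathbb R}=W$, the map $h$ extends uniquely to an ${\mathbb R}$-linear $h_{\mathbb R}\colon W\to K$, which splits into a ${\mathbb C}$-linear and a ${\mathbb C}$-antilinear part $h_{\mathbb R}=h^{+}+h^{-}$; choosing $c:=h^{+}$, the whole question collapses to whether the antilinear part $h^{-}$ carries $M$ into $\Lambda$.

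The hard part is precisely this vanishing: there is no formal reason for the ${\mathbb R}$-linear extension of a lattice homomorphism to be ${\mathbb C}$-linear, and $\bar h$ is in general a nonzero class in the analytic group $\mathrm{Ext}^1_{\mathrm{Lie}}\bigl({\rm Pic}^0(X),H^0(X_{\mathfrak m},{\mathcal S})\bigr)$, so the splitting must use the special geometry. I would try to establish the vanishing by bringing in the Hodge structure of the \emph{smooth} curve $X$ through the Serre duality of Section~3: dualizing $p$ gives, via Theorem~3.1 with $D=0$, the inclusion $\rho^{*}\colon H^0(X,\Omega)\hookrightarrow H^0(X_{\mathfrak m},\Omega_{\mathfrak m})$ with cokernel $K^{*}$, and I would aim to choose the section by representing each generator of $M=H^1(X,{\mathbb Z})$ by its anti-holomorphic harmonic representative on $X$, so that $h^{-}$ is computed by conjugate periods $\overline{\int_{\gamma}\omega}$ of forms pulled back from $X$, which already lie in $\Lambda$. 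Equivalently, one may invoke the structure theory of connected commutative complex Lie groups used in Section~5.6: $G:={\rm Pic}^0(X_{\mathfrak m})$ is a product of its maximal linear subgroup and a factor carrying no nonconstant holomorphic functions, and since the compact torus ${\rm Pic}^0(X)=G/H^0(X_{\mathfrak m},{\mathcal S})$ admits no nontrivial closed linear subgroup, the maximal linear subgroup is forced to be exactly $H^0(X_{\mathfrak m},{\mathcal S})$ and the complementary factor exactly ${\rm Pic}^0(X)$, which is the asserted decomposition. In either route the genuine obstacle, where the compactness of ${\rm Pic}^0(X)$ and the Hodge data of $X$ are indispensable, is to show that $H^0(X_{\mathfrak m},{\mathcal S})$ is the \emph{entire} linear part of ${\rm Pic}^0(X_{\mathfrak m})$.
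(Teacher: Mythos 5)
Your reduction of (5.3) to lattice data is sound, and your observation that the splitting as \emph{abstract} groups is automatic (divisibility of $H^0(X_{\mathfrak m},{\mathcal S})$) while all the content is holomorphic is correct. The gap is that neither of your two routes to the key vanishing is a proof, and the second rests on a false general principle. A closed subgroup $H \cong {\mathbb C}^{m}\times({\mathbb C}^{*})^{n}$ of a connected commutative complex Lie group $G$ such that $G/H$ is a compact torus need \emph{not} be a direct factor: toroidal groups are exactly the counterexamples. Every non-compact quasi-abelian variety of kind $0$ (Section 5.5 of the paper; for instance the two-dimensional groups $G_a$ with $a\in{\mathbb R}\setminus{\mathbb Q}$ in Section 6.2) sits in an exact sequence
$$0 \longrightarrow ({\mathbb C}^{*})^{k} \longrightarrow G \longrightarrow {\mathbb T} \longrightarrow 0$$
over a compact torus ${\mathbb T}$, and this sequence cannot split, because $G$ is toroidal and hence has no nonconstant holomorphic functions, whereas $({\mathbb C}^{*})^{k}\times{\mathbb T}$ has many. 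Here ${\mathbb T}=G/({\mathbb C}^{*})^{k}$ has no nontrivial closed linear subgroup, yet the fibre $({\mathbb C}^{*})^{k}$ is not complemented; indeed the linear factor in the Remmert--Morimoto decomposition of $G$ is trivial even though closed $({\mathbb C}^{*})^{k}$-subgroups exist. So the inference ``the quotient is a torus, hence $H^0(X_{\mathfrak m},{\mathcal S})$ is the maximal linear subgroup and a complementary factor exists'' is invalid. Your first (Hodge-theoretic) route begs the question: the assertion that the conjugate periods computing $h^{-}$ ``already lie in $\Lambda$'' is precisely the vanishing to be proved, and no argument is given; $\Lambda=H^0(X_{\mathfrak m},{\mathcal Z})$ is a discrete group while periods of holomorphic forms are generic complex numbers, so this cannot be accepted at face value.

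The paper's proof is entirely different and much more elementary: it constructs the section $\tau:{\rm Pic}^0(X)\to{\rm Pic}^0(X_{\mathfrak m})$ of $\sigma$ directly at the \v{C}ech level. One chooses a finite Stein (Leray) covering ${\mathfrak U}=\{U_i\}$ of $X_{\mathfrak m}$ such that each $U_i$ contains at most one point of $\overline{S}$ and, for $i\neq j$, $U_i\cap U_j$ is disjoint from $\overline{S}$ and $\rho^{-1}(U_i)\cap\rho^{-1}(U_j)$ is connected. Then $\rho$ is biholomorphic over every double intersection, so any cocycle $(\tilde g_{ij})\in Z^1(\rho^{-1}({\mathfrak U}),{\mathcal O}_X^{*})$ representing a topologically trivial bundle $\widetilde{L}$ on $X$ descends verbatim to a cocycle $(g_{ij})\in Z^1({\mathfrak U},{\mathcal O}_{\mathfrak m}^{*})$, defining a bundle $L$ on $X_{\mathfrak m}$ with $\rho^{*}L=\widetilde{L}$ and $c(L)=0$; setting $\tau(\widetilde{L}):=L$ gives a homomorphism with $\sigma\circ\tau={\rm id}$. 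The only geometric input is that the finitely many singular points can be isolated inside single members of a Leray covering --- no Hodge theory and no structure theory of complex Lie groups. To salvage your approach you would have to carry out the period computation of your first route explicitly (in effect redoing the computation of Section 6.1); as written, the proposal identifies the obstruction correctly but does not show that it vanishes.
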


\begin{proof}
It suffices to show that there exists a homomorphism
$\tau : {\rm Pic}^0(X) \longrightarrow {\rm Pic}^0(X_{\mathfrak m})$
with $\sigma \circ \tau = id$. We can take a finite open covering
${\mathfrak U} = \{ U_i\}$ of $X_{\mathfrak m}$ consisting of Stein
open sets with the following properties.\\
(i) Each $U_i$ has at most one point in $\overline{S}$.\\
(ii) If $i \not= j$, then $(U_i \cap U_j) \cap \overline{S} = \phi$
and $\rho ^{-1}(U_i)\cap \rho ^{-1}(U_j)$ has the only one component.\\
Let $\widetilde{{\mathfrak U}} := \{ \rho ^{-1}(U_i)\}$.
Since any $V := U_{i_0} \cap \cdots \cap U_{i_N}$ is a pure
1-dimensional Stein space, we have
$$H^2(V, {\mathbb Z}) = 0\quad \text{and \quad
$H^1(V, {\mathcal O}_{{\mathfrak m}}) = 0$.}$$
Then $H^1(V, {\mathcal O}_{{\mathfrak m}}^{*}) = 0$.
This shows that ${\mathfrak U}$ is a Leray covering of
$X_{{\mathfrak m}}$ for ${\mathcal O}_{{\mathfrak m}}^{*}$.
Similarly $\widetilde{{\mathfrak U}}$ is a Leray covering
of $X$ for ${\mathcal O}_X^{*}$.
Then we have
$$H^1(X_{\mathfrak m},{\mathcal O}_{\mathfrak m}^{*}) \cong
H^1({\mathfrak U},{\mathcal O}_{\mathfrak m}^{*})\quad
\text{and\quad $H^1(X,{\mathcal O}_X^{*}) \cong
H^1(\widetilde{{\mathfrak U}},{\mathcal O}_X^{*})$}.$$
\par
Take any $x \in {\rm Pic}^0(X)$. We assume that $x$ is the isomorphic class
of topologically trivial holomorphic line bundle $\widetilde{L}$ on $X$.
Let $(\tilde{g}_{ij}) \in Z^1(\widetilde{{\mathfrak U}},{\mathcal O}_X^{*})$
be a 1-cocycle which gives $\widetilde{L}$. Since $U_i \cap U_j$ has no
point in $\overline{S}$ for $i,j$ with $i \not= j$, 
$\rho ^{-1}(U_i) \cap \rho ^{-1}(U_j)$ is biholomorphic onto
$U_i \cap U_j$ by $\rho $. Then there exists a nowhere-vanishing
holomorphic function $g_{ij}$ on $U_i \cap U_j$ such that 
$\tilde{g}_{ij} = g_{ij}\circ \rho $.
Let $L$ be the line bundle on $X_{\mathfrak m}$ determined by
$(g_{ij}) \in Z^1({\mathfrak U},{\mathcal O}_{\mathfrak m}^{*})$.
Then we have $\widetilde{L} = \rho ^{*}L$.
Since $c_X(\widetilde{L}) = 0$, we have $c(L) = 0$.
Let $y$ be the isomorphic class in which $L$ is contained.
We set $\tau (x) := y$. It is easy to check that
$\tau : {\rm Pic}^0(X) \longrightarrow {\rm Pic}^0(X_{\mathfrak m})$
is well-defined 
for any $\widetilde{L}$ in any class of ${\rm Pic}^0(X)$ is
the pull-back of some line bundle $L$ on $X_{{\mathfrak m}}$.
\end{proof}

We also gave another proof of the above theorem in \cite{ref2}
(see Section 4 in \cite{ref2}).

\subsection{Albanese varieties}
We defined the Albanese variety ${\rm Alb}(X)$ of a compact Riemann surface
$X$ in Section 5.1. We know that ${\rm Pic}^0(X)$,
$\overline{{\rm Div}^0(X)}$ and ${\rm Alb}(X)$ are isomorphic one another.
\par
In this section we consider Albanese varieties for singular curves.
Let $X_{\mathfrak m}$ be a singular curve of genus $\pi = g + \delta $ as
in Section 2, where $g$ is the genus of $X$. Take a basis
$\{ \omega _1, \dots , \omega _{\pi}\}$ of $H^0(X_{\mathfrak m},\Omega _{\mathfrak m})$.
We fix a canonical homology basis $\{ \alpha _1, \beta _1, \dots , \alpha _g,
\beta _g \}$ of $X$. Let $S = \{ P_1, \dots , P_s\}$.
We denote by $\gamma _j$ a small circle centered at $P_j$ with anticlockwise
direction for $j = 1, \dots ,s$. Then the set $\{ \alpha _1, \beta _1, \dots ,
\alpha _g, \beta _g, \gamma _1, \dots , \gamma _{s-1}\}$ forms a basis of
$H_1(X \setminus S, {\mathbb Z}) = H_1(X_{\mathfrak m} \setminus \overline{S},
{\mathbb Z})$. Let $H^0(X_{\mathfrak m},\Omega _{\mathfrak m})^{*}$ be the dual space of
$H^0(X_{\mathfrak m},\Omega _{\mathfrak m})$. We set
$$A := H^0(X_{\mathfrak m},\Omega _{\mathfrak m})^{*}/
H_1(X_{\mathfrak m} \setminus \overline{S},{\mathbb Z}).$$
In \cite{ref15} it is stated that
$A$ has the unique algebraic structure in which the following exact
sequence is algebraic:
$$0 \longrightarrow H \longrightarrow A \longrightarrow {\rm Alb}(X)
\longrightarrow 0,$$
where $H$ is an affine algebraic group.
Rosenlicht \cite{ref15} called it the generalized Jacobi variety of
$X_{\mathfrak m}$. However, this is one of structures which can be induced to $A$.
In \cite{ref2}, we called $A$ with the above algebraic structure the algebraic
Albanese variety of $X_{\mathfrak m}$, and wrote it ${\rm Alb}^{alg}(X_{\mathfrak m})$.
\par
We are interested in their analytic structure. We begin by considering $A$ explicitly.
We may assume that $\{ \rho ^{*}\omega _1, \dots , \rho ^{*}\omega _g\}$ is a basis
of $H^0(X,\Omega )$. Consider $2g + s -1$ vectors
$$\left(\int _{\alpha _i}\rho ^{*}\omega _1, \dots
, \int _{\alpha _i}\rho ^{*}\omega _{\pi }\right),\quad i = 1, \dots , g,$$
$$\left(\int _{\beta _i}\rho ^{*}\omega _1, \dots
, \int _{\beta _i}\rho ^{*}\omega _{\pi }\right),\quad i = 1, \dots , g$$
and
$$\left(\int _{\gamma _j}\rho ^{*}\omega _1, \dots
, \int _{\gamma _j}\rho ^{*}\omega _{\pi }\right),\quad j = 1, \dots , s-1$$
in ${\mathbb C}^{\pi}$. Let $\Gamma $ be a subgroup of ${\mathbb C}^{\pi}$
generated by these vectors over ${\mathbb Z}$.
We have the following lemma as in the non-singular case.

\begin{lemma}
There exist $a_1, \dots , a_{\pi} \in X \setminus S$ such that if
$$\rho ^{*}\omega (a_1) = \cdots = \rho ^{*}\omega (a_{\pi }) = 0,$$
then $\omega = 0$ for $\omega \in H^0(X_{\mathfrak m},\Omega _{\mathfrak m})$.
\end{lemma}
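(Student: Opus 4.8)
The plan is to choose the points $a_1, \dots, a_\pi$ one at a time by a greedy procedure, cutting down at each stage, by exactly one, the dimension of the subspace of differentials vanishing at the points already chosen. First I would record the dimension count: combining Theorem 3.13 (the second version of Riemann--Roch) applied to $D = 0$ with Proposition 2.7 and the identity $\Omega_{\mathfrak m}(0) = \Omega_{\mathfrak m}$, one gets $\dim H^0(X_{\mathfrak m}, \Omega_{\mathfrak m}) = \pi$. Write $V := H^0(X_{\mathfrak m}, \Omega_{\mathfrak m})$, so $\dim V = \pi$.

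Next I would set up the evaluation functionals. For any nonzero $\omega \in V$, the pullback $\rho^*\omega$ is a nonzero meromorphic $1$-form on the compact surface $X$ which is holomorphic on $X \setminus S$; hence it has only finitely many zeros there. For a point $a \in X \setminus S$, fix a local coordinate $z$ with $z(a) = 0$ and write $\rho^*\omega = h_\omega\, dz$ near $a$; then $\ell_a(\omega) := h_\omega(a)$ is a $\mathbb{C}$-linear functional on $V$ whose kernel $\{ \omega \in V ; \rho^*\omega(a) = 0 \}$ is independent of the choice of $z$.

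Then I would construct the points inductively. Set $V_0 := V$. Suppose $a_1, \dots, a_k \in X \setminus S$ have been chosen so that $V_k := \{ \omega \in V ; \rho^*\omega(a_1) = \cdots = \rho^*\omega(a_k) = 0 \}$ has dimension $\pi - k$. If $k < \pi$, then $V_k \not= 0$, so there is a nonzero $\omega_k \in V_k$; since $\rho^*\omega_k$ has only finitely many zeros on $X \setminus S$, the cofinite set $X \setminus (S \cup \{ a_1, \dots, a_k \})$ contains a point $a_{k+1}$ at which $\rho^*\omega_k$ does not vanish. Thus $\ell_{a_{k+1}}$ restricts to a nonzero functional on $V_k$, and $V_{k+1} = \ker(\ell_{a_{k+1}}|_{V_k})$ has dimension $\pi - k - 1$. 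After $\pi$ steps I obtain points $a_1, \dots, a_\pi \in X \setminus S$ with $V_\pi = 0$, which is precisely the assertion.

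The construction is essentially forced, so there is no deep obstacle here; the only point requiring care is the inductive step, namely that a single nonzero section of $\Omega_{\mathfrak m}$ never vanishes identically on $X \setminus S$. This rests on the fact that $\rho^*\omega$ is a nonzero meromorphic $1$-form on the compact Riemann surface $X$ and therefore has a finite zero set, which always leaves room to select the next point $a_{k+1}$ in the cofinite set $X \setminus (S \cup \{ a_1, \dots, a_k \})$.
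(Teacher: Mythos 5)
Your proof is correct and takes essentially the same approach as the paper: the paper sets $H_a := \{ \omega \in H^0(X_{\mathfrak m},\Omega_{\mathfrak m}) ; \rho^{*}\omega (a) = 0 \}$, observes that each $H_a$ has codimension one and that $\bigcap_{a \in X \setminus S} H_a = \{ 0 \}$ (because a nonzero section pulls back to a nonzero meromorphic $1$-form with finitely many zeros), and then selects $a_1, \dots , a_{\pi}$ so that $\bigcap_{i=1}^{\pi} H_{a_i} = \{ 0 \}$. Your greedy dimension-descent construction is exactly the justification of that last selection step, just written out in full detail.
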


\begin{proof}
For any $a \in X \setminus S$ we set
$$H_a := \{ \omega \in H^0(X_{{\mathfrak m}}, \Omega _{{\mathfrak m}})
; \rho ^{*}\omega (a) = 0 \}.$$
Then $H_a$ is a linear subspace of
$H^0(X_{{\mathfrak m}}, \Omega _{{\mathfrak m}})$ with
$\dim H_a = \pi - 1$. Since
$\cap _{a \in X \setminus S}H_a = \{ 0 \}$, we can take
$a _1 , \dots , a_{\pi } \in X \setminus S$ such that
$\cap _{i = 1}^{\pi } H_{a_i} = \{ 0 \}$.
\end{proof}

\begin{proposition}
$\Gamma $ is a discrete subgroup of ${\mathbb C}^{\pi }$.
\end{proposition}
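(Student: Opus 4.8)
The plan is to prove discreteness of $\Gamma$ by projecting $\mathbb{C}^{\pi}$ onto the ``holomorphic'' coordinates, where the image becomes the classical period lattice of $X$, and then controlling the complementary ``residue'' directions coming from the loops $\gamma_j$. Since discreteness of a subgroup of $\mathbb{C}^{\pi}$ is preserved under $\mathbb{C}$-linear automorphisms, I am free to choose the basis $\{\omega_1, \dots, \omega_{\pi}\}$ of $H^0(X_{\mathfrak m}, \Omega_{\mathfrak m})$ conveniently, keeping the standing normalization that $\{\rho^{*}\omega_1, \dots, \rho^{*}\omega_g\}$ is a basis of $H^0(X, \Omega)$. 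For the remaining $\delta$ forms I observe that condition $(*)$ applied to $f \equiv 1$ gives $\sum_{P \in \rho^{-1}(Q)}{\rm Res}_P(\rho^{*}\omega) = 0$ for every $Q \in \overline{S}$, so every residue vector of a form in $\Omega_{\mathfrak m}$ lies in the rational subspace $W := \{(r_P)_{P\in S} : \sum_{P \in \rho^{-1}(Q)} r_P = 0 \text{ for all } Q \in \overline{S}\}$; conversely every element of $W$ is realized, since the higher-order instances of $(*)$ constrain only the polar parts of order $\geq 2$ and leave the residues free. As $W$ is cut out by integer equations it has an integral basis, so I choose $\omega_{g+1}, \dots, \omega_{\pi}$ to consist of elementary third-kind differentials with residues in $\{0, \pm 1\}$ together with higher-pole differentials of vanishing residue filling out the dimension $\delta$; then every ${\rm Res}_P(\rho^{*}\omega_k)$ is an integer.

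Next I introduce the projection $p : \mathbb{C}^{\pi} \longrightarrow \mathbb{C}^{g}$ onto the first $g$ coordinates. Because $\int_{\gamma_j}\rho^{*}\omega = 2\pi\sqrt{-1}\,{\rm Res}_{P_j}(\rho^{*}\omega)$ and $\rho^{*}\omega_1, \dots, \rho^{*}\omega_g$ are holomorphic on $X$, each $v_{\gamma_j}$ lies in $\ker p = \{0\}^{g} \times \mathbb{C}^{\delta}$, while the images $p(v_{\alpha_i})$ and $p(v_{\beta_i})$ are exactly the periods of $X$ with respect to $\rho^{*}\omega_1, \dots, \rho^{*}\omega_g$. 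Hence $p(\Gamma) = \Lambda$, the period lattice of $X$ from Section 5.1, which is a lattice of full rank $2g$ by Riemann's bilinear relations and in particular discrete. Since the $2g$ vectors $p(v_{\alpha_i}), p(v_{\beta_i})$ form a $\mathbb{Z}$-basis of $\Lambda$, any element of $\Gamma \cap \ker p$ must have vanishing $\alpha$- and $\beta$-coefficients, so $\Gamma \cap \ker p$ is generated by $v_{\gamma_1}, \dots, v_{\gamma_{s-1}}$; by the chosen basis these have entries in $2\pi\sqrt{-1}\,\mathbb{Z}$, whence $\Gamma \cap \ker p \subset 2\pi\sqrt{-1}\,\mathbb{Z}^{\delta}$ is discrete.

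I then conclude with the elementary fact that a subgroup is discrete once both its image and its intersection with the kernel of a continuous homomorphism are discrete: if $w_n \in \Gamma$ tends to $0$, then $p(w_n) \to 0$ in the discrete group $p(\Gamma)$ forces $p(w_n) = 0$ for large $n$, so $w_n \in \Gamma \cap \ker p$, which is discrete, giving $w_n = 0$ for large $n$; thus $0$ is isolated in $\Gamma$. I expect the only genuine obstacle to be the first step, namely verifying that the residues sweep out the \emph{entire} rational subspace $W$ so that they may be normalized to integers; once this realizability is established, the projection together with the two discreteness inputs (the classical lattice $\Lambda$ and the integral residue lattice) combine in a routine way.
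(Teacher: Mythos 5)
Your proposal is correct, but it follows a genuinely different route from the paper's. The paper proves discreteness as a consequence of its generalized Abel theorem: using Lemma 5.10 it chooses points $a_1, \dots , a_{\pi}$ so that $M = (\varphi _{ij}(a_j))$ is invertible, gets a local biholomorphism $F$ from $U_1 \times \cdots \times U_{\pi}$ onto a neighbourhood $W$ of $0$, and rules out a nonzero $t = F(x) \in \Gamma \cap W$ by producing a $1$-chain $c$ with $\partial c = \sum (x_i - a_i)$ and vanishing periods, invoking Theorem 4.2 to get $f$ with $(f) = \sum (x_i - a_i)$ and $f \equiv c(\overline{S})\ {\rm mod}\ {\mathfrak m}$, and then contradicting the invertibility of $M$ via the residue theorem applied to $f\rho ^{*}\omega _i$. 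You instead split off the holomorphic directions by the projection $p$, reduce to two independent discreteness inputs --- the classical full-rank period lattice of the compact surface $X$ (Riemann bilinear relations) and the integrality of the $\gamma _j$-periods after normalizing residues --- and glue them by an elementary group-theoretic argument; all three steps of that gluing (the computation of $p(\Gamma )$, the identification of $\Gamma \cap \ker p$, and the final extension argument) are sound. What your route buys: it avoids Theorem 4.2 and Lemma 5.10 entirely, showing that discreteness is a soft linear-algebraic fact, and it anticipates the normalized period matrices the paper only derives later (Proposition 5.14 and Theorem 6.1), where precisely your normalization $\int _{\gamma _i}\rho ^{*}\omega _{g+j} \in \{ 0, \pm 1\}$ (up to the factor $2\pi \sqrt{-1}$) appears. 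What the paper's route buys: it needs no choice of special basis, stays entirely inside the machinery it has built, and the same Abel-theorem pattern is reused immediately afterwards for injectivity of the period map (Proposition 5.16) and surjectivity (Theorem 5.17).

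One step you should tighten: the claim that every vector of $W$ is realized as a residue vector is not justified by saying that the higher-order instances of $(*)$ ``leave the residues free''; realizability is an existence theorem, not a counting of constraints. The correct and sufficient input is the classical existence of elementary differentials of the third kind with two simple poles in the same fibre $\rho ^{-1}(Q)$ and residues $+1$, $-1$ (for instance Theorem II.5.1 in Farkas--Kra, the very reference the paper cites when it performs the same normalization in Proposition 5.14). Since your basis consists only of such differentials, of holomorphic forms, and of second-kind differentials with vanishing residues spanning the kernel of the residue map, that single citation closes the gap and your argument is complete.
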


\begin{proof}
We take points $a_1, \dots , a_{\pi } \in X \setminus S$ possessing the 
property in Lemma 5.10. Let $(U_j,z_j)$ be a coordinate neighbourhood of
$a_j$ with $z_j(a_j) = 0$. We may assume that $\{ U_j\}$ is pairwise disjoint
and $U_j$ is simply connected. We have a representation of $\rho ^{*}\omega _i$
on $U_j$ such as
$$\rho ^{*} \omega _i = \varphi _{ij}dz_j,\quad 
\varphi _{ij} \in H^0(U_j, {\mathcal O}_{X}).$$
Let $M:= \left(\varphi _{ij}(a_j)\right) _{1 \leq i,j \leq \pi }$.
Then $M$ 
is of ${\rm rank}\ \pi $. In fact, if ${\rm rank}\ M < \pi $, then there exist
$c_1, \dots , c_{\pi } \in {\mathbb C}$ with
$(c_1, \dots , c_{\pi }) \not= (0, \dots , 0)$ such that
$$\sum _{i=1}^{\pi } c_i \varphi _{ij}(a_j) = 0$$
for any $j = 1, \dots , \pi $.
We set $\omega _0 := \sum _{i=1}^{\pi }c_i\omega _i$.
Then we have
$$\rho ^{*}\omega _0 (a_1) = \cdots = \rho ^{*}\omega _0 (a_{\pi }) = 0.$$
Therefore $\omega _0 = 0$ by Lemma 5.10.
This contradicts that $\omega _1, \dots , \omega _{\pi }$ is a basis of
$H^0(X_{{\mathfrak m}}, \Omega _{{\mathfrak m}})$.\par
For any 
$x = (x_1, \dots , x_{\pi }) \in U_1 \times \cdots \times U_{\pi }$ we define
$$F_i(x) := \sum _{j=1}^{\pi } \int _{a_j}^{x_j} \rho ^{*}\omega _i,\quad
i = 1, \dots , \pi .$$
Then we have a holomorphic map
$$F : U_1 \times \cdots \times U_{\pi } \longrightarrow {\mathbb C}^{\pi },
\quad F(x) := (F_1(x), \dots , F_{\pi }(x)).$$
The jacobian matrix of $F$ is
$$J_F(x) := \left( \frac{\partial F_i}{\partial z_j}(x)\right) =
\left(\varphi _{ij}(x)\right).$$
Let $a = (a_1, \dots , a_{\pi }) \in U_1 \times \cdots \times U_{\pi }$.
Since $M = J_F(a)$ is invertible, we may assume that the map
$$F : U_1 \times \cdots \times U_{\pi } \longrightarrow
W := F(U_1 \times \cdots \times U_{\pi }) \subset {\mathbb C}^{\pi }$$
is biholomorphic, shrinking $U_1, \dots , U_{\pi }$ if necessarily.
\par
It suffices to show that $\Gamma \cap W = \{ 0 \}$.
Suppose that there exists $t \in \Gamma \cap (W \setminus \{ 0\})$.
Then there exists $x \in U_1 \times \cdots \times U_{\pi }$ with
$x \not= a$ such that $F(x) = t \in \Gamma $. Renumbering if necessarily, we
may assume that $x_j \not= a_j$ for $1 \leq j \leq k$ and $x_j = a_j$ for
$k < j \leq \pi $, where $1 \leq k \leq \pi $.
We define a divisor $D$ prime to $S$ by
$$ D := \sum _{i=1}^{k}(x_i - a_i).$$
Since $F(x) = t \in \Gamma $, there exists
$$\gamma = \sum n_i \alpha _i + \sum m_i \beta _i + \sum \ell _j \gamma _j,
\quad n_i, m_i, \ell _j \in {\mathbb Z}$$
such that
$$t = \left(
\int _{\gamma }\rho ^{*}\omega _1, \dots , \int _{\gamma }\rho ^{*}\omega _{\pi}
\right).$$
Let $c_i$ be a curve from $a_i$ to $x_i$ in $U_i$. We set
$$c := \sum _{i=1}^{k}c_i - \gamma .$$
Then we have $\partial c = D$. For any $j = 1, \dots , \pi $ we have
$$\int _c \rho ^{*}\omega _j = \sum _{i=1}^{k}\int _{c_i}\rho ^{*}\omega _j
- \int _{\gamma }\rho ^{*}\omega _j =
F_j(x) - \int _{\gamma }\rho ^{*}\omega _j = t_j - t_j = 0.$$
Hence we obtain
$$\int _c \rho ^{*}\omega = 0$$
for any $\omega \in H^0(X_{\mathfrak m},\Omega _{\mathfrak m})$.
By Theorem 4.2 (a generalized Abel's theorem) there exists
$f \in {\rm Mer}(X)$ with $f \equiv c(\overline{S})\ {\rm mod}\ {\mathfrak m}$
for some multiconstant $c(\overline{S})$ such that
$$(f) = D = \sum _{i=1}^{k}(x_i - a_i).$$
Let $d_i/z_i$ be the principal part of $f$ at $a_i$. Then
$d_i \not= 0$ for $1 \leq i \leq k$. By the residue theorem we obtain
\begin{equation*}
\begin{split}
0 &= \sum _{P \in X} {\rm Res}_P(f \rho ^{*}\omega _i)\\
&= \sum _{P \in X \setminus S}{\rm Res}_P(f \rho ^{*}\omega _i)
+ \sum _{Q \in \overline{S}} c_Q \sum _{P \in \rho ^{-1}(Q)}
{\rm Res}_P(\rho ^{*}\omega _i)\\
&= \sum _{j=1}^{k}d_j\varphi _{ij}(a_j)
\end{split}
\end{equation*}
for any $i = 1, \dots , \pi $. This contradicts that
$M = (\varphi _{ij}(a_j))$ is invertible.
\end{proof}

By the above proposition we have
$$A = H^{0}(X_{\mathfrak m},\Omega _{\mathfrak m})^{*}/
H_1(X \setminus S, {\mathbb Z}) \cong {\mathbb C}^{\pi}/\Gamma $$
as a complex Lie group. We call $A$ with the structure as a complex
Lie group the analytic Albanese variety of $X_{\mathfrak m}$, and
write it as ${\rm Alb}^{an}(X_{\mathfrak m})$. We shall study its
properties in detail in the following sections.

\subsection{Toroidal groups and quasi-abelian varieties}
A connected complex Lie group $G$ is said to be a toroidal group if
$H^0(G,{\mathcal O}_G) = {\mathbb C}$, where ${\mathcal O}_G$ is the
structure sheaf on $G$. It is well-known that a toroidal
group $G$ is commutative. Then we have $G \cong {\mathbb C}^n/\Lambda $
as a complex Lie group, where $n = \dim G$ and 
$\Lambda $ is a discrete subgroup of
${\mathbb C}^n$ with ${\rm rank}\ \Lambda = n+m$ $(1 \leq m \leq n)$.
Let $\lambda _1 = (\lambda _{11}, \dots , \lambda _{1n}), \dots ,
\lambda _{n+m} = ( \lambda _{n+m, 1}, \dots , \lambda _{n+m, n}) \in
{\mathbb C}^n$ be generators of $\Lambda .$
Then the matrix
$$P =
\begin{pmatrix}
\lambda _{11} & \dots & \lambda _{n+m, 1}\\
\lambda _{12} & \dots & \lambda _{n+m, 2}\\
\hdotsfor[2]{3}\\
\lambda _{1n} & \dots & \lambda _{n+m, n}\\
\end{pmatrix}$$
is called a period matrix of $G$. By a suitable change of
variables and generators we have the following
normal form of $P$
\begin{equation}
P = \left(
\begin{array}{ccc}
0 & I_m & T \\
I_{n-m} & R_1 & R_2
\end{array}
\right),
\end{equation}
where $I_k$ is the unit matrix of degree $k$, the matrix
$(I_m\ \  T)$ is a period matrix of an $m$-dimensional complex torus
and $(R_1\ \  R_2)$ is a real matrix.
We call the coordinates in (5.4) toroidal coordinates of $G$ and write
them as follows
$$z = (z', z'') = (z_1, \dots z_m; z_{m+1}, \dots z_n).$$
The projection $z \mapsto z'$ in these coordinates makes
$G$ a fibre bundle $\sigma : G \longrightarrow {\mathbb T}$ over
an $m$-dimensional complex torus ${\mathbb T}$ with fibres
$({\mathbb C}^{*})^{n-m}$.
\par
Let ${\mathbb R}_{\Lambda }^{n+m}$ be the real linear subspace of
${\mathbb C}^n$ spanned by $\Lambda $. We denote by 
${\mathbb C}_{\Lambda }^{m}$ the maximal complex linear subspace contained
in ${\mathbb R}_{\Lambda }^{n+m}$.
It is well-known that $\dim {\mathbb C}_{\Lambda }^{m} = m$.

\begin{definition}
A toroidal group $G = {\mathbb C}^n/\Lambda $ is a quasi-abelian variety
if there exists a hermitian form ${\mathcal H}$ on ${\mathbb C}^n$ such that\\
(1) ${\mathcal H}$
is positive definite on ${\mathbb C}^{m}_{\Lambda }$,\\
(2) the imaginary part ${\mathcal A} := {\rm Im}\ {\mathcal H}$ of ${\mathcal H}$ is
${\mathbb Z}$-valued on $\Lambda \times \Lambda $.\\
A hermitian form ${\mathcal H}$ satisfying the above conditions (1) and (2)
is said to be an ample Riemann form for $G$. We set
${\mathcal A}_{\Lambda } := {\mathcal A}|_{{\mathbb R}^{n+m}_{\Lambda } \times 
{\mathbb R}^{n+m}_{\Lambda }}$ for an ample Riemann form ${\mathcal H}$.
Since ${\mathcal A}_{\Lambda }$ is an alternating form, we have
$${\rm rank}\ {\mathcal A}_{\Lambda } = 2(m + k),\quad
0 \leq 2k \leq n-m.$$
In this case we say that an ample Riemann form ${\mathcal H}$ is of kind $k$.
\end{definition}

If a quasi-abelian variety $G$ has an ample Riemann form of kind $k$,
then it also has an ample Riemann form of kind $k'$ for any $k'$
with $2k \leq 2k' \leq n-m$ (\cite{ref6}).
Then we defined the kind of a quasi-abelian variety as follows in \cite{ref6}.

\begin{definition}
The kind of a quasi-abelian variety $G$ is the smallest integer $k$ with
$0 \leq 2k \leq n-m$ such that there exists an ample Riemann form of kind $k$ for $G$.
\end{definition}

If $G = {\mathbb C}^n /\Lambda $ is a quasi-abelian variety of kind $0$,
then the matrix $(I_m\ \ T)$ in (5.4) is a period matrix of an $m$-dimensional
abelian variety ${\mathbb A}$. Therefore $G$ has the structure of a fibre
bundle $\sigma : G \longrightarrow {\mathbb A}$ over ${\mathbb A}$ with
fibres $({\mathbb C}^{*})^{n-m}$.
Replacing fibres $({\mathbb C}^{*})^{n-m}$ with 
$({\mathbb P}^{1})^{n-m}$, we obtain the associated bundle
$\overline{\sigma } : \overline{G} \longrightarrow {\mathbb A}$ over
${\mathbb A}$ with fibres $({\mathbb P}^{1})^{n-m}$.
We say that $\overline{G}$ is the standard compactification of a
quasi-abelian variety $G$ of kind $0$.\par
Conversely, if the matrix $(I_m\ \ T)$ in (5.4) is a period matrix of
an $m$-dimensional abelian variety, then $G$ is a quasi-abelian variety of
kind $0$.\par
We refer to \cite{ref4, ref5} for further properties of toroidal groups.

\subsection{Canonical form of analytic Albanese varieties}
By Proposition 5.11 we have
$$A = H^0(X_{\mathfrak m},\Omega _{\mathfrak m})^{*}/
H_1(X \setminus S, {\mathbb Z}) \cong {\mathbb C}^{\pi }/\Gamma $$
as a complex Lie group. The theorem of Remmert-Morimoto says that
$$A \cong {\mathbb C}^{\pi }/\Gamma \cong
{\mathbb C}^p \times ({\mathbb C}^{*})^q \times G,$$
where $G$ is a toroidal group of dimension $r$ and $p + q + r = \pi $.

\begin{proposition}
$G$ is a quasi-abelian variety of kind $0$.
\end{proposition}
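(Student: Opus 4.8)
The plan is to exhibit an ample Riemann form on the toroidal factor $G$ whose positivity is concentrated on the maximal complex subtorus, and to identify that subtorus with the Jacobi variety $J(X)$. By the criterion recorded at the end of Section 5.7, $G$ is quasi-abelian of kind $0$ precisely when the base torus ${\mathbb T}$ of the fibration $\sigma : G \to {\mathbb T}$ is an abelian variety; since ${\mathbb T}$ is isomorphic to the maximal complex subtorus $A_0 = {\mathbb C}^m_{\Gamma}/(\Gamma \cap {\mathbb C}^m_{\Gamma})$ of $G$ (and of $A$, the factors ${\mathbb C}^p$ and $({\mathbb C}^{*})^q$ carrying no positive-dimensional complex subtorus), it suffices to produce a positive-definite Riemann form on $A_0$.

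The source of that form is the canonical principal polarization of $J(X)$. The inclusion $\rho _{*}\Omega \hookrightarrow \Omega _{\mathfrak m}$, i.e. $H^0(X, \Omega ) \hookrightarrow H^0(X_{\mathfrak m}, \Omega _{\mathfrak m})$ with $\rho ^{*}\omega _1, \dots , \rho ^{*}\omega _g$ a basis of $H^0(X, \Omega )$, dualizes to a surjection ${\rm pr} : {\mathbb C}^{\pi } = H^0(X_{\mathfrak m}, \Omega _{\mathfrak m})^{*} \to H^0(X, \Omega )^{*} = {\mathbb C}^{g}$. I would first check that ${\rm pr}$ carries $\Gamma $ onto the period lattice $\Lambda $ of $J(X)$: the generators attached to $\alpha _i, \beta _i$ map to the corresponding periods of $\omega _1, \dots , \omega _g$, while those attached to the small loops $\gamma _j$ map to $0$, because $\omega _1, \dots , \omega _g$ are holomorphic near $S$ and hence $\int _{\gamma _j}\rho ^{*}\omega _i = 0$ for $i \leq g$. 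Thus ${\rm pr}$ descends to the Albanese projection $A \to J(X)$ with kernel $H$ supported on $W := \ker {\rm pr} = \{ 0\}^{g}\times {\mathbb C}^{\delta }$. Let ${\mathcal H}_0$ be a positive-definite Hermitian form on ${\mathbb C}^{g}$ whose imaginary part is the intersection form on $\Lambda $, and set ${\mathcal H} := {\rm pr}^{*}{\mathcal H}_0$. Then ${\mathcal H}$ is positive semidefinite with kernel $W$, and ${\rm Im}\,{\mathcal H}$ is ${\mathbb Z}$-valued on $\Gamma $, since on the $\alpha _i, \beta _i$ generators it reproduces the intersection form and any pairing involving a $\gamma _j$ generator vanishes.

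It remains to see that ${\mathcal H}$ is actually positive definite on ${\mathbb C}^m_{\Gamma }$, equivalently that ${\rm pr}|_{{\mathbb C}^m_{\Gamma }}$ is injective, i.e. ${\mathbb C}^m_{\Gamma }\cap W = 0$. For this I would analyze the toric directions: since $\int _{\gamma _j}\rho ^{*}\omega = 2\pi \sqrt{-1}\,{\rm Res}_{P_j}(\rho ^{*}\omega )$ depends only on residues, and the space of residue data $\{ (c_P)_{P\in S} : \sum _{P\in \rho ^{-1}(Q)}c_P = 0 \}$ is defined over ${\mathbb Q}$, one may choose the third-kind differentials $\omega _{g+1}, \dots , \omega _{\pi }$ with real (indeed rational) residues. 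Then every $\gamma _j$-period vector lies in $\{ 0\}^{g}\times \sqrt{-1}\,{\mathbb R}^{\delta }$, a totally real subspace containing no nonzero complex subspace. A direct computation shows ${\mathbb R}_{\Gamma }\cap W$ equals the real span of these $\gamma _j$-vectors (the $\alpha _i, \beta _i$ generators project to an ${\mathbb R}$-basis of $\Lambda $ and so cannot combine into $W$), whence ${\mathbb C}^m_{\Gamma }\cap W = 0$. Consequently ${\mathcal H}$ restricts to an ample Riemann form on $G$ and is positive definite on ${\mathbb C}^m_{\Gamma }$, so $A_0 \cong {\mathbb T}$ is an abelian variety and $G$ is quasi-abelian of kind $0$.

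The main obstacle is exactly this last step: controlling the toric directions. One must guarantee that the residue periods $\int _{\gamma _j}\rho ^{*}\omega $ span only a totally real subspace of $W$ --- equivalently, that the affine kernel $H$ carries no compact complex subtorus --- and this is where the structure of differentials of the third kind (the prescribability of residues subject to $\sum _{P\in \rho ^{-1}(Q)}{\rm Res}_P = 0$, together with Riemann's residue relation) is indispensable. Everything else is a formal pullback of the principal polarization of $J(X)$ along the Albanese projection.
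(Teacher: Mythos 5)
Your proposal takes a genuinely different route from the paper (which normalizes a period matrix explicitly: residues made $0,\pm 1$ via Farkas--Kra, $\alpha$-periods of $\omega_{g+1},\dots ,\omega_{\pi }$ killed by subtracting holomorphic forms, then a real row operation bringing the matrix to $\bigl(\begin{smallmatrix} I_g & 0 & \tau \\ R_1 & E & R_2 \end{smallmatrix}\bigr)$ and invoking the criterion of Section 5.5), but it has a genuine gap at its decisive step. Exhibiting a Hermitian form $\mathcal{H}=\mathrm{pr}^{*}\mathcal{H}_0$ that is ${\mathbb Z}$-valued on $\Gamma $ and positive definite on ${\mathbb C}^m_{\Gamma }$ proves only that $G$ is a \emph{quasi-abelian variety}; it does not determine the kind. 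By Definitions 5.12 and 5.13, the kind of a given ample Riemann form is the integer $k$ with $\mathrm{rank}\,\mathcal{A}_{\Gamma }=2(m+k)$, where $\mathcal{A}_{\Gamma }=\mathrm{Im}\,\mathcal{H}|_{{\mathbb R}_{\Gamma }}$. For your pullback form, since $\mathrm{pr}({\mathbb R}_{\Gamma })={\mathbb C}^g$ and $\mathcal{A}_0$ is nondegenerate, the kernel of $\mathcal{A}_{\Gamma }$ is exactly ${\mathbb R}_{\Gamma }\cap W$ (the span of the $\gamma _j$-vectors), so $\mathrm{rank}\,\mathcal{A}_{\Gamma }=2g$ and your form has kind $g-m$, not necessarily $0$. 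Thus you still must prove $m=g$. Your argument yields only $m\leq g$ (injectivity of $\mathrm{pr}$ on ${\mathbb C}^m_{\Gamma }$); the reverse inequality --- that ${\mathbb R}_{\Gamma }$ actually contains a $g$-dimensional complex subspace --- is the real content of the proposition and is nowhere established. It is not formal: it requires a ${\mathbb C}$-linear map $\phi :{\mathbb C}^g\rightarrow {\mathbb C}^{\delta }$ whose graph lies in ${\mathbb R}_{\Gamma }$, and that is a constraint on the $\alpha $- and $\beta $-periods of $\omega _{g+1},\dots ,\omega _{\pi }$, not on their residues; controlling the $\gamma $-periods (your ``totally real toric directions'') does not give it. The cusp case makes this vivid: there all residues vanish, so the $\gamma $-vectors are zero and all of your conditions hold trivially, yet $m=g$ holds only because the $\beta $-periods of $\eta $ also vanish once its $\alpha $-periods are normalized to zero --- i.e.\ the bilinear relation $\int _{\beta _j}\rho ^{*}\omega _{g+\ell }=2\pi \sqrt{-1}\sum _{P\in S}h_j(P)\,\mathrm{Res}_P(\rho ^{*}\omega _{g+\ell })$ proved in Section 6.1. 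That relation (equivalently, the paper's normalization producing the block $(I_g\ \ \tau )$ in toroidal coordinates) is precisely the ingredient your ``everything else is a formal pullback'' remark omits.

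A secondary but real error is the reduction you begin with: the base torus ${\mathbb T}$ of $\sigma :G\rightarrow {\mathbb T}$ is a quotient of $G$, not a subgroup, and it is in general not isomorphic to $A_0={\mathbb C}^m_{\Gamma }/(\Gamma \cap {\mathbb C}^m_{\Gamma })$; that quotient is usually noncompact. For the very groups appearing in Section 6.2, with period matrix $\bigl(\begin{smallmatrix} 0 & 1 & \tau \\ 1 & 0 & a \end{smallmatrix}\bigr)$, $a\in {\mathbb R}\setminus {\mathbb Q}$, one has ${\mathbb C}^1_{\Gamma }={\mathbb C}\times \{0\}$ and $\Gamma \cap {\mathbb C}^1_{\Gamma }={\mathbb Z}(1,0)$, so $A_0\cong {\mathbb C}^{*}$; such a $G$ contains no positive-dimensional compact complex subtorus at all. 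Consequently the statement ``it suffices to produce a positive-definite Riemann form on $A_0$'' is unfounded as written. The legitimate way back to the Section 5.5 criterion is exactly the equality $m=g$ discussed above, i.e.\ toroidal coordinates in which the compact block is $(I_g\ \ \tau )$ --- which is what the paper's matrix manipulation, and not a formal pullback of the polarization of $J(X)$, actually achieves.
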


\begin{proof}
We use the same notations as in Section 5.4. Let
$\{ \omega _1, \dots , \omega _{\pi}\}$ be a basis of
$H^{0}(X_{\mathfrak m}, \Omega _{\mathfrak m})$ such that
$\{\rho ^{*}\omega _1, \dots , \rho ^{*}\omega _{g}\}$ is
a basis of $H^{0}(X, \Omega )$.
We may assume that
$$\left(
\begin{array}{cc}
\left( \int _{\alpha _i}\rho ^{*}\omega _j\right)&
\left( \int _{\beta _i}\rho ^{*}\omega _j\right)\\
\end{array}\right)_{1\leq i, j \leq g} =
\left(
\begin{array}{cc}
I_g & \tau \\
\end{array}\right)$$
is a period matrix of ${\rm Alb}(X)$, where $\tau $ is a matrix
in the Siegel upper half space ${\mathfrak S}_g$ of degree $g$.
By a well-known fact concerning meromorphic differentials on a
Riemann surface (for example, see Theorem II.5.1 in \cite{ref9}),
we can take $\omega _{g+1}, \dots , \omega _{\pi}$ such as
\begin{equation}
\int _{\gamma _i}\rho ^{*}\omega _{g+j} = 0,\ 1\ \text{or $-1$}.
\end{equation}
Moreover, we replace $\omega _{g+1}, \dots , \omega _{\pi }$ with
$\omega '_{g+1}, \dots , \omega '_{\pi }$ given by
$$
\begin{pmatrix}
\omega '_{g+1}\\
\vdots \\
\omega '_{\pi }
\end{pmatrix}
:= - 
\left(
\int _{\alpha _i} \rho ^{*}\omega _{g+j}
\right)
\begin{pmatrix}
\omega _1 \\
\vdots \\
\omega _g
\end{pmatrix}
+
\begin{pmatrix}
\omega _{g+1}\\
\vdots \\
\omega _{\pi }
\end{pmatrix}
.$$
Then a period matrix $P$ of $A$ is as follows
$$P = \left(
\begin{array}{ccc}
I_g & 0 & \tau \\
0 & \left( \int _{\gamma _i}\rho ^{*}\omega _{g+j}\right) &
\left( \int _{\beta _i}\rho ^{*}\omega _{g+j}\right)\\
\end{array}\right).$$
Here we write new forms $\omega '_{g+1} , \dots , \omega '_{\pi }$ 
as
$\omega _{g+1} , \dots , \omega _{\pi }$.
We note that the above change of $\omega _{g+1}, \dots , \omega _{\pi }$
preserves the property (5.5).
We set
$$M = \left(
\begin{array}{cc}
I_g & 0 \\
R_1 & I_{\pi - g}\\
\end{array}\right),$$
where $R_1 = - \left({\rm Im}\left(\int _{\beta _i}
\rho ^{*}\omega _{g+j}\right)\right)\left({\rm Im}\tau \right)^{-1}.$
Then we have
$$\widetilde{P} := MP =
\left(
\begin{array}{ccc}
I_g & 0 & \tau \\
R_1 & E &  R_2 \\
\end{array}\right),$$
where $R_1$ and $R_2$ are real matrices and entries of $E$ are
0, 1 or $-1$.
Since $(I_g\ \ \tau )$ is a period matrix of a $g$-dimensional abelian
variety, $G$ is a quasi-abelian variety of kind $0$.
\end{proof}

By the above proposition, $A$ has the following representation
as a complex Lie group
$$A \cong {\mathbb C}^p \times ({\mathbb C}^{*})^q
\times {\mathfrak Q},$$
where ${\mathfrak Q}$ is a quasi-abelian variety of kind $0$.
We call this representation the canonical form of the analytic Albanese variety
of $X_{\mathfrak m}$ and write
$${\rm Alb}^{an}(X_{\mathfrak m}) = {\mathbb C}^p \times ({\mathbb C}^{*})^q
\times {\mathfrak Q}.$$

\subsection{Analytic Albanese varieties}
Let $\omega _1, \dots , \omega _{\pi}$ be a basis of
$H^0(X_{\mathfrak m}, \Omega _{\mathfrak m})$.
We define a period map $\varphi $ with base point $P_0 \in X \setminus S$ by
$$\varphi : X \setminus S \longrightarrow
{\rm Alb}^{an}(X_{\mathfrak m}),\quad
P \longmapsto \left[ \left( \int _{P_0}^{P}\rho ^{*} \omega _1, \dots ,
\int _{P_0}^{P}\rho ^{*} \omega _{\pi}\right)\right].$$
Consider a commutative complex Lie group $G$.
Any holomorphic map $\psi : X \setminus S \longrightarrow G$ can be extended
to a homomorphism on ${\rm Div}(X_{\mathfrak m})$ by
$$\psi (D) := \sum _{P \in X \setminus S} D(P)\psi (P)$$
for $D = \sum _{P \in X \setminus S}D(P)P \in {\rm Div}(X_{\mathfrak m})$.
If $g$ is a meromorphic function on $X$ with
$g \equiv c(\overline{S})\ {\rm mod}\ {\mathfrak m}$ for some multiconstant
$c(\overline{S})$, then
$$\psi ((g)) := \sum _{P \in X \setminus S}{\rm ord}_P(g)\psi (P)$$
is well-defined.

\begin{definition}
We say that a holomorphic map $\psi : X \setminus S \longrightarrow G$
admits ${\mathfrak m}$ for a modulus if $\psi ((f)) = 0$ for any
meromorphic function $f$ on $X$ satisfying
$f \equiv c(\overline{S})\ {\rm mod}\ {\mathfrak m}$ with some multiconstant
$c(\overline{S})$.
\end{definition}

\begin{proposition}
The period map $\varphi : X \setminus S \longrightarrow {\rm Alb}^{an}(X_{\mathfrak m})$
defined above admits ${\mathfrak m}$ for a modulus. Furthermore, it is
a holomorphic embedding if $g \geq 1$.
\end{proposition}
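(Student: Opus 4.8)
The plan is to dispose of the two assertions in turn, after recording that $\varphi$ is holomorphic: on $X\setminus S$ every pulled-back form $\rho^{*}\omega_j$ is holomorphic, so each coordinate $P\mapsto\int_{P_0}^{P}\rho^{*}\omega_j$ is locally holomorphic, and the indeterminacy caused by the choice of path is exactly an element of the period lattice $\Gamma$, so that $\varphi$ is a well-defined holomorphic map into ${\mathbb C}^{\pi}/\Gamma\cong{\rm Alb}^{an}(X_{\mathfrak m})$.

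For the modulus property, take $f\in{\rm Mer}(X)$ with $f\equiv c(\overline{S})\ {\rm mod}\ {\mathfrak m}$. Since $f$ equals the nonzero constant $c_Q$ to high order at each point of $S$, its divisor $(f)$ is prime to $S$, so $(f)\in{\rm Div}(X_{\mathfrak m})$ and $\deg(f)=0$. Writing $\varphi((f))=\sum_P{\rm ord}_P(f)\varphi(P)$ and using $\deg(f)=0$ to cancel the base-point terms, I would identify $\varphi((f))$ with $\big[\big(\int_c\rho^{*}\omega_j\big)_{j}\big]$ for any $1$-chain $c$ with $\partial c=(f)$; a different choice of $c$ alters the integrals by the periods of a cycle, hence by an element of $\Gamma$, so this class is well defined. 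Now the generalized Abel theorem (Theorem 4.2), in the direction that produces a chain from a function, supplies a $1$-chain $c$ with $\partial c=(f)$ and $\int_c\rho^{*}\omega=0$ for every $\omega\in H^0(X_{\mathfrak m},\Omega_{\mathfrak m})$; for this $c$ we get $\varphi((f))=0$, which is the first assertion.

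For the embedding claim, with $g\ge 1$, I would check that $\varphi$ is an immersion and that it is injective. In a local coordinate $z$ at $P\in X\setminus S$, writing $\rho^{*}\omega_j=\varphi_j\,dz$, the differential of $\varphi$ is the vector $(\varphi_1(P),\dots,\varphi_\pi(P))$, so immersivity at $P$ is equivalent to $H^0(X_{\mathfrak m},\Omega_{\mathfrak m}(-P))\subsetneq H^0(X_{\mathfrak m},\Omega_{\mathfrak m})$. I would deduce this from the second Riemann-Roch theorem (Theorem 3.13): because $g\ge 1$ there is no nonconstant meromorphic function on $X$ with a single simple pole at $P$, so $\dim H^0(X_{\mathfrak m},{\mathcal L}_{\mathfrak m}(P))=1=\dim H^0(X_{\mathfrak m},{\mathcal L}_{\mathfrak m}(0))$, and then Theorem 3.13 forces $\dim H^0(X_{\mathfrak m},\Omega_{\mathfrak m}(-P))=\pi-1<\pi=\dim H^0(X_{\mathfrak m},\Omega_{\mathfrak m})$. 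Hence the differential never vanishes and $\varphi$ is an immersion.

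For injectivity, suppose $\varphi(P)=\varphi(P')$ with $P\ne P'$ in $X\setminus S$. Taking a path $c$ from $P'$ to $P$, the hypothesis gives $\big(\int_c\rho^{*}\omega_j\big)_j\in\Gamma$, so subtracting the corresponding cycle yields a $1$-chain $c'$ with $\partial c'=P-P'$ and $\int_{c'}\rho^{*}\omega=0$ for all $\omega\in H^0(X_{\mathfrak m},\Omega_{\mathfrak m})$. Theorem 4.2 then produces $h\in{\rm Mer}(X)$ with $h\equiv c(\overline{S})\ {\rm mod}\ {\mathfrak m}$ and $(h)=P-P'$; such an $h$ is a degree-one map $X\to{\mathbb P}^1$, forcing $g=0$, a contradiction. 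Thus $\varphi$ is injective, and together with the immersion property this gives the holomorphic embedding. I expect the main obstacle to be the immersion step, since it is there that the hypothesis $g\ge 1$ must be converted, via the computation of $\dim H^0(X_{\mathfrak m},{\mathcal L}_{\mathfrak m}(P))$ and Theorem 3.13, into the base-point-freeness of $\Omega_{\mathfrak m}$ on $X\setminus S$; the modulus property and injectivity are comparatively direct consequences of Theorem 4.2.
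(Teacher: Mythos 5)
Your proof is correct and follows essentially the same route as the paper: Theorem 4.2 is invoked exactly as in the paper's proof, once to produce a $1$-chain with vanishing periods giving $\varphi((f))=0$ (the modulus property), and once to rule out $\varphi(P)=\varphi(P')$ by producing a function with divisor $P-P'$, impossible when $g\geq 1$. The only place you go beyond the paper is the immersion step: the paper simply asserts that $\rho^{*}\omega_1,\dots,\rho^{*}\omega_{\pi}$ have no common zero on $X\setminus S$, whereas you justify this via Theorem 3.13 together with $\dim H^0(X_{\mathfrak m},{\mathcal L}_{\mathfrak m}(P))=1$ for $g\geq 1$, which is a correct filling-in of a detail the paper leaves implicit.
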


\begin{proof}
Let $f$ be a meromorphic function on $X$ with
$f \equiv c(\overline{S})\ {\rm mod}\ {\mathfrak m}$ for some
multiconstant $c(\overline{S})$. By Theorem 4.2 (a generalized Abel's
theorem) there exists a 1-chain $c \in C_1(X \setminus S)$ with
$\partial c = (f)$ such that 
$$\int _c \rho ^{*}\omega = 0$$
for any $\omega \in H^0(X_{\mathfrak m}, \Omega _{\mathfrak m})$.
Then we have $\varphi ((f)) = 0$.
\par
Since $\rho ^{*}\omega _1, \dots , \rho ^{*}\omega _{\pi}$ have no
common zero, $d\varphi (P) \not= 0$ at any $P \in X \setminus S$.
Next we show that $\varphi : X \setminus S \longrightarrow
{\rm Alb}^{an}(X_{\mathfrak m})$ is injective.
Assume that there exist two distinct points $P$ and $P'$ in $X \setminus S$
such that $\varphi (P) = \varphi (P')$. Then we have
$\varphi ((P - P')) = 0$. Applying Theorem 4.2 again, we obtain a meromorphic
function $f$ on $X$ with $f \equiv c(\overline{S})\ {\rm mod}\ {\mathfrak m}$
for some multiconstant $c(\overline{S})$ such that $(f) = P - P'$.
Since the genus of $X$ is $g \geq 1$, there does not exist such a function.
Therefore $\varphi $ is injective.
\end{proof}

For a complex manifold $M$, we denote by $M^{(r)}$ its symmetric product of
degree $r$. We can extend $\varphi : X \setminus S \longrightarrow
{\rm Alb}^{an}(X_{\mathfrak m})$ to a holomorphic map
$\varphi : (X \setminus S)^{(r)} \longrightarrow {\rm Alb}^{an}(X_{\mathfrak m})$.

\begin{theorem}
The map $\varphi : (X \setminus S)^{(\pi )} \longrightarrow
{\rm Alb}^{an}(X_{\mathfrak m})$ is surjective.
\end{theorem}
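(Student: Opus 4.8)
The plan is to show that the image $W_\pi := \varphi((X\setminus S)^{(\pi)})$ is all of $A = \mathrm{Alb}^{an}(X_{\mathfrak m}) \cong {\mathbb C}^\pi/\Gamma$. First I would analyze the differential of $\varphi$. At a divisor $D = P_1 + \cdots + P_\pi$ of $\pi$ distinct points of $X\setminus S$, writing $\rho^*\omega_i = \varphi_{ij}\,dz_j$ in a coordinate $z_j$ centred at $P_j$, the differential of $\varphi$ is represented by the matrix $(\varphi_{ij}(P_j))_{1\le i,j\le\pi}$. By Lemma 5.10 and the argument in the proof of Proposition 5.11 we may choose distinct $a_1,\dots,a_\pi\in X\setminus S$ for which this matrix is invertible. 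Hence $\varphi$ is a local biholomorphism near $a_1 + \cdots + a_\pi$, so $W_\pi$ contains a nonempty open neighbourhood $\Omega$ of the point $x_0 := \varphi(a_1 + \cdots + a_\pi)$.

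Next I would exploit the group structure of $A$ together with the additivity $\varphi(D+D') = \varphi(D) + \varphi(D')$. The translate $\Omega - x_0$ is an open neighbourhood of $0$ contained in the subgroup $B$ generated by $\{\varphi(P) - \varphi(P') : P, P'\in X\setminus S\}$, since every element of $W_\pi - x_0$ is a finite sum of such differences. Thus $B$ is an open subgroup of the connected group $A$, whence $B = A$. Consequently, for any $y\in A$ there is a divisor $\Delta\in\mathrm{Div}^0(X_{\mathfrak m})$ (prime to $S$, degree $0$) with $\varphi(\Delta) = y$; setting $D_1 := \Delta + \pi P_0$ and using $\varphi(P_0) = 0$, I obtain $D_1\in\mathrm{Div}(X_{\mathfrak m})$ of degree $\pi$ with $\varphi(D_1) = y$, although $D_1$ need not be effective.

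It then remains to replace $D_1$ by an \emph{effective} divisor, prime to $S$, in the same class. By the second Riemann-Roch theorem (Theorem 3.14), $\dim H^0(X_{\mathfrak m}, {\mathcal L}_{\mathfrak m}(D_1)) \ge \deg D_1 + 1 - \pi = 1$, so there is a nonzero $f\in H^0(X_{\mathfrak m}, {\mathcal L}_{\mathfrak m}(D_1))$, and $E := D_1 + (f) \ge 0$ is effective of degree $\pi$ with $E \sim D_1$. Such $f$ satisfies $f\equiv c(\overline S)\ \mathrm{mod}\ {\mathfrak m}$, so the generalized Abel theorem (Theorem 4.2, cf. Proposition 5.16) gives $\varphi((f)) = 0$ and hence $\varphi(E) = \varphi(D_1) = y$; provided $E$ is supported in $X\setminus S$, this exhibits $y\in W_\pi$.

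The hard part will be exactly this last proviso: I must ensure that $f$ can be chosen so that $E = D_1 + (f)$ is prime to $S$, i.e. that $f(Q)\ne 0$ at every $Q\in\overline S$, for otherwise $(f)$ contributes mass over $S$ and $E\notin\mathrm{Div}(X_{\mathfrak m})$. The tool I would use is the observation from Section 3.4 that $H^0(X_{\mathfrak m},{\mathcal L}_{\mathfrak m}(D_1))\setminus\bigcup_{Q\in\overline S}N_Q(D_1)$ is open and dense whenever it is nonempty; combined with the freedom in choosing $P_0$ and the representative $\Delta$, and with the openness from the first step (which already yields $y\in W_\pi$ for all $y$ in an open dense subset), I expect to eliminate the exceptional classes by a density argument inside the connected group $A$. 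Controlling this interaction between the complete linear system $|D_1|$ and the singular locus $\overline S$ is the delicate core of the proof.
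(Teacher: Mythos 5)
Your first two steps are sound and essentially match the paper: the local biholomorphism at $a_1+\cdots +a_\pi$ (Lemma 5.10 and the argument of Proposition 5.11) gives an open piece of the image, and your open-subgroup argument is a clean variant of the paper's device of writing an arbitrary $c\in{\mathbb C}^\pi$ as $N\bigl(\varphi (P_1+\cdots +P_\pi )-K\bigr)$ with $K+c/N$ in the open image. The reduction to \lq\lq find an effective divisor prime to $S$ in the class of $D_1$" is also exactly the paper's reduction. But the last step, which you yourself flag as the delicate core, is a genuine gap, and it is where the entire content of the paper's proof lies. You need a nonzero $f\in H^0(X_{\mathfrak m},{\mathcal L}_{\mathfrak m}(D_1))$ with $f(Q)\not= 0$ at every $Q\in\overline{S}$, i.e. $f\notin\bigcup _{Q}N_Q(D_1)$. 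The observation you cite from Section 3.4 --- that the complement of $\bigcup _Q N_Q(B)$ is open and dense --- rests on each $N_Q(B)$ being a \emph{proper} subspace; there this is automatic because $B=nP\geq 0$, so the nonzero constants are sections not vanishing on $\overline{S}$. For your $D_1=\Delta +\pi P_0$, which in general is not effective, properness of $N_Q(D_1)$ is exactly what must be proved (each $N_Q(D_1)$ is the kernel of the evaluation functional $f\mapsto f(Q)$, hence is either a hyperplane or all of $H^0$), so invoking that observation is circular.

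Nor do your proposed sources of extra freedom rescue the argument. Replacing $\Delta$ by another representative $\Delta +(g)$ of the same class, where $g\equiv c(\overline{S})\ {\rm mod}\ {\mathfrak m}$, replaces $H^0({\mathcal L}_{\mathfrak m}(D_1))$ by $g^{-1}H^0({\mathcal L}_{\mathfrak m}(D_1))$; since $g(Q)\not= 0$ for all $Q\in\overline{S}$, this isomorphism preserves every vanishing locus $N_Q$, so no choice of $\Delta$ helps. A density argument in $y$ cannot close the gap either: knowing the good $y$ form an open dense subset of $A$ yields surjectivity only if the image of $\varphi$ is closed, and properness of $\varphi$ is neither proved nor available at this stage (it is only asserted later, in Theorem 5.19). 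What the paper actually does here is a real computation: it introduces the intermediate moduli ${\mathfrak n}_i={\mathfrak m}_1+\cdots +{\mathfrak m}_i$ and the curves $X_{{\mathfrak n}_i}$, applies Theorem 3.13 on each $X_{{\mathfrak n}_i}$, and identifies $H^0\bigl(X_{{\mathfrak n}_i},\Omega _{{\mathfrak n}_i}(\cdots )\bigr)$ with the pullback of $H^0\bigl(X_{{\mathfrak n}_{i+1}},\Omega _{{\mathfrak n}_{i+1}}(\cdots )\bigr)$, to obtain
$$\dim H^0(X,{\mathcal L}(-\widetilde{D}-{\mathfrak m}))=\dim H^0(X_{\mathfrak m},{\mathcal L}_{\mathfrak m}(-\widetilde{D}))-s,\quad s=\#\overline{S},$$
where $-\widetilde{D}$ plays the role of your $D_1$. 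Since $\bigcap _{i=1}^{s}H_i$ corresponds to $H^0(X,{\mathcal L}(-\widetilde{D}-{\mathfrak m}))$ and each $H_i$ has codimension at most one, this codimension-$s$ drop forces every $H_i$ to be proper, whence $H^0\setminus\bigcup _{i=1}^{s}H_i\not=\phi$. Without this (or an equivalent) dimension count, your proof does not go through.
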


\begin{proof}
Let $a_1, \dots , a_{\pi}$ be points in Lemma 5.10. We set
$$D_0 := a_1 + \cdots + a _{\pi } \in {\rm Div}(X_{\mathfrak m}).$$
We take a coordinate neighbourhood $(U_j, t_j)$ of $a_j$ with
$t_j(a_j) = 0$. We may assume that $U_j$ is simply connected and
$\{ U_j\}$ is pairwise disjoint. Let
$$K := \varphi (D_0) = \sum _{j=1}^{\pi }\varphi (a_j).$$
Each $\rho ^{*}\omega _k$ has a representation
$$\rho ^{*}\omega _k = \eta _{kj}(t_j)dt_j$$
with respect to $(U_j, t_j)$ for all $j = 1, \dots ,\pi $.
As shown in the proof of Proposition 5.11, we may assume that
$$\varphi : U_1 \times \cdots \times U_{\pi } \longrightarrow
W := \varphi (U_1 \times \cdots \times U_{\pi })$$
is biholomorphic and the image of $z = (z_1, \dots ,z_{\pi }) \in
U_1 \times \cdots \times U_{\pi }$ by $\varphi $ is
$K + (\varphi _1(z), \dots , \varphi _{\pi }(z))$, where
$$\varphi _{\ell}(z) = \sum _{j=1}^{\pi }\int _{0}^{z_j}
\eta _{\ell j}(t_j)dt_j\quad \mbox{for\quad $\ell = 1, \dots , \pi $}.$$
For any $c =(c_1, \dots , c_{\pi }) \in {\mathbb C}^{\pi }$ there exists
$N \in {\mathbb N}$ such that $K + c/N \in W$. Then there exists
$(P_1, \dots , P_{\pi }) \in U_1 \times \cdots \times U_{\pi }$ such that
$$\varphi (P_1 + \cdots + P_{\pi }) = K + \frac{c}{N}$$
or
$$N(\varphi (P_1 + \cdots + P_{\pi }) - K) = c.$$
It suffices to show that there exists a positive divisor
$D \in {\rm Div}(X_{\mathfrak m})$ with $\deg D = \pi $ such that
$$\varphi (D) = N(\varphi (P_1 + \cdots + P_{\pi }) - K).$$
\par
Put
$$\widetilde{D} := N \left( \sum _{j=1}^{\pi } a_j -
\sum _{j =1}^{\pi }P_j\right) - \pi P_0.$$
Then $\deg \widetilde{D} = - \pi $.
Let $\overline{S} = \{ Q_1, \dots , Q_s\}$.
For any $i = 1, \dots , s$ we define
$$
{\mathfrak m}_i(P) :=
\left\{
\begin{array}{ll}
{\mathfrak m}(P)&
\mbox{if $P \in \rho ^{-1}(Q_i)$}\\
0&
\mbox{otherwise}.
\end{array}\right.$$
Then ${\mathfrak m}_i$ is a modulus with support $\rho ^{-1}(Q_i)$.
We set
$${\mathfrak n}_i := {\mathfrak m}_1 + \cdots + {\mathfrak m}_i.$$
We denote by $X_{{\mathfrak n}_i}$ the singular curve constructed from
$X$ by ${\mathfrak n}_i$.
If $\pi _i$ is the genus of $X_{{\mathfrak n}_i}$, then we have
$$\pi _i = g + \deg {\mathfrak n}_i - i =
g + \sum _{\alpha = 1}^{i}\deg {\mathfrak m}_{\alpha } - i.$$
We note that $\widetilde{D} + \sum _{\alpha = i + 1}^{s}{\mathfrak m}_{\alpha }
\in {\rm Div}(X_{{\mathfrak n}_i})$ for any $i$.
Applying Theorem 3.13 on $X_{{\mathfrak n}_i}$, we obtain
\begin{eqnarray*}
\lefteqn{\dim H^0\left(X_{{\mathfrak n}_i},{\mathcal L}_{{\mathfrak n}_i}\left(-\widetilde{D} - 
\sum _{\alpha = i+1}^{s}{\mathfrak m}_{\alpha }\right)\right)}\\
&& = \dim H^0\left(X_{{\mathfrak n}_i},\Omega _{{\mathfrak n}_i}\left(\widetilde{D} + 
\sum _{\alpha = i+1}^{s}{\mathfrak m}_{\alpha }\right)\right)  +
\deg \left(- \widetilde{D} - \sum _{\alpha = i+1}^{s}{\mathfrak m}_{\alpha }\right) + 1 - \pi _i\\
&& =\dim H^0\left(X_{{\mathfrak n}_i}\;,\Omega _{{\mathfrak n}_i}\left(\widetilde{D}
+ \sum _{\alpha = i+1}^{s}{\mathfrak m}_{\alpha }\right)\right) + i -s +1.
\end{eqnarray*}
Let $\rho _i : X_{{\mathfrak n}_i} \longrightarrow X_{{\mathfrak n}_{i+1}}$
be the projection. It is obvious that
\begin{eqnarray*}
\lefteqn{
H^0\left(X_{{\mathfrak n}_i}, \Omega _{{\mathfrak n}_i}
\left(\widetilde{D} + \sum _{\alpha = i+1}^{s}{\mathfrak m}_{\alpha }\right) \right)}\\
&&= \left\{ \rho ^{*}_i\omega ; \omega \in
H^0\left(X_{{\mathfrak n}_{i+1}}, \Omega _{{\mathfrak n}_{i+1}}\left(
\widetilde{D} + \sum _{\alpha = i +2}^{s}{\mathfrak m}_{\alpha }\right) \right)\right\}.
\end{eqnarray*}
Then we obtain by the above equality that
\begin{eqnarray*}
\lefteqn{ \dim H^0\left( X_{{\mathfrak n}_i}, {\mathcal L}_{{\mathfrak n}_i}
\left( - \widetilde{D} - \sum _{\alpha = i+1}^{s}{\mathfrak m}_{\alpha }
\right)\right)}\\
&& = \dim H^0\left( X_{{\mathfrak n}_{i+1}}, \Omega _{{\mathfrak n}_{i+1}}
\left(\widetilde{D} + \sum _{\alpha = i +2}^{s}{\mathfrak m}_{\alpha }\right)\right)
+ i - s + 1.
\end{eqnarray*}
Applying again Theorem 3.13 on $X_{{\mathfrak n}_{i+1}}$, we have
\begin{eqnarray*}
\lefteqn{
\dim H^0\left( X_{{\mathfrak n}_{i+1}}, {\mathcal L}_{{\mathfrak n}_{i+1}}\left(
- \widetilde{D} - \sum _{\alpha = i+2}^{s}{\mathfrak m}_{\alpha }\right)\right)
- 1}\\
&& =
\dim H^0\left( X_{{\mathfrak n}_{i+1}}, \Omega _{{\mathfrak n}_{i+1}}\left(
\widetilde{D} + \sum _{\alpha = i+2}^{s}{\mathfrak m}_{\alpha }\right)\right)
+ i - s +1.
\end{eqnarray*}
Therefore we obtain
\begin{eqnarray*}
\lefteqn{
\dim H^0\left( X_{{\mathfrak n}_{i}}, {\mathcal L}_{{\mathfrak n}_{i}}\left(
- \widetilde{D} - \sum _{\alpha = i+1}^{s}{\mathfrak m}_{\alpha }\right)\right)
}\\
&& =
\dim H^0\left( X_{{\mathfrak n}_{i+1}}, {\mathcal L} _{{\mathfrak n}_{i+1}}\left(
\widetilde{D} + \sum _{\alpha = i+2}^{s}{\mathfrak m}_{\alpha }\right)\right)
-1.
\end{eqnarray*}
Considering ${\mathfrak n}_0= 0$, we see that the above equality holds for
$i=0$. From these equalities it follows that
$$\dim H^0(X, {\mathcal L}( - \widetilde{D} - {\mathfrak m}) )=
\dim H^0(X_{\mathfrak m},{\mathcal L}_{{\mathfrak m}}(-
\widetilde{D})) - s.$$
Let
$$H_i := \{ f \in H^0(X_{{\mathfrak m}},{\mathcal L}_{{\mathfrak m}}(
- \widetilde{D}) ) ; f(Q_i) = 0\}$$
for $i = 1, \dots ,s$. Then we have
$$H^0(X, {\mathcal L}(- \widetilde{D} - {\mathfrak m})) =
\bigcap _{i=1}^{s}\{ \rho ^{*} f ; f \in H_i \}.$$
From the above equality on the dimension it follows that
$$H^0(X_{{\mathfrak m}}, {\mathcal L}_{{\mathfrak m}}(
- \widetilde{D})) \setminus \left(\bigcup _{i=1}^{s} H_i\right) \not= \phi .$$
Then, there exists $f \in H^0(X_{\mathfrak m},{\mathcal L}_{\mathfrak m}(
- \widetilde{D}))$ with $(\rho ^{*}f) \in {\rm Div}(X_{\mathfrak m})$.
If we set
$$D := (\rho ^{*}f) - \widetilde{D},$$
then we have $D \geq 0$ and $D \in {\rm Div}(X_{\mathfrak m})$ for
$$(\rho ^{*}f) \geq - (- \widetilde{D}) = \widetilde{D}.$$
Since $D + \widetilde{D} = (\rho ^{*}f)$, we obtain
$\varphi (D + \widetilde{D}) = 0$ by Theorem 4.2.
Hence we have
\begin{align*}
0 & = \varphi (D) + \varphi \left(
N \left( \sum _{j=1}^{\pi }a_j - \sum _{j=1}^{\pi }P_j\right) - \pi P_0 \right)\\
& = \varphi (D) + N \left( K - \varphi \left(\sum_{j=1}^{\pi }P_j\right)\right)
\end{align*}
or
$$\varphi (D) = N \left( \varphi \left(
\sum _{j-1}^{\pi }P_j\right) - K \right).$$
This completes the proof.
\end{proof}

\begin{remark}
It follows from the above theorem that $\varphi (X \setminus S)$ generates
${\rm Alb}^{an}(X_{\mathfrak m})$.
\end{remark}

\begin{corollary}
We have an isomorphism $\overline{{\rm Div}^{0}(X_{\mathfrak m})}
\cong {\rm Alb}^{an}(X_{\mathfrak m})$ as groups.
\end{corollary}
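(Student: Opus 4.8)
The plan is to show that the period homomorphism, restricted to degree-zero divisors, descends to the divisor-class group and is bijective. First I would restrict the period map to ${\rm Div}^0(X_{\mathfrak m})$: since $\varphi(D) = \sum_{P \in X\setminus S}D(P)\varphi(P)$ is additive and, for $\deg D = 0$, independent of the base point $P_0$, this gives a group homomorphism $\varphi : {\rm Div}^0(X_{\mathfrak m}) \longrightarrow {\rm Alb}^{an}(X_{\mathfrak m})$. To see that it factors through $\overline{{\rm Div}^0(X_{\mathfrak m})}$, recall that $D \sim 0$ means $D = (f)$ for some $f \in {\rm Mer}(X_{\mathfrak m})$; under the identification of ${\rm Mer}(X_{\mathfrak m})$ with meromorphic functions on $X$ satisfying $f \equiv c(\overline{S})\ {\rm mod}\ {\mathfrak m}$, Proposition 5.15 yields $\varphi((f)) = 0$. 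Hence $\varphi$ induces a homomorphism $\overline{\varphi} : \overline{{\rm Div}^0(X_{\mathfrak m})} \longrightarrow {\rm Alb}^{an}(X_{\mathfrak m})$.

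For injectivity, suppose $D \in {\rm Div}^0(X_{\mathfrak m})$ satisfies $\varphi(D) = 0$. Choosing a $1$-chain $c \in C_1(X\setminus S)$ with $\partial c = D$, the vanishing of $\varphi(D)$ in ${\mathbb C}^{\pi}/\Gamma$ means that the vector $\left(\int_c \rho^{*}\omega_1, \dots, \int_c \rho^{*}\omega_{\pi}\right)$ lies in $\Gamma$. Since $\Gamma$ is generated by the period vectors of the cycles $\alpha_i, \beta_i, \gamma_j$, there is a cycle $\gamma \in H_1(X\setminus S,{\mathbb Z})$ with $\int_c \rho^{*}\omega = \int_{\gamma} \rho^{*}\omega$ for every $\omega \in H^0(X_{\mathfrak m},\Omega_{\mathfrak m})$. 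Then $c' := c - \gamma$ satisfies $\partial c' = D$ and $\int_{c'}\rho^{*}\omega = 0$ for all such $\omega$, so by Theorem 4.2 there is $f \in {\rm Mer}(X)$ with $f \equiv c(\overline{S})\ {\rm mod}\ {\mathfrak m}$ for some multiconstant and $(f) = D$; that is, $D \sim 0$. Thus $\overline{\varphi}$ is injective.

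For surjectivity I would invoke Theorem 5.17: given $a \in {\rm Alb}^{an}(X_{\mathfrak m})$ there exist $P_1, \dots, P_{\pi} \in X\setminus S$ with $\varphi(P_1 + \cdots + P_{\pi}) = a$. The divisor $D := P_1 + \cdots + P_{\pi} - \pi P_0$ lies in ${\rm Div}^0(X_{\mathfrak m})$ and, since $\varphi(P_0) = 0$, satisfies $\overline{\varphi}([D]) = \varphi(D) = a$. Hence $\overline{\varphi}$ is an isomorphism of groups. The only genuine step is the injectivity argument; its crux is the passage from the congruence $\varphi(D)=0$ modulo the lattice $\Gamma$ to a concrete $1$-chain with vanishing periods, which is precisely what lets us apply the generalized Abel theorem. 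Everything else is formal once Theorems 4.2 and 5.17 are in hand.
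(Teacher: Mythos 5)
Your proposal is correct and takes essentially the same route as the paper: surjectivity comes from Theorem 5.17 via the degree-zero divisor $D - \pi P_0$, and the kernel is identified with the principal divisors of functions $f \equiv c(\overline{S})\ {\rm mod}\ {\mathfrak m}$ via Theorem 4.2, your explicit lattice-adjustment step $c' := c - \gamma$ being exactly what the paper leaves implicit in ``It follows from Theorem 4.2.'' One small citation slip: the fact that $\varphi ((f)) = 0$ for such $f$ is Proposition 5.16 (the period map admits ${\mathfrak m}$ for a modulus), not Proposition 5.15, which is the definition of that property.
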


\begin{proof}
We have a homomorphism $\varphi : {\rm Div}^0(X_{\mathfrak m}) \longrightarrow
{\rm Alb}^{an}(X_{\mathfrak m})$. By Theorem 5.17, for any
$a \in {\rm Alb}^{an}(X_{\mathfrak m})$ there exists
a positive divisor
$D \in {\rm Div}(X_{\mathfrak m})$ with $\deg D = \pi $ such that
$\varphi (D) = a$. Since $D - \pi P_0 \in {\rm Div}^0(X_{\mathfrak m})$
and $\varphi (D - \pi P_0) = \varphi (D) = a$, the map
$\varphi : {\rm Div}^0(X_{\mathfrak m}) \longrightarrow
{\rm Alb}^{an}(X_{\mathfrak m})$ is surjective.
It follows from Theorem 4.2 that
$${\rm Ker} \varphi = \{ (f) ; f \in {\rm Mer}(X)\;
\text{with $f \equiv c(\overline{S})\ {\rm mod}\ {\mathfrak m}$ 
for some multiconstant $c(\overline{S})$}\}.$$
Therefore we have $\overline{{\rm Div}^{0}(X_{\mathfrak m})}
\cong {\rm Alb}^{an}(X_{\mathfrak m})$.
\end{proof}

\begin{theorem}
The map $\varphi : (X \setminus S)^{(\pi )} \longrightarrow
{\rm Alb}^{an}(X_{{\mathfrak m}})$ is bimeromorphic.
\end{theorem}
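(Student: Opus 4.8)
The plan is to prove that $\varphi$ is bimeromorphic by exhibiting a dense open subset of $(X\setminus S)^{(\pi)}$ on which it restricts to a biholomorphism onto an open subset of ${\rm Alb}^{an}(X_{\mathfrak m})$. Both $(X\setminus S)^{(\pi)}$ and ${\rm Alb}^{an}(X_{\mathfrak m})\cong{\mathbb C}^{\pi}/\Gamma$ are irreducible complex spaces of dimension $\pi$, and $\varphi$ is surjective by Theorem 5.17, hence dominant. It therefore suffices to show that $\varphi$ has generically nondegenerate differential and is generically injective.

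For the differential, I would take points $a_1,\dots,a_{\pi}\in X\setminus S$ as in Lemma 5.10 and retain the notation of the proof of Proposition 5.11. There the Jacobian matrix of $\varphi$ at the point $a_1+\cdots+a_{\pi}$ of the symmetric product is $\left(\varphi_{ij}(a_j)\right)$, which was shown to be invertible. Thus $d\varphi$ is an isomorphism at this point, and since the locus where $d\varphi$ is an isomorphism is the complement of a proper analytic subset (the zero set of the Jacobian determinant) in the irreducible space $(X\setminus S)^{(\pi)}$, it is a dense open set $U_1$ on which $\varphi$ is a local biholomorphism.

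For generic injectivity I would argue by Riemann--Roch. Recall $\dim H^0(X_{\mathfrak m},\Omega_{\mathfrak m})=\pi$ (Serre duality, Theorem 3.1, with $D=0$, together with Proposition 2.9). For an effective divisor $D=P_1+\cdots+P_{\pi}$ with distinct points of $X\setminus S$, the space $H^0(X_{\mathfrak m},\Omega_{\mathfrak m}(-D))$ consists of those $\omega\in H^0(X_{\mathfrak m},\Omega_{\mathfrak m})$ whose pull-backs vanish at all of $P_1,\dots,P_{\pi}$; by Lemma 5.10 this space is $\{0\}$ for a suitable choice of points, so the condition $\dim H^0(X_{\mathfrak m},\Omega_{\mathfrak m}(-D))=0$ cuts out a dense open subset $U_2$ of $(X\setminus S)^{(\pi)}$, its complement being the proper analytic set where some nonzero duality form vanishes on all $\pi$ points. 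For $D\in U_2$ the second Riemann--Roch theorem (Theorem 3.13) with $\deg D=\pi$ gives
$$\dim H^0(X_{\mathfrak m},{\mathcal L}_{\mathfrak m}(D)) = 1 + \dim H^0(X_{\mathfrak m},\Omega_{\mathfrak m}(-D)) = 1,$$
so $H^0(X_{\mathfrak m},{\mathcal L}_{\mathfrak m}(D))$ consists of constants only. If $D,D'\in U_2$ satisfy $\varphi(D)=\varphi(D')$, then $\varphi(D-D')=0$, and Theorem 4.2 furnishes $f\in{\rm Mer}(X)$ with $f\equiv c(\overline{S})\ {\rm mod}\ {\mathfrak m}$ and $(f)=D-D'$. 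Since $D\geq 0$ we get $(f)\geq -D'$ on $X\setminus S$, while $f\equiv c(\overline{S})\ {\rm mod}\ {\mathfrak m}$ places $f$ in the stalk ${\mathcal O}_{{\mathfrak m},Q}$ at each $Q\in\overline{S}$; hence $f\in H^0(X_{\mathfrak m},{\mathcal L}_{\mathfrak m}(D'))={\mathbb C}$, so $f$ is constant and $D=D'$. Thus $\varphi$ is injective on $U_2$.

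Setting $U:=U_1\cap U_2$, which is again dense open, $\varphi|_U$ is an injective local biholomorphism, hence a biholomorphism onto the open set $\varphi(U)$; combined with the surjectivity of Theorem 5.17, this shows $\varphi$ is bimeromorphic. The main obstacle I expect lies in the generic-injectivity step: one must verify that the jumping locus of $\dim H^0(X_{\mathfrak m},\Omega_{\mathfrak m}(-D))$ is a genuine proper analytic subset of the symmetric product, so that $U_2$ is dense, and one must check carefully that the function supplied by the generalized Abel theorem actually lands in $H^0(X_{\mathfrak m},{\mathcal L}_{\mathfrak m}(D'))$, that is, that the congruence $f\equiv c(\overline{S})\ {\rm mod}\ {\mathfrak m}$ is exactly the condition placing $f$ in the stalks ${\mathcal O}_{{\mathfrak m},Q}$ required by the definition of ${\mathcal L}_{\mathfrak m}$.
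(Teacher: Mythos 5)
Your proposal follows essentially the same route as the paper's own proof: a dense open set where $d\varphi $ is nondegenerate (justified via Lemma 5.10 and Proposition 5.11), the identification of that condition with the vanishing of $H^0(X_{\mathfrak m},\Omega _{\mathfrak m}(-D))$, Riemann--Roch (Theorem 3.13) to get $\dim H^0(X_{\mathfrak m},{\mathcal L}_{\mathfrak m}(D))=1$, and Theorem 4.2 to force the connecting function to be constant, hence injectivity on the good set. The one step the paper spells out but you only assert is the final passage from a biholomorphism between dense open subsets to bimeromorphy: this follows not from the surjectivity of Theorem 5.17 but from the properness of $\varphi $ together with the graph-closure argument (the closure of the graph of $(\varphi |_U)^{-1}$ is the graph of $\varphi $, an analytic set whose projection to ${\rm Alb}^{an}(X_{\mathfrak m})$ is proper), which is how the paper shows the inverse is meromorphic through ${\rm Alb}^{an}(X_{\mathfrak m})$.
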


\begin{proof}
Let $W$ be the set of all points at which the holomorphic
map $\varphi : (X \setminus S)^{(\pi )} \longrightarrow
{\rm Alb}^{an}(X_{{\mathfrak m}})$ is not degenerate.
Then it is obvious that $W$ is open and dense in
$(X \setminus S)^{(\pi )}$.
Take any $(P_1, \dots , P_{\pi }) \in W$.
We set $D := P_1 + \cdots + P_{\pi }$.
Then we have $H^0(X_{\mathfrak m}, \Omega _{\mathfrak m}(-D)) = \{ 0 \}$.
From Theorem 3.13 it follows that
$$\dim H^0(X_{\mathfrak m}, {\mathcal L}_{\mathfrak m}(D)) = 1.$$
\par
We show that $\varphi ^{-1}(\varphi (D)) = D$. Suppose that
there exists another $E \in {\rm Div}(X_{\mathfrak m})$ with
$\deg E = \pi $ and $E \geq 0$ such that $\varphi (D) = \varphi (E)$.
By Theorem 4.2 we have a 
meromorphic function $f$ on $X$ with
$f \equiv c(\overline{S})\ {\rm mod}\ {\mathfrak m}$ for some
multiconstant $c(\overline{S})$ such that $E - D = (f)$.
Since $D + (f) = E \geq 0$, we have $(f) \geq - D$.
Then $f \in H^0(X_{{\mathfrak m}}, {\mathcal L}_{{\mathfrak m}}(D))$.
However any function in $H^0(X_{{\mathfrak m}}, {\mathcal L}_{{\mathfrak m}}(D))$
must be constant for 
$\dim H^0(X_{{\mathfrak m}}, {\mathcal L}_{{\mathfrak m}}(D)) = 1$.
Therefore we have $E = D$. Thus we conclude that
$\varphi |W : W \longrightarrow \varphi (W)$ is biholomorphic.
The map $\varphi : (X \setminus S)^{(\pi )} \longrightarrow
{\rm Alb}^{an}(X_{{\mathfrak m}})$ is obviously proper.
Then $\varphi (W)$ is an open dense subset of
${\rm Alb}^{an }(X_{{\mathfrak m}})$.\par
Next we show that the map
$$\psi _W := (\varphi |W)^{-1} : \varphi (W) \longrightarrow
(X \setminus S)^{(\pi )}$$
is meromorphic through ${\rm Alb}^{an}(X_{{\mathfrak m}})$.
We denote by $G(\psi _W)$ and $G(\varphi )$ be graphs of
$\psi _W$ and $\varphi $ respectively. Then the closure
$\overline{G(\psi _W)}$ of $G(\psi _W)$ in
$(X \setminus S)^{(\pi )} \times {\rm Alb}^{an}(X_{{\mathfrak m}})$
coincides with $G(\varphi )$. Moreover, the projection
$p : \overline{G(\psi _W)} \longrightarrow
{\rm Alb}^{an}(X_{{\mathfrak m}})$ is proper for so is 
$\varphi : (X \setminus S)^{(\pi )} \longrightarrow
{\rm Alb}^{an}(X_{{\mathfrak m}})$.
Therefore $\varphi : (X \setminus S)^{(\pi )} \longrightarrow
{\rm Alb}^{an}(X_{{\mathfrak m}})$ is a bimeromorphic map.

\end{proof}

As in Section 5.6 we write
$${\rm Alb}^{an}(X_{\mathfrak m}) = {\mathbb C}^p \times ({\mathbb C}^{*})^q
\times {\mathfrak Q},$$
where ${\mathfrak Q}$ is an $r$-dimensional quasi-abelian variety of kind $0$ and
$p + q + r = \pi $.
Let ${\mathfrak Q} = {\mathbb C}^r/\Gamma _0 $, where $\Gamma _0$ is a 
discrete subgroup of ${\mathbb C}^r$ with ${\rm rank}\ \Gamma _0 = r + s$.
Then ${\mathfrak Q}$ is a fibre bundle over an $s$-dimensional abelian
variety $A_0$ with fibres $({\mathbb C}^{*})^{r-s}$.
Let $\overline{\mathfrak Q}$ be the standard compactification of ${\mathfrak Q}$.
Compactifying ${\mathbb C}^p \times ({\mathbb C}^{*})^q$ by
$({\mathbb P}^1)^{p+q}$, we obtain a compactification
$$\overline{{\rm Alb}^{an}(X_{\mathfrak m})} :=
({\mathbb P}^1)^{p+q} \times \overline{\mathfrak Q}$$
of ${\rm Alb}^{an}(X_{\mathfrak m})$. We call it the standard compacification
of ${\rm Alb}^{an}(X_{\mathfrak m})$.

\begin{remark}
The map $\varphi : X \setminus S \longrightarrow {\rm Alb}^{an}(X_{\mathfrak m})$
does not extend to a holomorphic map $\overline{\varphi } : X \longrightarrow
\overline{{\rm Alb}^{an}(X_{\mathfrak m})}$.
\end{remark}

\subsection{The universality of analytic Albanese varieties}
Let $G$ be a commutative complex Lie group. Then, by the theorem of
Remmert-Morimoto we have
$$G \cong {\mathbb C}^p \times ({\mathbb C}^{*})^q \times G_0,$$
where $G_0 = {\mathbb C}^r/\Gamma $ is a toroidal group with
${\rm rank}\, \Gamma = r+s$ $(1\leq s \leq r)$. The toroidal group
$G_0$ has the structure of principal $({\mathbb C}^{*})^{r-s}$-bundle
$\sigma : G_0 \longrightarrow {\mathbb T}$ over an $s$-dimensional
complex torus ${\mathbb T}$. Replacing fibres $({\mathbb C}^{*})^{r-s}$
with $({\mathbb P}^1)^{r-s}$, we obtain the associated
$({\mathbb P}^1)^{r-s}$-bundle $\overline{\sigma } : \overline{G_0}
\longrightarrow {\mathbb T}$. We call $\overline{G} = ({\mathbb P}^1)^{p+q}
\times \overline{G_0}$ the standard compactification of $G$.\par
The following theorem is considered as an analytic version of
Th\'eor\`eme 1 in \cite{ref16}.

\begin{theorem}
Let $X$ be a compact Riemann surface, and let $S$ be a finite
subset of $X$. We consider a commutative complex Lie group $G$
with the standard compactification $\overline{G}$. Let
$f : X \setminus S \longrightarrow G$ be a holomorphic map
which has the holomorphic extension
$\overline{f} : X \longrightarrow \overline{G}$. Then there
exists a modulus ${\mathfrak m}$ with support $S$ such that
$f$ admits ${\mathfrak m}$ for a modulus with respect to any
equivalence relation $R$ on $S$.
\end{theorem}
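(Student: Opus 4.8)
The plan is to reduce to the building blocks of a commutative complex Lie group and to re-encode the admissibility of $\mathfrak m$ as a pole-and-residue condition on the invariant differentials of $G$ pulled back to $X$; the generalized Abel theorem (Theorem 4.2) then does the remaining work. First I would apply the theorem of Remmert--Morimoto to write $G \cong \mathbb C^{p} \times (\mathbb C^{*})^{q} \times G_{0}$ with $G_{0}$ toroidal, and decompose $f = (f_{1}, f_{2}, f_{3})$ accordingly. The hypothesis that $\overline f : X \to \overline G = (\mathbb P^{1})^{p+q} \times \overline{G_{0}}$ is holomorphic forces each coordinate of $f_{1}$ to be a meromorphic function $h_{i}$ on $X$, holomorphic off $S$; each coordinate of $f_{2}$ to be a meromorphic function $u_{j}$ that is nonvanishing off $S$ with all its zeros and poles on $S$; and a basis of the translation-invariant holomorphic $1$-forms of $G_{0}$ to pull back, via $f_{3}$, to meromorphic $1$-forms $\eta_{1}, \dots, \eta_{r}$ on $X$ whose poles lie only on $S$. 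I would gather the invariant differentials $dh_{i}$, $du_{j}/u_{j}$ and $\eta_{k}$ into one finite family $\Phi_{1}, \dots, \Phi_{N}$ of meromorphic $1$-forms on $X$, all of whose poles sit on $S$, and let $\mu(P)$ denote the largest order of pole of any $\Phi_{k}$ at $P \in S$.

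The key reduction is that $f$ admits $\mathfrak m$ as soon as every $\Phi_{k}$ lies in $\rho^{*}H^{0}(X_{\mathfrak m}, \Omega_{\mathfrak m})$. Indeed, for $g$ with $g \equiv c(\overline S)\ {\rm mod}\ {\mathfrak m}$ the divisor $(g)$ is prime to $S$ and has degree $0$, so for any $1$-chain $c$ with $\partial c = (g)$ the base-point contributions cancel and one gets, in the additive and toroidal factors, $f((g)) = \int_{c}\Phi_{k}$, while in the multiplicative factor $\log f_{2}((g))_{j} = \sum_{P} {\rm ord}_{P}(g)\log u_{j}(P) = \int_{c} du_{j}/u_{j}$ modulo the periods $2\pi\sqrt{-1}\,\mathbb Z$. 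Hence, writing $\Phi_{k} = \rho^{*}\omega_{k}$ with $\omega_{k} \in H^{0}(X_{\mathfrak m}, \Omega_{\mathfrak m})$, Theorem 4.2 supplies a chain $c$ with $\partial c = (g)$ and $\int_{c}\rho^{*}\omega = 0$ for every $\omega \in H^{0}(X_{\mathfrak m}, \Omega_{\mathfrak m})$, so that $\int_{c}\Phi_{k} = 0$ for all $k$ and therefore $f((g)) = 0$.

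It remains to choose $\mathfrak m$ so that each $\Phi_{k}$ satisfies the defining condition $(*)$ of $\Omega_{\mathfrak m}$ at every $Q \in \overline S$. Using $\mathcal O_{\mathfrak m, Q} = \mathbb C + \mathcal I_{Q}$ I would split $(*)$ in two. For $\phi \in \mathcal I_{Q}$, which vanishes to order at least $\mathfrak m(P)$ at each $P \in \rho^{-1}(Q)$, the form $\rho^{*}\phi\,\Phi_{k}$ becomes holomorphic at $P$ once $\mathfrak m(P) \geq \mu(P)$, and the corresponding residues vanish; so I set $\mathfrak m(P) := \mu(P)$, enlarging it to respect $\deg \mathfrak m \geq 2$ if needed. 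For the constant functions the condition collapses to the per-fibre balance $\sum_{P \in \rho^{-1}(Q)} {\rm Res}_{P}(\Phi_{k}) = 0$; since $dh_{i}$ is exact and the torus directions are residue-free, only the logarithmic forms contribute, and there the residue theorem on the compact surface $X$ yields $\sum_{P \in S} {\rm Res}_{P}(\Phi_{k}) = 0$. This global identity is exactly the balance $(*)$ demands when all of $S$ is amalgamated into one fibre, and it is here---verifying the residue part of $(*)$ fibre by fibre---that the equivalence relation $R$ genuinely enters and that the principal difficulty lies; granting it, the family $\{\Phi_{k}\}$ lies in $\rho^{*}H^{0}(X_{\mathfrak m}, \Omega_{\mathfrak m})$ and the second paragraph concludes the proof.
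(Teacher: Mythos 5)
Your strategy is genuinely different from the paper's: the paper fixes $\varphi$, forms the trace of each $f^{*}\omega_{i}$ under the map $\Phi : X \to \mathbb{P}^{1}$ defined by $\varphi$, argues that this trace is holomorphic on all of $\mathbb{P}^{1}$ and hence zero, concludes that the traced map $\widetilde{f} = \mathrm{Trace}_{\Phi}(f)$ is constant, and gets $f((\varphi)) = \widetilde{f}(0) - \widetilde{f}(\infty) = 0$; you instead try to place the pulled-back invariant forms inside $\rho^{*}H^{0}(X_{\mathfrak{m}},\Omega_{\mathfrak{m}})$ and then invoke Theorem 4.2. Your reduction step is sound, and your splitting of condition $(*)$ via $\mathcal{O}_{\mathfrak{m},Q} = \mathbb{C} + \mathcal{I}_{Q}$ into a pole-order half (handled by $\mathfrak{m}(P) \geq \mu(P)$) and a residue-balance half is exactly the right dissection. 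But the step you flag and then ``grant'' --- the fibre-wise balance $\sum_{P \in \rho^{-1}(Q)}\mathrm{Res}_{P}(\Phi_{k}) = 0$ for every class $Q \in \overline{S}$ --- is a genuine gap, and it cannot be closed: the residue theorem controls only the sum over all of $S$, and nothing in the hypotheses distributes that vanishing over the classes of an arbitrary equivalence relation $R$.

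Concretely, take $X = \mathbb{P}^{1}$, $S = \{0,\infty\}$, $G = \mathbb{C}^{*}$ (so $\overline{G} = \mathbb{P}^{1}$), $f(z) = z$, whose extension $\overline{f} = \mathrm{id}$ is holomorphic, and let $R$ be the identity relation, so $\overline{S}$ has two classes. The only form is $\Phi_{1} = dz/z$, and $\mathrm{Res}_{0}(dz/z) = 1 \neq 0$, so the granted balance already fails. Moreover no modulus repairs this: given any $\mathfrak{m}$, Lemma 5.6 produces $\varphi \in \mathrm{Mer}(X)$ with Laurent tails $c_{1} + z^{\mathfrak{m}(0)}$ at $0$ and $c_{2} + w^{\mathfrak{m}(\infty)}$ at $\infty$ (where $w = 1/z$ and $c_{1} \neq c_{2}$ are nonzero), hence $\varphi \equiv c(\overline{S})\ \bmod\ \mathfrak{m}$ with $(\varphi)$ prime to $S$; then, multiplicatively, $f((\varphi)) = \prod_{P}\varphi(P)^{\mathrm{ord}_{P}(f)} = \varphi(0)\varphi(\infty)^{-1} = c_{1}/c_{2} \neq 1$ by Weil reciprocity (for $\mathfrak{m}(0)=\mathfrak{m}(\infty)=1$ one checks it directly with $\varphi(z) = (c_{2}z + c_{1}a)/(z+a)$). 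So $f$ admits no modulus at all with respect to this $R$, and the statement in this generality cannot be proved. You have in fact located the precise crux: the paper's own proof needs the identical vanishing, since its appeal to ``the same proof of Assertion in Section 4.5'' yields a trace form with a simple pole at $c_{Q}$ whose residue is exactly $\sum_{P\in\rho^{-1}(Q)}\mathrm{Res}_{P}(f^{*}\omega_{i})$; that quantity vanishes for sections of $\Omega_{\mathfrak{m}}$ by the very definition of that sheaf, but not for pull-backs $f^{*}\omega_{i}$. Both arguments are valid precisely when the balance is automatic --- for instance when $\overline{S}$ is a single point, or when all the constants $c_{Q}$ coincide, which is Serre's original setting $\varphi \equiv 1\ \bmod\ \mathfrak{m}$ --- and your write-up, unlike the paper's, at least makes this dependence explicit.
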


\begin{proof}
Let $n := \dim _{{\mathbb C}}G$. There exists a basis
$\{ \omega _1, \dots , \omega _n\}$ of the vector space of
holomorphic 1-forms on $G$ which are invariant by translation.
For any $i$ the pull-back $f^{*}\omega _i$ of $\omega _i$ is
a holomorphic 1-form on $X \setminus S$ which extends
meromorphically to $X$. Then, for any $P \in S$ we can take
a positive integer $n_P$ such that
$${\rm ord }_P(f^{*}\omega _i) \geq - n_P\quad
\text{for $1 \leq i \leq n$.}$$
We define a modulus ${\mathfrak m}$ with support $S$ by
${\mathfrak m} := \sum _{P \in S} n_P P$.\par
Consider any equivalence relation $R$ on $S$, and set
$\overline{S} := S/R$. It suffices to show that
$f((\varphi )) = 0$ for any $\varphi \in {\rm Mer}(X)$
with $\varphi \equiv c(\overline{S})\ {\rm mod }\ {\mathfrak m}$
for some multiconstant $c(\overline{S})$. The function
$\varphi $ gives a holomorphic map $\Phi : X \longrightarrow
{\mathbb P}^1$. Let ${\rm Trace }_{\Phi }(f^{*}\omega _i)$
be the trace of $f^{*}\omega _i$ by $\Phi $. Then
${\rm Trace }_{\Phi }(f^{*}\omega _i)$ is holomorphic on
${\mathbb P}^1 \setminus \varphi (S)$. By the same proof of
Assertion in Section 4.5, we can show that it is holomorphic
on ${\mathbb P}^1$, hence ${\rm Trace }_{\Phi }(f^{*}\omega _i) = 0$
$(i=1, \dots ,n)$.
On the other hand, we can define the trace
$\widetilde{f} := {\rm Trace}_{\Phi }(f) : {\mathbb P}^1 \setminus
\varphi (S) \longrightarrow G$ of $f$ by $\Phi $.
Since $\widetilde{f}^{*}\omega _i ={\rm Trace }_{\Phi }(f^{*}\omega _i)$
on ${\mathbb P}^1 \setminus \varphi (S)$, we have
$\widetilde{f}^{*}\omega _i = 0$ for $i=1, \dots ,n$. Therefore,
the tangent map of $\widetilde{f}$ vanishes. This means that
$\widetilde{f}$ is a constant map. We denote by
$(\varphi )_a$ the zeros of $\varphi - a$ for any
$a \in {\mathbb P}^1 \setminus \varphi (S)$. We note
$f((\varphi )_a) = \widetilde{f}(a)$. Since $\widetilde{f}$ is
a constant map, we have $\widetilde{f}(a) = \widetilde{f}(b)$
for any $a, b \in {\mathbb P}^1 \setminus \varphi (S)$.
Therefore we obtain
$$f((\varphi )) = f((\varphi )_0) - f((\varphi )_{\infty })
= \widetilde{f}(0) - \widetilde{f}(\infty ) = 0.$$
\end{proof}

\begin{theorem}[Universality Property]
Let $G$ be a commutative complex Lie group, and let $P_0$ be the base point
of the map $\varphi : X \setminus S \longrightarrow {\rm Alb}^{an}(X_{\mathfrak m})$.
Then, for any holomorphic map $\psi : X \setminus S \longrightarrow G$
which admits ${\mathfrak m}$ for a modulus there exists uniquely a
homomorphism $\Psi : {\rm Alb}^{an}(X_{\mathfrak m}) \longrightarrow G$
between complex Lie groups such that $\psi = \Psi \circ \varphi + g_0$,
where $g_0 = \psi (P_0)$.
\end{theorem}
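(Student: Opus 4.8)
The plan is to construct $\Psi$ first as an abstract group homomorphism on $\overline{{\rm Div}^0(X_{\mathfrak m})}$, which Corollary 5.18 identifies with ${\rm Alb}^{an}(X_{\mathfrak m})$, and only afterwards verify holomorphy. First I would replace $\psi$ by $\psi_0 := \psi - g_0$, so that $\psi_0(P_0) = 0$; it then suffices to produce a holomorphic homomorphism $\Psi$ with $\Psi \circ \varphi = \psi_0$ on $X \setminus S$. Extend $\psi_0$ to ${\rm Div}(X_{\mathfrak m})$ by $\psi_0(D) = \sum_{P \in X \setminus S} D(P)\psi_0(P)$. Given $a \in {\rm Alb}^{an}(X_{\mathfrak m})$, the surjectivity part of Corollary 5.18 lets me write $a = \varphi(D)$ for some $D \in {\rm Div}^0(X_{\mathfrak m})$, and I set $\Psi(a) := \psi_0(D)$.

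For well-definedness, suppose $\varphi(D) = \varphi(D')$ with $D, D'$ of degree $0$. Then $\varphi(D - D') = 0$, so by the computation of ${\rm Ker}\,\varphi$ in Corollary 5.18 there is $f \in {\rm Mer}(X)$ with $f \equiv c(\overline S)\ {\rm mod}\ {\mathfrak m}$ for some multiconstant and $(f) = D - D'$. Since $\deg(D-D') = 0$ we have $\psi_0(D-D') = \psi((f))$, which vanishes because $\psi$ admits ${\mathfrak m}$ for a modulus (Definition 5.13). Thus $\Psi$ is a well-defined homomorphism into the abelian group $G$. Evaluating on $P - P_0 \in {\rm Div}^0(X_{\mathfrak m})$ and using $\varphi(P) = \varphi(P-P_0)$ gives $\Psi(\varphi(P)) = \psi_0(P)$, i.e. $\psi = \Psi \circ \varphi + g_0$ on $X \setminus S$.

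The main work, and the step I expect to be the only genuinely delicate point, is showing that this $\Psi$ is holomorphic; this is where Theorem 5.20 enters. Let $\Psi^{(\pi)} : (X \setminus S)^{(\pi)} \to G$ be the holomorphic map $\{P_1, \dots, P_\pi\} \mapsto \psi_0(P_1) + \cdots + \psi_0(P_\pi)$. By Theorem 5.20 there is an open dense $W \subset (X \setminus S)^{(\pi)}$ on which $\varphi$ is biholomorphic onto the open dense set $U := \varphi(W) \subset {\rm Alb}^{an}(X_{\mathfrak m})$, and for $a = \varphi(P_1 + \cdots + P_\pi) \in U$ one checks (passing to the degree-$0$ divisor $P_1 + \cdots + P_\pi - \pi P_0$) that $\Psi(a) = \Psi^{(\pi)}(\{P_j\})$. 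Hence $\Psi|_U = \Psi^{(\pi)} \circ (\varphi|_W)^{-1}$ is holomorphic. To upgrade holomorphy from the dense set $U$ to all of ${\rm Alb}^{an}(X_{\mathfrak m})$ I would invoke the group law: fix $a_0$; since $U$ and $a_0 - U$ are both open and dense, there is $c \in U$ with $a_0 - c \in U$, and then $\Psi(a) = \Psi(a - c) + \Psi(c)$ exhibits $\Psi$ as holomorphic on the neighbourhood $\{a : a - c \in U\}$ of $a_0$. Thus $\Psi$ is holomorphic everywhere, hence a homomorphism of complex Lie groups.

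Finally, uniqueness follows from the Remark after Theorem 5.17: if $\Psi'$ is another homomorphism with $\psi = \Psi' \circ \varphi + g_0$, then $\Psi - \Psi'$ is a homomorphism into the abelian group $G$ vanishing on $\varphi(X \setminus S)$, which generates ${\rm Alb}^{an}(X_{\mathfrak m})$, so $\Psi - \Psi' \equiv 0$.
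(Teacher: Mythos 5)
Your proposal is correct and takes essentially the same route as the paper: construct $\Psi$ first as an abstract group homomorphism through the isomorphism $\overline{{\rm Div}^{0}(X_{\mathfrak m})} \cong {\rm Alb}^{an}(X_{\mathfrak m})$ of Corollary 5.18, then obtain holomorphy on the open dense set $\varphi (W)$ coming from the bimeromorphicity result (note this is Theorem 5.19, not 5.20 as you cite, and ``admits ${\mathfrak m}$ for a modulus'' is Definition 5.15, not 5.13). Your explicit translation argument $\Psi (a) = \Psi (a-c) + \Psi (c)$ for propagating holomorphy from $\varphi (W)$ to all of ${\rm Alb}^{an}(X_{\mathfrak m})$ makes precise a step the paper merely asserts (it says density of $\varphi (W)$ suffices, which for a bare map it would not), so your write-up is, if anything, more complete at that point.
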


\begin{proof}
We may assume $g_0 = 0$ without loss of generality. Any holomorphic
map $\psi : X \setminus S \longrightarrow G$ can be extended to a
group homomorphism $\psi : \overline{{\rm Div}^0(X_{\mathfrak m})}
\longrightarrow G$, for it admits ${\mathfrak m}$ for
a modulus. By Corollary 5.18 we have an isomorphism
$\overline{\varphi } : \overline{{\rm Div}^0(X_{\mathfrak m})}
\longrightarrow {\rm Alb}^{an}(X_{\mathfrak m})$.
Then there exists uniquely a group homomorphism $\Psi :
{\rm Alb}^{an}(X_{\mathfrak m}) \longrightarrow G$ such that
$\psi = \Psi \circ \overline{\varphi }$ on $\overline{{\rm Div}^0(X_{\mathfrak m})}$.
\par
It suffices to show that $\Psi $ is holomorphic.
We denote by $S\psi $ the extension of $\psi $ to
$(X \setminus S)^{(\pi )}$.
By Theorem 5.19 there exists an open dense subset $W$ of
$(X \setminus S)^{(\pi )}$ such that 
$\varphi |W : W \longrightarrow \varphi (W)$ is biholomorphic
and $\varphi (W)$ is an open dense subset of
${\rm Alb}^{an}(X_{{\mathfrak m}})$.
If we set $\tau := (\varphi |W)^{-1} : \varphi (W) \longrightarrow W$,
then $\Psi = S\psi \circ \tau $ on $\varphi (W)$. 
Then $\Psi $ is holomorphic on $\varphi (W)$. Since $\varphi (W)$ is
open and dense in ${\rm Alb}^{an}(X_{{\mathfrak m}})$, $\Psi $
is holomorphic on the whole of  ${\rm Alb}^{an}(X_{{\mathfrak m}})$.
\end{proof}

\section{Further Properties of Analytic Albanese Varieties}

\subsection{Curves with general singularities}
We extend the method developed in \cite{ref2} to the general
setting. Let $X$ be a compact Riemann surface of genus $g$,
and let $S = \{ P_1, \dots , P_s\}$ be a finite subset of
$X$. Considering an equivalence relation $R$ on $S$, we set
$\overline{S} = S/R$. Let ${\mathfrak m}$ be a modulus with
support $S$. Then we obtain a singular curve
$X_{{\mathfrak m} } = (X \setminus S) \cup \overline{S}$ with
the projection $\rho : X \longrightarrow X_{{\mathfrak m} }$.
The genus of $X_{{\mathfrak m} }$ is $\pi =
\dim H^0(X_{{\mathfrak m} }, \Omega _{{\mathfrak m} })$.
Let $\pi = g + \delta$. We take a homology basis
$\{ \alpha _1, \beta _1, \dots , \alpha _g, \beta _g\}$ of
$X$ and small circles $\{ \gamma _1, \dots \gamma _s\}$ as
in Fig.1.\par
\begin{figure}
\includegraphics[width=13cm,height=8cm, clip]{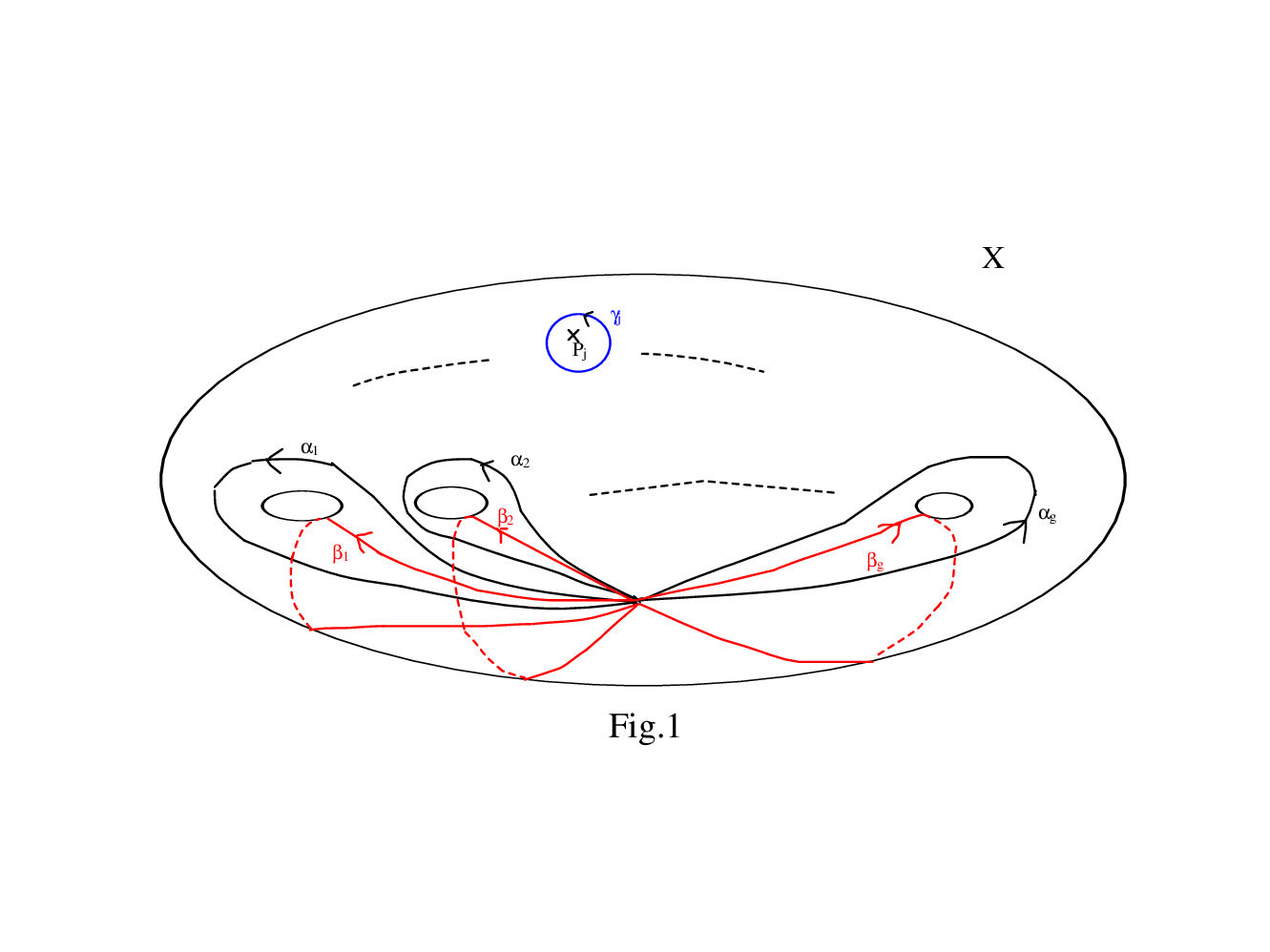}
\end{figure}
Then the set $\{ \alpha _1, \beta _1, \dots , \alpha _g, \beta _g,
\gamma _1,\dots ,\gamma _{s-1}\}$ forms a basis of
$H_1(X \setminus S, {\mathbb Z}) = H_1(X_{{\mathfrak m}}\setminus \overline{S},
{\mathbb Z})$.
Let $\{ \omega _1, \dots , \omega _{\pi }\}$ be a basis of
$H^0(X_{\mathfrak m}, \Omega _{\mathfrak m})$ such that
$\{ \rho ^{*}\omega _1, \dots , \rho ^{*}\omega _{g}\}$ is
a basis of $H^0(X, \Omega )$ and satisfies
$$\int _{\alpha _j}\rho ^{*}\omega _i = \delta _{ij},\quad
\tau = \left( \int _{\beta _j}\rho ^{*}\omega _i \right)_{1\leq i,j \leq g}
\in {\mathfrak S}_g.$$
Each $\rho ^{*}\omega _{g+i}$ is a meromorphic differential on
$X$ which is holomorphic on $X \setminus S$ and satisfies
$${\rm ord}_P(\rho ^{*}\omega _{g+i}) \geq - {\mathfrak m}(P)
\quad \mbox{for any $P \in S$}$$
and
$$\sum _{P \in \rho ^{-1}(Q)} {\rm Res}_P(\rho ^{*}(f\omega _{g+i}))
= 0$$
for any $f \in {\mathcal O}_{{\mathfrak m},Q}$ and any $Q \in \overline{S}$.
Renumbering $\{\rho ^{*}\omega _{g+i}\}$ if necessary, we can take
an integer $k$ with $0\leq k \leq \delta $ such that for
$i=1, \dots , k$ there exists $P \in S$ with
${\rm ord}_P(\rho ^{*}\omega _{g+i}) = -1$ and
for $k < i \leq \delta $ we have
${\rm Res}_P(\rho ^{*}\omega _{g+i})=0$ for any $P \in S$.
Moreover, we normalize as follows:\\
For $1 \leq i \leq k$ the form $\rho ^{*}\omega _{g+i}$ is
holomorphic except $\{ P_i, P_{k+i}\}$ and
${\rm ord}_{P_i}(\rho ^{*}\omega _{g+i}) =
{\rm ord}_{P_{k+i}}(\rho ^{*}\omega _{g+i}) = -1$ with
${\rm Res}_{P_i}(\rho ^{*}\omega _{g+i}) = 1/(2\pi \sqrt{-1})$. 
If $k<i\leq \delta $,
then ${\rm Res}_P(\rho ^{*}\omega _{g+i}) = 0$ for any
$P \in S$.
\begin{remark}
Let $\rho ^{*}\omega _{g+1}, \dots , \rho ^{*}\omega _{\pi }$ be
normalized as above. Take $i$ with $k < i \leq \delta $. Let $P \in S$.
We have the representation $\rho ^{*}\omega _{g+i} = f(t)dt$
by a local coordinate $t$ at $P$, where $f(t)$ is a meromorphic
function. Then the Laurent expansion of $f(t)$ at $t(P)=0$
does not contain a term $1/t$.
\end{remark}
Combining the normalization in the proof of Proposition 5.14 in
Section 5.6 with the above normalization,
we may assume that $\rho ^{*}\omega _1, \dots , \rho ^{*}\omega _{\pi}$
are further normalized as
$$\left(
\begin{array}{cc}
\left( \int _{\alpha _j}\rho ^{*}\omega _i\right)&
\left( \int _{\gamma _j}\rho ^{*}\omega _i\right)\\
\end{array}\right)
=
\left(
\begin{array}{cc}
I_g & 0 \\
0 & 
\left(
\begin{array}{ccc}
I_k & - I_k & 0\\
0 & 0 & 0\\
\end{array}\right)\\
\end{array}\right).$$
Let
$$B := \left(
\int _{\beta _j}\rho ^{*}\omega _{g+i}\right)_
{i=1, \dots , \delta ; j = 1, \dots , g}.$$
Then we have a period matrix of ${\rm Alb}^{an}(X_{\mathfrak m})$
as
$$\left(
\begin{array}{ccc}
I_g & \tau & 0\\
0 & B & \left(
\begin{array}{ccc}
I_k & - I_k & 0\\
 0 & 0 & 0\\
\end{array}\right)\\
\end{array}\right).$$
We consider a simply connected domain $D$ obtained by
cutting $X$ open along $\alpha _1, \beta _1, \dots , \alpha _g,
\beta _g$.
Let $D_0$ be the subdomain surrounded by $\partial D$ and
$\gamma _1, \dots , \gamma _s$ (see Fig.2).\par
\begin{figure}
\includegraphics[width=13cm,height=8cm, clip]{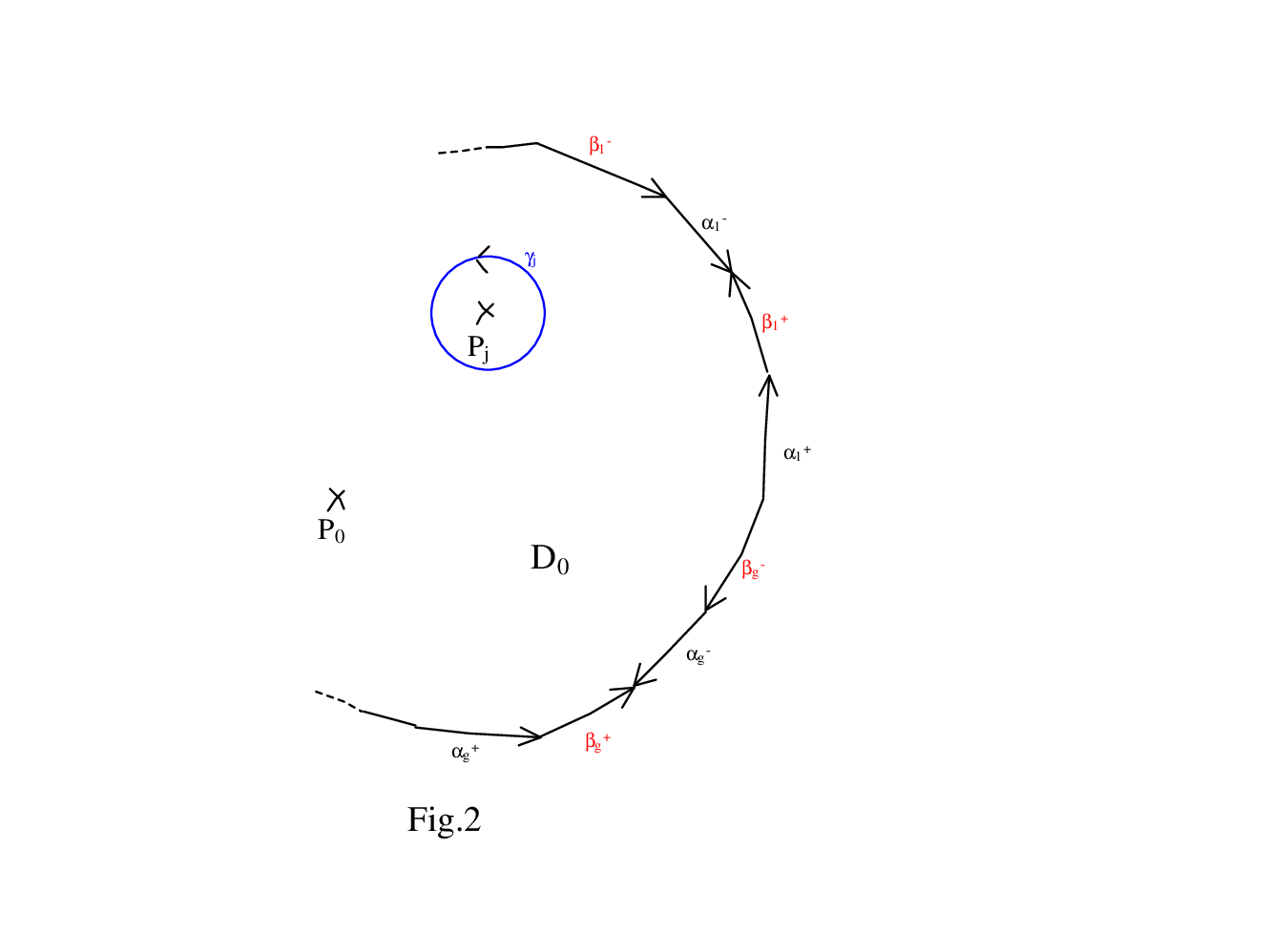}
\end{figure}
Fixing a point $P_0 \in D_0$, we define
$$h_j(z) := \int _{P_0}^{z}\rho ^{*}\omega _j$$
for any $z \in D$ and $j=1, \dots , g$.
Then $h_j$ is a holomorphic function on $D$ smoothly extended to
$\overline{D}$, which satisfies
$dh_j = \rho ^{*}\omega _j$.
Let $p_+ \in \alpha _i^{+}$.
We denote by $p_- \in \alpha _i^{-}$ the point corresponding to
$p_+$.
Then we have
$$h_j(p_-) = h_j(p_+) + \int _{\beta _i}\rho ^{*}\omega _j.$$
Similarly we have
$$h_j(q_-) = h_j(q_+) - \int _{\alpha _i}\rho ^{*}\omega _j$$
for $q_+ \in \beta _i^{+}$ and its correspondent point
$q_- \in \beta _i^{-}$.
Then we have for $j = 1, \dots , g$ and
$\ell = 1, \dots , \delta $
\begin{equation*}\label{xx}
\begin{split}
0 & = \iint _{D_0} \rho ^{*}\omega _j \wedge \rho ^{*}\omega _{g+\ell }\\
&= \iint _{D_0}d(h_j\rho ^{*}\omega _{g+\ell })\\
& = \int _{\partial D_0} h_j\rho ^{*}\omega _{g+\ell }\\
& = \sum _{i=1}^{g}\left( \int _{\alpha _i^{+}}h_j \rho ^{*}\omega _{g+\ell } +
\int _{\beta _i^{+}}h_j \rho ^{*}\omega _{g+\ell }\right)
- \sum _{i=1}^{g}\left( \int _{\alpha _i^{-}}h_j \rho ^{*}\omega _{g+\ell } +
\int _{\beta _i^{-}}h_j \rho ^{*}\omega _{g+\ell }\right)\\
& \quad - \sum _{\nu = 1}^{s} \int _{\gamma _{\nu }}h_j \rho ^{*}\omega _{g+\ell }\\
& = \sum _{i=1}^{g} \left( \int _{\alpha _i}h_j\rho ^{*}\omega _{g+\ell} +
\int _{\beta _i}h_j\rho ^{*}\omega _{g+\ell }\right)\\
& \quad - \sum _{i=1}^{g}\left[ \int _{\alpha _i}\left(
h_j + \int _{\beta _i}\rho ^{*}\omega _j\right) \rho ^{*}\omega _{g+\ell } +
\int _{\beta _i}\left( h_j - \int _{\alpha _i}\rho ^{*}\omega _j\right)
\rho ^{*}\omega _{g+\ell }\right]\\
& \quad - \sum _{\nu = 1}^{s} \int _{\gamma _{\nu }}h_j \rho ^{*}\omega _{g+\ell }\\
& = - \sum _{i=1}^{g}\left[ \left( \int _{\beta _i}\rho ^{*}\omega _j\right)
\left(\int _{\alpha _i}\rho ^{*}\omega _{g+\ell }\right) - 
\left( \int _{\alpha _i}\rho ^{*}\omega _j\right)
\left(\int _{\beta _i}\rho ^{*}\omega _{g+\ell }\right)\right]\\
& \quad - \sum _{\nu = 1}^{s} \int _{\gamma _{\nu }}h_j \rho ^{*}\omega _{g+\ell }\\
& =  \int _{\beta _j}\rho ^{*}\omega _{g+\ell } -
\sum _{\nu = 1}^{s} \int _{\gamma _{\nu }}h_j \rho ^{*}\omega _{g+\ell }.\\
\end{split}
\end{equation*}
Since
$$\int _{\gamma _{\nu }}h_j \rho ^{*}\omega _{g+\ell } =
2\pi \sqrt{-1}h_j(P_{\nu }) {\rm Res}_{P_{\nu }}(\rho ^{*}\omega _{g+\ell }),$$
we have
$$\sum _{\nu = 1}^{s}
\int _{\gamma _{\nu }}h_j \rho ^{*}\omega _{g+\ell } =
2\pi \sqrt{-1} \sum _{P \in S} h_j(P) {\rm Res}_{P}(\rho ^{*}\omega _{g+\ell }).$$
Therefore we obtain
$$\int _{\beta _j}\rho ^{*}\omega _{g+\ell } = 2\pi \sqrt{-1}
\sum _{P \in S}h_j(P) {\rm Res}_P(\rho ^{*}\omega _{g+\ell }).$$
By the normalization of $\rho ^{*}\omega _{g+1}, \dots , \rho ^{*}\omega _{\pi }$,
we obtain
$$\sum _{P \in S} h_j(P) {\rm Res}_P(\rho ^{*}\omega _{g+\ell }) = 
\frac{1}{2\pi \sqrt{-1}}(h_j(P_{\ell }) - h_j(P_{k+\ell }))$$
for $j=1, \dots , g$ if $1 \leq \ell \leq k$, and
$$\sum _{P \in S} h_j(P) {\rm Res}_P(\rho ^{*}\omega _{g+\ell }) = 0$$
for $j=1, \dots , g$ if $k+1 \leq \ell \leq \delta $.
Therefore we have
\begin{equation*}
\int _{\beta _j} \rho ^{*}\omega _{g + \ell } =
\begin{cases}
h_j(P_{\ell }) - h_j(P_{k+\ell }) &
\text{if $1 \leq \ell \leq k$},\\
0 & \text{if $k+1 \leq \ell \leq \delta $}
\end{cases}
\end{equation*}
for $j = 1, \dots , g$.
We set
$$H_k(S) :=\left(
\begin{array}{ccc}
h_1(P_1) - h_1(P_{k+1}) & \cdots &
h_g(P_1) - h_g(P_{k+1}) \\
\vdots &   &   \vdots \\
h_1(P_k) - h_1(P_{2k}) & \cdots &
h_g(P_k) - h_g(P_{2k}) \\
\end{array}
\right).$$
Then we have
$$B = \left(
\begin{array}{c}
H_k(S)\\
0
\end{array}\right).$$
Hence we obtain the following theorem.
\begin{theorem}
Let $X_{\mathfrak m}$ be as above. Then we have a period matrix 
$$\left(
\begin{array}{ccc}
I_g & \tau & 0 \\
0 &
H_k(S) & I_k\\
0 & 0 & 0
\end{array}\right)$$
of
${\rm Alb}^{an}(X_{\mathfrak m})$.
\end{theorem}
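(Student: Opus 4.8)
The plan is to read off the stated matrix from the period matrix already exhibited in this section, after discarding the redundant generators of the period lattice $\Gamma$. The substantive analytic input --- the reciprocity computation that evaluates $\int_{\beta_j}\rho^{*}\omega_{g+\ell}$ --- has been carried out above, so what remains is purely a matter of organizing generators.

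First I would recall the period matrix in the form derived just before the statement,
$$P = \left(\begin{array}{ccc} I_g & \tau & 0 \\ 0 & B & \left(\begin{array}{ccc} I_k & -I_k & 0 \\ 0 & 0 & 0 \end{array}\right) \end{array}\right),$$
whose columns are the vectors $\left(\int_{c}\rho^{*}\omega_1,\dots,\int_{c}\rho^{*}\omega_{\pi}\right)^{T}$ as $c$ ranges over $\alpha_1,\dots,\alpha_g$, then $\beta_1,\dots,\beta_g$, then $\gamma_1,\dots,\gamma_{s-1}$. Inserting the value $B=\left(\begin{array}{c} H_k(S) \\ 0 \end{array}\right)$ obtained above and splitting the lower $\delta$ rows into the first $k$ and the remaining $\delta-k$, the matrix takes the form
$$\left(\begin{array}{ccccc} I_g & \tau & 0 & 0 & 0 \\ 0 & H_k(S) & I_k & -I_k & 0 \\ 0 & 0 & 0 & 0 & 0 \end{array}\right),$$
with column blocks of widths $g,\ g,\ k,\ k,\ s-1-2k$.

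The key step is the simplification of the generating set of $\Gamma$. By the normalization of $\rho^{*}\omega_{g+1},\dots,\rho^{*}\omega_{\pi}$, every $\gamma$-period column is supported in the middle $k$ rows; comparing them entry by entry shows that the column attached to $\gamma_{k+i}$ is the negative of the column attached to $\gamma_i$ for $1\le i\le k$, while the columns attached to $\gamma_{2k+1},\dots,\gamma_{s-1}$ vanish identically. These $s-1-k$ columns therefore lie in the $\mathbb{Z}$-span of the retained ones and may be deleted without changing $\Gamma$. The survivors --- the $g$ columns over $\alpha_j$, the $g$ columns over $\beta_j$, and the $k$ columns over $\gamma_1,\dots,\gamma_k$ --- assemble exactly into
$$\left(\begin{array}{ccc} I_g & \tau & 0 \\ 0 & H_k(S) & I_k \\ 0 & 0 & 0 \end{array}\right),$$
which is the claimed period matrix.

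The one point I expect to require care is checking that these $2g+k$ retained columns are $\mathbb{Z}$-linearly independent, so that the displayed matrix is a genuine period matrix rather than a redundant generating set. This uses $\tau\in\mathfrak{S}_g$: in any integral relation among the columns the coefficients of the $\beta_j$-columns must vanish, because ${\rm Im}\,\tau$ is positive definite and those coefficients are real, and then the coefficients of the $\alpha_j$- and $\gamma_i$-columns vanish as well. Granting this, the theorem is just a transcription of the data computed above.
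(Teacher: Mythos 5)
Your proposal is correct, and it is essentially the paper's own route: the paper likewise obtains the theorem by substituting $B=\begin{pmatrix}H_k(S)\\ 0\end{pmatrix}$ into the matrix of $\alpha$-, $\beta$- and $\gamma$-periods and then passing to the displayed $\pi\times(2g+k)$ matrix. The one place you go beyond the text is that the paper performs this last passage silently (the theorem is introduced only by \lq\lq Hence we obtain the following theorem"), whereas you justify it: each $\gamma_{k+i}$-column is the negative of the corresponding $\gamma_i$-column, the $\gamma_j$-columns with $j>2k$ vanish, so deleting them leaves $\Gamma$ unchanged; and the surviving $2g+k$ columns are $\mathbb{Z}$-independent, since an integral relation read off the first $g$ rows gives $a+\tau b=0$ with $a,b$ integral, positive definiteness of ${\rm Im}\,\tau$ forces $b=0$, hence $a=0$, and the middle $k$ rows then kill the $\gamma$-coefficients. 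That verification is not cosmetic: under the paper's definition of a period matrix (Section 5.5) the columns must be a generating set whose cardinality equals ${\rm rank}\,\Gamma$, so one genuinely needs the retained columns to form a basis, which also pins down ${\rm rank}\,\Gamma=2g+k$; the paper leaves this point implicit.

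One caveat to keep in view: all of the analytic substance of this theorem is the identity $\int_{\beta_j}\rho^{*}\omega_{g+\ell}=2\pi\sqrt{-1}\sum_{P\in S}h_j(P)\,{\rm Res}_P(\rho^{*}\omega_{g+\ell})$, proved by Stokes' theorem on the cut domain $D_0$, which is what yields $B=\begin{pmatrix}H_k(S)\\ 0\end{pmatrix}$. You cite this as \lq\lq obtained above," and indeed it stands before the theorem in the paper, so your reduction is a legitimate completion of the argument; but if that reciprocity computation is regarded as part of the proof of the theorem rather than as given context, then your proposal supplies only the final bookkeeping and presupposes the main step. As written, it completes the paper's derivation rather than replacing it.
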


\subsection{Curves with nodes, the case $\pi = 2$}
Let ${\mathcal M}_g$ be the moduli space of non-singular curves of genus $g$.
The compactification $\widehat{{\mathcal M}_g}$ of ${\mathcal M}_g$ is obtained 
by adding stable curves of genus $g$ (\cite{ref7, ref8}).
An irreducible stable curve is a curve with only nodes as singularities.
We have a result for curves of general genus with nodes in \cite{ref2}.
We consider the case $\pi = 2$ in this section.
We show that an analytic Albanese variety may have many algebraic
structures. \par
Let $X$ be a compact Riemann surface of genus 1. Taking distinct points
$P_1$ and $P_2$ in $X$, we set $S = \{ P_1, P_2\}$. Let ${\mathfrak m}$ be a modulus
with support $S$ defined by 
${\mathfrak m}(P_1) = {\mathfrak m}(P_2) = 1$.
We identify $P_1$ with $P_2$ and put $\overline{S} = \{ Q \}$.
Then we obtain a singular curve
$X_{{\mathfrak m} } = (X \setminus S) \cup \overline{S}$
with the only node $Q$.
Let $\{ \alpha , \beta \}$ and $\{ \gamma _1, \gamma _2 \}$ be a homology
basis of $X$ and small circles respectively as in Fig.1 in the preceding
section. We take a basis $\{ \omega , \eta \}$ of 
$H^0(X_{{\mathfrak m} }, \Omega _{{\mathfrak m} })$ such that
$\rho ^{*} \omega $ generates $H^0(X, \Omega )$, where
$\rho : X \longrightarrow X_{{\mathfrak m} }$ is the projection.
Let ${\mathcal H}$ be the upper half plane. We may assume that $\omega $ is
normalized as 
$$\int _{\alpha }\rho ^{*}\omega = 1 \quad\mbox{and\quad
$\int _{\beta }\rho ^{*}\omega = \tau \in {\mathcal H}$}.$$
Furthermore we can take $\eta $ satisfying
$$\int _{\alpha }\rho ^{*}\eta = 0 \quad\mbox{and\quad
$\displaystyle{{\rm Res}_{P_1}(\rho ^{*}\eta ) = 
\frac{1}{2\pi \sqrt{-1}}}$}.$$
Then we have necessarily
${\rm Res}_{P_2}(\rho ^{*}\eta )= -1/(2\pi \sqrt{-1})$.
In this case we have
$$\left(
\begin{array}{cc}
\int _{\alpha }\rho ^{*}\omega & \int _{\gamma _1 }\rho ^{*}\omega \\
\int _{\alpha }\rho ^{*}\eta & \int _{\gamma _1 }\rho ^{*}\eta \\
\end{array}\right) =
\left(
\begin{array}{cc}
1 & 0 \\
0 & 1 \\
\end{array}\right) .$$
Let $D$ and $D_0$ be as in the preceding section.
We fix $P_0 \in D_0$.
Define
$$h(z) := \int _{P_0}^{z} \rho ^{*}\omega $$
for any $z \in D$. We set
$$H(S) := h(P_1) - h(P_2).$$
By Theorem 6.1, a period matrix of ${\rm Alb}^{an}(X_{{\mathfrak m} })$ is
$$\left(
\begin{array}{ccc}
0 & 1 & \tau \\
1 & 0 & H(S) \\
\end{array}\right).$$
This means that the structure of ${\rm Alb}^{an}(X_{{\mathfrak m} })$ is
determined by the relation of $P_1$ and $P_2$. The analytic Albanese variety
${\rm Alb}^{an}(X_{{\mathfrak m} })$ is one of the following cases:\\
(i) ${\rm Alb}^{an}(X_{{\mathfrak m} }) = {\mathbb T} \times {\mathbb C}^{*}$,
where ${\mathbb T}$ is a complex torus of dimension 1;\\
(ii) ${\rm Alb}^{an}(X_{{\mathfrak m} })$ is a 2-dimensional non-compact quasi-abelian
variety.
\begin{remark}
In the case (i) ${\mathbb T}$ depends on $X_{{\mathfrak m}}$ and is not isomorphic to $J(X)$ in general.
\end{remark}
\par
We consider the isomorphic classes of $\{{\rm Alb}^{an}(X_{{\mathfrak m}})\}$.
We identify $X$ with ${\mathbb T}_{\tau } := {\mathbb C}/\Gamma _{\tau}$,
where $\Gamma _{\tau } = {\mathbb Z} + \tau {\mathbb Z}$.
Let $z$ be a coordinate of ${\mathbb C}$. 
We denote by $[z]$ the equivalence class of $z$ modulo $\Gamma _{\tau }$.
From this identification it follows that
$\rho ^{*} \omega = dz$.
Let ${\mathcal F} = \{ a = s + t\tau ; 0 \leq s, t < 1 \}$ be the
fundamental parallelogram of $X = {\mathbb T}_{\tau }$. 
We can take $z_1, z_2 \in {\mathcal F}$ such that $P_1 = [z_1]$
and $P_2 = [z_2]$. Then we have
$$H(S) = z_1 - z_2.$$
For any $a \in {\mathcal F} \setminus \{ 0\}$ 
there exist $P_1=[z_1]$ and $P_2=[z_2]$ with $P_1 \not= P_2$
such that $z_1 - z_2 = a$. Then we can take a period
matrix of each ${\rm Alb}^{an}(X_{{\mathfrak m} })$ as follows
$$P_a = \left(
\begin{array}{ccc}
0 & 1 &\tau \\
1 & 0 & a \\
\end{array}\right),\quad
a \in {\mathcal F} \setminus \{ 0 \}.$$
We denote by $\Gamma _a$ a discrete subgroup of ${\mathbb C}^2$ with period matrix $P_a$.
Let $G_a := {\mathbb C}^2/\Gamma _a$.
\begin{lemma}
The quotient group $G_a$ is not toroidal if and only if
$$a = r + q\tau,\quad r, q \in {\mathbb Q}\quad
\mbox{and\quad $0\leq r, q <1$}.$$
\end{lemma}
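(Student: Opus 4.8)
The plan is to translate the toroidal condition into an arithmetic condition on $a$ by analysing the holomorphic homomorphisms of $G_a$ onto $\mathbb{C}$ and onto $\mathbb{C}^*$. First I would record that $\Gamma_a$ is generated over $\mathbb{Z}$ by the three columns $\lambda_1 = {}^t(0,1)$, $\lambda_2 = {}^t(1,0)$ and $\lambda_3 = {}^t(\tau, a)$ of $P_a$. Since $\mathrm{Im}\,\tau > 0$, the first coordinate of $\lambda_3$ has nonzero imaginary part whereas those of $\lambda_1, \lambda_2$ are real, so $\lambda_1, \lambda_2, \lambda_3$ are linearly independent over $\mathbb{R}$; hence $\Gamma_a$ is a discrete subgroup of rank $3$ and its real span $\mathbb{R}_{\Gamma_a}$ is $3$-dimensional. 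As $G_a$ is toroidal exactly when $H^0(G_a, \mathcal{O}_{G_a}) = \mathbb{C}$, the Remmert--Morimoto decomposition used in Section 5.7 shows that $G_a$ fails to be toroidal if and only if it admits a surjective holomorphic homomorphism onto $\mathbb{C}$ or onto $\mathbb{C}^*$.

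Next I would eliminate the $\mathbb{C}$ factor by a dimension count. A surjective homomorphism $G_a \to \mathbb{C}$ lifts to a nonzero $\mathbb{C}$-linear functional $\ell$ on $\mathbb{C}^2$ with $\ell(\Gamma_a) = 0$; being $\mathbb{R}$-linear, $\ell$ then vanishes on $\mathbb{R}_{\Gamma_a}$, so $\mathbb{R}_{\Gamma_a} \subset \ker \ell$. But $\ker \ell$ is a complex hyperplane in $\mathbb{C}^2$, of real dimension $2$, while $\dim_{\mathbb{R}} \mathbb{R}_{\Gamma_a} = 3$; this is impossible. Therefore $G_a$ is not toroidal if and only if there is a surjective holomorphic homomorphism onto $\mathbb{C}^*$. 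Lifting through the exponential covering, such a homomorphism has the form $z \mapsto \exp(2\pi\sqrt{-1}\,\ell(z))$ for a $\mathbb{C}$-linear $\ell(z) = c_1 z_1 + c_2 z_2$, and it descends to $G_a$ precisely when $\ell(\lambda_j) \in \mathbb{Z}$ for $j = 1,2,3$, that is, when $c_1 \in \mathbb{Z}$, $c_2 \in \mathbb{Z}$ and $c_1 \tau + c_2 a \in \mathbb{Z}$ with $(c_1, c_2) \neq (0,0)$.

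It then remains to solve this arithmetic condition. If $c_2 = 0$ then $c_1 \tau \in \mathbb{Z}$ with $c_1 \neq 0$, contradicting $\tau \notin \mathbb{R}$; hence $c_2 \neq 0$, and writing $c_1 \tau + c_2 a = n \in \mathbb{Z}$ gives $a = \tfrac{n}{c_2} - \tfrac{c_1}{c_2}\tau \in \mathbb{Q} + \mathbb{Q}\tau$. Conversely, if $a = r + q\tau$ with $r, q \in \mathbb{Q}$, clearing a common denominator $N$ yields integers with $(-qN)\tau + N a = rN \in \mathbb{Z}$ and $(-qN, N) \neq (0,0)$, so the required character exists. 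Thus $G_a$ is not toroidal if and only if $a \in \mathbb{Q} + \mathbb{Q}\tau$. Finally, since $a$ lies in the fundamental parallelogram $\mathcal{F}$, its expansion $a = s + t\tau$ with $0 \le s, t < 1$ in the $\mathbb{R}$-basis $\{1, \tau\}$ of $\mathbb{C}$ is unique; comparing it with $a = r + q\tau$ forces $r = s$ and $q = t$, so rationality of $r, q$ is equivalent to the stated condition $a = r + q\tau$ with $r, q \in \mathbb{Q}$ and $0 \le r, q < 1$.

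I expect the main obstacle to be the first reduction: cleanly justifying that ``not toroidal'' is detected exactly by a surjective homomorphism onto $\mathbb{C}$ or $\mathbb{C}^*$, and then ruling out the $\mathbb{C}$ summand by the inequality $\dim_{\mathbb{R}}\mathbb{R}_{\Gamma_a} = 3 > 2$. Once these structural facts are in place, the remaining steps are elementary linear algebra over $\mathbb{Q}$.
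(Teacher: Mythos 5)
Your proof is correct. The pivot of your argument and of the paper's is the same arithmetic criterion: $G_a$ is toroidal if and only if there is no $\sigma={}^t(\sigma_1,\sigma_2)\in\mathbb{Z}^2\setminus\{0\}$ with $\sigma_1\tau+\sigma_2 a\in\mathbb{Z}$. The difference is purely in how that criterion is obtained. The paper simply cites it as the well-known irrationality condition for toroidal groups (from the Toroidal Groups references) and declares the lemma ``immediate,'' leaving the elementary arithmetic over $\mathbb{Q}$ unwritten. You instead derive the criterion: you invoke Remmert--Morimoto to reduce ``not toroidal'' to the existence of a surjective holomorphic homomorphism onto $\mathbb{C}$ or $\mathbb{C}^*$, kill the $\mathbb{C}$ case by the dimension count $\dim_{\mathbb{R}}\mathbb{R}_{\Gamma_a}=3>2=\dim_{\mathbb{R}}\ker\ell$, and then classify characters $z\mapsto\exp\bigl(2\pi\sqrt{-1}\,\ell(z)\bigr)$ descending to $G_a$, which is exactly the integrality condition above. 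This buys a self-contained proof at the level of this lemma (modulo Remmert--Morimoto, which the paper already uses elsewhere), whereas the paper's version trades that for brevity by outsourcing the criterion to the literature, where it is usually proved via Fourier expansion of holomorphic functions rather than via your homomorphism-lifting argument. Your explicit treatment of the arithmetic (ruling out $c_2=0$ using $\tau\notin\mathbb{R}$, clearing denominators for the converse, and using uniqueness of the expansion of $a$ in the $\mathbb{R}$-basis $\{1,\tau\}$ to match the normalization $0\le r,q<1$) fills in precisely what the paper compresses into ``the assertion is immediate.''
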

\begin{proof}
It is well-known (cf. \cite{ref4, ref5}) that $G_a$ is toroidal if and only if there exists no
$\sigma = { }^t(\sigma _1, \sigma _2) \in {\mathbb Z}^2 \setminus \{ 0\}$ so that
$$\sigma _1 \tau + \sigma _2 a \in {\mathbb Z}.$$
The assertion is immediate from this condition.
\end{proof}
Next we consider the biholomorphic equivalence on $\{ X_{{\mathfrak m} } \}$.
Let $S' = \{ P'_1, P'_2 \}$ be another set of distinct two points in $X$.
Using $S'$,
we construct a singular curve $X_{{\mathfrak m} '}$ of genus 2 as above. Let
$\rho ' : X \longrightarrow X_{{\mathfrak m} '}$ be the projection.
\begin{lemma}
A map $f : X_{{\mathfrak m} } \longrightarrow X_{{\mathfrak m} '}$ is biholomorphic if
and only if there exists an automorphism
$\widetilde{f} : X \longrightarrow X$ with $\widetilde{f}(S) = S'$ such that
$f \circ \rho = \rho ' \circ \widetilde{f}$.
\end{lemma}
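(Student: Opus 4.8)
The plan is to prove the two implications separately, with essentially all of the analytic content living in the necessity direction. Throughout I would use the basic feature of the construction in Section 2 that, since $\mathfrak m(P_1) = \mathfrak m(P_2) = 1$, the projection restricts to a biholomorphism $\rho|_{X \setminus S} : X \setminus S \longrightarrow X_{\mathfrak m} \setminus \{Q\}$ (it collapses only $P_1, P_2$ to the node $Q$, while carrying the structure sheaf $\mathcal O_{X}$ over unchanged elsewhere), and likewise for $\rho'$ on $X \setminus S'$.

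For the sufficiency, suppose $\widetilde f : X \to X$ is an automorphism with $\widetilde f(S) = S'$. Then $\widetilde f$ carries $X \setminus S$ biholomorphically onto $X \setminus S'$ and sends the pair $\{P_1, P_2\}$ to $\{P_1', P_2'\}$, so it descends to a homeomorphism $f : X_{\mathfrak m} \to X_{\mathfrak m'}$ with $f \circ \rho = \rho' \circ \widetilde f$; away from the node $f$ is biholomorphic, being conjugate to $\widetilde f$ through the two projections. At the node I would verify the structure sheaves correspond directly: a germ $g \in \mathcal O_{\mathfrak m', Q'} = \mathbb C + \mathcal I_{Q'}$ is holomorphic near $\{P_1', P_2'\}$ with $g(P_1') = g(P_2')$, hence $g \circ \widetilde f$ is holomorphic near $\{P_1, P_2\}$ and takes equal values there, so $g \circ \widetilde f \in \mathcal O_{\mathfrak m, Q}$. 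Applying the same reasoning to $\widetilde f^{-1}$ shows $f$ is an isomorphism of complex spaces.

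For the necessity, the idea is to recover $\widetilde f$ by lifting $f$ through the projections and filling in the removed points. First I would argue that a biholomorphism $f$ must send the node $Q$ to the node $Q'$: the curve $X_{\mathfrak m}$ fails to be a manifold precisely at $Q$ (equivalently, $\mathcal O_{\mathfrak m, Q}$ is not regular) while every other point is a smooth point of a Riemann surface, so $f$ preserves the singular locus and $f(Q) = Q'$. Consequently $f$ restricts to a biholomorphism $X_{\mathfrak m} \setminus \{Q\} \to X_{\mathfrak m'} \setminus \{Q'\}$, and conjugating by the projections yields a biholomorphism
$$\widetilde f_0 := (\rho'|_{X \setminus S'})^{-1} \circ f \circ (\rho|_{X \setminus S}) : X \setminus S \longrightarrow X \setminus S'$$
between the punctured surfaces. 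Since $X$ is compact and only finitely many points are deleted, Riemann's removable singularity theorem extends $\widetilde f_0$ to a holomorphic map $\widetilde f : X \to X$; applying the same extension to $\widetilde f_0^{-1}$ produces a holomorphic inverse, so $\widetilde f$ is an automorphism. As $\widetilde f$ is then a bijection of $X$ with $\widetilde f(X \setminus S) = X \setminus S'$, we get $\widetilde f(S) = S'$, and $\rho' \circ \widetilde f = f \circ \rho$ holds on the dense set $X \setminus S$, hence everywhere by continuity.

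The step I expect to be the main obstacle is making the necessity direction rigorous near the node, in two respects. The first is justifying $f(Q) = Q'$ intrinsically, i.e. that $f$ cannot carry the singular point to a smooth one; I would phrase this via the non-regularity of $\mathcal O_{\mathfrak m, Q}$, or purely analytically by noting that $X_{\mathfrak m}$ is locally a manifold at every point except $Q$. The second is confirming that the extended $\widetilde f$ descends to exactly $f$ rather than merely to a map agreeing with $f$ off the node; this follows once $\widetilde f(S) = S'$ is in hand, because then $\widetilde f$ descends and the descended map coincides with $f$ on the dense complement of the node, forcing equality. If one prefers to bypass the removable-singularity argument, the identical conclusion is available from the universal property of normalization for reduced complex spaces, once $\rho$ and $\rho'$ are recognized as the normalizations of $X_{\mathfrak m}$ and $X_{\mathfrak m'}$.
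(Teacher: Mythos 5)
Your proof is correct and follows essentially the same route as the paper: the paper's own proof also restricts $f$ to the complement of the node, obtaining a biholomorphism $X \setminus S \longrightarrow X \setminus S'$, and invokes the Riemann removable singularity theorem to extend it to an automorphism $\widetilde{f}$ of $X$ with $\widetilde{f}(S) = S'$, dismissing the converse as obvious. The extra details you supply --- the argument that $f$ must carry the node $Q$ to $Q'$ because a biholomorphism preserves the singular locus, and the structure-sheaf verification in the sufficiency direction --- are points the paper leaves implicit, and you handle them correctly.
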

\begin{proof}
Suppose that a map $f : X_{{\mathfrak m} } \longrightarrow X_{{\mathfrak m} '}$
is biholomorphic.
Let $g := f|(X\setminus S)$. Then $g : X \setminus S \longrightarrow X \setminus S'$
is biholomorphic. By Riemann removability theorem, $g$ has the holomorphic
extension $\widetilde{f}$ to $X$ with $\widetilde{f}(S) = S'$.\par
The converse is obvious.
\end{proof}
The following proposition is well-known (for example, see Chapter III,
Section 1, Proposition 1.12 in \cite{ref13}).
\begin{proposition}
Any automorphism $\varphi : X \longrightarrow X$ is induced from
a linear function $\Phi (z) = \gamma z + z_0$, where
$z_0 \in {\mathbb C}$ and
\begin{equation}
\gamma =\begin{cases}
\text{a $4^{th}$ root of unity}& \text{if $\Gamma _{\tau }$ is square,}\\
\text{a $6^{th}$ root of unity}& \text{if $\Gamma _{\tau }$ is hexagonal,}\\
\pm 1 & \text{otherwise.}\\
\end{cases}
\tag{$*$}
\end{equation}
\end{proposition}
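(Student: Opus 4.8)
The plan is to reduce everything to the standard theory of elliptic curves $X = {\mathbb T}_\tau = {\mathbb C}/\Gamma_\tau$ and their automorphism groups fixing the origin. First I would recall that an elliptic curve has a distinguished structure as a one-dimensional complex torus, so that any biholomorphic automorphism $\varphi : X \longrightarrow X$ is, after composing with a translation, a group automorphism of ${\mathbb T}_\tau$. The point is that biholomorphisms of ${\mathbb C}/\Gamma_\tau$ lift to biholomorphisms of the universal cover ${\mathbb C}$, and the only biholomorphisms of ${\mathbb C}$ that descend to the torus are the affine maps $z \longmapsto \gamma z + z_0$. So the first step is purely a covering-space argument: lift $\varphi$ to an entire map $\widetilde{\varphi}$ of ${\mathbb C}$, show it is affine (a biholomorphism of ${\mathbb C}$ is $z \mapsto \gamma z + z_0$ with $\gamma \neq 0$), and record that $\Phi(z) = \gamma z + z_0$ induces $\varphi$.

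Next I would determine which multipliers $\gamma$ are admissible. For $\Phi(z) = \gamma z + z_0$ to descend to the quotient it must normalize the lattice: $\gamma \Gamma_\tau = \Gamma_\tau$. Writing $\Gamma_\tau = {\mathbb Z} + \tau {\mathbb Z}$, the condition $\gamma\,\Gamma_\tau \subseteq \Gamma_\tau$ means $\gamma \cdot 1$ and $\gamma \cdot \tau$ both lie in $\Gamma_\tau$, which forces $\gamma$ to be an algebraic integer in the imaginary quadratic field ${\mathbb Q}(\tau)$ (when $\tau$ is non-real quadratic) or simply a rational integer otherwise. Combined with the requirement that $\gamma$ be a \emph{unit} of the ring of multipliers — since $\Phi$ is invertible, $\gamma^{-1}$ must also preserve $\Gamma_\tau$ — one concludes $\gamma$ is a root of unity lying in ${\mathbb Q}(\tau)$. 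The units of the endomorphism ring $\operatorname{End}(\Gamma_\tau)$ of a lattice in ${\mathbb C}$ are well known: for the square lattice (with complex multiplication by $\sqrt{-1}$) they are the fourth roots of unity, for the hexagonal lattice (complex multiplication by a primitive sixth root of unity) they are the sixth roots of unity, and in all remaining cases the only units are $\pm 1$. This gives exactly the trichotomy $(*)$.

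The main technical step, and the place where I expect the real work to sit, is identifying when $\Gamma_\tau$ is square or hexagonal in terms of the standard representative $\tau$ in the upper half-plane. Here I would invoke the classical theory: two lattices $\Gamma_\tau$ and $\Gamma_{\tau'}$ are homothetic if and only if $\tau$ and $\tau'$ are related by an element of $\operatorname{SL}(2,{\mathbb Z})$ acting by fractional linear transformations, and the lattices with extra automorphisms correspond precisely to the two elliptic fixed points of this action, namely $\tau = \sqrt{-1}$ (the square case, stabilized by an order-$4$ element) and $\tau = e^{\pi\sqrt{-1}/3}$ (the hexagonal case, stabilized by an order-$6$ element). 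A direct computation of the stabilizer of $\tau$ in $\operatorname{PSL}(2,{\mathbb Z})$ then yields the automorphism group of the elliptic curve: it is cyclic of order $4$, $6$, or $2$ according to the three cases. The subtlety is purely bookkeeping — correctly matching the geometric notion of ``square'' and ``hexagonal'' lattice with the arithmetic condition on $\tau$, and verifying that the multiplier $\gamma$ realizing the automorphism is indeed a primitive root of unity of the stated order rather than a proper power. Once these identifications are made, assembling the three cases into the statement $(*)$ is immediate, and this completes the proof.
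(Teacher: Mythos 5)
Your proof is correct, but there is nothing in the paper to compare it against: the paper does not prove this proposition at all, declaring it well-known and referring the reader to Chapter III, Section 1, Proposition 1.12 of \cite{ref13}. Your argument is essentially the standard one that such references give: lift the automorphism through the covering ${\mathbb C} \longrightarrow {\mathbb C}/\Gamma _{\tau }$, use that every biholomorphism of ${\mathbb C}$ is affine, $z \longmapsto \gamma z + z_0$, note that descending to an automorphism of the quotient forces $\gamma \Gamma _{\tau } = \Gamma _{\tau }$, and then classify such $\gamma $ as units of the multiplier ring of $\Gamma _{\tau }$, which is either ${\mathbb Z}$ or an order in an imaginary quadratic field, so that $\gamma $ is a root of unity of order $1$, $2$, $3$, $4$ or $6$. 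Two remarks on the write-up. First, the step you label ``well known'' --- that a lattice admitting a unit of order $3$, $4$ or $6$ is necessarily hexagonal or square, so that in ``all remaining cases'' only $\pm 1$ survive --- is exactly where the content of the proposition sits; to make the proposal self-contained you would add that such a lattice is a rank-one torsion-free module over ${\mathbb Z}[\omega ]$ or ${\mathbb Z}[\sqrt{-1}]$, both principal ideal domains, hence homothetic to that ring (alternatively, a shortest-vector argument shows a lattice invariant under rotation by $\pi /2$ or $\pi /3$ is square or hexagonal). Second, your third paragraph on the elliptic fixed points of the ${\rm SL}(2,{\mathbb Z})$-action, which you call the main technical step, is actually dispensable: ``square'' and ``hexagonal'' are homothety-invariant conditions on $\Gamma _{\tau }$ and are already handled by the unit classification, so no normalization of $\tau $ in the modular curve is needed.
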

\begin{proposition}
Let $X = {\mathbb T}_{\tau }$. Consider two singular curves 
$X_{{\mathfrak m} }$ and $X_{{\mathfrak m} '}$ of genus 2 with a node
constructed from $X$.
We assume that the supports of ${\mathfrak m}$ and ${\mathfrak m}'$
are $S = \{P_1 = [z_1], P_2 = [z_2] \}$ and 
$S' = \{P'_1 = [z'_1], P'_2 = [z'_2]\}$ respectively.
Then, $X_{{\mathfrak m} }$ and $X_{{\mathfrak m} '}$ are
biholomorphic if and only if
$$z'_1 - z'_2 \equiv \gamma (z_1 - z_2)\quad {\rm mod}\quad \Gamma _{\tau },$$
where $\gamma $ is a complex number possessing the property $(*)$ in Proposition 6.4.
\end{proposition}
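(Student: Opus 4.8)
The plan is to prove this by reducing the biholomorphic equivalence of $X_{\mathfrak m}$ and $X_{\mathfrak m'}$ entirely to an arithmetic statement about the lattice $\Gamma_\tau$, using the two results immediately preceding it. By Lemma 6.3 a biholomorphism $f : X_{\mathfrak m} \to X_{\mathfrak m'}$ exists if and only if there is an automorphism $\widetilde{f} : X \to X$ with $\widetilde{f}(S) = S'$ and $f \circ \rho = \rho' \circ \widetilde{f}$, and by Proposition 6.4 every such $\widetilde{f}$ is induced by a linear map $\Phi(z) = \gamma z + z_0$ with $\gamma$ satisfying $(*)$. Thus the whole argument comes down to determining when such a $\Phi$ carries $\{z_1, z_2\}$ onto $\{z'_1, z'_2\}$ modulo $\Gamma_\tau$, and the translation parameter $z_0$ will be eliminated by subtraction.

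For the necessity I would start from the automorphism $\widetilde{f}$ furnished by Lemma 6.3 and write it as $\Phi(z) = \gamma z + z_0$ via Proposition 6.4. The condition $\widetilde{f}(S) = S'$ admits two labelings: either $\Phi(z_1) \equiv z'_1$ and $\Phi(z_2) \equiv z'_2$, or $\Phi(z_1) \equiv z'_2$ and $\Phi(z_2) \equiv z'_1$ (all congruences modulo $\Gamma_\tau$). Subtracting the two congruences cancels $z_0$ and yields $z'_1 - z'_2 \equiv \gamma(z_1 - z_2)$ in the first case and $z'_1 - z'_2 \equiv -\gamma(z_1 - z_2)$ in the second. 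The point to check is that $-\gamma$ again satisfies $(*)$: multiplication by $-1$ permutes the $4$th (resp. $6$th) roots of unity because $-1$ belongs to each of these groups, and it sends $\pm 1$ to $\mp 1$. Hence in both labelings $z'_1 - z'_2 \equiv \gamma'(z_1 - z_2) \pmod{\Gamma_\tau}$ for some $\gamma'$ with property $(*)$.

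For the sufficiency, assuming $z'_1 - z'_2 \equiv \gamma(z_1 - z_2) \pmod{\Gamma_\tau}$ with $\gamma$ satisfying $(*)$, I would set $z_0 := z'_1 - \gamma z_1$ and $\Phi(z) := \gamma z + z_0$. Since $\gamma$ satisfies $(*)$ we have $\gamma \Gamma_\tau = \Gamma_\tau$, so $\Phi$ descends to a well-defined automorphism $\widetilde{f}$ of $X = \mathbb{T}_\tau$ (the converse content of Proposition 6.4). By construction $\Phi(z_1) \equiv z'_1$, and then $\Phi(z_2) = \gamma z_2 + z_0 = z'_1 - \gamma(z_1 - z_2) \equiv z'_1 - (z'_1 - z'_2) = z'_2 \pmod{\Gamma_\tau}$, so $\widetilde{f}(S) = S'$. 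Feeding this $\widetilde{f}$ back into Lemma 6.3 produces the required biholomorphism $f : X_{\mathfrak m} \to X_{\mathfrak m'}$.

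The only genuinely nonroutine step, and the one I expect to need the most care, is the labeling ambiguity in the necessity direction: one must confirm that the swapped matching of $\{P_1, P_2\}$ with $\{P'_1, P'_2\}$ still delivers a legitimate coefficient $\gamma'$ with property $(*)$, which rests precisely on the closure of the relevant root-of-unity sets under negation. The remainder is the elementary elimination of $z_0$ and a direct appeal to the already-established Lemma 6.3 and Proposition 6.4.
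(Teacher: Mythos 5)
Your proposal is correct and follows essentially the same route as the paper: both reduce to Lemma 6.3 and Proposition 6.4, split the necessity direction into the two possible matchings of $S$ with $S'$ (eliminating $z_0$ by subtraction), and prove sufficiency by explicitly constructing the linear map $\gamma z + z_0$ (you anchor $z_0$ at $z_1$, the paper at $z_2$, an immaterial difference). Your explicit verification that $-\gamma$ again satisfies $(*)$, and that $\gamma\Gamma_\tau = \Gamma_\tau$ makes the linear map descend to the torus, are details the paper leaves implicit, so they are welcome additions rather than deviations.
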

\begin{proof}
Let $f : X_{{\mathfrak m} } \longrightarrow X_{{\mathfrak m} '}$ be a biholomorphic
map. By Lemma 6.3 we have an automorphism
$\widetilde{f} : X \longrightarrow X$ with $\widetilde{f}(S) = S'$ such that
$f \circ \rho  = \rho ' \circ \widetilde{f}$.
It follows from Proposition 6.4 that $\widetilde{f}$ is induced from
a linear function $F(z) = \gamma z + z_0$, where $\gamma $ possesses the
property $(*)$ in Proposition 6.4.
Since $\widetilde{f}(S) = S'$, $\widetilde{f}$ satisfies one of the following two cases:\\
(1) $\widetilde{f}(P_1) = P'_1$ and $\widetilde{f}(P_2) = P'_2$;\\
(2) $\widetilde{f}(P_1) = P'_2$ and $\widetilde{f}(P_2) = P'_1$.\\
In the case (1), we have
$$[z'_1 - z'_2] = P'_1 - P'_2 = \widetilde{f}(P_1) - \widetilde{f}(P_2)
= [\gamma (z_1 - z_2)].$$
Then we obtain
$$z'_1 - z'_2 \equiv \gamma (z_1 - z_2)\quad {\rm mod}\quad \Gamma _{\tau }.$$
Similarly we obtain
$$z'_1 - z'_2 \equiv - \gamma (z_1 - z_2)\quad {\rm mod}\quad \Gamma _{\tau }$$
in the case (2). 
\par
Conversely we assume that 
$$z'_1 - z'_2 \equiv \gamma (z_1 - z_2)\quad {\rm mod}\quad \Gamma _{\tau }.$$
Letting
$z_0 := z'_2 - \gamma z_2$, we define $G(z) := \gamma z + z_0$.
Then $G$ gives an automorphism
$\widetilde{g} : X \longrightarrow X$.
By the assumption we obtain
$$G(z_1) = \gamma z_1 + z_0 = \gamma (z_1 - z_2) + z'_2 \equiv z'_1\quad {\rm mod}\quad
\Gamma _{\tau }.$$
Then we have $\widetilde{g}(P_1) = P'_1$. 
Since $G(z_2) = z'_2$, we have
$\widetilde{g}(P_2) = P'_2$. Therefore $\widetilde{g}$ gives
a biholomorphic map $g : X_{{\mathfrak m} } \longrightarrow 
X_{{\mathfrak m} '}$.
\end{proof}
We give examples $X_{{\mathfrak m} }$ and $X_{{\mathfrak m} '}$ such that
${\rm Alb}^{an}(X_{{\mathfrak m} }) \cong {\rm Alb}^{an}(X_{{\mathfrak m} '})$ is a
2-dimensional non-compact quasi-abelian variety even though $X_{{\mathfrak m} }$
and $X_{{\mathfrak m} '}$ are not biholomorphic.
To do this, we have to study the equivalence of period matrices of toroidal groups in detail.
\par
Let $G = {\mathbb C}^n/\Gamma $ and $G' = {\mathbb C}^n/\Gamma '$ be toroidal groups
of ${\rm rank}\ \Gamma = {\rm rank}\ \Gamma ' = n+m$, whose period matrices are
$P$ and $P'$ respectively. Then $G$ and $G'$ are isomorphic if and only if
there exist $M \in GL(n, {\mathbb C})$ and $A \in GL(n+m, {\mathbb Z})$ such that
$P = MP'A$.
In this case we write $P \sim P'$.\par
Every 2-dimensional non-compact toroidal group $G = {\mathbb C}^2/\Gamma $ has a
period matrix in toroidal coordinates as follows
$$\left(
\begin{array}{ccc}
0 & 1 & \tau \\
1 & s & t \\
\end{array}\right),$$
where $\tau \in {\mathcal H}$ and $ s, t \in {\mathbb R}$ (see p.10 in \cite{ref5}).
Let
$$\left(
\begin{array}{ccc}
0 & 1 & \tau '\\
1 & s' & t' \\
\end{array}\right)$$
be a period matrix of another toroidal group $G'= {\mathbb C}^2/\Gamma '$.
If
$$\left(
\begin{array}{ccc}
0 & 1 & \tau \\
1 & s & t \\
\end{array}\right) \sim
\left(
\begin{array}{ccc}
0 & 1 & \tau '\\
1 & s' & t' \\
\end{array}\right),$$
then there exist $M \in GL(2, {\mathbb C})$ and
$$A = \left(
\begin{array}{ccc}
a_{11} & a_{12} & a_{13} \\
a_{21} & a_{22} & a_{23} \\
a_{31} & a_{32} & a_{33} \\
\end{array}\right)
\in GL(3, {\mathbb Z})$$
such that
$$\left(
\begin{array}{ccc}
0 & 1 & \tau \\
1 & s & t \\
\end{array}\right) = M
\left(
\begin{array}{ccc}
0 & 1 & \tau '\\
1 & s' & t' \\
\end{array}\right) A.$$
We set
$$\begin{cases}
A_{11} = a_{21} + a_{31}\tau ',\\
A_{12} = a_{22} + a_{32}\tau ',\\
A_{13} = a_{23} + a_{33}\tau ',\\
A_{21} = a_{11} + a_{21}s' + a_{31}t',\\
A_{22} = a_{12} + a_{22}s' + a_{32}t',\\
A_{23} = a_{13} + a_{23}s' + a_{33}t'.\\
\end{cases}
$$
By a straight calculation we see that $M$ is written by $\{ A_{ij}\} $ and
$$\begin{cases}
s = \frac{A_{22}}{A_{21}},\\
t = \frac{A_{23}}{A_{21}},\\
\tau = \frac{A_{11} A_{23} - A_{13} A_{21}}
{A_{11} A_{22} - A_{12} A_{21}}.\\
\end{cases}$$
\par
Now we consider the following period matrices
$$P_a = \left(
\begin{array}{ccc}
0 & 1 & \tau \\
1 & 0 & a \\
\end{array}\right),\quad
P_b = \left(
\begin{array}{ccc}
0 & 1 & \tau \\
1 & 0 & b \\
\end{array}\right),\quad
a, b \in {\mathbb R} \setminus {\mathbb Q}.$$
Without loss of generality, we may assume
$0 < a, b < 1$.
Applying the above result to this case, we obtain that
$P_a \sim P_b$ if and only if there exists
$A \in GL(3, {\mathbb Z})$ such that
$$\begin{cases}
A_{22} = 0,\\
a = \frac{A_{23}}{A_{21}},\\
\tau = \frac{A_{11} A_{23} - A_{13} A_{21}}{- A_{12} A_{21}}.\\
\end{cases}$$
We see that all of such $A$'s are
$$\pm \left(
\begin{array}{ccc}
1 & 0 & 0 \\
0 & 1 & 0 \\
0 & 0 & 1\\
\end{array}\right)
\quad
\text{and\quad $\pm
\left(
\begin{array}{ccc}
1 & 0 & 1 \\
0 & -1 & 0 \\
0 & 0 & -1\\
\end{array}\right)$},$$
by an elementary argument.
If we take
$$A = \pm \left(
\begin{array}{ccc}
1 & 0 & 1 \\
0 & -1 & 0 \\
0 & 0 & -1\\
\end{array}\right),$$
then we obtain
$$\left(
\begin{array}{ccc}
0 & 1 & \tau \\
1 & 0 & 1-b\\
\end{array}\right) \sim
\left(
\begin{array}{ccc}
0 & 1 & \tau \\
1 & 0 & b\\
\end{array}\right) .$$
\par
\begin{example}
Suppose that $\Gamma _{\tau }$ is neither square nor hexagonal.
Let $S = \{ P_1, P_2 \}$ be a subset of $X = {\mathbb T}_{\tau }$
with $P_1 = [z_1]$ and $P_2 = [z_2]$.
We assume that
$$z_1 - z_2 \in {\mathbb R} \setminus {\mathbb Q},\quad
0 < z_1 - z_2 < 1, \quad z_1 - z_2\not= \frac{1}{2}.$$
We take $S' = \{ P'_1= [z'_1], P'_2=[z'_2]\}$ such as
$$z'_1 - z'_2 = 1 - (z_1 - z_2).$$
We construct $X_{{\mathfrak m} }$ and $X_{{\mathfrak m} '}$ from
$S$ and $S'$ respectively. Since
$z_1 - z_2 \not\equiv \pm (z'_1 - z'_2)\ {\rm mod}\ \Gamma _{\tau }$,
$X_{{\mathfrak m} }$ and
$X_{{\mathfrak m} '}$ are not biholomorphic by Proposition 6.5. However
${\rm Alb}^{an}(X_{{\mathfrak m} }) \cong {\rm Alb}^{an}(X_{{\mathfrak m} '})$
and it is a non-compact quasi-abelian variety,
for
$$\left(
\begin{array}{ccc}
0 & 1 & \tau \\
1 & 0 & z'_1 - z'_2\\
\end{array}\right) \sim
\left(
\begin{array}{ccc}
0 & 1 & \tau \\
1 & 0 & z_1 - z_2\\
\end{array}\right) $$
by the choice of $S$ and $S'$.
\end{example}
\subsection{Curves with cusps}
Let $X$ be a compact Riemann surface of genus $g$.
We set $S = \{ P \}$, $P \in X$.
We define a modulus ${\mathfrak m}$ with support $S$ by
${\mathfrak m}(P) = 2$.
Let $\overline{S} = \{ Q \}$, $Q = P$.
Then we obtain a singular curve $X_{{\mathfrak m} }$ with
the only cusp $Q$. The genus of $X_{{\mathfrak m} }$ is 
$\pi = g + 1$.
Let $\{ \alpha _1, \beta _1, \dots , \alpha _g, \beta _g\}$
be a canonical homology basis of $X$. We take a small circle $\gamma $
centered at $P$.
We denote by $\rho : X \longrightarrow X_{{\mathfrak m} }$
the projection. We can take a basis
$\{ \omega _1, \dots , \omega _g, \eta \}$ of
$H^0(X_{{\mathfrak m} }, \Omega _{{\mathfrak m} })$ such that
$\{ \rho ^{*}\omega _1, \dots , \rho ^{*}\omega _g\}$ is a
basis of $H^0(X, \Omega )$ and it is normalized as
$$\int _{\alpha _j}\rho ^{*}\omega _i = \delta _{ij},\quad
\tau = \left(\int _{\beta _j}\rho ^{*}\omega _i\right) \in
{\mathfrak S}_g$$
and 
$$\int _{\alpha _j}\rho ^{*}\eta = 0,\quad
j = 1, \dots , g.$$
It is obvious that $\rho ^{*}\eta $ is a meromorphic
differential with ${\rm Res}_P(\rho ^{*}\eta ) = 0$.
Then we see that $k=0$, $C=0$ and 
$$\int _{\beta _j}\rho ^{*}\eta = 2\pi \sqrt{-1}
h_j(P){\rm Res}_P(\rho ^{*}\eta ) = 0$$
in Section 6.1.
Therefore a period matrix of ${\rm Alb}^{an}(X_{{\mathfrak m} })$ is
$$\left(
\begin{array}{cc}
I_g & \tau \\
0 & 0 
\end{array}\right).$$
Thus we obtain the following theorem.
\begin{theorem}
Let $X_{{\mathfrak m} }$ be a singular curve whose
singularity is the only cusp.
If $X$ is the normalization of $X_{{\mathfrak m} }$, then we have
$${\rm Alb}^{an}(X_{{\mathfrak m} }) \cong
J(X) \times {\mathbb C}.$$
\end{theorem}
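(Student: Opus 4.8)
The plan is to read the isomorphism directly off the period matrix computed just above in this section. Since the support $S = \{P\}$ consists of a single point, we have $s = 1$, so the homology basis of $X \setminus S$ reduces to $\{\alpha_1, \beta_1, \dots, \alpha_g, \beta_g\}$ with no small circles $\gamma$, and $\rho^{*}\eta$ is a differential of the second kind with $\mathrm{Res}_P(\rho^{*}\eta) = 0$. Consequently $k = 0$ and $\int_{\beta_j}\rho^{*}\eta = 2\pi\sqrt{-1}\,h_j(P)\,\mathrm{Res}_P(\rho^{*}\eta) = 0$ for every $j$, so that the period matrix of $\mathrm{Alb}^{an}(X_{\mathfrak{m}})$ is
$$P = \begin{pmatrix} I_g & \tau \\ 0 & 0 \end{pmatrix},$$
a $(g+1)\times 2g$ matrix whose $2g$ columns generate the discrete subgroup $\Gamma \subset \mathbb{C}^{g+1}$ (discreteness being guaranteed by Proposition 5.11) for which $\mathrm{Alb}^{an}(X_{\mathfrak{m}}) = \mathbb{C}^{g+1}/\Gamma$.

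First I would observe that every column of $P$ has vanishing last entry, so $\Gamma$ lies in the hyperplane $\mathbb{C}^{g}\times\{0\} \subset \mathbb{C}^{g+1}$. Writing coordinates as $(w, z)$ with $w \in \mathbb{C}^g$ and $z \in \mathbb{C}$, I would identify $\Gamma = \Lambda \times \{0\}$, where $\Lambda$ is the lattice in $\mathbb{C}^g$ generated by the columns of $(I_g\ \ \tau)$. Because $\tau \in \mathfrak{S}_g$, this $\Lambda$ is precisely the period lattice defining the Jacobi variety $J(X) = \mathbb{C}^g/\Lambda$ of $X$ relative to the normalized basis $\{\rho^{*}\omega_1, \dots, \rho^{*}\omega_g\}$ of $H^0(X, \Omega)$.

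The conclusion then follows by splitting the quotient along the coordinate decomposition $\mathbb{C}^{g+1} = \mathbb{C}^g \times \mathbb{C}$: since $\Gamma$ acts by translation only in the first factor,
$$\mathrm{Alb}^{an}(X_{\mathfrak{m}}) = \mathbb{C}^{g+1}/\Gamma \cong (\mathbb{C}^g/\Lambda)\times \mathbb{C} = J(X)\times \mathbb{C}$$
as complex Lie groups. The one point deserving a word of care is checking that the second factor is a genuine copy of $\mathbb{C}$ rather than $\mathbb{C}^{*}$ or a one-dimensional torus; here this is immediate, because $\mathrm{rank}\,\Gamma = 2g$ already exhausts the rank of $\Lambda$, so the last coordinate is subject to no lattice relation and the fibre direction contributes exactly the additive group $\mathbb{C}$. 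I do not anticipate any serious obstacle: once the period matrix is brought to the stated block-triangular form by the normalization, the product decomposition is forced.
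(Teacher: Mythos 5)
Your proposal is correct and follows essentially the same route as the paper: both use the normalization of Section 6.1 (with $s=1$, hence no $\gamma$-circles in the homology basis, and ${\rm Res}_P(\rho ^{*}\eta ) = 0$, forcing $k=0$ and $\int _{\beta _j}\rho ^{*}\eta = 0$) to obtain the period matrix $\left(\begin{smallmatrix} I_g & \tau \\ 0 & 0 \end{smallmatrix}\right)$, and then read off the splitting ${\mathbb C}^{g+1}/\Gamma \cong ({\mathbb C}^g/\Lambda _{\tau }) \times {\mathbb C} = J(X) \times {\mathbb C}$. The only difference is that you make explicit the final step, that the vanishing last row places $\Gamma $ inside ${\mathbb C}^g \times \{0\}$ and yields a genuine ${\mathbb C}$ factor, which the paper leaves implicit.
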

\begin{remark}
If the genus of $X$ is 1, then for any two points $P$ and $P'$ in $X$
there exists an automorphism $f : X \longrightarrow X$ with
$f(P) = P'$. 
Let $X_{{\mathfrak m}}$ and $X_{{\mathfrak m}'}$ be
singular curves with the only cusp constructed from $P$ and $P'$
respectively.
Then $X_{{\mathfrak m} } \cong X_{{\mathfrak m}'}$.
\par
Next we assume that $X$ is a compact Riemann surface of
genus $g \geq 2$.
Then the number of automorphisms of $X$ is at most
$84(g - 1)$ by Hurwitz' theorem.
Fix a singular curve $X_{{\mathfrak m}}$ with the only cusp
constructed from $P \in X$. Then,
there exist infinitely many
$X_{{\mathfrak m}'}$ whose singularity is the only cusp $Q'$ such that 
$X_{{\mathfrak m} } \not\cong X_{{\mathfrak m}'}$.
However we have ${\rm Alb}^{an}(X_{{\mathfrak m}}) \cong
{\rm Alb}^{an}(X_{{\mathfrak m}'})$ by the above theorem.
\end{remark}

\section{Meromorphic Function Fields}
\subsection{Non-singular case}
Let $X$ be a compact Riemann surface of genus $g$.
We stated some properties of the Jacobi variety $J(X)$ of $X$ in Section 4.1.
In this section we state another viewpoint of $J(X)$ considering the connection between
the two meromorphic function fields ${\rm Mer}(X)$ and
${\rm Mer}(J(X))$ on $X$ and on $J(X)$ respectively.\par
We follow the restatement of the classical argument (cf.\cite{ref11}) by \cite{ref14}.
The $n$-dimensional complex projective space ${\mathbb P}^n$ is
identified with the $n-$symmetric product $({\mathbb P}^1)^{(n)}$ of
${\mathbb P}^1$ by the map induced from the following
rational map
$$\tau : ({\mathbb P}^1)^n \ni (x_1, \dots , x_n) \longmapsto
(a_0 : a_1 :\cdots :a_n) \in {\mathbb P}^n,$$
$$\frac{a_1}{a_0} = - \sum _{i=1}^{n}x_i,\ 
\frac{a_2}{a_0} = \sum _{i<j}x_ix_j,\ \dots,\ 
\frac{a_n}{a_0} = (-1)^n \prod_{i=1}^{n}x_i,$$
where $(x_1, \dots , x_n)$ is the inhomogeneous coordinates of
$({\mathbb P}^1)^n$ and $(a_0: a_1: \cdots : a_n)$ is the homogeneous
coordinates of ${\mathbb P}^n$.
We have $x, y \in {\rm Mer}(X)$ such that 
${\rm Mer}(X) = {\mathbb C}(x,y)$. Then
we can take a non-zero
irreducible polynomial $f$ such that $f(x,y) = 0$.
Let $C$ be the closure of $\{ f =0\}$ in
${\mathbb P}^1 \times {\mathbb P}^1$. Then, by the normalization
$\mu _0 : X \longrightarrow C$ we can define a holomorphic map
$$\mu : X^g \longrightarrow C^g \subset ({\mathbb P}^1 \times {\mathbb P}^1)^g \cong
({\mathbb P}^1)^g \times ({\mathbb P}^1)^g.$$
Let $\sigma _0 : X^g \longrightarrow X^{(g)}$ be the canonical projection.
Since $J(X) \cong {\mathbb C}^g / \Gamma $, we have the projection
$\sigma : {\mathbb C}^g \longrightarrow J(X).$
A  period map $\varphi : X \longrightarrow J(X)$ is extended to a
birational map $\varphi : X^{(g)} \longrightarrow J(X).$
Then we have the following commutative diagram:
\def\spmapright#1{\smash{%
 \mathop{\hbox to 1.3cm{\rightarrowfill}}
  \limits^{#1}}}
\def\splongmapright#1{\smash{%
 \mathop{\hbox to 3.8cm{\rightarrowfill}}
  \limits^{#1}}}
\def\lmapdown#1{\Big\downarrow %
 \llap{$\vcenter{\hbox{$\scriptstyle#1 \,$ }}$}}
\def\rmapdown#1{\Big\downarrow %
 \rlap{$\vcenter{\hbox{$\scriptstyle #1$}} $ }}
\def\lmapup#1{\Big\uparrow %
 \llap{$\vcenter{\hbox{$\scriptstyle #1\,$}}$ }}
\def\lswmap#1{\swarrow %
 \llap{$\vcenter{\hbox{$\scriptstyle #1\,$}}$ }}
\def\rsemap#1{\searrow %
 \rlap{$\vcenter{\hbox{$\scriptstyle #1$}}$ }}
$$\begin{array}{ccc}
X^g & \spmapright{\mu}  C^g \subset ({\mathbb P}^1 \times {\mathbb P}^1)^g&
\cong ({\mathbb P}^1)^g \times ({\mathbb P}^1)^g\\
\lmapdown{\sigma _0} &      & \rmapdown{\tau \times \tau }\\
X^{(g)}& \splongmapright{\hat{\mu}}& {\mathbb P}^g \times {\mathbb P}^g \\
\lmapdown{\varphi} &   &  \lmapdown {\sigma _1}\quad  \rmapdown {\sigma _2}\\
J(X) &   &   {\mathbb P}^g\quad {\mathbb P}^g\\
\lmapup{\sigma } &   &   \\
{\mathbb C}^g &   &   \\
\end{array},
$$
where $\hat{\mu} : X^{(g)} \longrightarrow {\mathbb P}^g \times {\mathbb P}^g$
is the holomorphic mapping induced from $\mu $ and
$\sigma _i : {\mathbb P}^g \times {\mathbb P}^g \longrightarrow {\mathbb P}^g$
is the projection onto the $i$-th component for $i = 1,2$.
We note that $\hat{\mu}(X^{(g)})$ is projective algebraic and
$\hat{\mu} : X^{(g)} \longrightarrow \hat{\mu}(X^{(g)})$ is a birational map.
Thus we obtain rational maps 
$\wp ^x : {\mathbb C}^g \longrightarrow {\mathbb P}^g$ and
$\wp ^y : {\mathbb C}^g \longrightarrow {\mathbb P}^g$ defined by
$$\wp ^x := \sigma _1 \circ \hat{\mu}\circ \varphi ^{-1} \circ \sigma,\quad
\wp ^y := \sigma _2 \circ \hat{\mu}\circ \varphi ^{-1} \circ \sigma .$$
Using homogeneous coordinates of ${\mathbb P}^g$, we write
$$\wp ^x(z) = (1: \xi _1(z): \cdots : \xi _g(z)),\quad
\wp ^y(z) = (1: \eta _1(z): \cdots : \eta _g(z)).$$
Then we have 
${\rm Mer}(J(X)) = {\mathbb C}(\xi _1(z), \dots , \xi _g(z), \eta _1(z), \dots , \eta _g(z)).$
The relations of $\xi _1(z), \dots , \xi _g(z), \eta _1(z), \dots , \eta _g(z)$
are obtained by the elimination of $x_1, \dots , x_g, y_1, \dots , y_g$ from
the following equations
$$f(x_1, y_1) = 0, \dots , f(x_g, y_g) = 0,$$
$$\xi _1 = - \sum _{i=1}^{g} x_i,\  \xi _2 = \sum _{i<j}x_ix_j, \dots ,\ 
\xi _g = (-1)^g\prod _{i=1}^{g}x_i,$$
$$\eta _1 = - \sum _{i=1}^{g} y_i,\  \eta _2 = \sum _{i<j}y_iy_j, \dots ,\ 
\eta _g = (-1)^g\prod _{i=1}^{g}y_i.$$
These functions $\xi _1(z), \dots , \xi _g(z), \eta _1(z), \dots , \eta _g(z)$ are
the fundamental abelian functions belonging to ${\rm Mer}(X)$ in \cite{ref11}.
It is easy to check that the meromorphic function field
${\mathbb C}(\xi _1(z), \dots , \xi _g(z), \eta _1(z), \dots , \eta _g(z))$ admits
an algebraic addition theorem.
This is an abelian function field.

\subsection{Singular case}
In this section we develop the theory in the preceding section to
a singular curve $X_{{\mathfrak m} }$ defined by a modulus ${\mathfrak m}$ with
support $S$.\par
Let $\rho : X \longrightarrow X_{{\mathfrak m} }$ be the projection.
We denote by $\pi = g + \delta $ the genus of $X_{{\mathfrak m} },$
where $g$ is the genus of $X$.
We set $K := \rho ^{*}{\rm Mer}(X_{{\mathfrak m} }) \subset {\rm Mer}(X).$
Then there exist $x,y \in K$ such that $K = {\mathbb C}(x,y).$
Since $x$ and $y$ are algebraically dependent, we have a non-zero irreducible
polynomial $f$ such that $f(x,y) = 0.$ Let $C$ be the closure of
$\{ f = 0 \}$ in ${\mathbb P}^1 \times {\mathbb P}^1.$ We obtain the following
commutative diagram in the same manner as in the preceding section:
$$\begin{array}{ccc}
X^{\pi } & \spmapright{\mu}  C^{\pi } \subset ({\mathbb P}^1 \times {\mathbb P}^1)^{\pi }&
\cong ({\mathbb P}^1)^{\pi } \times ({\mathbb P}^1)^{\pi }\\
\lmapdown{\sigma _0} &      & \rmapdown{\tau \times \tau }\\
X^{(\pi )}& \splongmapright{\hat{\mu}}& {\mathbb P}^{\pi } \times {\mathbb P}^{\pi } \\
\cup &      &
  \lmapdown {\sigma _1}\quad  \rmapdown {\sigma _2}\\
(X \setminus S)^{(\pi )} &    &  {\mathbb P}^{\pi }\quad {\mathbb P}^{\pi }\\
\lmapdown{\varphi} &   &        \\
{\rm Alb}^{an}(X_{{\mathfrak m} }) &   &   \\
\lmapup{\sigma } &   &   \\
{\mathbb C}^{\pi } &   &   \\
\end{array}.
$$
By Theorem 5.19 the map $\varphi : (X \setminus S)^{(\pi )} \longrightarrow
{\rm Alb}^{an}(X_{{\mathfrak m} })$ is bimeromorphic. Therefore we can define
meromorphic maps
$\wp ^x : {\mathbb C}^{\pi } \longrightarrow {\mathbb P}^{\pi }$ and
$\wp ^y : {\mathbb C}^{\pi } \longrightarrow {\mathbb P}^{\pi }$ by
$$\wp ^x := \sigma _1 \circ \hat{\mu}\circ \varphi ^{-1} \circ \sigma \quad
\text{and}\quad
\wp ^y := \sigma _2 \circ \hat{\mu}\circ \varphi ^{-1} \circ \sigma .$$
If we represent $\wp ^x$ and $\wp ^y$ in homogeneous coordinates of
${\mathbb P}^{\pi }$ as
$$\wp ^x(z) = (1: \xi _1(z): \cdots : \xi _{\pi}(z)) \quad
\text{and}\quad
\wp ^y(z) = (1: \eta _1(z): \cdots : \eta _{\pi }(z)),$$
then $\xi _1(z), \dots , \xi _{\pi }(z), \eta _1(z), \dots , \eta _{\pi }(z) \in
{\rm Mer}({\rm Alb}^{an}(X_{{\mathfrak m} })).$
For the simplicity we write $A = {\rm Alb}^{an}(X_{{\mathfrak m} })$.
Let $[z]$ be a generic point of $A$, where $z \in {\mathbb C}^{\pi }$.
Then there exists uniquely $(P_1, \dots , P_{\pi }) \in
(X \setminus S)^{(\pi )}$ with $\varphi ((P_1, \dots , P_{\pi })) = [z]$.
In this case we have
$$\xi _1(z) = - \sum _{i=1}^{\pi }x(P_i),\ 
\xi _2(z) = \sum _{i<j}x(P_i)x(P_j), \dots ,\ 
\xi _{\pi }(z) = (-1)^{\pi }\prod _{i=1}^{\pi }x(P_i),$$
$$\eta _1(z) = - \sum _{i=1}^{\pi } y(P_i),\ 
\eta _2 (z) = \sum _{i<j}y(P_i)y(P_j), \dots ,\  
\eta _{\pi }(z) = (-1)^{\pi }\prod _{i=1}^{\pi }y(P_i).$$
We denote
$\kappa := {\mathbb C}(\xi _1(z), \dots , \xi _{\pi }(z),
\eta _1(z), \dots , \eta _{\pi }(z))\subset {\rm Mer}(A)$.
From the above representation it follows that any function
in $\varphi ^{*}\kappa $ extends meromorphically to $S$ and
$\varphi ^{*}\kappa = {\rm Mer}((X_{{\mathfrak m}} )^{(\pi )})
\subset {\rm Mer}(X^{(\pi )})$, where
$(X_{{\mathfrak m}} )^{(\pi )}$ is the symmetric product of
$X_{{\mathfrak m}} $ of degree $\pi $.
Therefore $\varphi ^{*}\kappa $ is finitely generated over
${\mathbb C}$ and ${\rm Trans}_{{\mathbb C}}\varphi ^{*}\kappa = \pi $.
Since $\varphi : (X \setminus S)^{(\pi )} \longrightarrow A$
is a bimeromorphic map, $\kappa $ is also finitely generated
over ${\mathbb C}$ and ${\rm Trans}_{{\mathbb C}}\kappa = \pi $.\par
We have the canonical representation
$$A = {\mathcal Q} \times ({\mathbb C}^{*})^q \times 
{\mathbb C}^p,$$
where ${\mathcal Q}$ is an $r$-dimensional quasi-abelian variety
of kind 0 and $p+q+r=\pi $ (see Section 5.6). Let
$$P =
\begin{pmatrix}
P_0 & 0 \\
0 & I_q \\
0 & 0
\end{pmatrix}$$
be a period matrix of $A$, where
$$P_0 =
\begin{pmatrix}
0 & I_g & \tau \\
I_{r-g} & R_1 & R_2
\end{pmatrix}$$
is a period matrix of ${\mathcal Q} = {\mathbb C}^r/\Gamma _0$ in 
the second normal form. We may assume that basis
$\omega _1, \dots ,\omega _{\pi }$ of
$H^0(X_{{\mathfrak m}} , \Omega _{{\mathfrak m}} )$ are
normalized as in Section 6.1. We take toroidal coordinates
$$z = (z',z'') = (z_1, \dots , z_g; z_{g+1}, \dots , z_r)$$
of ${\mathbb C}^r$. If $\zeta = (\zeta _1, \dots ,\zeta _q)$ and
$w = (w_1, \dots ,w_p)$ are coordinates of ${\mathbb C}^q$ and
${\mathbb C}^p$ respectively, then $(z,\zeta ,w)$ are coordinates
of ${\mathbb C}^{\pi }$. Any $\gamma \in \Gamma _0$ has the
representation $\gamma = (\gamma ', \gamma '')$ in toroidal
coordinates. We set
$$
\begin{pmatrix}
I_g & \tau \\
R_1 & R_2
\end{pmatrix}
= (\gamma _1, \dots ,\gamma _{2g}).$$
We denote by $\Lambda _{\tau }$ the lattice in ${\mathbb C}^g$
generated by $(\gamma _1', \dots , \gamma _{2g}') =
(I_g\ \tau )$. There exists an isomorphism from $\Lambda _{\tau }$
into $\Gamma _0$ defined by
$$\iota (\lambda ) :=
\sum _{j=1}^{2g}k_j \gamma _j$$
for any $\lambda = \sum _{j=1}^{2g}k_j \gamma _j'$.
We write ${\bf e}(t) = \exp (2\pi \sqrt{-1}t)$ for $t \in {\mathbb C}$.
Furthermore, we set ${\bf e}(z'') := {\bf e}(z_{g+1}) \cdots
{\bf e}(z_r)$ for $z'' = (z_{g+1}, \dots ,z_r)$. We define a
homomorphism
$$\psi : \Lambda _{\tau } \longrightarrow
{\mathbb C}_1^{\times } = \{ t \in {\mathbb C} ; |t| = 1 \}$$
by $\psi (\lambda ) := {\bf e}(\iota (\lambda )'')$
for $\lambda \in \Lambda _{\tau }$. We set $m := r - g$.
Let $\alpha \in {\mathbb Z}^m$. If we define
$\psi ^{-\alpha }(\lambda ) := \psi (\lambda )^{-\alpha }$,
then $\psi ^{-\alpha } : \Lambda _{\tau } \longrightarrow
{\mathbb C}_1^{\times }$ is also a homomorphism.\par
Let $\overline{A} = \overline{{\mathcal Q}} \times ({\mathbb P}^1)^q
\times ({\mathbb P}^1)^p$ be the standard compactification
of $A$. We set
$${\rm Mer}(\overline{A})|_A := \{ f|_A ;
f \in {\rm Mer}(\overline{A})\},$$
where $f|_A$ is the restriction of $f$ onto $A$.

\begin{proposition}
Any $F \in \varphi ^{*}({\rm Mer}(\overline{A})|_A)$ extends
meromorphically to $X^{(\pi )}$. Then we can consider
$\varphi ^{*}({\rm Mer}(\overline{A})|_A)$ as a subfield
of ${\rm Mer}(X^{(\pi )})$.
\end{proposition}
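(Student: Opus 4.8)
The plan is to realise each $F=\varphi^{*}(f|_{A})$ as the restriction of a genuinely global meromorphic function on $X^{(\pi)}$, by extending the bimeromorphic correspondence $\varphi$ across the boundary added in passing from $(X\setminus S)^{(\pi)}$ to $X^{(\pi)}$. First I would record the reduction: the assignment $f\longmapsto \varphi^{*}(f|_{A})$ is a field homomorphism from ${\rm Mer}(\overline{A})|_{A}$ into ${\rm Mer}((X\setminus S)^{(\pi)})$, injective because $\varphi$ is dominant. Since the property ``extends meromorphically to $X^{(\pi)}$'' is stable under sums, products and quotients, the subfield assertion is automatic once we know that each individual $F$ lies in the image of the restriction map ${\rm Mer}(X^{(\pi)})\longrightarrow {\rm Mer}((X\setminus S)^{(\pi)})$. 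So everything reduces to one extension statement.

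The conceptual route is to extend $\varphi$ itself. By Theorem 5.19 the map $\varphi\colon(X\setminus S)^{(\pi)}\to A$ is bimeromorphic, and $(X\setminus S)^{(\pi)}$, $A$ are dense open subsets of the compact manifolds $X^{(\pi)}$ and $\overline{A}$. Both of these are projective: $X^{(\pi)}$ is the symmetric product of a projective curve, while $\overline{A}=\overline{{\mathcal Q}}\times({\mathbb P}^{1})^{q}\times({\mathbb P}^{1})^{p}$, where ${\mathcal Q}$, being a quasi-abelian variety of kind $0$, is a semi-abelian variety (a torus bundle over an abelian variety) and hence its standard compactification $\overline{{\mathcal Q}}$ is projective. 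I would then form the graph $G=\{(u,\varphi(u))\}$ and take its closure $\Gamma$ in $X^{(\pi)}\times\overline{A}$; once $\Gamma$ is analytic with proper, generically one-to-one projection onto $X^{(\pi)}$, it defines a bimeromorphic map $\overline{\varphi}\colon X^{(\pi)}\to\overline{A}$ prolonging $\varphi$, and $\overline{\varphi}^{\,*}\colon{\rm Mer}(\overline{A})\to{\rm Mer}(X^{(\pi)})$ is an isomorphism of fields. For $f\in{\rm Mer}(\overline{A})$ the function $\overline{\varphi}^{\,*}f$ restricts on the common dense open set to $\varphi^{*}(f|_{A})=F$, giving exactly the desired extension.

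The hard part is the analyticity of $\Gamma$, that is, prolonging the correspondence across the \emph{codimension-one} boundaries $X^{(\pi)}\setminus(X\setminus S)^{(\pi)}$ and $\overline{A}\setminus A$; the usual Hartogs-type extension theorems only handle codimension $\geq 2$. I would settle this through projectivity: $G$ is the graph of a morphism between quasi-projective varieties, so its Zariski closure is a projective variety which, by Chow's theorem, is its analytic closure $\Gamma$, and the projection onto $X^{(\pi)}$ is proper because $\overline{A}$ is compact. Equivalently one can invoke Bishop's theorem, the required local finiteness of the volume of $G$ near the boundary being forced by the fact that $\varphi$ has degree one. This single step is where the whole proposition is won or lost.

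Finally I would make the extension explicit, both as a safeguard and because it pinpoints the only real subtlety. Using the product decomposition of $\overline{A}$ and the toroidal coordinates $(z,\zeta,w)$ fixed above, it suffices to treat generators of ${\rm Mer}(\overline{A})$. The coordinate functions on the ${\mathbb C}^{p}$ factor pull back, via the second-kind integrals whose periods all vanish by the normalization of Theorem 6.1, to symmetric sums $\sum_{l=1}^{\pi}v_{j}(P_{l})$ with $v_{j}\in{\rm Mer}(X)$; the coordinate functions on any split $({\mathbb C}^{*})$ factor pull back, via exponentiation of residue-type integrals $\int\rho^{*}\omega_{g+i}$ with integral periods, to symmetric products $\prod_{l=1}^{\pi}u_{j}(P_{l})$ with $u_{j}\in{\rm Mer}(X)$; and the remaining directions, those absorbed into the quasi-abelian factor ${\mathcal Q}$, are governed by meromorphic functions that are theta-quotients combining the characters $\psi^{-\alpha}$ and ${\bf e}(z'')$ with the $J(X)$-theta functions of the non-singular computation of Section 7.1. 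In every case $\varphi^{*}f$ is a \emph{symmetric} meromorphic function of $(P_{1},\dots,P_{\pi})$ built from functions in ${\rm Mer}(X)$, hence descends to ${\rm Mer}(X^{(\pi)})$. The genuine obstacle in this concrete form is precisely the quasi-abelian part: one must check that the theta expressions in the ${\mathcal Q}$-directions have exactly the automorphy needed to produce single-valued meromorphic functions on $X$, which is what the toroidal-coordinate machinery assembled just before this proposition is designed to supply.
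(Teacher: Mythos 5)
Your primary route --- closing up the graph of $\varphi$ inside $X^{(\pi)}\times\overline{A}$ --- breaks down at precisely the step you single out as decisive, and for a reason that cannot be patched by the tools you invoke. The Zariski-closure/Chow argument presupposes that $\varphi$ is an \emph{algebraic} morphism of quasi-projective varieties, but $\varphi$ is defined by transcendental abelian integrals and is a priori only holomorphic; Chow's theorem and GAGA apply to analytic subsets of \emph{projective} (compact) ambient spaces, and a holomorphic map between quasi-projective varieties need not be algebraic. The graph of $t\mapsto e^{1/t}$ over the punctured disc is the standard counterexample: it is an analytic, degree-one graph whose closure in $\Delta\times{\mathbb P}^{1}$ is not an analytic set and whose volume near $t=0$ is infinite --- which simultaneously refutes the claim that the Bishop volume bound is ``forced by the fact that $\varphi$ has degree one.'' To rule out exactly this kind of essential-singularity behaviour you must control the asymptotics of the integrals $\int\rho^{*}\omega_{g+i}$ at $S$, i.e.\ do the explicit computation anyway; and proving that $\varphi$ is algebraic is Rosenlicht's algebraic theory of generalized Jacobians, which this paper deliberately avoids and which, within its analytic framework, sits downstream of the very extension statement at issue. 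Note also the paper's own Remark at the end of Section 5.7: $\varphi : X\setminus S \to {\rm Alb}^{an}(X_{\mathfrak m})$ does \emph{not} extend to a holomorphic map $X \to \overline{{\rm Alb}^{an}(X_{\mathfrak m})}$, so any graph-theoretic shortcut has to confront the boundary behaviour of the integrals head on rather than deduce it from soft properness/degree considerations.

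Your ``safeguard'' computation is in substance the paper's actual proof, and its first two cases are handled correctly (the ${\mathbb C}^{p}$ coordinates pull back to symmetric sums of the single-valued second-kind integrals, which are meromorphic at $S$ because their Laurent tails have no $1/t$ term; the split $({\mathbb C}^{*})^{q}$ coordinates pull back to symmetric products of ${\bf e}\bigl(\int_{P_{0}}^{P}\rho^{*}\omega_{g+i}\bigr)$, which near the two simple poles equals $t\,{\bf e}(h_{1}(t))$ resp.\ $t^{-1}{\bf e}(h_{2}(t))$). But you then defer the quasi-abelian factor as ``the genuine obstacle,'' and that is exactly where the proposition is proved or not. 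The paper closes this case by quoting from \cite{ref3} the representation of any $g\in {\rm Mer}(\overline{\mathcal Q})|_{\mathcal Q}$ as a quotient
$$g(z)=\frac{\sum_{\beta}D_{\beta}(z')\,{\bf e}(z'')^{\beta}}{\sum_{\alpha}C_{\alpha}(z')\,{\bf e}(z'')^{\alpha}},\qquad C_{\alpha}\in\Theta(\psi^{-\alpha}),\ D_{\beta}\in\Theta(\psi^{-\beta}),$$
so that the quasi-periodicity factors of the theta coefficients cancel against those of the monomials ${\bf e}(z'')^{\alpha}$, ${\bf e}(z'')^{\beta}$; pulling back, the numerator and denominator become locally meromorphic expressions in the first-kind integrals (holomorphic across $S$) and the functions ${\bf e}\bigl(\int\rho^{*}\omega_{g+i}\bigr)$ (meromorphic across $S$ by the computation above), and the quotient is single-valued, hence lies in ${\rm Mer}(X^{(\pi)})$. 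Without this step your argument establishes the extension only for functions coming from the ${\mathbb C}^{p}$ and split $({\mathbb C}^{*})^{q}$ factors, which is strictly weaker than the statement; with it, your fallback coincides with the paper's proof.
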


\begin{proof}
We use the above notations. First we consider
$g \in {\rm Mer}(\overline{{\mathcal Q}})|_{{\mathcal Q}}$.
There exists a theta factor $\varrho _0$ of an abelian
variety ${\mathbb C}^g/\Lambda _{\tau }$ such that if we
denote by $\Theta (\psi ^{-\alpha })$ the space of all theta
functions for $\psi ^{-\alpha }\varrho _0$, then we have the
representation of $g$ as
$$g(z) = \frac{\displaystyle{\sum _{\text{finite sum }}D_{\beta }(z')
{\bf e}(z'')^{\beta }}}{\displaystyle{\sum _{\text{finite sum }}
C_{\alpha }(z'){\bf e}(z'')^{\alpha }}},\quad
C_{\alpha } \in \Theta (\psi ^{-\alpha }),\ 
D_{\beta }\in \Theta (\psi ^{-\beta })$$
(\cite{ref3}).\par
Any $f \in {\rm Mer}(\overline{A})|_A$ is a rational function
of functions in ${\rm Mer}(\overline{{\mathcal Q}})|_{{\mathcal Q}}$,
${\bf e}(\zeta _1), \dots , {\bf e}(\zeta _q)$ and
$w_1, \dots ,w_p$
(\cite{ref3}). For any $(P_1, \dots , P_{\pi }) \in
(X \setminus S)^{(\pi )}$ we have
$$
\varphi ((P_1, \dots ,P_{\pi })) = \left[
\left( \sum _{i=1}^{\pi }\int _{P_0}^{P_i}\rho ^{*}\omega _1,
\dots , \sum _{i=1}^{\pi }\int _{P_0}^{P_i}
\rho ^{*}\omega _{\pi }\right) \right].$$
Since $\rho ^{*}\omega _1, \dots , \rho ^{*}\omega _g$ are
holomorphic forms on $X$,
$\displaystyle{\int _{P_0}^{P}\rho ^{*}\omega _i}$ is holomorphic 
on $X$ for $i = 1, \dots ,g$. From Remark in Section 6.1 it
follows that $\displaystyle{\int _{P_0}^{P}\rho ^{*}\omega _{g+i}}$
extends meromorphically to $S$ for $i$ with
$m+q < i \leq \delta $.\par
For $i$ with $1 \leq i \leq m+q$ we show that 
$\displaystyle{{\bf e}\left( \int _{P_0}^{P} \rho ^{*}
\omega _{g+i}\right)}$
extends meromorphically to $S$. By the normalization of
$\{ \rho ^{*}\omega _{g+i}\}$ we see that $\rho ^{*}\omega _{g+i}$
is holomorphic except two points $P, P' \in S$ with
$${\rm ord}_P(\rho ^{*}\omega _{g+i}) =
{\rm ord}_{P'}(\rho ^{*}\omega _{g+i}) = -1.$$
Let $P \in S$ be such a point. We assume
$${\rm Res}_P(\rho ^{*}\omega _{g+i}) =
\frac{1}{2\pi \sqrt{-1}}.$$
We can take a local coordinate $t$ on a neighborhood $U$ of $P$
with $t(P) = 0$ such that
$$\rho ^{*}\omega _{g+i} = \left( \frac{1}{2\pi \sqrt{-1}}
\frac{1}{t} + h(t)\right) dt$$
on $U$, where $h(t)$ is a holomorphic function on $U$.
Then we obtain
$${\bf e}\left( \int _{P_0}^{P} \rho ^{*}\omega _{g+i}\right)
= t {\bf e}(h_1(t))\quad
\text{on}\ U,$$
where $h_1(t)$ is a holomorphic function.
If
$${\rm Res}_P(\rho ^{*}\omega _{g+i}) = -
\frac{1}{2\pi \sqrt{-1}},$$
then we obtain
$${\bf e}\left( \int _{P_0}^{P} \rho ^{*}\omega _{g+i}\right)
= \frac{1}{t} {\bf e}(h_2(t))$$
in the same manner.\par
Thus we conclude that any function $F = \varphi ^{*}f$ with
$f \in {\rm Mer}(\overline{A})|_A$ extends meromorphically
to $X^{(\pi )}$.
\end{proof}

We think that $\kappa $ is a generalization of abelian function fields.
By the definition of $\kappa $ and Proposition 7.1, both $\kappa $ and
${\rm Mer}(\overline{A})|_A$ are subfields of $(\varphi ^{-1})^{*}{\rm Mer}
(X^{(\pi )})$. Unfortunately, we do not know the relation of $\kappa $
and ${\rm Mer}(\overline{A})|_A$ yet.




\begin{thebibliography}{9}
\bibitem{ref1}
Y. Abe,
A statement of Weierstrass on meromorphic functions
which admit an algebraic addition theorem,
J. Math. Soc. Japan,
\textbf{57}, 
(2005),      
709--723.      

\bibitem{ref2}
Y. Abe, 
Quasi-abelian functions and varieties in the sense of Severi,
III, Generalized Jacobi varieties, Toyama Math. J.
\textbf{31},
(2008),
17--32.

\bibitem {ref3} Y. Abe, 
Explicit representation of degenerate abelian
functions and related topics,
Far East J. Math. Sci.
\textbf{70},
(2012),
321--336.

\bibitem {ref4} Y. Abe,
Toroidal Groups, 
Yokohama Publisher, Inc., Yokohama, 2018.

\bibitem {ref5} Y. Abe and K. Kopfermann,
Toroidal Groups,
LNM 1759, Springer-Verlag, New York-Heidelberg-Berlin, 2001.

\bibitem {ref6} Y. Abe and T. Umeno, On quasi-abelian varieties
of kind $k$,
Kyushu J. Math.
\textbf{60},
(2006),
305--316.

\bibitem {ref7} L. Bers, 
Spaces of degenerating Riemann surfaces,
Ann. of Math. Studies
\textbf{79},
(1974),
43--55.

\bibitem {ref8} P. Deligne and D. Mumford,
The irreducibility
of the space of curves of given genus, IHES Publ. Math. 
\textbf{36},
(1969),
75--109.

\bibitem {ref9} H. M. Farkas and I. Kra, 
Riemann Surfaces,
GTM 71, Springer-Verlag, New York, 1992.

\bibitem {ref10} O. Forster, 
Lectures on Riemann Surfaces,
GTM 81, Springer-Verlag, New York-Heidelberg-Berlin, 1981.

\bibitem {ref11} K. Iwasawa, 
Algebraic Function Theory (in Japanese),
Iwanami Shoten, Tokyo, 1973.

\bibitem {ref12} T. Jambois, 
The theorem of Torelli for singular curves,
Trans. Amer. Math. Soc. 
\textbf{239},
(1978), 
123--146.

\bibitem {ref13} R. Miranda, 
Algebraic Curves and Riemann Surfaces,
Graduate Studies in Mathematics Vol. 5,
Amer. Math. Soc., Providence, RI, 1995.

\bibitem {ref14} M. Namba, 
Inversion of abelian integrals (in Japanese),
Proceedings of the 15th Number Theory Summer School in 2007, pp. 191--198,
Hanamaki, Japan, 2008.

\bibitem {ref15} M. Rosenlicht, 
Generalized jacobian varieties,
Ann. of Math.
\textbf{59},
(1954), 
505--530.

\bibitem {ref16} J.-P. Serre, 
Groupes alg\'ebriques et corps de classes,
Hermann, Paris, 1959.


\end{thebibliography}
\end{document}